\documentclass[12pt]{amsart}

\usepackage{amsmath}
\usepackage{amsfonts}
\usepackage{amssymb}
\usepackage[all]{xy}           
\usepackage{xcolor}
\usepackage{bbding}
\usepackage{txfonts}
\usepackage{amscd}

\usepackage[shortlabels]{enumitem}
\usepackage{ifpdf}
\ifpdf
  \usepackage[colorlinks,final,
  hyperindex]{hyperref}
\else
  \usepackage[colorlinks,final,
  hyperindex]{hyperref}
\fi
\usepackage{tikz}
\usepackage[active]{srcltx}

\makeatletter

\newtheorem{thm}{Theorem}[section]
\newtheorem{lem}[thm]{Lemma}

\newtheorem{pro}[thm]{Proposition}
\newtheorem{ex}[thm]{Example}
\newtheorem{rmk}[thm]{Remark}
\newtheorem{defi}[thm]{Definition}

\numberwithin{equation}{section}

\newcommand{\g}{\mathfrak {g}}
\newcommand{\gl}{\mathrm{End}}
\newcommand{\kl}{\mathfrak l}
\newcommand{\kr}{\mathfrak r}
\newcommand{\kkl}{\tilde{\mathfrak{l}}}
\newcommand{\kkr}{\tilde{\mathfrak{r}}}
\newcommand{\kll}{\bar{\mathfrak{l}}}
\newcommand{\krr}{\bar{\mathfrak{r}}}

\newcommand{\fl}{\mathbf{l}}
\newcommand{\fr}{\mathbf{r}}

\newcommand{\ffl}{\tilde{\mathbf{l}}}
\newcommand{\ffr}{\tilde{\mathbf{r}}}
\newcommand{\fll}{\bar{\mathbf{l}}}
\newcommand{\frr}{\bar{\mathbf{r}}}

\newcommand{\id}{\mathrm{id}}

\newcommand{\ad}{\mathrm{ad}}
\newcommand{\Img}{\mathrm{Im}}

\newcommand{\DAYBE}{\mathrm{DAYBE}}
\newcommand{\CYBE}{\mathrm{CYBE}}
\newcommand{\AYBE}{\mathrm{AYBE}}
\newcommand{\LYBE}{\mathrm{LYBE}}

\begin{document}

\title[Construction of diassociative bialgebras from ASI bialgebras]
{Construction of diassociative bialgebras from antisymmetric infinitesimal bialgebras
and related algebra structures}

\author{Bo Hou}
\address{School of Mathematics and Statistics, Henan University, Kaifeng 475004,
China}
\email{houbo@henu.edu.cn, bohou1981@163.com}

\author{Ru Li}
\address{School of Mathematics and Statistics, Henan University, Kaifeng 475004,
China}
\email{13037698973@163.com}

\vspace{-5mm}


\begin{abstract}
There is a diassociative algebra structure on the tensor product of an associative algebra
and a perm algebra. In this paper, we elevate this conclusion to the level of bialgebra.
We prove that the tensor product of an antisymmetric infinitesimal bialgebra
and a quadratic perm algebra has a diassociative bialgebra structure, and this diassociative
bialgebra structure is coboundary (resp. quasi-triangular, triangular, factorizable) if
the original antisymmetric infinitesimal bialgebra is coboundary (resp. quasi-triangular,
triangular, factorizable). As an application, we provide the close relationship between
Lie bialgebras, Leibniz bialgebras, diassociative bialgebras and antisymmetric
infinitesimal bialgebras, and the close relationship between symplectic Lie algebras,
symplectic Leibniz algebras, symplectic associative algebras and symplectic
diassociative algebras.
\end{abstract}

\keywords{diassociative bialgebra, antisymmetric infinitesimal bialgebra,
perm algebra, Lie bialgebra, Leibniz bialgebra, Yang-Baxter equations,
$\mathcal{O}$-operator, symplectic structure.}
\makeatletter
\@namedef{subjclassname@2020}{\textup{2020} Mathematics Subject Classification}
\makeatother
\subjclass[2020]{
17A30, 
17A32, 
17B38, 
17B62. 
}

\maketitle



\vspace{-4mm}

\section{Introduction}\label{sec:intr}

The aim of this paper is to construct diassociative bialgebras by the antisymmetric
infinitesimal bialgebras. As a direct application of this construction, we present the
close connection between Lie bialgebras, Leibniz bialgebras, diassociative bialgebras
and antisymmetric infinitesimal bialgebras.

A bialgebra structure on a given algebra structure is obtained as a coalgebra
structure together which gives the same algebra structure on the dual space with
a set of compatibility conditions. One of the most famous example of bialgebra is
the Lie bialgebra \cite{Dri}. Lie bialgebras have important applications in various
areas, including the theory of integrable systems and the study of Poisson Lie groups
\cite{CP}. In the context of Lie bialgebras, quasi-triangular Lie bialgebras play important
roles in mathematical physics \cite{Sem}. In the past decade or so, there have been a lot
of bialgebra theories for other algebra structures that essentially follow the approach
of Lie bialgebras such as antisymmetric infinitesimal bialgebras \cite{Agu,Bai},
left-symmetric bialgebras \cite{Bai1}, Jordan bialgebras \cite{Zhe}, Novikov bialgebras
\cite{HBG}, perm bialgebras \cite{Hou,LZB}, Jacobi-Jordan bialgebras \cite{BCHM},
anti-pre-Lie Poisson bialgebras \cite{LB}, and so on.
Recently, \cite{Lu} and \cite{HLLZ} independently presented a bialgebra theory of
diassociative algebras. In this paper, we will present a method for constructing
diassociative bialgebras by using the tensor product of antisymmetric infinitesimal bialgebras
(or ASI bialgebras) and quadratic perm algebras. And on the basis of this method,
the close connection between diassociative bialgebras and Lie bialgebras, Leibniz
bialgebras are discussed in detail.

As an important class of quasi-triangular Lie bialgebras, factorizable Lie bialgebras
are used to connect classical $r$-matrices with certain factorization problems, and have
various applications in integrable systems \cite{BGN,RS}. Recently, the factorizable Lie
bialgebras have received further research \cite{LS}, and the factorizable antisymmetric
infinitesimal bialgebras \cite{SW}, factorizable Leibniz bialgebras \cite{BLST},
factorizable diassociative bialgebras \cite{Lu} have been studied. In this paper,
we consider the factorizability of the diassociative bialgebra induced by an
antisymmetric infinitesimal bialgebra.

\smallskip\noindent
{\bf Theorem } (Theorems \ref{thm:permbia-dia} and \ref{thm:indu-sdibia})
{\it Let $(A, \cdot, \Delta)$ be an ASI bialgebra and $(P, \diamond, \omega)$ be a
quadratic perm algebra and $(A\otimes P, \dashv, \vdash)$ be the induced diassociative
algebra, i.e., $(a_{1}, p_{1})\dashv(a_{2}, p_{2})=(a_{1}\cdot a_{2})\otimes(p_{1}
\diamond p_{2})$ and $(a_{1}, p_{1})\vdash(a_{2}, p_{2})=(a_{1}\cdot a_{2})\otimes
(p_{2}\diamond p_{1})$ for any $a_{1}, a_{2}\in A$ and $p_{1}, p_{2}\in P$. Define two
linear maps $\Delta_{\dashv}, \Delta_{\vdash}: A\otimes P\rightarrow
(A\otimes P)\otimes(A\otimes P)$ by
\begin{align*}
\Delta_{\dashv}(a\otimes p)&=\Delta(a)\bullet\tau(\nu_{\varpi}(p))=\sum_{(a)}\sum_{(p)}
(a_{(1)}\otimes p_{(2)})\otimes(a_{(2)}\otimes p_{(1)}),\\[-2mm]
\Delta_{\vdash}(a\otimes p)&=\Delta(a)\bullet\nu_{\varpi}(p)=\sum_{(a)}\sum_{(p)}
(a_{(1)}\otimes p_{(1)})\otimes(a_{(2)}\otimes p_{(2)}),
\end{align*}
for any $a\in A$ and $p\in P$, where $\nu_{\varpi}(p)=\sum_{(p)}p_{(1)}\otimes p_{(2)}$
and $\Delta(a)=\sum_{(a)}a_{(1)}\otimes a_{(2)}$ in the Sweedler notation.
Then $(A\otimes P, \dashv, \vdash, \Delta_{\dashv}, \Delta_{\vdash})$ is a diassociative
bialgebra. Moreover, we get that $(A\otimes P, \dashv, \vdash$, $\Delta_{\dashv},
\Delta_{\vdash})$ is a coboundary (resp. quasi-triangular, triangular, factorizable)
diassociative bialgebra if $(A, \cdot, \Delta)$ is coboundary (resp. quasi-triangular,
triangular, factorizable).}

\smallskip
In \cite{Kup}, Kupershmidt found that the classical Yang-Baxter equation (or $\CYBE$)
in tensor form on Lie algebras can be converted into an $\mathcal{O}$-operator associated
to the coregular representation. This conclusion has been proven to be valid for various
algebra structures. Nowadays, the $\mathcal{O}$-operator is regarded as an operator form
of a skew-symmetric solution of the $\CYBE$. On the other hand, a skew-symmetric solution of
the $\CYBE$ in a Lie algebra naturally induces a triangular Lie bialgebra. In this paper,
by discussing the connection between the solution of the associative Yang-Baxter equation
in an associative algebra $(A, \cdot)$ and the solution of the $\CYBE$ in the induced
Lie algebra $(A, [-,-])$, the solution of the diassociative Yang-Baxter equation in the
related diassociative $(A\otimes P, \dashv, \vdash)$, as well as the solution of the
Leibniz Yang-Baxter equation in the related Leibniz algebra $(A\otimes P, \ast)$,
we present the close connections among triangular antisymmetric infinitesimal bialgebras
triangular Lie bialgebras, triangular diassociative bialgebras and triangular Leibniz
bialgebras, as well as the connections among $\mathcal{O}$-operators on associative
algebras, Lie algebras, diassociative algebras and Leibniz algebras.
These results, when put together, yield the following commutative diagram:
{\small
\begin{displaymath}
\xymatrix@R=0.4cm@C=-0.8cm{
&\txt{{\small $(A, \cdot, \Delta_{r})$ }\\
{\tiny a triangular ASI bialgebra}}
\ar@{->}[rr]|-{\txt{\tiny\textcolor{red}{Thm.}~\ref{thm:indu-sdibia}}}
\ar@{->}[ld]|-{\txt{\tiny\textcolor{red}{Pro.}~\ref{pro:qtAss-qtLie}}}
\ar@{<.}[dd]|-(0.8){\txt{\tiny\textcolor{red}{Pro.}~\ref{pro:quasass-bia}}}&
&\txt{{\small $(A\otimes P, \dashv, \vdash, \Delta_{\dashv,\widetilde{r}},
\Delta_{\vdash,\widetilde{r}})$} \\
{\tiny a triangular diassociative bialgebra}}
\ar@{->}[ld]|-{\txt{\tiny\textcolor{red}{Pro.}~\ref{pro:qtdiAss-qtLeib}}}
\ar@{<-}[dd]|-{\txt{\tiny\textcolor{red}{Pro.}~\ref{pro:quasi-di}}}&\\
\txt{{\small $(A, [-,-], \vartheta_{r})$} \\ {\tiny a triangular Lie bialgebra}}
\ar@{<-}[dd]|-{\txt{\tiny\textcolor{red}{Pro.}~\ref{pro:lie-bia}}}
\ar@{->}[rr]|-(0.75){\txt{\tiny\textcolor{red}{Thm.}~\ref{thm:indu-triLib}}}&
&\txt{{\small $(A\otimes P, \ast, \vartheta_{\widetilde{r}})$}\\
{\tiny a triangular Leibniz bialgebra}}
\ar@{<-}[dd]|-(0.7){\txt{\tiny\textcolor{red}{Pro.}~\ref{pro:sLib-bia}}} \\
&\txt{{\small\bf $r$}\\ {\tiny\bf a skew-symmetric solution} \\
{\tiny\bf of the $\AYBE$ in $(A, \cdot)$}}
\ar@{.>}[dd]|-(0.75){\txt{\tiny\textcolor{red}{Pro.}~\ref{pro:o-ass}}}
\ar@{.>}[rr]|-(0.2){\rm\textcolor{red}{Pro.}~\ref{pro:AYBE-DYBE}}&
&\txt{{\small\bf $\widetilde{r}$} \\  {\tiny\bf a symmetric solution} \\
{\tiny\bf of the $\DAYBE$ in $(A\otimes P, \dashv, \vdash)$}}
\ar@{->}[dd]|-{\txt{\tiny\textcolor{red}{Pro.}~\ref{pro:o-dia}}}\\
\txt{{\small\bf $r$}\\ {\tiny\bf a skew-symmetric solution} \\
{\tiny\bf of the $\CYBE$ in $(A, [-,-])$}}
\ar@{->}[dd]|-{\txt{\tiny\textcolor{red}{Pro.}~\ref{pro:o-lie}}}
\ar@{<.}[ru]|-{\txt{\tiny\textcolor{red}{Pro.}~\ref{pro:ass-Lie-YBE}}}
\ar@{->}[rr]|-(0.2){\txt{\tiny\textcolor{red}{Pro.}~\ref{pro:CYBE-LYBE}}}&
&\txt{{\small\bf $\widetilde{r}$}\\ {\tiny\bf a symmetric solution} \\
{\tiny\bf of the $\LYBE$ in $(A\otimes P, \ast)$}}
\ar@{<-}[ru]|-{\txt{\tiny\textcolor{red}{Pro.}~\ref{pro:diass-Leib-YBE}}}
\ar@{->}[dd]|-(0.75){\txt{\tiny\textcolor{red}{Pro.}~\ref{pro:o-leib}}}\\
&\txt{{\small $r^{\sharp}$}\\{\tiny an $\mathcal{O}$-operator of $(A, \cdot)$} \\
{\tiny associated to the coregular bimodule}} \ar@{.>}[ld] &
&\txt{{\small $\widetilde{r}^{\sharp}=r^{\sharp}\otimes\kappa^{\sharp}$}\\
{\tiny an $\mathcal{O}$-operator of $(A\otimes P, \dashv, \vdash)$} \\
{\tiny associated to the coregular bimodule}}
\ar@{<.}[ll]|-(0.75){\txt{\tiny\textcolor{red}{$-\otimes\kappa^{\sharp}$}}}\\
\txt{{\small $r^{\sharp}$}\\{\tiny an $\mathcal{O}$-operator of $(A, [-,-])$} \\
{\tiny associated to the coregular representation}}&
&\txt{{\small $\widetilde{r}^{\sharp}=r^{\sharp}\otimes\kappa^{\sharp}$}\\
{\tiny an $\mathcal{O}$-operator of $(A\otimes P, \ast)$} \\
{\tiny associated to the coregular representation}}
\ar@{<-}[ru]\ar@{<-}[ll]|-{\txt{\tiny\textcolor{red}{$-\otimes\kappa^{\sharp}$}}}&}
\end{displaymath}
}

A symplectic structure on a Lie algebra $(\g, [-,-])$ is a nondegenerate 2-cocycle
\cite{AS,GMAB}. The underlying structure of a symplectic Lie algebra is a quadratic
pre-Lie algebra. Moreover, the symplectic structure on a Lie algebra is closely related
to the nondegenerate skew-symmetric solution of the $\CYBE$ in this Lie algebra.
A symplectic associative algebra is an associative algebra with a nondegenerate Connes
cocycle. Recently, Tang, Xu and Sheng provided a definition of symplectic Leibniz algebras
and studied the symplectic structures and the complex structures on a Leibniz algebras
\cite{TXS}. Here we introduce the definition of symplectic diassociative algebras
and obtain a commutative diagram about symplectic associative algebra, symplectic
diassociative algebra, symplectic Lie algebra and symplectic Leibniz algebra
(see Theorem \ref{thm:sym-ass-leib}):
$$
\xymatrix@C=2cm@R=0.7cm{
\txt{$(A, \cdot, \omega)$ \\
{\tiny a symplectic associative algebra}}
\ar[d]_{{\rm Pro.}~\ref{pro:sym-ass-lie}} \ar[r]^{{\rm Pro.}~\ref{pro:sym-ass-diass}}
&\txt{$(A\otimes P, \dashv, \vdash, \widetilde{\omega})$\\
{\tiny a symplectic diassociative algebra}}\ar[d]^{{\rm Pro.}~\ref{pro:sym-diass-leib}} \\
\txt{$(A, [-,-], \omega)$ \\ {\tiny a symplectic Lie algebra}}
\ar[r]^{{\rm Pro.}~\ref{pro:sym-lie-leib}}
& \txt{$(A\otimes P, \ast, \widetilde{\omega})$ \\ {\tiny a symplectic Leibniz algebra}}}
$$
Moreover, we also use the close relationship between symplectic structures and the
nondegenerate solutions of the Yang-Baxter equations to provide the construction
relationship between symplectic associative algebras, symplectic Lie algebras,
symplectic diassociative algebras and symplectic Leibniz algebras.

This paper is organized as follows. In Section \ref{sec:Pre}, we recall the notions of
associative algebras, diassociative algebras and diassociative bialgebras. We give a
construction of Leibniz algebras from associative algebras.
In Section \ref{sec:qtbia-ASI}, we show that there is a diassociative bialgebra on the
tensor product of an antisymmetric infinitesimal bialgebra and a quadratic perm algebra
in Theorem \ref{thm:permbia-dia}, and prove that the induced diassociative bialgebra
is coboundary (resp. quasi-triangular, triangular, factorizable) if the antisymmetric
infinitesimal bialgebra is coboundary (resp. quasi-triangular, triangular, factorizable)
in Theorem \ref{thm:indu-sdibia}. Moreover, we also give a construction of
$\mathcal{O}$-operator of diassociative bialgebras from $\mathcal{O}$-operator of
associative algebras. In Section \ref{sec:LeibBi}, We use a series of propositions to
provide the connection between the solution of the associative Yang-Baxter equation
in an associative algebra $(A, \cdot)$ and the solution of the $\CYBE$ in the induced
Lie algebra $(A, [-,-])$, the solution of the diassociative Yang-Baxter equation in the
related diassociative $(A\otimes P, \dashv, \vdash)$, as well as the solution of the
Leibniz Yang-Baxter equation in the related Leibniz algebra $(A\otimes P, \ast)$.
Furthermore, the commutative diagram mentioned above about solutions of the
Yang-Baxter equations, the triangular bialgebra structures and the $\mathcal{O}$-operators
is provided. Moreover, by using the connection between the nondegenerate solution of the
Yang-Baxter equation and symplectic structure, the construction relationship between
symplectic associative algebras, symplectic Lie algebras, symplectic diassociative
algebras and symplectic Leibniz algebras is presented.

Throughout this paper, we fix $\Bbbk$ as a field of characteristic zero.
All the vector spaces, algebras are over $\Bbbk$ and are finite-dimensional
unless otherwise specified, and all tensor products are also over $\Bbbk$.
We denote the identity map by $\id$. For any finite-dimensional $\Bbbk$-vector
space $V$, we denote $V^{\ast}$ the dual space.

\section{Preliminaries}\label{sec:Pre}
In this section, we recall the notions of associative algebras, diassociative algebras
and diassociative bialgebras.

\subsection{Diassociative algebras and bimodules over a diassociative algebra}
\label{subsec:diass}
Recall that an {\bf associative algebra} $(A, \cdot)$ is a vector space $A$ equipped with
a binary operation $\cdot: A\otimes A\rightarrow A$ such that for any $a_{1}, a_{2}, a_{3}
\in A$,
$$
a_{1}\cdot(a_{2}\cdot a_{3})=(a_{1}\cdot a_{2})\cdot a_{3}.
$$
Diassociative algebra is a generalization of associative algebra, which was first
introduced by Loday in the early 1990s in connection with  periodicity phenomena in
algebraic $K$-theory.

\begin{defi}\label{de:dialg}
A {\bf diassociative algebra} $(D, \dashv, \vdash)$ is a vector space $D$ equipped with
two bilinear maps called respectively left product and right product: $\dashv, \vdash:
D\otimes D\rightarrow D$, such that $(D, \dashv)$ and $(D, \vdash)$
are associative algebras and satisfying the following axioms:
\begin{align*}
& (d_{1}\dashv d_{2})\dashv d_{3}=d_{1}\dashv(d_{2}\vdash d_{3}),\\
& (d_{1}\vdash d_{2})\dashv d_{3}=d_{1}\vdash(d_{2}\dashv d_{3}),\\
& (d_{1}\dashv d_{2})\vdash d_{3}=(d_{1}\vdash d_{2})\vdash d_{3},
\end{align*}
for any $d_{1}, d_{2}, d_{3}\in D$
\end{defi}

Recall that a (left) {\bf perm algebra} is a pair $(P, \diamond)$, where $P$ is a vector
space and $\diamond: P\otimes P\rightarrow P$ is a binary operator such that for any
$p_{1}, p_{2}, p_{3}\in P$,
$$
p_{1}\diamond(p_{2}\diamond p_{3})=(p_{1}\diamond p_{2})\diamond p_{3}
=(p_{2}\diamond p_{1})\diamond p_{3}.
$$
Clearly, in a diassociative algebra $(D, \dashv, \vdash)$, if $d_{1}\dashv d_{2}=
d_{2}\vdash d_{1}$ for any $d_{1}, d_{2}\in D$, we get that $(D, \dashv)$ is a perm algebra.
Let $(A, \cdot)$ be an associative algebra. If we define bilinear maps
$\dashv, \vdash: A\otimes A\rightarrow A$ by $a_{1}\dashv a_{2}=a_{1}\cdot a_{2}
=a_{1}\vdash a_{2}$ for any $a_{1}, a_{2}\in A$. Then $(A, \dashv, \vdash)$ is a
diassociative algebra. That is to say, we can view each associative algebra as a
diassociative algebra in this way. Moreover, for any associative algebra, we
can also use its tensor product with the perm algebra to obtain a diassociative
algebra.

\begin{pro}[\cite{Lod}]\label{pro:ass-diass}
Let $(A, \cdot)$ be an associative algebra and $(P, \diamond)$ be a perm algebra.
If we define two binary operations $\vdash, \dashv$ on $A\otimes P$ by
\begin{align}
&(a_{1}\otimes p_{1})\dashv(a_{2}\otimes p_{2})
:=(a_{1}\cdot a_{2})\otimes(p_{2}\diamond p_{1}),  \label{prod1}\\
&(a_{1}\otimes p_{1})\vdash(a_{2}\otimes p_{2})
:=(a_{1}\cdot a_{2})\otimes(p_{1}\diamond p_{2}),  \label{prod2}
\end{align}
for any $a_{1}, a_{2}\in A$ and $p_{1}, p_{2}\in P$, then $(A\otimes P, \vdash, \dashv)$ is
a diassociative algebra.
\end{pro}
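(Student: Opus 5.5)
The plan is a direct verification of the axioms of Definition~\ref{de:dialg}. The key observation is that on elementary tensors both operations act as the associative product in the $A$-factor. Only the order of the $P$-factors differs. Every identity to be checked is trilinear, so it suffices to check it on elementary tensors $x_{i}=a_{i}\otimes p_{i}$, $i=1,2,3$. By associativity of $(A,\cdot)$, both sides of every axiom then have first tensor factor $a_{1}\cdot a_{2}\cdot a_{3}$. Each axiom therefore reduces to an identity among products of $p_{1},p_{2},p_{3}$ in $P$.

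Before starting, I record two consequences of the definition of a perm algebra. First, $(P,\diamond)$ is associative, since $p_{1}\diamond(p_{2}\diamond p_{3})=(p_{1}\diamond p_{2})\diamond p_{3}$. Second, $(p_{1}\diamond p_{2})\diamond p_{3}=(p_{2}\diamond p_{1})\diamond p_{3}$, so in a triple product the first two factors may be swapped.

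Then I compute the $P$-parts using \eqref{prod1} and \eqref{prod2}, and compare them as follows.
\begin{itemize}
\item Associativity of $\vdash$: both sides are $p_{1}\diamond p_{2}\diamond p_{3}$, equal by associativity of $P$.
\item Associativity of $\dashv$: the two sides are $p_{3}\diamond(p_{2}\diamond p_{1})$ and $(p_{3}\diamond p_{2})\diamond p_{1}$, again equal by associativity.
\item The axiom $(x_{1}\dashv x_{2})\dashv x_{3}=x_{1}\dashv(x_{2}\vdash x_{3})$: the two sides are $p_{3}\diamond(p_{2}\diamond p_{1})$ and $(p_{2}\diamond p_{3})\diamond p_{1}$. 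They agree after swapping the first two factors of the right-hand side.
\item The axiom $(x_{1}\vdash x_{2})\dashv x_{3}=x_{1}\vdash(x_{2}\dashv x_{3})$: the two sides are $p_{3}\diamond(p_{1}\diamond p_{2})=(p_{3}\diamond p_{1})\diamond p_{2}$ and $p_{1}\diamond(p_{3}\diamond p_{2})=(p_{1}\diamond p_{3})\diamond p_{2}$. These agree by the swap rule.
\item The axiom $(x_{1}\dashv x_{2})\vdash x_{3}=(x_{1}\vdash x_{2})\vdash x_{3}$: the two sides are $(p_{2}\diamond p_{1})\diamond p_{3}$ and $(p_{1}\diamond p_{2})\diamond p_{3}$, which is exactly the defining perm identity.
\end{itemize}

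There is no real obstacle here. The only point needing care is tracking the reversed order $p_{2}\diamond p_{1}$ in $\dashv$ when the operations are nested. I would also note that, since each axiom holds on elementary tensors and is multilinear, it extends to all of $A\otimes P$ by linearity. This completes the proof.
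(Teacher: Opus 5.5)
Your verification is correct. On elementary tensors the $A$-factor is $a_{1}\cdot a_{2}\cdot a_{3}$ on both sides, and each of the five $P$-identities you list follows from associativity of $\diamond$ together with the left-perm swap $(p_{1}\diamond p_{2})\diamond p_{3}=(p_{2}\diamond p_{1})\diamond p_{3}$, exactly as you compute.

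The paper gives no proof of its own; it simply attributes the result to \cite{Lod}, and your direct check is the standard argument. You are also right to take $\dashv$ from \eqref{prod1} as the left product of Definition~\ref{de:dialg}, even though the triple is written $(A\otimes P,\vdash,\dashv)$. With the roles swapped, the axiom $(x_{1}\dashv x_{2})\dashv x_{3}=x_{1}\dashv(x_{2}\vdash x_{3})$ would require $p_{1}\diamond p_{2}\diamond p_{3}=p_{1}\diamond p_{3}\diamond p_{2}$, which fails in general, for instance in Example~\ref{ex:dual-perm}.
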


Recall that a (left) {\bf Leibniz algebra} $(L, \ast)$ is a vector space $L$ together
with a bilinear map $\ast: L\times L\rightarrow L$ satisfying the following Leibniz identity:
$$
x_{1}\ast(x_{2}\ast x_{3})=(x_{1}\ast x_{2})\ast x_{3}+x_{2}\ast(x_{1}\ast x_{3}),
$$
for any $x_{1}, x_{2}, x_{3}\in L$. In a Leibniz algebra $(L, \ast)$, if $\ast$ is
anticommutative, i.e., $x_{1}\ast x_{2}=-x_{2}\ast x_{1}$, then $(L, \ast)$ is a
{\bf Lie algebra}.
In a Lie algebra, we usually denote the binary operation $\ast$ by bracket $[-,-]$.
It is well-known that an associative algebra forms a Lie algebra under the commutator.
Similar results also exist for diassociative algebras and Leibniz algebras.

\begin{pro}\label{pro:commtor}
Let $(D, \dashv, \vdash)$ be a diassociative algebra. Define a new binary operation $\ast$
on $D$ by
$$
d_{1}\ast d_{2}=d_{1}\vdash d_{2}-d_{2}\dashv d_{1},
$$
for any $d_{1}, d_{2}\in D$, then $(D, \ast)$ is a Leibniz algebra. In particular,
if $(D, \cdot)$ is an associative algebra, then $(D, [-,-])$ is a Lie algebra,
where $[d_{1}, d_{2}]=d_{1}\cdot d_{2}-d_{2}\cdot d_{1}$.
\end{pro}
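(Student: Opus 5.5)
We verify the left Leibniz identity directly from the five diassociative axioms, writing $d_{1}\ast d_{2}=d_{1}\vdash d_{2}-d_{2}\dashv d_{1}$. Both sides of
$$
d_{1}\ast(d_{2}\ast d_{3})=(d_{1}\ast d_{2})\ast d_{3}+d_{2}\ast(d_{1}\ast d_{3})
$$
expand into twelve monomials in total, each a product of $d_{1},d_{2},d_{3}$ in some order with two operations. The plan is to expand both sides and group the monomials by the order in which $d_{1},d_{2},d_{3}$ appear. Then we show that each group cancels, using associativity of $\dashv$ and of $\vdash$ together with the three mixed axioms of Definition \ref{de:dialg}.

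The expansions are as follows:
\begin{align*}
d_{1}\ast(d_{2}\ast d_{3})&=d_{1}\vdash(d_{2}\vdash d_{3})-d_{1}\vdash(d_{3}\dashv d_{2})-(d_{2}\vdash d_{3})\dashv d_{1}+(d_{3}\dashv d_{2})\dashv d_{1},\\
(d_{1}\ast d_{2})\ast d_{3}&=(d_{1}\vdash d_{2})\vdash d_{3}-(d_{2}\dashv d_{1})\vdash d_{3}-d_{3}\dashv(d_{1}\vdash d_{2})+d_{3}\dashv(d_{2}\dashv d_{1}),\\
d_{2}\ast(d_{1}\ast d_{3})&=d_{2}\vdash(d_{1}\vdash d_{3})-d_{2}\vdash(d_{3}\dashv d_{1})-(d_{1}\vdash d_{3})\dashv d_{2}+(d_{3}\dashv d_{1})\dashv d_{2}.
\end{align*}
We then match the terms by the order of the letters:
\begin{itemize}
\item Order $d_{1}d_{2}d_{3}$: by associativity of $\vdash$, $d_{1}\vdash(d_{2}\vdash d_{3})=(d_{1}\vdash d_{2})\vdash d_{3}$.
\item Order $d_{2}d_{1}d_{3}$: by the third axiom $(d_{2}\dashv d_{1})\vdash d_{3}=(d_{2}\vdash d_{1})\vdash d_{3}$, which by associativity equals $d_{2}\vdash(d_{1}\vdash d_{3})$. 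So the term $-(d_{2}\dashv d_{1})\vdash d_{3}+d_{2}\vdash(d_{1}\vdash d_{3})$ on the right vanishes.
\item Order $d_{1}d_{3}d_{2}$: the second axiom gives $d_{1}\vdash(d_{3}\dashv d_{2})=(d_{1}\vdash d_{3})\dashv d_{2}$, so these terms match with the correct signs.
\item Order $d_{2}d_{3}d_{1}$: the same axiom gives $(d_{2}\vdash d_{3})\dashv d_{1}=d_{2}\vdash(d_{3}\dashv d_{1})$.
\item Order $d_{3}d_{2}d_{1}$: associativity of $\dashv$ gives $(d_{3}\dashv d_{2})\dashv d_{1}=d_{3}\dashv(d_{2}\dashv d_{1})$.
\item Order $d_{3}d_{1}d_{2}$: the first axiom gives $d_{3}\dashv(d_{1}\vdash d_{2})=(d_{3}\dashv d_{1})\dashv d_{2}$, so these cancel on the right-hand side.
\end{itemize}
Collecting, the left side equals the right side.

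The only real obstacle is sign bookkeeping and picking the correct axiom for each pairing. The mixed axioms are used exactly once each, with the third one used via associativity of $\vdash$.

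For the particular case, an associative algebra gives a diassociative algebra with $\dashv=\vdash=\cdot$, so the result above makes $(D,[-,-])$ a Leibniz algebra. Since the bracket is visibly anticommutative, it is a Lie algebra.
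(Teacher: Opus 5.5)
Your proof is correct. The twelve monomials pair off exactly as you list them, and setting $\dashv=\vdash=\cdot$ gives the Lie case. The paper states this proposition without proof (it is a standard fact), and your term-by-term expansion is exactly the routine check it leaves implicit.

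One harmless slip in your closing remark: the second mixed axiom is used twice, for the orders $d_{1}d_{3}d_{2}$ and $d_{2}d_{3}d_{1}$, not once.
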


Direct verification shows that there is a Leibniz algebra structure on the tensor product
of a Lie algebra and a perm algebra:

\begin{pro}\label{pro:li-lieb}
Let $(\g, [-,-])$ be a Lie algebra and $(P, \diamond)$ be a perm algebra.
If we define a binary operation $\ast$ on $\g\otimes P$ by
\begin{align}
&(g_{1}\otimes p_{1})\ast(g_{2}\otimes p_{2})
:=[g_{1}, g_{2}]\otimes(p_{1}\diamond p_{2}),  \label{prod3}
\end{align}
for any $g_{1}, g_{2}\in\g$ and $p_{1}, p_{2}\in P$.
Then $(\g\otimes P, \ast)$ is a Leibniz algebra.
\end{pro}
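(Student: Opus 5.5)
We verify the left Leibniz identity directly on pure tensors, since both sides are trilinear. Take $x_{i}=g_{i}\otimes p_{i}$ for $i=1,2,3$. Applying \eqref{prod3} twice gives
\begin{align*}
x_{1}\ast(x_{2}\ast x_{3})&=[g_{1},[g_{2},g_{3}]]\otimes\big(p_{1}\diamond(p_{2}\diamond p_{3})\big),\\
(x_{1}\ast x_{2})\ast x_{3}&=[[g_{1},g_{2}],g_{3}]\otimes\big((p_{1}\diamond p_{2})\diamond p_{3}\big),\\
x_{2}\ast(x_{1}\ast x_{3})&=[g_{2},[g_{1},g_{3}]]\otimes\big(p_{2}\diamond(p_{1}\diamond p_{3})\big).
\end{align*}

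The central step is to use the perm identities to put all three $P$-factors into one common form. The defining relations give $p_{1}\diamond(p_{2}\diamond p_{3})=(p_{1}\diamond p_{2})\diamond p_{3}=(p_{2}\diamond p_{1})\diamond p_{3}$. Applying the same relations with $p_{1}$ and $p_{2}$ interchanged gives $p_{2}\diamond(p_{1}\diamond p_{3})=(p_{2}\diamond p_{1})\diamond p_{3}$, and this equals $(p_{1}\diamond p_{2})\diamond p_{3}$. Hence all three $P$-components equal the single element $q:=p_{1}\diamond(p_{2}\diamond p_{3})$. This symmetry of the triple product in its first two arguments is exactly what is needed.

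With a common second factor, the Leibniz identity reduces to
$$\big([g_{1},[g_{2},g_{3}]]-[[g_{1},g_{2}],g_{3}]-[g_{2},[g_{1},g_{3}]]\big)\otimes q=0.$$
The first tensor factor vanishes because of the Jacobi identity in the form $[g_{1},[g_{2},g_{3}]]=[[g_{1},g_{2}],g_{3}]+[g_{2},[g_{1},g_{3}]]$. That form is the left Leibniz identity for $(\g,[-,-])$, which follows from Jacobi together with antisymmetry.

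The result then extends to all of $\g\otimes P$ by linearity. There is no real obstacle here; the only point needing care is the perm-algebra bookkeeping, namely checking that $p_{2}\diamond(p_{1}\diamond p_{3})$ coincides with the other two $P$-components.
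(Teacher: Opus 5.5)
Your proof is correct and is the direct verification the paper invokes without writing out. The perm identities make all three $P$-components equal to $(p_{1}\diamond p_{2})\diamond p_{3}$, and the remaining Lie factor vanishes by the Jacobi identity written in derivation form.
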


For a given associative algebra $(A, \cdot)$ and a perm algebra $(P, \diamond)$,
one can check that the Leibniz algebra $(\g\otimes P, \ast)$ constructed in Proposition
\ref{pro:li-lieb} is precisely the the induced Leibniz algebra by the diassociative algebra
$(A\otimes P, \vdash, \dashv)$ given in Proposition \ref{pro:ass-diass}.
Thus, we obtain the following commutative diagram:
$$
\xymatrix@C=8cm@R=0.5cm{
\txt{$(A, \cdot)$ \\ {\tiny an associative algebra}}
\ar[d]_-{{\rm Pro.}~\ref{pro:ass-diass}}
\ar[r]^-{\mbox{\tiny $[a_{1}, a_{2}]=a_{1}\cdot a_{2}-a_{2}\cdot a_{1}$}}
&\txt{$(A, [-,-])$\\ {\tiny a Lie algebra}}
\ar[d]^-{{\rm Pro.}~\ref{pro:li-lieb}} \\
\txt{$(A\otimes P, \dashv, \vdash)$ \\ {\tiny a diassociative algebra}}
\ar[r]^-{\mbox{\tiny $(a_{1}\otimes p_{1})\ast(a_{2}\otimes p_{2})=(a_{1}\otimes p_{1})
\vdash(a_{2}\otimes p_{2})-(a_{2}\otimes p_{2})\dashv(a_{1}\otimes p_{1})$}}
& \txt{$(A\otimes P, \ast)$ \\ {\tiny a Leibniz algebra}}}
$$
One of the main conclusions of this paper is that this commutative diagram also holds
at the level of bialgebra structure and symplectic structure.

\begin{rmk}\label{rmk:comm}
Since each associative algebra is a diassociative algebra and each Lie algebra is a
Leibniz algebra, we naturally have the following commutative diagram:
$$
\xymatrix@C=2cm@R=0.5cm{
\txt{$(A, \cdot)$ \\ {\tiny an associative algebra}}
\ar[d]\ar[r]^-{\mbox{\tiny\rm commutator}}
&\txt{$(A, [-,-])$\\ {\tiny a Lie algebra}}\ar[d] \\
\txt{$(A, \cdot, \cdot)$ \\ {\tiny a diassociative algebra}}
\ar[r]^-{\mbox{\tiny\rm commutator}}
& \txt{$(A, \ast=[-,-])$ \\ {\tiny a Leibniz algebra}}}
$$
But an antisymmetric infinitesimal bialgebra is not a diassociative bialgebra in general
 and a Lie bialgebra is not a Leibniz bialgebra in general.
Thus, this commutative diagram not holds at the level of bialgebra.
\end{rmk}

Next, we consider bimodules over a diassociative algebra. Let $(A, \cdot)$ be an associative
algebra $V$ be a vector space and $\kl, \kr: A\rightarrow\gl(V)$ be two linear maps.
If for any $a_{1}, a_{2}\in A$, $\kl(a_{1}\cdot a_{2})=\kl(a_{1})\circ\kl(a_{2})$,
$\kl(a_{1})\circ\kr(a_{2})=\kl(a_{2})\circ\kl(a_{1})$ and $\kr(a_{1}\cdot a_{2})
=\kr(a_{2})\circ\kr(a_{1})$, then $(V, \kl, \kr)$ is called a {\bf bimodule} over
$(A, \cdot)$. In particular, $(A, \fl_{A}, \fr_{A})$ is a bimodule over $(A, \cdot)$,
which is celled the {\bf regular bimodule} over $(A, \cdot)$, where $\fl_{A}(a_{1})(a_{2})
=a_{1}\cdot a_{2}=\fr_{A}(a_{2})(a_{1})$ for any $a_{1}, a_{2}\in A$.

\begin{defi}\label{def:SL-mod}
Let $(D, \dashv, \vdash)$ be a diassociative algebra, $V$ be a $\Bbbk$-vector space.
If there exist four bilinear maps $\kl_{\dashv}, \kr_{\dashv}, \kl_{\vdash}, \kr_{\vdash}:
D\rightarrow\gl(V)$, such that $(V, \kl_{\dashv}, \kr_{\dashv})$ is a bimodule
over $(D, \dashv)$, $(V, \kl_{\vdash}, \kr_{\vdash})$ is a bimodule over $(D, \vdash)$
and for any $d_{1}, d_{2}\in D$,
\begin{align*}
\kr_{\vdash}(d_{1}\vdash d_{2})&=\kr_{\vdash}(d_{1}\dashv d_{2}),
&& \kl_{\vdash}(d_{1})\circ\kr_{\vdash}(d_{2})=\kl_{\vdash}(d_{1})\circ\kr_{\dashv}(d_{2}),
&& \kl_{\vdash}(d_{1})\circ\kl_{\vdash}(d_{2})=\kl_{\vdash}(d_{1})\circ\kl_{\dashv}(d_{2}),  \\
\kr_{\vdash}(d_{1})\circ\kr_{\dashv}(d_{2})&=\kr_{\dashv}(d_{2}\vdash d_{1}),
&& \kr_{\vdash}(d_{1})\circ\kl_{\dashv}(d_{2})=\kl_{\dashv}(d_{2})\circ\kr_{\vdash}(d_{1}),
&&\; \kl_{\vdash}(d_{1}\dashv d_{2})=\kl_{\dashv}(d_{1})\circ\kl_{\vdash}(d_{2}),  \\
\kr_{\dashv}(d_{1})\circ\kr_{\vdash}(d_{2})&=\kr_{\dashv}(d_{1})\circ\kr_{\dashv}(d_{2}),
&& \kr_{\dashv}(d_{1})\circ\kl_{\vdash}(d_{2})=\kr_{\dashv}(d_{1})\circ\kl_{\dashv}(d_{2}),
&&\; \kl_{\dashv}(d_{1}\vdash d_{2})=\kl_{\dashv}(d_{1}\dashv d_{2}),
\end{align*}
we call $(V, \kl_{\dashv}, \kr_{\dashv}, \kl_{\vdash}, \kr_{\vdash})$ is a
{\bf bimodule} over $(D, \dashv, \vdash)$.
\end{defi}

Let $(V, \kl_{\dashv}, \kr_{\dashv}, \kl_{\vdash}, \kr_{\vdash})$ and $(V', \kl'_{\dashv},
\kr'_{\dashv}, \kl'_{\vdash}, \kr'_{\vdash})$ be two bimodules over a diassociative algebra
$(D, \dashv, \vdash)$ and $f: V\rightarrow V'$ be a linear map. Then $f$ is called a
{\bf morphism of bimodule} if $f(\beta(d)(v))=\beta'(d)(f(v))$ for any
$\beta\in\{\kl_{\dashv}, \kr_{\dashv}, \kl_{\vdash}, \kr_{\vdash}\}$, $d\in D$ and $v\in V$.
This two bimodules $(V, \kl_{\dashv}, \kr_{\dashv}, \kl_{\vdash}, \kr_{\vdash})$ and
$(V', \kl'_{\dashv}, \kr'_{\dashv}, \kl'_{\vdash}, \kr'_{\vdash})$ are called {\bf
isomorphic as bimodules} if the morphism $f$ is a bijection. It is easy to see that
a diassociative algebra $(D, \dashv, \vdash)$ is a bimodule over itself under the actions
$\fl_{\dashv}, \fr_{\dashv}, \fl_{\vdash}, \fr_{\vdash}: D\rightarrow\gl(D)$,
$\fl_{\dashv}(d_{1})(d_{2})=d_{1}\dashv d_{2}$,
$\fr_{\dashv}(d_{1})(d_{2})=d_{2}\dashv d_{1}$,
$\fl_{\vdash}(d_{1})(d_{2})=d_{1}\vdash d_{2}$ and
$\fr_{\vdash}(d_{1})(d_{2})=d_{2}\vdash d_{1}$ for $d_{1}, d_{2}\in D$.
This bimodule is called the {\bf regular bimodule} over $(D, \dashv, \vdash)$.
Moreover, by direct calculations, we can give an equivalent condition as follows.

\begin{pro}\label{pro:bimodule}
Let $(D, \dashv, \vdash)$ be a diassociative algebra, $V$ be a $\Bbbk$-vector space,
$\kl_{\dashv}, \kr_{\dashv}, \kl_{\vdash}, \kr_{\vdash}: D\rightarrow\gl(V)$ be four
linear maps. Then $(V, \kl_{\dashv}, \kr_{\dashv}, \kl_{\vdash}, \kr_{\vdash})$ is a
bimodule over $(D, \dashv, \vdash)$ if and only if $D\oplus V$ is a diassociative algebra under
the following operations:
\begin{align*}
(d_{1}, v_{1})\dashv(d_{2}, v_{2})&:=\big(d_{1}\dashv d_{2},\ \
\kl_{\dashv}(d_{1})(v_{2})+\kr_{\dashv}(d_{2})(v_{1})\big),\\
(d_{1}, v_{1})\vdash(d_{2}, v_{2})&:=\big(d_{1}\vdash d_{2},\ \
\kl_{\vdash}(d_{1})(v_{2})+\kr_{\vdash}(d_{2})(v_{1})\big),
\end{align*}
for all $d_{1}, d_{2}\in D$ and $v_{1}, v_{2}\in V$. This diassociative algebra is called
a {\bf semidirect product} of $(D, \dashv, \vdash)$ by bimodule $(V, \kl_{\dashv},
\kr_{\dashv}, \kl_{\vdash}, \kr_{\vdash})$, denoted by $D\ltimes V$.
\end{pro}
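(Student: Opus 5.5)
Both directions come from one bookkeeping step: I expand the two diassociative products on $D\oplus V$ and compare components. Write $\hat d_i=(d_i,v_i)$. By construction the $D$-component of any triple product of the $\hat d_i$ is the corresponding triple product in $D$. Since $(D,\dashv,\vdash)$ is diassociative, every $D$-component identity holds automatically. So $D\oplus V$ is diassociative if and only if the $V$-components of the five defining identities of Definition~\ref{de:dialg} agree. These are associativity of $\dashv$ and of $\vdash$, together with the three mixed axioms.

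Next I expand each $V$-component. It is a sum of three terms, linear respectively in $v_1$, $v_2$, $v_3$, since any term containing two of the $v_i$ vanishes. Because the $v_i$ are independent, each identity on $D\oplus V$ splits into three identities of operators on $V$, one for each slot. For example, associativity of $\dashv$ splits as follows:
\begin{itemize}
\item the $v_3$ slot gives $\kl_\dashv(d_1\dashv d_2)=\kl_\dashv(d_1)\kl_\dashv(d_2)$;
\item the $v_2$ slot gives $\kl_\dashv(d_1)\kr_\dashv(d_3)=\kr_\dashv(d_3)\kl_\dashv(d_1)$;
\item the $v_1$ slot gives $\kr_\dashv(d_2\dashv d_3)=\kr_\dashv(d_3)\kr_\dashv(d_2)$.
\end{itemize}
These are exactly the bimodule conditions over $(D,\dashv)$. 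The same holds for $\vdash$, presumably with the middle-slot relation read in its standard form $\kl(a_1)\kr(a_2)=\kr(a_2)\kl(a_1)$. Each of the three mixed axioms likewise produces three operator identities, giving nine in total.

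The main work, and the only real obstacle, is matching these nine identities with the nine listed in Definition~\ref{def:SL-mod}. Some rewriting is needed, so I will go axiom by axiom.
\begin{itemize}
\item From $(d_1\dashv d_2)\dashv d_3=d_1\dashv(d_2\vdash d_3)$:
  \begin{itemize}
  \item the $v_3$ slot gives $\kl_\dashv(d_1\dashv d_2)=\kl_\dashv(d_1)\kl_\vdash(d_2)$; combined with associativity of $\dashv$, this is equivalent to $\kl_\dashv(d_1)\kl_\vdash(d_2)=\kl_\dashv(d_1)\kl_\dashv(d_2)$;
  \item the $v_2$ slot gives $\kr_\dashv(d_3)\kl_\dashv(d_1)=\kl_\dashv(d_1)\kr_\vdash(d_3)$;
  \item the $v_1$ slot gives $\kr_\dashv(d_3)\kr_\dashv(d_2)=\kr_\dashv(d_2\vdash d_3)$.
  \end{itemize}
\item The remaining two mixed axioms are handled the same way. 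They yield the listed conditions of the shape $\kr_\vdash(d_1\vdash d_2)=\kr_\vdash(d_1\dashv d_2)$ and $\kl_\vdash(d_1\dashv d_2)=\kl_\dashv(d_1)\kl_\vdash(d_2)$, and so on. In several cases the match again requires substituting the bimodule relations already obtained.
\end{itemize}
Where the paper's list is written with $\kr_\vdash(d_1)\circ\kr_\dashv(d_2)$ and similar products, I will rename the variables so that both sides use the same $d$'s.

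Once this correspondence is checked (modulo the already-established bimodule axioms over $(D,\dashv)$ and $(D,\vdash)$), the two directions follow at once:
\begin{itemize}
\item if $V$ is a bimodule, all component identities hold, so $D\oplus V$ is diassociative;
\item conversely, if $D\oplus V$ is diassociative, then taking $v_i=0$ except in one slot isolates each operator identity, so $V$ is a bimodule.
\end{itemize}
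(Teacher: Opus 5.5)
Your reduction of the diassociativity of $D\oplus V$ to fifteen operator identities (one for each slot $v_1,v_2,v_3$ in each of the five axioms) is the right argument. It is the direct verification the paper calls ``straightforward''. Your three identities from $(d_1\dashv d_2)\dashv d_3=d_1\dashv(d_2\vdash d_3)$ are also correct.

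The gap is the step you yourself call the main work. The match with Definition~\ref{def:SL-mod} is asserted rather than carried out, and for the definition as printed it is false. The other two mixed axioms give the following.
From $(d_1\vdash d_2)\dashv d_3=d_1\vdash(d_2\dashv d_3)$ you get $\kl_{\dashv}(d_1\vdash d_2)=\kl_{\vdash}(d_1)\circ\kl_{\dashv}(d_2)$, $\kr_{\dashv}(d_3)\circ\kl_{\vdash}(d_1)=\kl_{\vdash}(d_1)\circ\kr_{\dashv}(d_3)$ and $\kr_{\dashv}(d_3)\circ\kr_{\vdash}(d_2)=\kr_{\vdash}(d_2\dashv d_3)$.
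From $(d_1\dashv d_2)\vdash d_3=(d_1\vdash d_2)\vdash d_3$ you get $\kl_{\vdash}(d_1\dashv d_2)=\kl_{\vdash}(d_1\vdash d_2)$, $\kr_{\vdash}(d_3)\circ\kl_{\dashv}(d_1)=\kr_{\vdash}(d_3)\circ\kl_{\vdash}(d_1)$ and $\kr_{\vdash}(d_3)\circ\kr_{\dashv}(d_2)=\kr_{\vdash}(d_3)\circ\kr_{\vdash}(d_2)$.
Up to renaming variables, these six are the second and third rows of the printed list with $\dashv$ and $\vdash$ interchanged everywhere. Likewise your three identities, after using the $(D,\dashv)$-bimodule relations, are the interchanged first row. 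For instance, your $\kl_{\dashv}(d_1)\circ\kl_{\vdash}(d_2)=\kl_{\dashv}(d_1)\circ\kl_{\dashv}(d_2)$ is the interchange of the listed $\kl_{\vdash}(d_1)\circ\kl_{\vdash}(d_2)=\kl_{\vdash}(d_1)\circ\kl_{\dashv}(d_2)$, and it is not itself on the list. The conditions you say the last two axioms yield, $\kr_{\vdash}(d_1\vdash d_2)=\kr_{\vdash}(d_1\dashv d_2)$ and $\kl_{\vdash}(d_1\dashv d_2)=\kl_{\dashv}(d_1)\circ\kl_{\vdash}(d_2)$, do not follow from the fifteen identities at all.

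To see this, let $D$ be the perm algebra of Example~\ref{ex:dual-perm}, made diassociative by $x\vdash y=x\diamond y$ and $x\dashv y=y\diamond x$, and let $V=D$ carry the regular actions. Then $D\ltimes D$ is diassociative, since its fifteen identities are instances of the axioms of $D$. But $\fr_{\vdash}(x_2\vdash x_1)(x_2)=x_1\neq 0=\fr_{\vdash}(x_2\dashv x_1)(x_2)$, and $\fl_{\vdash}(x_2\dashv x_2)(x_1)=x_1\neq 0=\fl_{\dashv}(x_2)\big(\fl_{\vdash}(x_2)(x_1)\big)$.

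So, read literally, the direction ``$D\ltimes V$ diassociative $\Rightarrow$ bimodule'' fails. So does the paper's claim that the regular bimodule is a bimodule. No substitution of relations already obtained can close your matching. What your computation actually proves is the proposition for Definition~\ref{def:SL-mod} corrected by interchanging $\dashv$ and $\vdash$ throughout its nine mixed conditions. A complete proof must carry out the matching explicitly and state that correction, rather than assert that the printed list comes out.
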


\begin{proof}
It is straightforward.
\end{proof}

Let $V$ be a vector space. Denote the standard pairing between the dual space
$V^{\ast}$ and $V$ by
\begin{align*}
\langle-,-\rangle:\quad V^{\ast}\otimes V\rightarrow \Bbbk, \qquad\quad
\langle \xi,\; v \rangle:=\xi(v),
\end{align*}
for any $\xi\in V^{\ast}$ and $v\in V$. Let $V$, $W$ be two vector spaces. For a linear
map $\varphi: V\rightarrow W$, the transpose map $\varphi^{\ast}: W^{\ast}\rightarrow
V^{\ast}$ is defined by
\begin{align*}
\langle \varphi^{\ast}(\xi),\; v \rangle:=\langle\xi,\; \varphi(v)\rangle,
\end{align*}
for any $v\in V$ and $\xi\in W^{\ast}$. Let $(D, \dashv, \vdash)$ be a diassociative algebra
and $V$ be a vector space. For a linear map $\psi: D\rightarrow\gl(V)$, the linear map
$\psi^{\ast}: D\rightarrow\gl(V^{\ast})$ is defined by
\begin{align*}
\langle\psi^{\ast}(d)(\xi),\; v\rangle:=-\langle\xi,\; \psi(d)(v)\rangle,
\end{align*}
for any $d\in D$, $v\in V$, $\xi\in V^{\ast}$. That is, $\psi^{\ast}(d)=\psi(d)^{\ast}$
for all $d\in D$. Then, for any bimodule $(V, \kl_{\dashv}, \kr_{\dashv}, \kl_{\vdash},
\kr_{\vdash})$ over a diassociative algebra $(D, \dashv, \vdash)$, one can check that
$(V^{\ast}, \kr_{\vdash}^{\ast}-\kr_{\dashv}^{\ast}, -\kl_{\vdash}^{\ast},
-\kr_{\dashv}^{\ast}, \kl_{\dashv}^{\ast}-\kl_{\vdash}^{\ast})$ is again a
bimodule over $(D, \dashv, \vdash)$. In particular, we get $(D^{\ast}, \fr_{\vdash}^{\ast}
-\fr_{\dashv}^{\ast}, -\fl_{\vdash}^{\ast}, -\fr_{\dashv}^{\ast}, \fl_{\dashv}^{\ast}
-\fl_{\vdash}^{\ast})$ is a bimodule over $(D, \dashv, \vdash)$, which is called
the {\bf coregular bimodule}.

Let $V$ be a vector space and $\omega(-,-)$ be a bilinear form on $V$. Recall that
\begin{enumerate}\itemsep=0pt
\item[-] $\omega(-,-)$ is called {\bf nondegenerate} if $\omega(v_{1},\;
        v_{2})=0$ for any $v_{2}\in V$, then $v_{1}=0$;
\item[-] $\omega(-,-)$ is called {\bf symmetric} if $\omega(v_{1},\; v_{2})
        =\omega(v_{2},\; v_{1})$ for any $v_{1}, v_{2}\in V$;
\item[-] $\omega(-,-)$ is called {\bf skew-symmetric} if $\omega(v_{1},\; v_{2})
        =-\omega(v_{2},\; v_{1})$, for any $v_{1}, v_{2}\in V$.
\end{enumerate}
Let $\omega(-,-)$ be a bilinear form on a diassociative algebra $(D, \dashv, \vdash)$.
Then $\omega(-,-)$ is called {\bf invariant} if for any $d_{1}, d_{2}, d_{3}\in D$,
$$
\omega(d_{1}\vdash d_{2},\; d_{3})=\omega(d_{1},\; d_{2}\vdash d_{3}-d_{2}\dashv d_{3})
\qquad\mbox{and}\qquad \omega(d_{1}\dashv d_{2},\; d_{3})=\omega(d_{1},\; d_{2}\vdash d_{3}).
$$
A (skew-symmetric) {\bf quadratic diassociative algebra}
$(D, \dashv, \vdash, \omega)$ is a diassociative algebra $(D, \dashv, \vdash)$ with a
skew-symmetric, nondegenerate and invariant bilinear form $\omega(-,-): D\otimes
D\rightarrow\Bbbk$. By direct calculation, we have:

\begin{pro}\label{pro:dual}
Let $(D, \dashv, \vdash)$ be a diassociative algebra. Then the regular bimodule $(D,
\fl_{\dashv}, \fr_{\dashv}$, $\fl_{\vdash}, \fr_{\vdash})$ and the coregular bimodule
$(D^{\ast}, \fr_{\vdash}^{\ast}-\fr_{\dashv}^{\ast}, -\fl_{\vdash}^{\ast},
-\fr_{\dashv}^{\ast}, \fl_{\dashv}^{\ast}-\fl_{\vdash}^{\ast})$ are isomorphic as
bimodules over $(D, \dashv, \vdash)$ if there exists a skew-symmetric,
nondegenerate invariant bilinear form $\omega(-,-)$ on $(D, \dashv, \vdash)$.
\end{pro}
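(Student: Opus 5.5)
The plan is to take the map induced by $\omega$ and check, one action at a time, that it intertwines the regular and coregular bimodules. Define $\varphi: D\rightarrow D^{\ast}$ by
$$
\langle\varphi(d_{1}),\; d_{2}\rangle:=\omega(d_{1},\; d_{2}),\qquad d_{1}, d_{2}\in D.
$$
Since $\omega$ is nondegenerate and $D$ is finite-dimensional, $\varphi$ is a linear bijection. It therefore suffices to show that $\varphi$ is a morphism of bimodules from $(D, \fl_{\dashv}, \fr_{\dashv}, \fl_{\vdash}, \fr_{\vdash})$ to $(D^{\ast}, \fr_{\vdash}^{\ast}-\fr_{\dashv}^{\ast}, -\fl_{\vdash}^{\ast}, -\fr_{\dashv}^{\ast}, \fl_{\dashv}^{\ast}-\fl_{\vdash}^{\ast})$. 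This amounts to four identities, each to be checked by pairing with an arbitrary $d_{3}\in D$:
\begin{align*}
\varphi(d_{1}\dashv d_{2})&=(\fr_{\vdash}^{\ast}-\fr_{\dashv}^{\ast})(d_{1})\varphi(d_{2}),
&\varphi(d_{2}\dashv d_{1})&=-\fl_{\vdash}^{\ast}(d_{1})\varphi(d_{2}),\\
\varphi(d_{1}\vdash d_{2})&=-\fr_{\dashv}^{\ast}(d_{1})\varphi(d_{2}),
&\varphi(d_{2}\vdash d_{1})&=(\fl_{\dashv}^{\ast}-\fl_{\vdash}^{\ast})(d_{1})\varphi(d_{2}).
\end{align*}

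Before checking these, I would record the two invariance identities in the form
$$
\omega(x\vdash y,\; z)=\omega(x,\; y\vdash z-y\dashv z),\qquad
\omega(x\dashv y,\; z)=\omega(x,\; y\vdash z).
$$
Each identity is then checked by moving products across $\omega$ with these two rules and swapping arguments with skew-symmetry. For the first one, the computation
$$
\omega(d_{1}\dashv d_{2},\; d_{3})=\omega(d_{1},\; d_{2}\vdash d_{3})=-\omega(d_{2}\vdash d_{3},\; d_{1})
=\omega(d_{2},\; d_{3}\dashv d_{1})-\omega(d_{2},\; d_{3}\vdash d_{1})
$$
is exactly $\langle(\fr_{\vdash}^{\ast}-\fr_{\dashv}^{\ast})(d_{1})\varphi(d_{2}),\; d_{3}\rangle$, using $\langle\psi^{\ast}(d)\xi, v\rangle=-\langle\xi,\psi(d)v\rangle$. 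The other three are handled the same way:
\begin{enumerate}
\item[(i)] For $\varphi(d_{2}\dashv d_{1})$, a single application of the second invariance rule gives $\omega(d_{2},\; d_{1}\vdash d_{3})$, which is the required right-hand side.
\item[(ii)] For $\varphi(d_{1}\vdash d_{2})$, the first rule gives $\omega(d_{1},\; d_{2}\vdash d_{3})-\omega(d_{1},\; d_{2}\dashv d_{3})$. Swapping arguments and applying both rules again should reduce this to $\omega(d_{2},\; d_{3}\dashv d_{1})$.
\item[(iii)] For $\varphi(d_{2}\vdash d_{1})$, the first rule gives $\omega(d_{2},\; d_{1}\vdash d_{3}-d_{1}\dashv d_{3})$ directly.
\end{enumerate}

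The main obstacle is the identity in (ii), since it needs a double use of invariance and skew-symmetry. I expect the two terms produced there to combine through the diassociative structure, possibly needing the consistency of the two invariance rules (obtained by applying them twice and comparing). If that particular pairing convention fails for one of the four identities, I would first try the alternative map $\langle\varphi(d_{1}), d_{2}\rangle=\omega(d_{2}, d_{1})$. This is just $-\varphi$ by skew-symmetry, so it yields the same isomorphism, and any discrepancy must come from a sign slip rather than from the approach.
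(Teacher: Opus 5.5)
Your proposal is correct and is essentially the direct calculation the paper has in mind (the paper gives no detail beyond ``by direct calculation''): the map $\varphi(d_1)=\omega(d_1,-)$ is a bijection that intertwines all four actions. Step (ii) closes using only skew-symmetry and the two invariance rules, since $\omega(d_1, d_2\vdash d_3)-\omega(d_1, d_2\dashv d_3)=\big(-\omega(d_2, d_3\vdash d_1)+\omega(d_2, d_3\dashv d_1)\big)+\omega(d_2, d_3\vdash d_1)=\omega(d_2, d_3\dashv d_1)$, so neither the diassociative axioms nor your fallback sign change is needed.
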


\subsection{Diassociative bialgebras}\label{subsec:dibiass}
Recall that a {\bf coassociative coalgebra} is pair $(C, \Delta)$ that $C$ is a
vector space and $\Delta: C\rightarrow C\otimes C$ satisfying the coassociativity
condition: $(\id\otimes\Delta)\circ\Delta=(\Delta\otimes\id)\circ\Delta$.

\begin{defi}[\cite{Lu,HLLZ}]\label{def:codi}
A {\bf diassociative coalgebra} is a triple $(C, \Delta_{\dashv}, \Delta_{\vdash})$
where $(C, \Delta_{\dashv})$ and $(C, \Delta_{\vdash})$ are coassociative coalgebras
satisfying the following conditions:
\begin{align}
(\Delta_{\dashv}\otimes\id)\circ\Delta_{\dashv}
&=(\id\otimes\Delta_{\vdash})\circ\Delta_{\dashv},    \label{codi1} \\
(\Delta_{\vdash}\otimes\id)\circ\Delta_{\dashv}
&=(\id\otimes\Delta_{\dashv})\circ\Delta_{\vdash},    \label{codi2}\\
(\Delta_{\dashv}\otimes\id)\circ\Delta_{\vdash}
&=(\Delta_{\vdash}\otimes\id)\circ\Delta_{\vdash}.    \label{codi3}
\end{align}
\end{defi}

One can check that $(C, \Delta_{\dashv}, \Delta_{\vdash})$ is a diassociative coalgebra
if and only if $(C^{\ast}, \Delta_{\dashv}^{\ast}, \Delta_{\vdash}^{\ast})$ is a
diassociative algebra.

\begin{defi}[\cite{Lu,HLLZ}]\label{def:bidi}
A \textbf{diassociative bialgebra} is a quintuple $(D, \dashv, \vdash, \Delta_{\dashv},
\Delta_{\vdash})$ where $(D, \dashv, \vdash)$ is a diassociative algebra and $(D,
\Delta_{\dashv}, \Delta_{\vdash})$ is a diassociative coalgebra satisfying the
following compatibility conditions:
\begin{align}
&\qquad\qquad (\id\otimes\fl_{\dashv}(d_{1}))(\Delta_{\dashv}(d_{2}))
=(\fr_{\vdash}(d_{2})\otimes\id)(\Delta_{\vdash}(d_{1})),         \label{bidi1}\\
&\qquad\qquad (\fl_{\dashv}(d_{1})\otimes\id)(\Delta_{\vdash}(d_{2}))
=(\id\otimes\fl_{\dashv}(d_{2}))(\tau(\Delta_{\vdash}(d_{1}))),    \label{bidi2}\\
&\qquad\qquad (\fr_{\vdash}(d_{1})\otimes\id)(\tau(\Delta_{\dashv}(d_{2})))
=(\id\otimes\fr_{\vdash}(d_{2}))(\Delta_{\dashv}(d_{1})),          \label{bidi3}\\
& \Delta_{\vdash}(d_{1}\vdash d_{2})
=(\id\otimes\fl_{\vdash}(d_{1}))(\Delta_{\vdash}(d_{2}))
-((\fr_{\vdash}-\fr_{\dashv})(d_{2})\otimes\id)
((\Delta_{\vdash}-\Delta_{\dashv})(d_{1})),                         \label{bidi4}\\
&\quad \Delta_{\vdash}(d_{1}\dashv d_{2})=(\id\otimes\fl_{\dashv}(d_{1}))
((\Delta_{\vdash}-\Delta_{\dashv})(d_{2}))
+(\fr_{\dashv}(d_{2})\otimes\id)(\Delta_{\vdash}(d_{1})),          \label{bidi5}\\
&\quad \Delta_{\dashv}(d_{1}\vdash d_{2})=(\id\otimes(\fl_{\vdash}-\fl_{\dashv})(d_{1}))
(\Delta_{\dashv}(d_{2}))+(\fr_{\vdash}(d_{2})\otimes\id)
(\Delta_{\dashv}(d_{1})),                                          \label{bidi6}\\
& \Delta_{\dashv}(d_{1}\dashv d_{2})=(\fr_{\dashv}(d_{2})\otimes\id)
(\Delta_{\dashv}(d_{1}))-(\id\otimes(\fl_{\vdash}-\fl_{\dashv})(d_{1}))
((\Delta_{\vdash}-\Delta_{\dashv})(d_{2})),                       \label{bidi7}\\
& ((\fl_{\vdash}-\fl_{\dashv})(d_{1})\otimes\id)(\Delta_{\vdash}(d_{2}))
+(\id\otimes\fl_{\dashv}(d_{2}))(\tau(\Delta_{\dashv}(d_{1})))    \label{bidi8}\\[-1mm]
&\qquad\qquad\quad=((\fr_{\vdash}-\fr_{\dashv})(d_{2})\otimes\id)
(\tau((\Delta_{\vdash}-\Delta_{\dashv})(d_{1})))
+(\id\otimes\fr_{\dashv}(d_{1}))(\Delta_{\vdash}(d_{2})),               \nonumber\\
& (\fr_{\dashv}(d_{1})\otimes\id)(\tau(\Delta_{\dashv}(d_{2})))+(\id\otimes
\fr_{\vdash}(d_{2}))((\Delta_{\vdash}-\Delta_{\dashv})(d_{1}))        \label{bidi9}\\[-1mm]
&\qquad\qquad\quad=((\fl_{\vdash}-\fl_{\dashv})(d_{2})\otimes\id)
((\Delta_{\vdash}-\Delta_{\dashv})(d_{1}))
+(\id\otimes\fl_{\vdash}(d_{1}))(\tau(\Delta_{\dashv}(d_{2}))),    \nonumber
\end{align}
for any $d_{1}, d_{2}\in D$.
\end{defi}

Let $(D, \dashv, \vdash)$ be a diassociative algebra. We define two linear maps
$\mathcal{E}, \mathcal{F}: D\rightarrow\gl(D\otimes D)$ by
\begin{align}
\mathcal{E}(d)&:=(\fr_{\vdash}-\fr_{\dashv})(d)\otimes\id
+\id\otimes\fl_{\dashv}(d),                  \label{cobdi1}\\
\mathcal{F}(d)&:=\id\otimes(\fl_{\vdash}-\fl_{\dashv})(d)
-\fr_{\vdash}(d)\otimes\id,                  \label{cobdi2}
\end{align}
for any $d\in D$. An element $r\in D\otimes D$ is called {\bf symmetric} if $r=\tau(r)$,
where $\tau: D\otimes D\rightarrow D\otimes D$ is the twist map defined by
$\tau(d_{1}\otimes d_{2}):=d_{2}\otimes d_{1}$ for all $d_{1}, d_{2}\in D$, and $r$
is called {\bf diass-invariant} if $\mathcal{E}(d)(r)=\mathcal{F}(d)(r)=0$ for any $d\in D$.
If there exists an element $r\in D\otimes D$ such that $(D, \dashv, \vdash, \Delta_{\dashv,r},
\Delta_{\vdash,r})$ is a diassociative bialgebra, where $\Delta_{\dashv,r}, \Delta_{\vdash,r}:
D\rightarrow D\otimes D$ are given by
\begin{align}
\Delta_{\dashv,r}(d)=\mathcal{F}(d)(r), \qquad\qquad
\Delta_{\vdash,r}(d)=-\mathcal{E}(d)(r),    \label{cobdi}
\end{align}
for any $d\in D$, then $(D, \dashv, \vdash, \Delta_{\dashv,r}, \Delta_{\vdash,r})$ is called a
{\bf coboundary diassociative bialgebra} associated with $r$. Let $(D, \dashv, \vdash)$
be a diassociative algebra and $r=\sum_{i}x_{i}\otimes y_{i}\in D\otimes D$. The equation
$$
\mathbf{D}_{r}=r_{13}\vdash r_{23}-r_{23}\dashv r_{12}
-r_{12}\vdash r_{13}+r_{12}\dashv r_{13}=0
$$
is called the {\bf diassociative Yang-Baxter equation} (or $\DAYBE$) in the diassociative
algebra $(D, \dashv, \vdash)$, where $r_{13}\vdash r_{23}=\sum_{i,j}x_{i}\otimes x_{j}
\otimes(y_{i}\vdash y_{j})$, $r_{23}\dashv r_{12}=\sum_{i,j}x_{j}\otimes(x_{i}\dashv
y_{j})\otimes y_{i}$, $r_{12}\vdash r_{13}=\sum_{i,j}(x_{i}\vdash x_{j})\otimes y_{i}
\otimes y_{j}$ and $r_{12}\dashv r_{13}=\sum_{i,j}(x_{i}\dashv x_{j})\otimes y_{i}
\otimes y_{j}$.

\begin{pro}[\cite{Lu,HLLZ}]\label{pro:quasi-di}
Let $(D, \dashv, \vdash)$ be a diassociative algebra, $r\in D\otimes D$ and
$\Delta_{\dashv,r}, \Delta_{\vdash,r}: D\rightarrow D\otimes D$ be the linear maps
defined by Eq. \eqref{cobdi}.
\begin{enumerate}
\item[$(i)$] If $r$ is a solution of the $\DAYBE$ in $(D, \dashv, \vdash)$ and $r-\tau(r)$ is
     diass-invariant, then $(D, \dashv, \vdash$, $\Delta_{\dashv,r}, \Delta_{\vdash,r})$
     is a diassociative bialgebra, which is called a {\bf quasi-triangular diassociative
     bialgebra} associated with $r$.
\item[$(ii)$] If $r$ is a symmetric solution of the $\DAYBE$ in $(D, \dashv, \vdash)$, then
     $(D, \dashv, \vdash, \Delta_{\dashv,r}, \Delta_{\vdash,r})$ is a Leibniz bialgebra,
     which is called a {\bf triangular diassociative bialgebra} associated with $r$.
\end{enumerate}
\end{pro}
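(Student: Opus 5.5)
Since $\Delta_{\dashv,r},\Delta_{\vdash,r}$ are defined by Eq. \eqref{cobdi} through $\mathcal{E},\mathcal{F}$, my plan is to verify each requirement of Definition \ref{def:bidi} directly in terms of $r$. I would follow the standard coboundary argument used for Lie and ASI bialgebras.

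\textbf{Step 1: compatibility conditions.} First I check that $\Delta_{\dashv,r},\Delta_{\vdash,r}$ satisfy the compatibilities \eqref{bidi1}--\eqref{bidi9} for an arbitrary $r\in D\otimes D$, or at least up to terms involving $r-\tau(r)$. For this I substitute $\Delta_{\dashv,r}(d)=\mathcal{F}(d)(r)$ and $\Delta_{\vdash,r}(d)=-\mathcal{E}(d)(r)$ into each identity. The relations that must be used are the diassociative axioms of Definition \ref{de:dialg}, equivalently the fact that the regular actions $\fl_{\dashv},\fr_{\dashv},\fl_{\vdash},\fr_{\vdash}$ form a bimodule, so that all the relations of Definition \ref{def:SL-mod} hold with $V=D$. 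Each side expands into terms of the form $(\alpha(d_1)\otimes\beta(d_2))(r)$ or $(\alpha(d_1)\beta(d_2)\otimes\id)(r)$. The bimodule relations should cancel everything except expressions where $\tau$ appears, that is, expressions of the form $(\ldots)(r-\tau(r))$ built from $\mathcal{E}(d)$ or $\mathcal{F}(d)$ applied to $r-\tau(r)$.
\begin{itemize}
\item In case $(ii)$, $r-\tau(r)=0$, so these leftover terms vanish.
\item In case $(i)$, $r-\tau(r)$ is diass-invariant, so these leftover terms vanish as well.
\end{itemize}
For the identities that involve $d_1\vdash d_2$ or $d_1\dashv d_2$ (\eqref{bidi4}--\eqref{bidi7}), I expect exact cocycle-type identities with no residual. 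This is because $\mathcal{E}$ and $\mathcal{F}$ were designed so that $d\mapsto\mathcal{F}(d)(r)$ and $d\mapsto-\mathcal{E}(d)(r)$ are 1-coboundaries for the tensor bimodule.

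\textbf{Step 2: coassociativity and the diassociative coalgebra conditions.} Next I verify that $(D,\Delta_{\dashv,r})$ and $(D,\Delta_{\vdash,r})$ are coassociative and satisfy \eqref{codi1}--\eqref{codi3}. For each of the five identities I compute the difference of the two sides applied to $d$. The aim is to write this difference as a sum of three kinds of terms:
\begin{itemize}
\item operators of the form $\alpha(d)\otimes\id\otimes\id$, $\id\otimes\alpha(d)\otimes\id$ or $\id\otimes\id\otimes\alpha(d)$ (with $\alpha$ among the regular actions) applied to $\mathbf{D}_r$ or to a permuted copy of it, such as $\tau_{12}\mathbf{D}_r$;
\item terms built from $\mathcal{E}(d)(r-\tau(r))$ or $\mathcal{F}(d)(r-\tau(r))$ tensored with a leg of $r$;
\item terms built from $r-\tau(r)$ itself, which appear only in the symmetric case.
\end{itemize}
Under the hypotheses, namely $\mathbf{D}_r=0$ together with either invariance of $r-\tau(r)$ or symmetry of $r$, all of these terms vanish.

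\textbf{Main obstacle.} The hard part is Step 2, and in particular the mixed identities \eqref{codi1}--\eqref{codi3}. In these the two coproducts interleave, and one must identify exactly which of the four summands of $\mathbf{D}_r$ appear with which action. My first attempt would be to work at the level of components, writing $r=\sum_i x_i\otimes y_i$. I would then expand
$(\Delta_{\dashv,r}\otimes\id)\Delta_{\dashv,r}(d)-(\id\otimes\Delta_{\vdash,r})\Delta_{\dashv,r}(d)$
into double sums over $i,j$. The axioms of Definition \ref{de:dialg} let me move $d$ to an outer position, after which I group the terms into the form $\id\otimes\id\otimes\fl_{\dashv}(d)$ applied to a combination of the $r_{13}\vdash r_{23}$-type products defining $\mathbf{D}_r$. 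If the grouping does not close up exactly, the remainder should be an $\mathcal{E}$- or $\mathcal{F}$-term in $r-\tau(r)$, which I would absorb using the invariance hypothesis.

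Finally, for case $(ii)$, I note that symmetry makes $r-\tau(r)=0$ trivially diass-invariant, so $(ii)$ follows from $(i)$. The resulting structure is a (triangular) diassociative bialgebra; the statement's phrase ``Leibniz bialgebra'' should read ``diassociative bialgebra''.
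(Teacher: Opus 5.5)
The paper gives no proof of this proposition; it is quoted from \cite{Lu,HLLZ}. A direct expansion in $r=\sum_i x_i\otimes y_i$, as you propose, is the standard route, so the question is whether your sketch closes.

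\textbf{Step 1 does close.} Write $x\star y:=x\vdash y-x\dashv y$, which satisfies $d\dashv(x\star y)=0=(x\star y)\vdash d$. Then:
\begin{itemize}
\item \eqref{bidi1} and \eqref{bidi4}--\eqref{bidi7} hold for every $r$.
\item With $s=r-\tau(r)$, \eqref{bidi2} and \eqref{bidi3} are equivalent to $(\fl_{\dashv}(d_1)\otimes\fl_{\dashv}(d_2))(s)=0$ and $(\fr_{\vdash}(d_1)\otimes\fr_{\vdash}(d_2))(s)=0$. These follow from $\mathcal{E}(d_2)(s)=0$ and $\mathcal{F}(d_1)(s)=0$.
\item \eqref{bidi8} and \eqref{bidi9} close once you also use $\tau(s)=-s$.
\end{itemize}
Your correction of ``Leibniz'' to ``diassociative'' in $(ii)$ is right.

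\textbf{The gap is Step 2.} This is where $\mathbf{D}_r=0$ is actually used, and you only describe the expected shape of the answer. Two parts of that description are wrong in ways that matter.

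First, you allow ``terms built from $r-\tau(r)$ itself''. In case $(i)$ the hypothesis gives only $\mathcal{E}(d)(s)=\mathcal{F}(d)(s)=0$, not $s=0$. Such terms would survive, and $(i)$ would not follow. They genuinely occur. Put $a=\frac12(r+\tau(r))$ and $s=\sum_k s'_k\otimes s''_k$. Invariance and the symmetry of $a$ kill the cross terms, leaving
\[
\mathbf{D}_r=\mathbf{D}_a-\frac14\sum_{k,l}(s'_k\star s'_l)\otimes s''_k\otimes s''_l .
\]
The last tensor is nonzero in general. For example, take $\Bbbk\otimes P$ with $P$ as in Example~\ref{ex:dual-perm} and $s=\kappa$: here $\kappa$ is diass-invariant, and the tensor equals $x_1\otimes x_2\otimes x_1-x_1\otimes x_1\otimes x_2$.

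Second, ``permuted copies of $\mathbf{D}_r$'' arise only for symmetric $r$, because permuting legs interchanges the roles of $x_i$ and $y_i$. For symmetric $r$ one finds, for instance,
\[
(\Delta_{\dashv,r}\otimes\id)\Delta_{\dashv,r}(d)-(\id\otimes\Delta_{\dashv,r})\Delta_{\dashv,r}(d)
=-\big(\id\otimes\id\otimes(\fl_{\vdash}-\fl_{\dashv})(d)\big)\sigma(\mathbf{D}_r)
-\big(\fr_{\vdash}(d)\otimes\id\otimes\id\big)\sigma^{2}(\mathbf{D}_r),
\]
where $\sigma(u\otimes v\otimes w)=w\otimes u\otimes v$. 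Note the permutations are cyclic, and the outer operators are not $\fl_\dashv(d)$ as you guessed. For non-symmetric $r$, the two brackets are mixed expressions in $r$ and $\tau(r)$ and are not permutations of $\mathbf{D}_r$.

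\textbf{How to close it.} Invariance of $s$ gives $\Delta_{\dashv,r}=\Delta_{\dashv,a}$ and $\Delta_{\vdash,r}=\Delta_{\vdash,a}$. So proceed in two stages.
\begin{enumerate}
\item For symmetric $a$, express each of the five coalgebra defects (coassociativity of $\Delta_{\dashv}$ and of $\Delta_{\vdash}$, and \eqref{codi1}--\eqref{codi3}) through $\mathbf{D}_a$, $\sigma(\mathbf{D}_a)$, $\sigma^2(\mathbf{D}_a)$, as in the display above. This already proves $(ii)$.
\item For $(i)$, substitute $\mathbf{D}_a=\frac14\sum_{k,l}(s'_k\star s'_l)\otimes s''_k\otimes s''_l$. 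Then show, applying invariance twice, that the outer operators annihilate these terms. In the displayed identity, both resulting terms become $\mp\sum_{k,l}((s'_l\vdash s'_k)\vdash d)\otimes s''_l\otimes s''_k$ and cancel.
\end{enumerate}
Until these five computations are written down, Step 2 is a conjecture about the shape of the answer, not a proof.
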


Let $V$ be a vector space. For any $r\in V\otimes V$,
we define a linear map $r^{\sharp}: V^{\ast}\rightarrow V$ by
$$
\langle r^{\sharp}(\xi_{1}),\; \xi_{2}\rangle=\langle\xi_{1}\otimes\xi_{2},\; r\rangle,
$$
for any $\xi_{1}, \xi_{2}\in V^{\ast}$.

\begin{defi}[\cite{Lu}]\label{def:fact-di}
Let $(D, \dashv, \vdash)$ be a diassociative algebra, $r\in D\otimes D$ and
$(D, \dashv, \vdash, \Delta_{\dashv,r}, \Delta_{\vdash,r})$ be a quasi-triangular
diassociative bialgebra associated with $r$. If $\mathcal{I}=r^{\sharp}-\tau(r)^{\sharp}:
D^{\ast}\rightarrow D$ is an isomorphism of vector spaces,
then $(D, \dashv, \vdash, \Delta_{\dashv,r}, \Delta_{\vdash,r})$ is called a
{\bf factorizable diassociative bialgebra}.
\end{defi}

In a factorizable diassociative bialgebra $(D, \dashv, \vdash, \Delta_{\dashv,r},
\Delta_{\vdash,r})$, each element $d\in D$ can be decomposed into $d=d_{+}+d_{-}$,
where $d_{+}\in\Img(r^{\sharp})$ and $d_{-}\in\Img(\tau(r)^{\sharp})$ \cite{Lu}.
Note that $\mathcal{I}=0$ if $(D, \dashv, \vdash, \Delta_{\dashv,r}, \Delta_{\vdash,r})$
is a triangular diassociative bialgebra, we can view the factorizable diassociative
bialgebra is the opposite of the triangular diassociative bialgebra.
In this paper, we consider the diassociative bialgebras induced by ASI bialgebras,
and discuss its connection with Lie bialgebras and Leibniz bialgebras.

\section{Quasi-triangular diassociative bialgebras from antisymmetric infinitesimal
bialgebras} \label{sec:qtbia-ASI}
In this section, we recall the notions of antisymmetric infinitesimal bialgebras and
quadratic perm algebras. We show that there is a diassociative bialgebra on the tensor
product of an antisymmetric infinitesimal bialgebra and a quadratic perm algebra,
and prove that the induced diassociative bialgebra is coboundary (resp. quasi-triangular,
triangular, factorizable) if the antisymmetric infinitesimal bialgebra is
coboundary (resp. quasi-triangular, triangular, factorizable).

Let $(P, \diamond)$ be a perm algebra. Recall that a {\bf bimodule $(V, \kkl, \kkr)$ over
$(P, \diamond)$} is a vector space $V$ with two linear maps $\kkl, \kkr: P\rightarrow\gl(V)$
such that for any $p_{1}, p_{2}\in P$,
\begin{align*}
&\qquad\quad\kkl(p_{1}\diamond p_{2})=\kkl(p_{1})\circ\kkl(p_{2})
=\kkl(p_{2})\circ\kkl(p_{1}),\\
&\kkr(p_{1}\diamond p_{2})=\kkr(p_{2})\circ\kkr(p_{1})=\kkr(p_{2})\circ\kkl(p_{1})
=\kkl(p_{1})\circ\kkr(p_{2}).
\end{align*}
In particular, $(P, \ffl_{P}, \ffr_{P})$ is a bimodule over $(P, \diamond)$, which is
called the regular bimodule, where $\ffl_{P}, \ffr_{P}: P\rightarrow\gl(P)$ are
given by $\ffl_{P}(p_{1})(p_{2})=p_{1}\diamond p_{2}=\ffr_{P}(p_{2})(p_{1})$ for any
$p_{1}, p_{2}\in P$. We now consider dual conclusion for Proposition \ref{pro:ass-diass}.

\begin{defi}\label{def:permcoalg}
A {\bf perm coalgebra} $(P, \nu)$ is a vector space $P$
with a linear map $\nu: P\rightarrow P\otimes P$ satisfying
$$
(\nu\otimes\id)\circ\nu=(\id\otimes\nu)\circ\nu=(\tau\otimes\id)\circ(\id\otimes\nu)\circ\nu.
$$
\end{defi}

One can check that $(P, \nu)$ is a perm coalgebra if and only if
$(P^{\ast}, \nu^{\ast})$ is a perm algebra.
For the dual version of Proposition \ref{pro:ass-diass}, we have

\begin{pro}\label{pro:perm-codia}
Let $(C, \Delta)$ be a coassociative coalgebra and $(P, \nu)$ be a perm coalgebra.
Define two linear maps $\Delta_{\dashv}, \Delta_{\vdash}: C\otimes P\rightarrow
(C\otimes P)\otimes(C\otimes P)$ by
\begin{align*}
\Delta_{\dashv}(c\otimes p)&=\Delta(c)\bullet\tau(\nu(p))=\sum_{(c)}\sum_{(p)}
(c_{(1)}\otimes p_{(2)})\otimes(c_{(2)}\otimes p_{(1)}),\\[-2mm]
\Delta_{\vdash}(c\otimes p)&=\Delta(c)\bullet\nu(p)=\sum_{(c)}\sum_{(p)}
(c_{(1)}\otimes p_{(1)})\otimes(c_{(2)}\otimes p_{(2)}),
\end{align*}
for any $c\in C$ and $p\in P$, where $\Delta(c)=\sum_{(c)}c_{(1)}\otimes c_{(2)}$,
and $\nu(p)=\sum_{(p)}p_{(1)}\otimes p_{(2)}$ in the Sweedler notation.
Then $(C\otimes P, \Delta_{\dashv}, \Delta_{\vdash})$ is a diassociative coalgebra.
\end{pro}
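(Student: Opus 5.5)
The plan is to reduce the statement to Proposition \ref{pro:ass-diass} by duality, using the standing finite-dimensionality assumption. Two facts recorded earlier do the work. First, $(C, \Delta)$ is coassociative if and only if $(C^{\ast}, \Delta^{\ast})$ is associative, and $(P, \nu)$ is a perm coalgebra if and only if $(P^{\ast}, \nu^{\ast})$ is a perm algebra. Second, $(C\otimes P, \Delta_{\dashv}, \Delta_{\vdash})$ is a diassociative coalgebra if and only if $((C\otimes P)^{\ast}, \Delta_{\dashv}^{\ast}, \Delta_{\vdash}^{\ast})$ is a diassociative algebra. We identify $(C\otimes P)^{\ast}$ with $C^{\ast}\otimes P^{\ast}$ through the pairing $\langle f\otimes q, c\otimes p\rangle=\langle f, c\rangle\langle q, p\rangle$. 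Write $f_{1}\cdot f_{2}:=\Delta^{\ast}(f_{1}\otimes f_{2})$ and $q_{1}\diamond q_{2}:=\nu^{\ast}(q_{1}\otimes q_{2})$.

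The key step is to compute the dual products. For $f_{1}, f_{2}\in C^{\ast}$ and $q_{1}, q_{2}\in P^{\ast}$, the definition of $\Delta_{\dashv}$ gives
$$
\langle\Delta_{\dashv}^{\ast}((f_{1}\otimes q_{1})\otimes(f_{2}\otimes q_{2})),\; c\otimes p\rangle
=\sum_{(c)}\sum_{(p)}\langle f_{1}, c_{(1)}\rangle\langle f_{2}, c_{(2)}\rangle
\langle q_{1}, p_{(2)}\rangle\langle q_{2}, p_{(1)}\rangle
=\langle (f_{1}\cdot f_{2})\otimes(q_{2}\diamond q_{1}),\; c\otimes p\rangle.
$$
Hence $\Delta_{\dashv}^{\ast}$ is exactly the product \eqref{prod1} on $C^{\ast}\otimes P^{\ast}$. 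The same computation without the twist shows that $\Delta_{\vdash}^{\ast}$ sends $(f_{1}\otimes q_{1})\otimes(f_{2}\otimes q_{2})$ to $(f_{1}\cdot f_{2})\otimes(q_{1}\diamond q_{2})$, which is the product \eqref{prod2}. By Proposition \ref{pro:ass-diass}, applied to the associative algebra $(C^{\ast}, \cdot)$ and the perm algebra $(P^{\ast}, \diamond)$, these products make $C^{\ast}\otimes P^{\ast}$ a diassociative algebra. Dualizing back yields the claim.

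The main obstacle is bookkeeping: matching the twist $\tau$ in $\Delta_{\dashv}$ with the order reversal $p_{2}\diamond p_{1}$ in \eqref{prod1}, and keeping the perm-coalgebra convention $(\tau\otimes\id)\circ(\id\otimes\nu)\circ\nu$ consistent with left perm algebras under dualization. The identification $(C\otimes P)^{\ast}\cong C^{\ast}\otimes P^{\ast}$ must also be compatible with the pairing on triple tensors.

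As an independent check, one can also verify \eqref{codi1}--\eqref{codi3} and the coassociativity of $\Delta_{\dashv}$ and $\Delta_{\vdash}$ directly in Sweedler notation. In each identity the $C$-components agree by coassociativity of $\Delta$. The $P$-components reduce to one of the equalities
$$
(\nu\otimes\id)\nu=(\id\otimes\nu)\nu=(\tau\otimes\id)(\id\otimes\nu)\nu,
$$
after the appropriate permutation of tensor factors. For example, \eqref{codi1} needs $\sum p_{(2)(2)}\otimes p_{(2)(1)}\otimes p_{(1)}=\sum p_{(2)}\otimes p_{(1)(1)}\otimes p_{(1)(2)}$. Reordering the factors, this is exactly the perm identity $(\tau\otimes\id)(\id\otimes\nu)\nu=(\nu\otimes\id)\nu$.
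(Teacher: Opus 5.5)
Your proof is correct, but your main argument takes a different route from the paper's. The paper argues by direct computation in the $\bullet$-notation. It writes $(\Delta_{\dashv}\otimes\id)\Delta_{\dashv}(c\otimes p)$ as $(\Delta\otimes\id)\Delta(c)\bullet(\tau\otimes\id)(\id\otimes\tau)(\tau\otimes\id)(\id\otimes\nu)\nu(p)$. Coassociativity of $\Delta$ and the perm-coalgebra identity then turn this into $(\id\otimes\Delta)\Delta(c)\bullet(\tau\otimes\id)(\id\otimes\tau)(\nu\otimes\id)\nu(p)=(\id\otimes\Delta_{\vdash})\Delta_{\dashv}(c\otimes p)$, and the paper declares the remaining identities ``similar''. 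That is exactly your ``independent check'', and your $P$-component identity for \eqref{codi1} is the same one. Your primary argument instead dualizes: you identify $\Delta_{\dashv}^{\ast}$ and $\Delta_{\vdash}^{\ast}$ on $C^{\ast}\otimes P^{\ast}$ with the products \eqref{prod1} and \eqref{prod2}, then invoke Proposition \ref{pro:ass-diass}. This makes the statement a literal corollary of Proposition \ref{pro:ass-diass}, matching the paper's own description of it as the ``dual version''. It also settles all the required identities at once, including coassociativity of $\Delta_{\dashv}$ and $\Delta_{\vdash}$, which the paper's written proof never mentions. The only bookkeeping left is the single pairing computation, which you did correctly: the twist in $\Delta_{\dashv}$ becomes the reversal $q_{2}\diamond q_{1}$. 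The price is that you depend on Proposition \ref{pro:ass-diass}, which the paper quotes without proof, holding in exactly the stated conventions. They do check out: for instance, the first mixed axiom reduces to $p_{3}\diamond(p_{2}\diamond p_{1})=(p_{2}\diamond p_{3})\diamond p_{1}$, which holds in a left perm algebra. You also depend on the algebra/coalgebra duality dictionary, whereas the paper's direct computation is self-contained and never uses dual spaces.
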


\begin{proof}
For any $\sum_{l}c'_{l}\otimes c''_{l}\otimes c'''_{l}\in C\otimes C\otimes C$ and
$\sum_{k}p'_{k}\otimes p''_{k}\otimes p'''_{k}\in P\otimes P\otimes P$, we denote
$$
\Big(\sum_{l}c'_{l}\otimes c''_{l}\otimes c'''_{l}\Big)\bullet
\Big(\sum_{k}p'_{k}\otimes p''_{k}\otimes p'''_{k}\Big)
=\sum_{l}\sum_{k}(c'_{l}\otimes p'_{k})\otimes
(c''_{l}\otimes p''_{k})\otimes(c'''_{l}\otimes p'''_{k}).
$$
Then, by using the above notations, since $(P, \nu)$ is a perm coalgebra
and $(C, \Delta)$ is a coassociative coalgebra, for any $c\otimes p\in C\otimes P$, we have
\begin{align*}
(\Delta_{\dashv}\otimes\id)(\Delta_{\dashv}(c\otimes p))
=&\;(\Delta\otimes\id)(\Delta(c))\bullet
(\tau\otimes\id)((\id\otimes\tau)((\tau\otimes\id)((\id\otimes\nu)(\nu(p)))))\\
=&\;(\id\otimes\Delta)(\Delta(c))\bullet
(\tau\otimes\id)((\id\otimes\tau)((\nu\otimes\id)(\nu(p))))\\
=&\;(\id\otimes\Delta_{\vdash})(\Delta_{\dashv}(c\otimes p)).
\end{align*}
That is, $(\Delta_{\dashv}\otimes\id)\circ\Delta_{\dashv}=(\id\otimes\Delta_{\vdash})
\circ\Delta_{\dashv}$. Similarly, we also have $(\Delta_{\vdash}\otimes\id)\circ\Delta_{\dashv}
=(\id\otimes\Delta_{\dashv})\circ\Delta_{\vdash}$ and $(\Delta_{\dashv}\otimes\id)
\circ\Delta_{\vdash}=(\Delta_{\vdash}\otimes\id)\circ\Delta_{\vdash}$.
Thus, $(C\otimes P, \Delta_{\dashv}, \Delta_{\vdash})$ is a diassociative coalgebra.
\end{proof}
Let $(P, \diamond)$ be a perm algebra. A bilinear form $\varpi(-,-)$ on $(P, \diamond)$
is called {\bf invariant} if it satisfies
$$
\varpi(p_{1}\diamond p_{2},\; p_{3})
=\varpi(p_{1},\; p_{2}\diamond p_{3}-p_{3}\diamond p_{2}),
$$
for any $p_{1}, p_{2}, p_{3}\in P$. A {\bf quadratic perm algebra}, denoted by
$(P, \diamond, \varpi)$, is a perm algebra $(P, \diamond)$ together with a skew-symmetric
invariant nondegenerate bilinear form $\varpi(-,-)$. Let $(P, \diamond, \varpi)$ be a
quadratic perm algebra. Then the bilinear form $\varpi(-,-)$ can naturally expand to
the tensor product $P\otimes P\otimes\cdots\otimes P$, i.e.,
$$
\varpi(-,-):\qquad (\underbrace{P\otimes\cdots\otimes P}_{\mbox{\tiny $k$-fold}})\otimes
(\underbrace{P\otimes\cdots\otimes P}_{\mbox{\tiny $k$-fold}})\longrightarrow\Bbbk,
$$
$\varpi(p_{1}\otimes p_{2}\otimes\cdots\otimes p_{k},\ \ p'_{1}\otimes p'_{2}
\otimes\cdots\otimes p'_{k})=\prod_{i=1}^{k}\varpi(p_{i}, p'_{i})$,
for any $p_{1}, p_{2},\cdots, p_{k}, p'_{1}, p'_{2},\cdots, p'_{k}\in P$.
Then $\varpi(-,-)$ on $P\otimes P\otimes\cdots\otimes P$ is also a
nondegenerate bilinear form.

\begin{lem}[\cite{LZB}]\label{lem:perm-dual}
Let $(P, \diamond, \varpi)$ be a quadratic perm algebra. Define a linear map
$\nu_{\varpi}: P\rightarrow P\otimes P$ by $\varpi(\nu_{\varpi}(p_{1}),\;
p_{2}\otimes p_{3})=\varpi(p_{1},\; p_{2}\diamond p_{3})$ for any $p_{1}, p_{2},
p_{3}\in P$. Then $(P, \nu_{\varpi})$ is a perm coalgebra.
\end{lem}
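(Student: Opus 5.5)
The plan is to transfer the perm algebra identities on $P$ to the dual identities for $\nu_{\varpi}$, using only the defining relation of $\nu_{\varpi}$ and the fact that the extended form $\varpi$ on $P^{\otimes 3}$ is nondegenerate. Two identities must be shown:
$$(\nu_{\varpi}\otimes\id)\circ\nu_{\varpi}=(\id\otimes\nu_{\varpi})\circ\nu_{\varpi}$$
and
$$(\id\otimes\nu_{\varpi})\circ\nu_{\varpi}=(\tau\otimes\id)\circ(\id\otimes\nu_{\varpi})\circ\nu_{\varpi}.$$
Since $\varpi$ on $P\otimes P\otimes P$ is nondegenerate, it suffices to check that both sides of each identity give the same value when paired with an arbitrary $p_2\otimes p_3\otimes p_4$.

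First I will record the elementary pairing rules. From the definition, $\varpi((\nu_{\varpi}\otimes\id)(x\otimes y),\;p_2\otimes p_3\otimes p_4)=\varpi(x\otimes y,\;(p_2\diamond p_3)\otimes p_4)$, and similarly for $\id\otimes\nu_{\varpi}$. The twist satisfies $\varpi((\tau\otimes\id)(z),\;p_2\otimes p_3\otimes p_4)=\varpi(z,\;p_3\otimes p_2\otimes p_4)$. Applying these twice to $\nu_{\varpi}(p_1)$ gives:
\begin{align*}
\varpi((\nu_{\varpi}\otimes\id)\nu_{\varpi}(p_1),\;p_2\otimes p_3\otimes p_4)&=\varpi(p_1,\;(p_2\diamond p_3)\diamond p_4),\\
\varpi((\id\otimes\nu_{\varpi})\nu_{\varpi}(p_1),\;p_2\otimes p_3\otimes p_4)&=\varpi(p_1,\;p_2\diamond (p_3\diamond p_4)),\\
\varpi((\tau\otimes\id)(\id\otimes\nu_{\varpi})\nu_{\varpi}(p_1),\;p_2\otimes p_3\otimes p_4)&=\varpi(p_1,\;p_3\diamond(p_2\diamond p_4)).
\end{align*}

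The perm axioms give $p_2\diamond(p_3\diamond p_4)=(p_2\diamond p_3)\diamond p_4$, which settles the first identity. They also give $p_3\diamond(p_2\diamond p_4)=(p_3\diamond p_2)\diamond p_4=(p_2\diamond p_3)\diamond p_4=p_2\diamond(p_3\diamond p_4)$, which settles the second. Nondegeneracy of $\varpi$ on $P^{\otimes 3}$, which the paper asserts just before the statement, then yields the coalgebra identities.

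The only real obstacle is bookkeeping: confirming that $\varpi$, extended factorwise, turns $\nu_{\varpi}\otimes\id$ into $\diamond\otimes\id$ in the correct slot. For pure tensors $x\otimes y$, the form factors as $\varpi(x\otimes y,\;a\otimes b)=\varpi(x,a)\varpi(y,b)$, and this gives the rule directly. Neither skew-symmetry nor invariance is needed here; only nondegeneracy and the perm axioms are used.
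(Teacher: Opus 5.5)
Your proof is correct. The three pairing formulas are right, including the effect of $\tau\otimes\id$, which swaps the first two test slots. The perm identities $p_3\diamond(p_2\diamond p_4)=(p_3\diamond p_2)\diamond p_4=(p_2\diamond p_3)\diamond p_4=p_2\diamond(p_3\diamond p_4)$ then give both coalgebra identities, and you are right that only nondegeneracy of $\varpi$ is used, not skew-symmetry or invariance.

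The paper gives no proof of this lemma, citing \cite{LZB} instead. Your argument (dualizing through the factorwise extension of $\varpi$ and testing against pure tensors) is the same pairing technique the paper uses elsewhere, e.g.\ in the proof of Theorem~\ref{thm:permbia-dia}.
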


\begin{ex}\label{ex:dual-perm}
Consider the $2$-dimensional perm algebra $(P, \diamond)$, where $P={\rm span}_{\Bbbk}
\{x_{1}, x_{2}\}$, the products are given by $x_{2}\diamond x_{1}=x_{1}$, $x_{2}\diamond x_{2}
=x_{2}$, $x_{1}\diamond x_{1}=0=x_{1}\diamond x_{2}$. If we define a skew-symmetric bilinear
form $\varpi(-,-)$ on $P$ by $\varpi(x_{1}, x_{2})=1$. Then $(P, \diamond, \varpi)$ is a
quadratic perm algebra, and we get a perm coalgebra $(P, \nu_{\omega})$, where
$\nu_{\omega}(x_{1})=x_{1}\otimes x_{1}$ and $\nu_{\omega}(x_{2})=x_{1}\otimes x_{2}$.
\end{ex}

%
Recall that an {\bf antisymmetric infinitesimal bialgebra} (ASI bialgebra) is a triple
$(A, \cdot, \Delta)$ such that $(A, \cdot)$ is an associative algebra, $(A, \Delta)$
is a coassociative coalgebra, and the following compatibility condition holds:
\begin{align}
&\qquad\quad\Delta(a_{1}\cdot a_{2})=(\fr_{A}(a_{2})\otimes\id)(\Delta(a_{1}))
+(\id\otimes\fl_{A}(a_{1}))(\Delta(a_{2})), \label{ASI1}\\
&\big(\fl_{A}(a_{1})\otimes\id-\id\otimes\fr_{A}(a_{1})\big)(\Delta(a_{2}))
=\tau\big(\big(\id\otimes\fr_{A}(a_{2})
-\fl_{A}(a_{2})\otimes\id\big)(\Delta(a_{1}))\big), \label{ASI2}
\end{align}
for any $a_{1}, a_{2}\in A$. Following from Propositions \ref{pro:ass-diass},
\ref{pro:perm-codia} and Lemma \ref{lem:perm-dual}, we can construct a diassociative
bialgebra from an ASI bialgebra and a quadratic perm algebra as the following theorem.

\begin{thm}\label{thm:permbia-dia}
Let $(A, \cdot, \Delta)$ be an ASI bialgebra, $(P, \diamond, \varpi)$ be a quadratic
perm algebra and $(A\otimes P, \dashv, \vdash)$ be the induced diassociative algebra by
$(A, \cdot)$ and $(P, \diamond)$ in Propositions \ref{pro:ass-diass}. Define two linear maps
$\Delta_{\dashv}, \Delta_{\vdash}: A\otimes P\rightarrow(A\otimes P)\otimes(A\otimes P)$ by
\begin{align}
\Delta_{\dashv}(a\otimes p)&=\Delta(a)\bullet\tau(\nu_{\varpi}(p))=\sum_{(a)}\sum_{(p)}
(a_{(1)}\otimes p_{(2)})\otimes(a_{(2)}\otimes p_{(1)}), \label{codia1}\\[-2mm]
\Delta_{\vdash}(a\otimes p)&=\Delta(a)\bullet\nu_{\varpi}(p)=\sum_{(a)}\sum_{(p)}
(a_{(1)}\otimes p_{(1)})\otimes(a_{(2)}\otimes p_{(2)}),  \label{codia2}
\end{align}
for any $a\in A$ and $p\in P$, where $\nu_{\varpi}(p)=\sum_{(p)}p_{(1)}\otimes p_{(2)}$
and $\Delta(a)=\sum_{(a)}a_{(1)}\otimes a_{(2)}$ in the Sweedler notation.
Then $(A\otimes P, \dashv, \vdash, \Delta_{\dashv}, \Delta_{\vdash})$ is a diassociative
bialgebra, which is called {\bf the diassociative bialgebra induced from
$(A, \cdot, \Delta)$ by $(P, \diamond, \varpi)$}.
\end{thm}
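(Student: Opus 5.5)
By Proposition \ref{pro:ass-diass}, $(A\otimes P, \dashv, \vdash)$ is a diassociative algebra. By Lemma \ref{lem:perm-dual}, $(P, \nu_{\varpi})$ is a perm coalgebra, so Proposition \ref{pro:perm-codia} applied to $(A, \Delta)$ and $(P, \nu_{\varpi})$ shows that $(A\otimes P, \Delta_{\dashv}, \Delta_{\vdash})$ is a diassociative coalgebra. What remains are the nine compatibility conditions \eqref{bidi1}--\eqref{bidi9} of Definition \ref{def:bidi}. I would check them on pure tensors $d_{1}=a_{1}\otimes p_{1}$ and $d_{2}=a_{2}\otimes p_{2}$.

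Every term in these conditions factorizes through the $\bullet$ notation into an $A$-part and a $P$-part. The $A$-part is a term of $\Delta(a_{1}\cdot a_{2})$, $(\fr_{A}(a_{2})\otimes\id)\Delta(a_{1})$, $(\id\otimes\fl_{A}(a_{1}))\Delta(a_{2})$, $(\fl_{A}(a_{1})\otimes\id)\Delta(a_{2})$, $(\id\otimes\fr_{A}(a_{1}))\Delta(a_{2})$, or one of these with the roles of $a_{1}, a_{2}$ exchanged, possibly twisted by $\tau$. The $P$-part is a corresponding expression in $\nu_{\varpi}(p_{i})$ and a multiplication by $p_{j}$ on one tensor leg. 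So for each condition I would regroup the terms according to their $A$-part and reduce to identities of the form (ASI-term)$\,\bullet\,$(perm-term). Two kinds of identities are available for this. On the $A$-side, \eqref{ASI1} expresses $\Delta(a_{1}a_{2})$ and \eqref{ASI2} relates $(\fl_{A}(a_{1})\otimes\id-\id\otimes\fr_{A}(a_{1}))\Delta(a_{2})$ to $\tau$ of the analogous expression. On the $P$-side, I need identities for $\nu_{\varpi}$ of perm-bialgebra type, obtained by dualizing via $\varpi$.

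The $P$-side identities form a preliminary lemma. Pairing both sides with $q_{1}\otimes q_{2}$ via $\varpi$, and using the invariance $\varpi(p_{1}\diamond p_{2}, p_{3})=\varpi(p_{1}, p_{2}\diamond p_{3}-p_{3}\diamond p_{2})$, skew-symmetry and the perm axioms, I would derive formulas such as
\begin{align*}
\nu_{\varpi}(p_{1}\diamond p_{2})&=(\id\otimes\ffl_{P}(p_{1}))\nu_{\varpi}(p_{2})+\cdots,\\
(\ffl_{P}(p_{1})\otimes\id)\nu_{\varpi}(p_{2})&=(\id\otimes\ffl_{P}(p_{2}))\tau(\nu_{\varpi}(p_{1}))+\cdots.
\end{align*}
Concretely, $\nu_{\varpi}$ is the transpose of the product under the isomorphism $P\cong P^{\ast}$ given by $\varpi$. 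Invariance says that this isomorphism intertwines the regular bimodule with the coregular-type bimodule $(\ffl_{P}-\ffr_{P}, \ldots)$, so $(P,\diamond,\nu_{\varpi})$ is a perm bialgebra in the sense of \cite{Hou,LZB}. I would take those compatibility identities as the list of $P$-side relations. The conditions without $\Delta(d_{1}\cdot d_{2})$-type terms, namely \eqref{bidi1}--\eqref{bidi3}, \eqref{bidi8} and \eqref{bidi9}, should then follow by matching the $A$-side via \eqref{ASI2} with the $P$-side identities. The conditions \eqref{bidi4}--\eqref{bidi7}, which contain coproducts of products, use \eqref{ASI1} on the $A$-side together with the formula for $\nu_{\varpi}(p_{1}\diamond p_{2})$ and $\nu_{\varpi}(p_{2}\diamond p_{1})$.

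I expect the main obstacle to be exactly this bookkeeping in \eqref{bidi4}--\eqref{bidi9}. The differences $\Delta_{\vdash}-\Delta_{\dashv}$ and $\fl_{\vdash}-\fl_{\dashv}$ become expressions like $\Delta(a)\bullet(\nu_{\varpi}-\tau\nu_{\varpi})(p)$ and $a_{1}\cdot(-)\otimes(p_{1}\diamond-\,-\diamond p_{1})$. One must check that the ASI terms, whose two sides are each a difference of two terms, recombine against the $P$-terms with matching signs; a priori the $A$- and $P$-identities need not split compatibly. To control this, I would first prove the auxiliary identity
$$\varpi\big(\nu_{\varpi}(p_{1})-\tau\nu_{\varpi}(p_{1}),\; q_{1}\otimes q_{2}\big)=\varpi(p_{1},\; q_{1}\diamond q_{2}-q_{2}\diamond q_{1}),$$
which says that commutators of $P$ dualize to the antisymmetrized coproduct. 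With it, each bialgebra condition reduces to a linear combination of \eqref{ASI1}, \eqref{ASI2} and $\tau$ applied to them, each tensored with a single perm term. Alternatively, as a cross-check, I would verify the equivalent Manin-triple formulation: that the bilinear form induced by $\varpi$ and the natural pairing makes $(A\oplus A^{\ast})\otimes P$ a quadratic diassociative algebra. This can be done by tensoring the double construction of the ASI bialgebra with $(P,\diamond,\varpi)$, which avoids most index gymnastics.
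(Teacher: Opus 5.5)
Your architecture matches the paper's. The coalgebra part comes from Proposition~\ref{pro:perm-codia} and Lemma~\ref{lem:perm-dual}, and \eqref{bidi1}--\eqref{bidi9} are checked on pure tensors by splitting each term as an $A$-part $\bullet$ a $P$-part, with \eqref{ASI1} and \eqref{ASI2} on the $A$-side. The step that would fail is where your $P$-side relations come from. $(P,\diamond,\nu_{\varpi})$ is \emph{not} a perm bialgebra in general. That $\varpi$ intertwines the regular and coregular bimodules does not imply it, just as transposing the bracket of a quadratic Lie algebra does not give a Lie bialgebra. Transport $P^{\ast}$ to $P$ along $\varpi$. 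The Manin double that a perm bialgebra would require is then $P\oplus P$ with $(x,x')\diamond(y,y')=(x\diamond y+x\diamond y'+x'\diamond y,\ x'\diamond y'+x\diamond y'+x'\diamond y)$. In Example~\ref{ex:dual-perm}, with $u=v=(x_{2},0)$ and $w=(0,x_{2})$, this gives $u\diamond(v\diamond w)=(2x_{2},x_{2})\neq(x_{2},x_{2})=(u\diamond v)\diamond w$. The shape is also wrong for your purpose. A bialgebra-type rule writes $\nu_{\varpi}(p'\diamond p)$ as a sum of two terms. Paired with the two-term \eqref{ASI1}, that creates four terms where \eqref{bidi4}--\eqref{bidi7} have two, which is exactly the ``splitting'' problem you anticipate.

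What your pairing argument actually yields, and what the paper uses, are vanishing identities together with single-term (Frobenius-type) identities:
\[
(\ffr_{P}(p')\otimes\id)\nu_{\varpi}(p)=0=(\id\otimes\ffr_{P}(p))\tau(\nu_{\varpi}(p')),\qquad
\nu_{\varpi}(p'\diamond p)=(\id\otimes\ffr_{P}(p))\nu_{\varpi}(p')=(\ffl_{P}(p')\otimes\id)\nu_{\varpi}(p).
\]
Both follow from invariance, skew-symmetry and the left perm identities. The vanishing identities make both sides of \eqref{bidi1}--\eqref{bidi3} zero identically, so \eqref{ASI2} plays no role there. The single-term identities let the one tensor $\nu_{\varpi}(p'\diamond p)$ be rewritten to fit each term of \eqref{ASI1}. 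For example, \eqref{bidi5} becomes $\big(\Delta(a\cdot a')-(\id\otimes\fl_{A}(a))\Delta(a')-(\fr_{A}(a')\otimes\id)\Delta(a)\big)\bullet\nu_{\varpi}(p'\diamond p)=0$. Your displayed $\nu_{\varpi}(p_{1}\diamond p_{2})=(\id\otimes\ffl_{P}(p_{1}))\nu_{\varpi}(p_{2})$ in fact holds with no further terms. Your auxiliary identity is also useful: combined with $(p_{1}\diamond p_{2}-p_{2}\diamond p_{1})\diamond q=0$, it gives $(\id-\tau)\nu_{\varpi}(p_{1}\diamond p_{2}-p_{2}\diamond p_{1})=0$. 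This is what remains of \eqref{bidi8} after applying these identities and \eqref{ASI2}.

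Your Manin-triple cross-check is sound and would be a genuinely different, cleaner proof. Take the double-construction form $\mathcal{B}$ on $A\oplus A^{\ast}$. Then $\mathcal{B}\otimes\varpi$ on $(A\oplus A^{\ast})\otimes P$ is skew-symmetric and invariant in the diassociative sense, using invariance of $\varpi$ and its consequence $\varpi(p\diamond q,s)=\varpi(q,p\diamond s)$. The subspaces $A\otimes P$ and $A^{\ast}\otimes P$ are isotropic subalgebras. The transported coproducts are \eqref{codia1}--\eqref{codia2} up to an overall sign. This route does rest on the Manin-triple characterization of diassociative bialgebras in \cite{HLLZ,Lu}, which the paper does not invoke.
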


\begin{proof}
By Proposition \ref{pro:perm-codia} and Lemma \ref{lem:perm-dual}, we get
that $(A\otimes P, \Delta_{\dashv}, \Delta_{\vdash})$ is a diassociative coalgebra.
Thus, we only need to show Eqs. \eqref{bidi1}-\eqref{bidi9} hold. For any $e, f, p, p'\in P$,
note that
\begin{align*}
&\varpi((\id\otimes\ffr_{P}(p))(\tau(\nu_{\varpi}(p'))),\; e\otimes f)
=\varpi(p',\; (p\diamond f-f\diamond p)\diamond e)=0,\\
&\varpi((\ffr_{P}(p')\otimes\id)(\nu_{\varpi}(p)),\; e\otimes f)
=\varpi(p,\; (p'\diamond e-e\diamond p')\diamond f)=0.
\end{align*}
We get $(\id\otimes\ffr_{P}(p))(\tau(\nu_{\varpi}(p')))=0=(\ffr_{P}(p')\otimes\id)
(\nu_{\varpi}(p))$. Thus $(\id\otimes\fl_{\dashv}(a\otimes p))(\Delta_{\dashv}(a'\otimes p'))
=(\id\otimes\fl_{A}(a))(\Delta(a'))\bullet(\id\otimes\ffr_{P}(p))(\tau(\nu_{\varpi}(p')))=0$
and $(\fr_{\vdash}(a'\otimes p')\otimes\id)(\Delta_{\vdash}(a\otimes p))=
(\fr_{A}(a')\otimes\id)(\Delta(a))\bullet(\ffr_{P}(p')\otimes\id)(\nu_{\varpi}(p))=0$
That is, Eq. \eqref{bidi1} holds. Similarly, Eqs. \eqref{bidi2} and \eqref{bidi3} hold.
Moreover, for any $e, f, p, p'\in P$, since
\begin{align*}
&\varpi(\nu_{\varpi}(p'\diamond p),\; e\otimes f)
=\varpi\big(p',\; p\diamond(e\diamond f)-(e\diamond f)\diamond p\big),\\
&\varpi((\id\otimes\ffr_{P}(p))(\nu_{\varpi}(p')),\; e\otimes f)
=\varpi\big(p',\; e\diamond(p\diamond f)-e\diamond(f\diamond p)\big),\\
&\varpi(\tau((\ffr_{P}(p)\otimes\id)(\nu_{\varpi}(p'))),\; e\otimes f)
=\varpi\big(p',\; (p\diamond f)\diamond e-(f\diamond p)\diamond e\big)=0,\\
&\varpi((\ffl_{P}(p')\otimes\id)(\nu_{\varpi}(p)),\; e\otimes f)
=\varpi\big(p',\; p\diamond(e\diamond f)-(e\diamond f)\diamond p\big),
\end{align*}
we obtain that $\tau((\ffr_{P}(p)\otimes\id)(\nu_{\varpi}(p')))=0$ and
$\nu_{\varpi}(p'\diamond p)=(\id\otimes\ffr_{P}(p))(\nu_{\varpi}(p'))
=(\ffl_{P}(p')\otimes\id)(\nu_{\varpi}(p))$. Thus, by Eq. \eqref{ASI1}, we have
\begin{align*}
&\;\Delta_{\vdash}((a\otimes p)\dashv(a'\otimes p'))
-(\id\otimes\fl_{\dashv}(a\otimes p))(\Delta_{\vdash}(a'\otimes p'))\\[-1mm]
&\qquad +(\id\otimes\fl_{\dashv}(a\otimes p))(\Delta_{\dashv}(a'\otimes p'))
-(\fr_{\dashv}(a'\otimes p')\otimes\id)(\Delta_{\vdash}(a\otimes p))\\
=&\; \Delta(a\cdot a')\bullet\nu_{\varpi}(p'\diamond p)
-(\id\otimes\fl_{A}(a))(\Delta(a'))\bullet(\id\otimes\ffr_{P}(p))(\nu_{\varpi}(p'))\\[-1mm]
&\qquad +(\id\otimes\fl_{A}(a))(\Delta(a'))\bullet\tau((\ffr_{P}(p)\otimes\id)
(\nu_{\varpi}(p')))\\[-1mm]
&\qquad -(\fr_{A}(a')\otimes\id)(\Delta(a))\bullet(\ffl_{P}(p')\otimes\id)(\nu_{\varpi}(p))\\
=&\; \Big(\Delta(a\cdot a')-(\id\otimes\fl_{A}(a))(\Delta(a'))
-(\fr_{A}(a')\otimes\id)(\Delta(a))\Big)\bullet\nu_{\varpi}(p'\diamond p)\\
=&\; 0.
\end{align*}
That is, Eq. \eqref{bidi5} holds. Similarly, by Eqs. \eqref{ASI1} and \eqref{ASI2},
we get that Eqs. \eqref{bidi4}, \eqref{bidi6}-\eqref{bidi9} hold.
Thus, $(A\otimes P, \dashv, \vdash, \Delta_{\dashv}, \Delta_{\vdash})$ is a
diassociative bialgebra.
\end{proof}

\begin{ex}\label{ex:ind-diabi}
We consider $2$-dimensional ASI bialgebra $(A, \cdot, \Delta)$, where vector space
$A={\rm span}_{\Bbbk}\{e_{1}$, $e_{2}\}$ and the products and coporducts are given by
$e_{1}\cdot e_{1}=e_{1}$, $e_{1}\cdot e_{2}=e_{2}$, $\Delta(e_{1})=e_{2}\otimes e_{1}$
and $\Delta(e_{2})=e_{2}\otimes e_{2}$. Let $(P, \diamond, \varpi)$ be the $2$-dimensional
quadratic perm algebra given in Example \ref{ex:dual-perm}. By Theorem \ref{thm:permbia-dia},
we get a $4$-dimensional diassociative bialgebra $(A\otimes P, \dashv, \vdash,
\Delta_{\dashv}, \Delta_{\vdash})$, where
\begin{align*}
& (e_{1}\otimes x_{2})\vdash(e_{1}\otimes x_{1})=e_{1}\otimes x_{1},&
& (e_{1}\otimes x_{2})\vdash(e_{2}\otimes x_{1})=e_{2}\otimes x_{1},\\
& (e_{1}\otimes x_{2})\vdash(e_{1}\otimes x_{2})=e_{1}\otimes x_{2},&
& (e_{1}\otimes x_{2})\vdash(e_{2}\otimes x_{2})=e_{2}\otimes x_{2},\\
& (e_{1}\otimes x_{1})\dashv(e_{1}\otimes x_{2})=e_{1}\otimes x_{1},&
& (e_{1}\otimes x_{1})\dashv(e_{2}\otimes x_{2})=e_{2}\otimes x_{1},\\
& (e_{1}\otimes x_{2})\dashv(e_{1}\otimes x_{2})=e_{1}\otimes x_{2},&
& (e_{1}\otimes x_{2})\dashv(e_{2}\otimes x_{2})=e_{2}\otimes x_{2},\\
& \Delta_{\vdash}(e_{1}\otimes x_{1})=(e_{2}\otimes x_{1})\otimes(e_{1}\otimes x_{1})
=\Delta_{\dashv}(e_{1}\otimes x_{1}),&
& \Delta_{\vdash}(e_{1}\otimes x_{2})=(e_{2}\otimes x_{1})\otimes(e_{1}\otimes x_{2}),\\
& \Delta_{\vdash}(e_{2}\otimes x_{1})=(e_{2}\otimes x_{1})\otimes(e_{2}\otimes x_{1})
=\Delta_{\dashv}(e_{2}\otimes x_{1}),&
& \Delta_{\vdash}(e_{2}\otimes x_{2})=(e_{2}\otimes x_{1})\otimes(e_{2}\otimes x_{2}),\\
& \Delta_{\dashv}(e_{1}\otimes x_{2})=(e_{2}\otimes x_{2})\otimes(e_{1}\otimes x_{1}), &
& \Delta_{\dashv}(e_{2}\otimes x_{2})=(e_{2}\otimes x_{2})\otimes(e_{2}\otimes x_{1}).
\end{align*}
\end{ex}

Recall that an ASI bialgebra $(A, \cdot, \Delta)$ is called {\bf coboundary}
if there exists an element $r\in A\otimes A$ such that $\Delta=\Delta_{r}$, where
\begin{align}
\Delta_{r}(a)=(\id\otimes\fl_{A}(a)-\fr_{A}(a)\otimes\id)(r), \label{ass-cobo}
\end{align}
for any $a\in A$. Let $(A, \cdot)$ be an associative algebra. An element
$r=\sum_{i}x_{i}\otimes y_{i}\in A\otimes A$ is said to be {\bf skew-symmetric}
if $r=-\tau(r)$, and it is said to be {\bf ass-invariant}
if $\Delta_{r}(a)=0$ for all $a\in A$. The equation
$$
\mathbf{A}_{r}:=r_{12}\cdot r_{13}+r_{13}\cdot r_{23}-r_{23}\cdot r_{12}=0
$$
is called the {\bf associative Yang-Baxter equation} (or $\AYBE$) in $(A, \cdot)$,
where $r_{12}\cdot r_{13}=\sum_{i,j}(x_{i}\cdot x_{j})\otimes y_{i}\otimes y_{j}$,
$r_{13}\cdot r_{23}=\sum_{i,j}x_{i}\otimes x_{j}\otimes(y_{i}\cdot y_{j})$ and
$r_{23}\cdot r_{12}=\sum_{i,j}x_{j}\otimes(x_{i}\cdot y_{j})\otimes y_{i}$.

\begin{pro}[\cite{SW}]\label{pro:quasass-bia}
Let $(A, \cdot)$ be an associative algebra, $r\in A\otimes A$ and $\Delta_{r}:
A\rightarrow A\otimes A$ be the linear map defined by Eq. \eqref{ass-cobo}.
\begin{enumerate}\itemsep=0pt
\item[$(i)$] If $r$ is a skew-symmetric solution of the $\AYBE$ in $(A, \cdot)$, then
     $(A, \cdot, \Delta_{r})$ is an ASI bialgebra, which is called a {\bf triangular ASI
     bialgebra} associated with $r$.
\item[$(ii)$] If $r$ is a solution of the $\AYBE$ in $(A, \cdot)$ and $r+\tau(r)$ is
     ass-invariant, then $(A, \cdot, \Delta_{r})$ is an ASI bialgebra,
     which is called a {\bf quasi-triangular ASI bialgebra} associated with $r$.
\end{enumerate}
\end{pro}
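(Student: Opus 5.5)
The plan is to reduce both parts to the standard fact that a coboundary coproduct $\Delta_r$ on an associative algebra gives an ASI bialgebra exactly when certain expressions built from $\mathbf{A}_r$ and from $r+\tau(r)$ vanish. The first step is to record that $\Delta_r$ automatically satisfies Eq. \eqref{ASI1}. Indeed, for $a_1,a_2\in A$, writing $r=\sum_i x_i\otimes y_i$,
\begin{align*}
\Delta_r(a_1\cdot a_2)&=\sum_i x_i\otimes a_1\cdot a_2\cdot y_i-\sum_i x_i\cdot a_1\cdot a_2\otimes y_i,
\end{align*}
while $(\fr_A(a_2)\otimes\id)\Delta_r(a_1)+(\id\otimes\fl_A(a_1))\Delta_r(a_2)$ equals
$$\sum_i x_i\cdot a_2\otimes a_1\cdot y_i-\sum_i x_i\cdot a_1\cdot a_2\otimes y_i+\sum_i x_i\otimes a_1\cdot a_2\cdot y_i-\sum_i x_i\cdot a_2\otimes a_1\cdot y_i.$$
The cross terms cancel, and this holds for every $r$ by associativity alone.

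The second step is Eq. \eqref{ASI2}. Expanding both sides with $\Delta_r$ and moving everything to one side, I expect the difference to be an expression of the form
$$(\fl_A(a_1)\otimes\id-\id\otimes\fr_A(a_1))(\id\otimes\fl_A(a_2)-\fr_A(a_2)\otimes\id)(r+\tau(r)),$$
possibly with $a_1,a_2$ interchanged or with signs adjusted; the exact shape will come out of direct bookkeeping. The key point is that it depends on $r$ only through $r+\tau(r)$. It therefore vanishes if $r$ is skew-symmetric, since then $r+\tau(r)=0$. It also vanishes if $r+\tau(r)$ is ass-invariant: the inner factor is $\Delta_{r+\tau(r)}(a_2)$, which is $0$.

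The third step, and the main obstacle, is coassociativity of $\Delta_r$. I would compute $(\Delta_r\otimes\id)\Delta_r(a)-(\id\otimes\Delta_r)\Delta_r(a)$ in three tensor slots and reorganize it as
$$(\id\otimes\id\otimes\fl_A(a))\mathbf{A}_r-(\fr_A(a)\otimes\id\otimes\id)\mathbf{A}_r+\text{(terms involving } r+\tau(r)).$$
The correction terms should have the shape $\sum_j(\cdots)\otimes\Delta_{r+\tau(r)}(\cdot)$ or a variant with the invariance map in the outer slots, so that they vanish under either hypothesis of $(i)$ or $(ii)$. The delicate part is choosing which of the twelve or so terms to group as $\mathbf{A}_r$ (namely $r_{12}\cdot r_{13}$, $r_{13}\cdot r_{23}$ and $r_{23}\cdot r_{12}$ with $a$ attached on the left or right), and checking that the leftover terms combine exactly into $r+\tau(r)$ paired against an invariance expression. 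If that grouping does not close up, my fallback is to use the ass-invariance of $s=r+\tau(r)$ to replace $\tau(r)$ by $s-r$ and recompute.

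Given these three steps, $\mathbf{A}_r=0$ together with either skew-symmetry or ass-invariance of $r+\tau(r)$ yields Eqs. \eqref{ASI1} and \eqref{ASI2} and coassociativity, so $(A,\cdot,\Delta_r)$ is an ASI bialgebra. Part $(i)$ is the special case of $(ii)$ with $r+\tau(r)=0$, which is trivially ass-invariant.
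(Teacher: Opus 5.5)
Your plan is correct and is the standard coboundary argument for ASI bialgebras; the paper gives no proof of its own, only the citation. Your first step is right, and in the second step the defect of Eq.~\eqref{ASI2} comes out exactly as you guessed, namely $(\fl_{A}(a_{1})\otimes\id-\id\otimes\fr_{A}(a_{1}))\Delta_{r+\tau(r)}(a_{2})$, with no interchange or sign change. The one correction is that your ``main obstacle'' is easier than you expect, and there are no $r+\tau(r)$ correction terms. Of the eight terms in $(\Delta_{r}\otimes\id)\Delta_{r}(a)-(\id\otimes\Delta_{r})\Delta_{r}(a)$, the two of the form $\pm x_{j}\otimes x_{i}\cdot a\cdot y_{j}\otimes y_{i}$ cancel. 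The remaining six give exactly $(\fr_{A}(a)\otimes\id\otimes\id-\id\otimes\id\otimes\fl_{A}(a))(\mathbf{A}_{r})$, so coassociativity follows from the $\AYBE$ alone, and the hypothesis on $r+\tau(r)$ is needed only for Eq.~\eqref{ASI2}.
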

%
%
A quasi-triangular ASI bialgebra $(A, \cdot, \Delta_{r})$ is called a {\bf factorizable
ASI bialgebra} if $\mathcal{I}=r^{\sharp}+\tau(r)^{\sharp}: A^{\ast}\rightarrow A$
is an isomorphism of vector spaces.
These special ASI bialgebras are all related to the solutions of the associative
Yang-Baxter equation. Next, we consider the relation between the solutions of the
$\AYBE$ in an associative algebra and the solutions of the $\DAYBE$ in the induced
diassociative algebra. Let $(P, \diamond, \varpi)$ be a quadratic perm algebra
and $\{e_{1}, e_{2},\cdots, e_{n}\}$ be a basis of $P$. Since $\varpi(-,-)$ is
skew-symmetric nondegenerate, we get a basis $\{f_{1}, f_{2},\cdots, f_{n}\}$ of $P$,
which is called the dual basis of $\{e_{1}, e_{2},\cdots, e_{n}\}$ with respect to
$\varpi(-,-)$, by $\varpi(f_{i}, e_{j})=\delta_{ij}$, where $\delta_{ij}$ is the Kronecker
delta. Here, for some special solutions of the $\AYBE$ in an associative algebra, we have:

\begin{pro}\label{pro:AYBE-DYBE}
Let $(A, \cdot)$ be an associative algebra, $(P, \diamond, \varpi)$ be a quadratic
perm algebra and $(A\otimes P, \dashv, \vdash)$ be the induced diassociative algebra.
Suppose that $r=\sum_{i}x_{i}\otimes y_{i}\in A\otimes A$ is a solution of the $\AYBE$
in $(A, \cdot)$, $\{e_{1}, e_{2},\cdots, e_{n}\}$ is a basis of $P$ and $\{f_{1},
f_{2},\cdots, f_{n}\}$ is the dual basis of $\{e_{1}, e_{2},\cdots, e_{n}\}$ with
respect to $\varpi(-,-)$. Then
\begin{align}
\widetilde{r}=\sum_{i, j}(x_{i}\otimes e_{j})\otimes(y_{i}\otimes f_{j})
\in(A\otimes P)\otimes(A\otimes P)  \label{r-max}
\end{align}
is a solution of the $\DAYBE$ in $(A\otimes P, \dashv, \vdash)$.
Moreover, we get that $\widetilde{r}-\tau(\widetilde{r})$ is diass-invariant
if $r+\tau(r)$ is ass-invariant.

In particular, $\widetilde{r}$ is a symmetric solution of the $\DAYBE$ in
$(A\otimes P, \dashv, \vdash)$ if $r$ is a skew-symmetric solution of the
$\AYBE$ in $(A, \cdot)$.
\end{pro}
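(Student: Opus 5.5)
The plan is to factor every quantity in the $A$-part and the $P$-part separately using the $\bullet$-notation from the proof of Proposition \ref{pro:perm-codia}. Set $\kappa=\sum_{j}e_{j}\otimes f_{j}\in P\otimes P$, so that $\widetilde{r}=r\bullet\kappa$.

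First I record the basic properties of $\kappa$. Because $\varpi$ is skew-symmetric, $\varpi(e_{i}, f_{j})=-\delta_{ij}$. Hence $\{-e_{j}\}$ is the dual basis of $\{f_{j}\}$, which gives $\tau(\kappa)=-\kappa$. I will also use that $\kappa$ is independent of the chosen basis, via the reproducing identity $\sum_{j}\varpi(u, e_{j})f_{j}=u$ (up to the sign just fixed). With these, $\tau(\widetilde{r})=\tau(r)\bullet\tau(\kappa)=-\tau(r)\bullet\kappa$. The ``in particular'' statement then follows at once from the main one: if $\tau(r)=-r$, then $\tau(\widetilde{r})=\widetilde{r}$.

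For the $\DAYBE$, I expand each of the four terms of $\mathbf{D}_{\widetilde{r}}$ using Eqs. \eqref{prod1}--\eqref{prod2}. This gives
\begin{align*}
\widetilde{r}_{13}\vdash\widetilde{r}_{23}&=(r_{13}\cdot r_{23})\bullet K_{1}, &
\widetilde{r}_{23}\dashv\widetilde{r}_{12}&=(r_{23}\cdot r_{12})\bullet K_{2},\\
\widetilde{r}_{12}\vdash\widetilde{r}_{13}&=(r_{12}\cdot r_{13})\bullet K_{3}, &
\widetilde{r}_{12}\dashv\widetilde{r}_{13}&=(r_{12}\cdot r_{13})\bullet K_{4}.
\end{align*}
Here $K_{1}=\sum_{j,l}e_{j}\otimes e_{l}\otimes(f_{j}\diamond f_{l})$, $K_{3}=\sum_{j,l}(e_{j}\diamond e_{l})\otimes f_{j}\otimes f_{l}$, $K_{4}=\sum_{j,l}(e_{l}\diamond e_{j})\otimes f_{j}\otimes f_{l}$, and $K_{2}$ is the analogous tensor with the product $\diamond$ placed in the middle slot. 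The goal is to show that
\[
K_{1}=K_{2}=K_{4}-K_{3}=:K .
\]
Once this holds, $\mathbf{D}_{\widetilde{r}}=\mathbf{A}_{r}\bullet K=0$.

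To prove these equalities I pair each $K_{i}$ against an arbitrary $a\otimes b\otimes c\in P^{\otimes 3}$ using the extended form $\varpi$, which is nondegenerate on $P^{\otimes 3}$. The reproducing identity collapses the sums over dual bases to expressions such as $\varpi(a\diamond b, c)$ or $\varpi(a, b\diamond c)$. These are then compared using the invariance $\varpi(p_{1}\diamond p_{2}, p_{3})=\varpi(p_{1}, p_{2}\diamond p_{3}-p_{3}\diamond p_{2})$, skew-symmetry, and the perm identities. In particular, $(p_{1}\diamond p_{2})\diamond p_{3}=(p_{2}\diamond p_{1})\diamond p_{3}$ makes left commutators act trivially, exactly as in the vanishing computations in the proof of Theorem \ref{thm:permbia-dia}. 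I expect this sign and ordering bookkeeping to be the main obstacle. The combination $K_{4}-K_{3}$ is where the commutator $p_{2}\diamond p_{3}-p_{3}\diamond p_{2}$ from the invariance should appear, so I will check that identity first, since it pins down the sign convention for $K$.

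For the invariance statement, I write $\widetilde{r}-\tau(\widetilde{r})=(r+\tau(r))\bullet\kappa$. I then evaluate $\mathcal{E}(a\otimes p)$ and $\mathcal{F}(a\otimes p)$ from Eqs. \eqref{cobdi1}--\eqref{cobdi2} on it. Each summand splits as an $A$-part ($\fr_{A}(a)\otimes\id$ or $\id\otimes\fl_{A}(a)$ applied to $r+\tau(r)$) times a $P$-part obtained by applying $\ffl_{P}(p)$ or $\ffr_{P}(p)$ to one leg of $\kappa$. Using the same pairing technique, I will show two things: the $P$-parts either vanish (as with $(\ffr_{P}(p')\otimes\id)(\nu_{\varpi}(p))=0$ earlier), or the surviving two coincide up to sign. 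Then $\mathcal{E}(a\otimes p)$ and $\mathcal{F}(a\otimes p)$ of $\widetilde{r}-\tau(\widetilde{r})$ each become $\pm\Delta_{r+\tau(r)}(a)\bullet\kappa_{p}$ for a suitable tensor $\kappa_{p}$. These vanish because $r+\tau(r)$ is ass-invariant.
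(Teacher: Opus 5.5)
Your proposal is correct and follows essentially the paper's route: split every term as an $A$-part $\bullet$ a $P$-part, identify the $P$-tensors by pairing with the nondegenerate extension of $\varpi$ to obtain $\mathbf{D}_{\widetilde{r}}=\mathbf{A}_{r}\bullet K$, and for invariance reduce $\mathcal{E}(a\otimes p)$ and $\mathcal{F}(a\otimes p)$ of $\widetilde{r}-\tau(\widetilde{r})$ to $\pm\Delta_{r+\tau(r)}(a)$ times a fixed tensor in $P\otimes P$. Your target identity $K_{1}=K_{2}=K_{4}-K_{3}$ is the right one: the paper's intermediate display states $K_{3}+K_{4}$ because of a sign slip in its pairings, though its final formula $\mathbf{A}_{r}\bullet(K_{4}-K_{3})$ agrees with yours, and writing $\widetilde{r}-\tau(\widetilde{r})=(r+\tau(r))\bullet\kappa$ before applying $\mathcal{E},\mathcal{F}$ is a mild streamlining of the paper's six-term expansion.
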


\begin{proof}
First, by direct calculation, we have
\begin{align*}
&\; \widetilde{r}_{13}\vdash\widetilde{r}_{23}-\widetilde{r}_{23}\dashv\widetilde{r}_{12}
-\widetilde{r}_{12}\vdash\widetilde{r}_{13}+\widetilde{r}_{12}\dashv\widetilde{r}_{13}\\
=&\; \sum_{i,j}\sum_{k,l}\Big(\big(x_{i}\otimes x_{j}\otimes(y_{i}\cdot y_{j})\big)
\bullet\big(e_{k}\otimes e_{l}\otimes(f_{k}\diamond f_{l})\big)
-\big(x_{j}\otimes(x_{i}\cdot y_{j})\otimes y_{i}\big)\bullet
\big(e_{l}\otimes(f_{l}\diamond e_{k})\otimes f_{k}\big)\\[-4mm]
&\qquad\quad-\big((x_{i}\cdot x_{j})\otimes y_{i}\otimes y_{j}\big)\bullet
\big((e_{k}\diamond e_{l})\otimes f_{k}\otimes f_{l}\big)
+\big((x_{i}\cdot x_{j})\otimes y_{i}\otimes y_{j}\big)\bullet
\big((e_{l}\diamond e_{k})\otimes f_{k}\otimes f_{l}\big)\Big).
\end{align*}
Moreover, since $\varpi(-,-)$ is nondegenerate and for any $1\leq s, u, v\leq n$,
\begin{align*}
\varpi\Big(\sum_{k,l}e_{k}\otimes e_{l}\otimes(f_{k}\diamond f_{l}),\ \
e_{s}\otimes e_{u}\otimes e_{v}\Big)&=\varpi(e_{v}\diamond e_{u}
+e_{u}\diamond e_{v},\; e_{s}),\\[-2mm]
\varpi\Big(\sum_{k,l}e_{l}\otimes(f_{l}\diamond e_{k})\otimes f_{k},\ \
e_{s}\otimes e_{u}\otimes e_{v}\Big)&=\varpi(e_{v}\diamond e_{u}
+e_{u}\diamond e_{v},\; e_{s}),\\[-2mm]
\varpi\Big(\sum_{k,l}(e_{k}\diamond e_{l})\otimes f_{k}\otimes f_{l},\ \
e_{s}\otimes e_{u}\otimes e_{v}\Big)&=\varpi(e_{u}\diamond e_{v},\; e_{s}),\\[-2mm]
\varpi\Big(\sum_{k,l}(e_{l}\diamond e_{k})\otimes f_{k}\otimes f_{l},\ \
e_{s}\otimes e_{u}\otimes e_{v}\Big)&=\varpi(e_{v}\diamond e_{u},\; e_{s}),
\end{align*}
we get that $\sum_{k,l}e_{k}\otimes e_{l}\otimes(f_{k}\diamond f_{l})=
\sum_{k,l}e_{l}\otimes(f_{l}\diamond e_{k})\otimes f_{k}=\sum_{k,l}(e_{k}\diamond e_{l})
\otimes f_{k}\otimes f_{l}+\sum_{k,l}(e_{l}\diamond e_{k})\otimes f_{k}\otimes f_{l}$,
and so that
\begin{align*}
&\; \widetilde{r}_{13}\vdash\widetilde{r}_{23}-\widetilde{r}_{23}\dashv\widetilde{r}_{12}
-\widetilde{r}_{12}\vdash\widetilde{r}_{13}+\widetilde{r}_{12}\dashv\widetilde{r}_{13}\\
=&\; \sum_{i,j}\sum_{k,l}\Big(\big((x_{i}\cdot x_{j})\otimes y_{i}\otimes y_{j}
+x_{i}\otimes x_{j}\otimes(y_{i}\cdot y_{j})-x_{j}\otimes(x_{i}\cdot y_{j})\otimes y_{i}\big)
\bullet\big((e_{l}\diamond e_{k})\otimes f_{k}\otimes f_{l}\big)\\[-4mm]
&\qquad\quad-\big((x_{i}\cdot x_{j})\otimes y_{i}\otimes y_{j}
+x_{i}\otimes x_{j}\otimes(y_{i}\cdot y_{j})-x_{j}\otimes(x_{i}\cdot y_{j})\otimes y_{i}\big)
\bullet\big((e_{k}\diamond e_{l})\otimes f_{k}\otimes f_{l}\big)\\
=&\;\mathbf{A}_{r}\bullet\sum_{k,l}\big((e_{l}\diamond e_{k})\otimes f_{k}\otimes f_{l}
-(e_{k}\diamond e_{l})\otimes f_{k}\otimes f_{l}\big).
\end{align*}
That is to say, $\widetilde{r}$ is a solution of the $\DAYBE$ in $(A\otimes P, \dashv,
\vdash)$ if $r$ is a solution of the $\AYBE$ in $(A, \cdot)$.

Second, for any $a\in A$ and $p\in P$, we have
\begin{align*}
&\;\big((\fr_{\vdash}-\fr_{\dashv})(a\otimes p)\otimes\id
+\id\otimes\fl_{\dashv}(a\otimes p)\big)\big(\widetilde{r}-\tau(\widetilde{r})\big)\\
=&\;\sum_{i, j}\Big(((x_{i}\cdot a)\otimes y_{i})\bullet((e_{j}\diamond p)\otimes f_{j})
-((x_{i}\cdot a)\otimes y_{i})\bullet((p\diamond e_{j})\otimes f_{j})\\[-5mm]
&\qquad\quad +(x_{i}\otimes(a\cdot y_{i}))\bullet(e_{j}\otimes(f_{j}\diamond p))
-((y_{i}\cdot a)\otimes x_{i})\bullet((f_{j}\diamond p)\otimes e_{j})\\[-2mm]
&\qquad\quad +((y_{i}\cdot a)\otimes x_{i})\bullet((p\diamond f_{j})\otimes e_{j})
-(y_{i}\otimes(a\cdot x_{i}))\bullet(f_{j}\otimes(e_{j}\diamond p))\Big).
\end{align*}
Moreover, for any $1\leq s, t\leq n$, since
$$
\varpi\Big(\sum_{j}(e_{j}\diamond p)\otimes f_{j},\; e_{s}\otimes e_{t}\Big)
=\varpi(p,\; e_{t}\diamond e_{s})
=-\varpi\Big(\sum_{j}(f_{j}\diamond p)\otimes e_{j},\; e_{s}\otimes e_{t}\Big),
$$
we get $\sum_{j}(e_{j}\diamond p)\otimes f_{j}=-\sum_{j}(f_{j}\diamond p)\otimes e_{j}$.
Similarly, we also have $\sum_{j}e_{j}\otimes(f_{j}\diamond p)
=-\sum_{j}f_{j}\otimes(e_{j}\diamond p)$ and $\sum_{j}(p\diamond e_{j})\otimes f_{j}=
-\sum_{j}(p\diamond f_{j})\otimes e_{j}=\sum_{j}(e_{j}\diamond p)\otimes f_{j}
+e_{j}\otimes(f_{j}\diamond p)$. Therefore, we obtain
\begin{align*}
&\;\big((\fr_{\vdash}-\fr_{\dashv})(a\otimes p)\otimes\id
+\id\otimes\fl_{\dashv}(a\otimes p)\big)\big(\widetilde{r}-\tau(\widetilde{r})\big)\\
=&\;\sum_{i,j}\big((x_{i}\cdot a)\otimes y_{i}+(y_{i}\cdot a)\otimes x_{i}
-x_{i}\otimes(a\cdot y_{i})+y_{i}\otimes(a\cdot x_{i})\big)\bullet(f_{j}\otimes
(e_{j}\diamond p)).
\end{align*}
If $r+\tau(r)$ is ass-invariant, i.e., for any $a\in A$,
$$
\sum_{i}x_{i}\otimes(a\cdot y_{i})+y_{i}\otimes(a\cdot x_{i})
=\sum_{i}(x_{i}\cdot a)\otimes y_{i}+(y_{i}\cdot a)\otimes x_{i},
$$
we get $\big((\fr_{\vdash}-\fr_{\dashv})(a\otimes p)\otimes\id
+\id\otimes\fl_{\dashv}(a\otimes p)\big)\big(\widetilde{r}-\tau(\widetilde{r})\big)=0$.
Similarly, we also have $\big(\fr_{\vdash}(a\otimes p)\otimes\id-\id\otimes(\fl_{\vdash}
-\fl_{\dashv})(a\otimes p)\big)\big(\widetilde{r}-\tau(\widetilde{r})\big)=0$.
Hence, $\widetilde{r}-\tau(\widetilde{r})$ is diass-invariant.
%

Finally, note that for any $1\leq s, t\leq n$,
$$
\varpi\Big(\sum_{j}e_{j}\otimes f_{j},\; e_{s}\otimes e_{t}\Big)
=\varpi(e_{t}, e_{s})=-\omega(e_{s}, e_{t})
=\varpi\Big(\sum_{j}f_{j}\otimes e_{j},\; e_{s}\otimes e_{t}\Big).
$$
That is, $\sum_{j}e_{j}\otimes f_{j}$ is skew-symmetric.
We get $\widetilde{r}$ is symmetric if $r$ is skew-symmetric. The proof is finished.
\end{proof}

We now give another main conclusion of this section.

\begin{thm}\label{thm:indu-sdibia}
Let $(A, \cdot, \Delta)$ be an ASI bialgebra, $(P, \diamond, \varpi)$ be a quadratic
perm algebra and $(A\otimes P, \dashv$, $\vdash, \Delta_{\dashv}, \Delta_{\vdash})$ be
the induced diassociative bialgebra from $(A, \cdot, \Delta)$ by $(P, \diamond, \varpi)$.
If $\Delta=\Delta_{r}$ is defined by Eq. \eqref{ass-cobo} for $r\in A\otimes A$, then
$(A\otimes P, \dashv, \vdash, \Delta_{\dashv}, \Delta_{\vdash})=(A\otimes P, \dashv,
\vdash, \Delta_{\dashv,\widetilde{r}}, \Delta_{\vdash,\widetilde{r}})$ as diassociative
bialgebras, where $\Delta_{\dashv,\widetilde{r}}, \Delta_{\vdash,\widetilde{r}}$ are
defined by Eq. \eqref{cobdi} and $\widetilde{r}$ is defined by Eq. \eqref{r-max}.
Therefore, we obtain
\begin{enumerate}\itemsep=0pt
\item[$(i)$]  $(A\otimes P, \dashv, \vdash, \Delta_{\dashv}, \Delta_{\vdash})$
     is coboundary if $(A, \cdot, \Delta)$ is coboundary;
\item[$(ii)$] $(A\otimes P, \dashv, \vdash, \Delta_{\dashv}, \Delta_{\vdash})$ is
     quasi-triangular if $(A, \cdot, \Delta)$ is quasi-triangular;
\item[$(iii)$] $(A\otimes P, \dashv, \vdash, \Delta_{\dashv}, \Delta_{\vdash})$ is
     triangular if $(A, \cdot, \Delta)$ is triangular;
\item[$(iv)$] $(A\otimes P, \dashv, \vdash, \Delta_{\dashv}, \Delta_{\vdash})$ is
     factorizable if $(A, \cdot, \Delta)$ is factorizable.
\end{enumerate}
\end{thm}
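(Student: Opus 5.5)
The core claim is the identity $\Delta_{\dashv}=\Delta_{\dashv,\widetilde{r}}$ and $\Delta_{\vdash}=\Delta_{\vdash,\widetilde{r}}$ on $A\otimes P$ whenever $\Delta=\Delta_{r}$; items $(i)$--$(iv)$ then follow from results already stated. I will evaluate both sides on a pure tensor $a\otimes p$ and factor each side through the $\bullet$-notation into an $A$-part and a $P$-part.

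For $\Delta_{\vdash,\widetilde{r}}(a\otimes p)=-\mathcal{E}(a\otimes p)(\widetilde{r})$, expanding \eqref{cobdi1} with \eqref{prod1}, \eqref{prod2} gives
\begin{align*}
-\sum_{i,j}\Big(&((x_{i}\cdot a)\otimes y_{i})\bullet\big((e_{j}\diamond p)\otimes f_{j}-(p\diamond e_{j})\otimes f_{j}\big)\\
&+(x_{i}\otimes(a\cdot y_{i}))\bullet(e_{j}\otimes(f_{j}\diamond p))\Big).
\end{align*}
Here I use the identities established in the proof of Proposition \ref{pro:AYBE-DYBE}, namely $\sum_{j}(p\diamond e_{j})\otimes f_{j}=\sum_{j}(e_{j}\diamond p)\otimes f_{j}+e_{j}\otimes(f_{j}\diamond p)$. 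With this, the $P$-factor in the first term becomes $-\sum_{j}e_{j}\otimes(f_{j}\diamond p)$. Both terms then share the common $P$-factor $T(p):=\sum_{j}e_{j}\otimes(f_{j}\diamond p)$, and the expression equals
$$\big((\id\otimes\fl_{A}(a)-\fr_{A}(a)\otimes\id)(r)\big)\bullet(\pm T(p)).$$
The first factor is exactly $\Delta_{r}(a)$. It remains to identify $T(p)$, up to the correct sign, with $\nu_{\varpi}(p)$. For this I pair $T(p)$ against $e_{s}\otimes e_{t}$ using $\varpi$ and use skew-symmetry and invariance:
$$\varpi\Big(\sum_{j}e_{j}\otimes(f_{j}\diamond p),\ e_{s}\otimes e_{t}\Big)=\sum_{j}\varpi(e_{j},e_{s})\,\varpi(f_{j}\diamond p,e_{t}).$$
I rewrite the right-hand side via the dual-basis expansion $\sum_{j}\varpi(e_{j},e_{s})f_{j}=\pm e_{s}$ and invariance, and compare it with $\varpi(p,e_{s}\diamond e_{t})$. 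The vanishing $\varpi(p,(x\diamond y-y\diamond x)\diamond z)=0$, seen in the proof of Theorem \ref{thm:permbia-dia}, handles the commutator parts. By nondegeneracy of $\varpi$ on $P\otimes P$ this pins down the identity. The $\Delta_{\dashv}$ case is analogous, using $\mathcal{F}$ from \eqref{cobdi2}: the $P$-part should collapse to $\tau(\nu_{\varpi}(p))$, by the companion identities $\sum_{j}(e_{j}\diamond p)\otimes f_{j}=-\sum_{j}(f_{j}\diamond p)\otimes e_{j}$, and so on.

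The main obstacle is the sign bookkeeping in this identification of the $P$-tensors with $\nu_{\varpi}(p)$ and $\tau(\nu_{\varpi}(p))$. Skew-symmetry of $\varpi$ and the asymmetric dual-basis convention $\varpi(f_{i},e_{j})=\delta_{ij}$ make sign errors easy. I will therefore check it concretely on Example \ref{ex:dual-perm} before writing the general computation.

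Given the identity, the four items follow directly. Item $(i)$ is immediate. For $(ii)$, $r$ is a solution of the $\AYBE$ with $r+\tau(r)$ ass-invariant; Proposition \ref{pro:AYBE-DYBE} then makes $\widetilde{r}$ a solution of the $\DAYBE$ with $\widetilde{r}-\tau(\widetilde{r})$ diass-invariant, and Proposition \ref{pro:quasi-di}$(i)$ applies. For $(iii)$, I use the ``in particular'' clause of Proposition \ref{pro:AYBE-DYBE} and Proposition \ref{pro:quasi-di}$(ii)$. For $(iv)$, I compute $\widetilde{r}^{\sharp}$. Identifying $(A\otimes P)^{\ast}=A^{\ast}\otimes P^{\ast}$, one gets $\widetilde{r}^{\sharp}=r^{\sharp}\otimes\kappa^{\sharp}$ with $\kappa=\sum_{j}e_{j}\otimes f_{j}$, which is skew-symmetric and nondegenerate since $\varpi$ is nondegenerate. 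Hence $\tau(\widetilde{r})^{\sharp}=\tau(r)^{\sharp}\otimes\tau(\kappa)^{\sharp}=-\tau(r)^{\sharp}\otimes\kappa^{\sharp}$, and
$$\widetilde{r}^{\sharp}-\tau(\widetilde{r})^{\sharp}=(r^{\sharp}+\tau(r)^{\sharp})\otimes\kappa^{\sharp}.$$
This is a tensor product of two isomorphisms, hence an isomorphism, so the induced bialgebra is factorizable.
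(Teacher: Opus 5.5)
Your proposal is correct and follows essentially the same route as the paper. You expand $-\mathcal{E}(a\otimes p)(\widetilde{r})$ and $\mathcal{F}(a\otimes p)(\widetilde{r})$, identify the $P$-tensors with $\nu_{\varpi}(p)$ and $\tau(\nu_{\varpi}(p))$ by pairing against $e_{s}\otimes e_{t}$, obtain $(ii)$--$(iii)$ from Propositions \ref{pro:AYBE-DYBE} and \ref{pro:quasi-di}, and obtain $(iv)$ from $\widetilde{r}^{\sharp}-\tau(\widetilde{r})^{\sharp}=(r^{\sharp}+\tau(r)^{\sharp})\otimes\kappa^{\sharp}$. The sign you left open comes out right: your computation gives $\Delta_{\vdash,\widetilde{r}}(a\otimes p)=-\Delta_{r}(a)\bullet T(p)$, and the dual-basis convention, skew-symmetry and invariance give $\varpi(T(p),u\otimes v)=-\varpi(u\diamond p,v)=-\varpi(p,u\diamond v)$, so $T(p)=-\nu_{\varpi}(p)$, which is exactly the identity the paper uses.
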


\begin{proof}
Let $r=\sum_{i}x_{i}\otimes y_{i}\in A\otimes A$. First, for any $a\in A$ and $p\in P$, we have
$$
\Delta_{\vdash}(a\otimes p)=\Big(\sum_{i}\big(x_{i}\otimes(a\cdot y_{i})
-(x_{i}\cdot a)\otimes y_{i}\big)\Big)\bullet\Big(\sum_{(p)}p_{(1)}\otimes p_{(2)}\Big),
$$
where $\nu_{\varpi}(p)=\sum_{(p)}p_{(1)}\otimes p_{(2)}$ and $\Delta_{r}(a)=
(\id\otimes\fl_{A}(a)-\fr_{A}(a)\otimes\id)(r)=\sum_{i}\big(x_{i}\otimes(a\cdot y_{i})
-(x_{i}\cdot a)\otimes y_{i}\big)$. On the other hand,
\begin{align*}
\Delta_{\vdash,\widetilde{r}}(a\otimes p)&=-\big((\fr_{\vdash}-\fr_{\dashv})(a\otimes p)
\otimes\id+\id\otimes\fl_{\dashv}(a\otimes p)\big)(\widetilde{r})\\
&=\sum_{i,j}\Big(\big((x_{i}\cdot a)\otimes y_{i}\big)\bullet\big((p\diamond e_{j})\otimes
f_{j}\big)-\big((x_{i}\cdot a)\otimes y_{i}\big)\bullet
\big((e_{j}\diamond p)\otimes f_{j}\big)\\[-5mm]
&\qquad\qquad-\big(x_{i}\otimes(a\cdot y_{i})\big)\bullet\big(e_{j}\otimes
(f_{j}\diamond p)\big)\Big),
\end{align*}
where $\{e_{1}, e_{2},\cdots, e_{n}\}$ is a basis of $P$ and $\{f_{1}, f_{2},\cdots,
f_{n}\}$ is the dual basis of $\{e_{1}, e_{2}, \cdots, e_{n}\}$ with respect to $\varpi(-,-)$.
For any basis elements $e_{s}, e_{t}\in P$, since
\begin{align*}
& \varpi\Big(\sum_{(p)}p_{(1)}\otimes p_{(2)},\ \
e_{s}\otimes e_{t}\Big)=\varpi(p,\; e_{s}\diamond e_{t}),\\[-2mm]
& \varpi\Big(\sum_{j}(p\diamond e_{j})\otimes f_{j},\ \ e_{s}\otimes e_{t}\Big)
=\varpi(p,\; e_{t}\diamond e_{s}-e_{s}\otimes e_{t}),\\[-2mm]
& \varpi\Big(\sum_{j}(e_{j}\diamond p)\otimes f_{j},\ \ e_{s}\otimes e_{t}\Big)
=\varpi(p,\; e_{t}\diamond e_{s}),\\[-2mm]
& \varpi\Big(\sum_{j}e_{j}\otimes(f_{j}\diamond p),\ \ e_{s}\otimes e_{t}\Big)
=-\varpi(p,\; e_{s}\otimes e_{t}),
\end{align*}
we get that $\sum_{(p)}p_{(1)}\otimes p_{(2)}=-\sum_{j}e_{j}\otimes(f_{j}\diamond p)
=\sum_{j}(e_{j}\diamond p)\otimes f_{j}-\sum_{j}(p\diamond e_{j})\otimes f_{j}$, and so that
$$
\Delta_{\vdash,\widetilde{r}}(a\otimes p)=\Big(\sum_{i}\big(x_{i}\otimes(a\cdot y_{i})
-(x_{i}\cdot a)\otimes y_{i}\big)\Big)\bullet\Big(\sum_{(p)}p_{(1)}\otimes p_{(2)}\Big)
=\Delta_{\vdash}(a\otimes p).
$$
Similarly, we also have $\Delta_{\dashv,\widetilde{r}}=\Delta_{\dashv}$.
Thus, $(A\otimes P, \dashv, \vdash, \Delta_{\dashv}, \Delta_{\vdash})=(A\otimes P, \dashv,
\vdash, \Delta_{\dashv,\widetilde{r}}, \Delta_{\vdash,\widetilde{r}})$ as diassociative
bialgebras. We obtain the conclusion $(i)$.

Second, $(ii)$ and $(iii)$ follow from Proposition \ref{pro:AYBE-DYBE}.
Finally, suppose $(A, \cdot, \Delta_{r})$ is factorizable. We get $\mathcal{I}=
r^{\sharp}+\tau(r)^{\sharp}: A^{\ast}\rightarrow A$ is an isomorphism of vector spaces.
We need to show that $\widetilde{\mathcal{I}}:=\widetilde{r}^{\sharp}
-\tau(\widetilde{r})^{\sharp}: (A\otimes P)^{\ast}\rightarrow A\otimes P$ is an
isomorphism of vector spaces. Denote $\kappa:=\sum_{j}e_{j}\otimes
f_{j}\in P\otimes P$. Define $\kappa^{\sharp}: P^{\ast}\rightarrow P$ by
$\langle\kappa^{\sharp}(\xi_{1}),\; \xi_{2}\rangle=\langle\kappa,\;
\xi_{1}\otimes\xi_{2}\rangle$, for any $\xi_{1}, \xi_{2}\in P^{\ast}$.
Then, one can check that $\kappa^{\sharp}$ is a linear isomorphism and
$\langle\kappa^{\sharp}(\xi_{1}),\; \xi_{2}\rangle=\langle\kappa,\;
\xi_{1}\otimes\xi_{2}\rangle=-\langle\kappa,\; \xi_{2}\otimes\xi_{1}\rangle
=\langle\kappa^{\sharp}(\xi_{2}),\; \xi_{1}\rangle$.
Therefore, for any $\xi_{1}, \xi_{2}\in P^{\ast}$ and $\eta_{1}, \eta_{2}\in A^{\ast}$,
\begin{align*}
\langle\widetilde{r}^{\sharp}(\eta_{1}\otimes\xi_{1}),\; \eta_{2}\otimes\xi_{2}\rangle
&=\sum_{i,j}\langle(\eta_{1}\otimes\xi_{1})\otimes(\eta_{2}\otimes\xi_{2}),\ \
(x_{i}\otimes e_{j})\otimes(y_{i}\otimes f_{j})\rangle\\[-2mm]
&=\Big(\sum_{j}\langle\eta_{1}, x_{i}\rangle\langle\eta_{2}, y_{i}\rangle\Big)
\Big(\sum_{i}\langle\xi_{1}, e_{j}\rangle\langle\xi_{2}, f_{j}\rangle\Big)\\[-2mm]
&=\langle r^{\sharp}(\eta_{1}),\; \eta_{2}\rangle
\langle\kappa^{\sharp}(\xi_{1}),\; \xi_{2}\rangle\\
&=\langle r^{\sharp}(\eta_{1})\otimes\kappa^{\sharp}(\xi_{1}),\;
\eta_{2}\otimes \xi_{2}\rangle.
\end{align*}
That is, $\widetilde{r}^{\sharp}=r^{\sharp}\otimes\kappa^{\sharp}$, Similarly,
$\tau(\widetilde{r})^{\sharp}=-\tau(r)^{\sharp}\otimes\kappa^{\sharp}$. Thus,
$\widetilde{\mathcal{I}}=\mathcal{I}\otimes\kappa^{\sharp}$ is an isomorphism of
vector spaces. The proof is completed.
\end{proof}

\begin{ex}\label{ex:ind-tribi}
Consider $2$-dimensional associative algebra $(A, \cdot)$, where vector space
$A={\rm span}_{\Bbbk}\{e_{1}$, $e_{2}\}$, the nonzero products are given by
$e_{1}\cdot e_{1}=e_{1}$ and $e_{1}\cdot e_{2}=e_{2}$. Denote $r=e_{2}\otimes e_{1}
-e_{1}\otimes e_{2}$. Then $r$ is a skew-symmetric solution of the $\AYBE$ in $(A, \cdot)$.
Thus, we get a triangular ASI bialgebra $(A, \cdot, \Delta)$, where the coporducts are
given by $\Delta(e_{1})=e_{2}\otimes e_{1}$ and $\Delta(e_{2})=e_{2}\otimes e_{2}$.
Let $(P, \diamond, \varpi)$ be the $2$-dimensional quadratic perm algebra given
in Example \ref{ex:dual-perm}. Then we get a $4$-dimensional diassociative algebra
$(A\otimes P, \dashv, \vdash)$, where the nonzero products are given by
\begin{align*}
& (e_{1}\otimes x_{2})\vdash(e_{1}\otimes x_{1})=e_{1}\otimes x_{1},&
& (e_{1}\otimes x_{2})\vdash(e_{2}\otimes x_{1})=e_{2}\otimes x_{1},\\
& (e_{1}\otimes x_{2})\vdash(e_{1}\otimes x_{2})=e_{1}\otimes x_{2},&
& (e_{1}\otimes x_{2})\vdash(e_{2}\otimes x_{2})=e_{2}\otimes x_{2},\\
& (e_{1}\otimes x_{1})\dashv(e_{1}\otimes x_{2})=e_{1}\otimes x_{1},&
& (e_{1}\otimes x_{1})\dashv(e_{2}\otimes x_{2})=e_{2}\otimes x_{1},\\
& (e_{1}\otimes x_{2})\dashv(e_{1}\otimes x_{2})=e_{1}\otimes x_{2},&
& (e_{1}\otimes x_{2})\dashv(e_{2}\otimes x_{2})=e_{2}\otimes x_{2}.
\end{align*}
Moreover, by Proposition \ref{pro:AYBE-DYBE}, we get a symmetric solution
$$
\widetilde{r}=(e_{1}\otimes x_{1})\otimes(e_{2}\otimes x_{2})
+(e_{2}\otimes x_{2})\otimes(e_{1}\otimes x_{1})
-(e_{1}\otimes x_{2})\otimes(e_{2}\otimes x_{1})
-(e_{2}\otimes x_{1})\otimes(e_{1}\otimes x_{2})
$$
of the $\DAYBE$ in $(A\otimes P, \dashv, \vdash)$. Thus, $\widetilde{r}$ induced
a triangular diassociative bialgebra structure on $(A\otimes P, \dashv, \vdash)$,
which is precisely the diassociative bialgebra given in Example \ref{ex:ind-diabi}.
\end{ex}

\begin{rmk}\label{rmk:nond}
Let $V$ be a vector space. Recall that an element $r\in V\otimes V$ is called {\bf
nondegenerate} if $r^{\sharp}: V^{\ast}\rightarrow V$ is a linear isomorphism.
Let $(A, \cdot)$ be an associative algebra and $(P, \diamond, \varpi)$ be a quadratic
perm algebra. By the proof of Theorem \ref{thm:indu-sdibia}, it is easy to see that
$\widetilde{r}\in(A\otimes P)\otimes(A\otimes P)$ is nondegenerate if
$r\in A\otimes A$ is nondegenerate.

Moreover, similar to the discussion in \cite{HL}, we can also construct a diassociative
bialgerba by the tensor product of perm bialgebra and a quadratic associative, and show that
this diassociative bialgerba is coboundary (resp. quasi-triangular, triangular, factorizable)
if the perm bialgebra is coboundary (resp. quasi-triangular, triangular, factorizable).
\end{rmk}

Let $(A, \cdot)$ be an associative algebra, $r\in A\otimes A$ and $(P, \diamond, \varpi)$ be a
quadratic perm algebra. By the proof of Theorem \ref{thm:indu-sdibia},
we have following commutative diagram:
$$
\xymatrix@C=2cm@R=0.6cm{
\txt{$r$ \\ {\tiny a solution of the $\AYBE$ in $(A, \cdot)$}\\
{\tiny such that $r+\tau(r)$ is ass-invariant}}
\ar[d]_{{\rm Pro.}~\ref{pro:AYBE-DYBE}}\ar[r]^{{\rm Pro.}~\ref{pro:quasass-bia}} &
\txt{$(A, \cdot, \Delta_{r})$ \\ {\tiny a quasi-triangular ASI bialgebra}}
\ar[d]^{{\rm Thm.}~\ref{thm:indu-sdibia}} \\
\txt{$\widetilde{r}$ \\ {\tiny a solution of the $\DAYBE$ in $(A\otimes P, \dashv, \vdash)$}\\
{\tiny such that $\widetilde{r}-\tau(\widetilde{r})$ is Leib-invariant}}
\ar[r]^{{\rm Pro.}~\ref{pro:quasi-di}} &
\txt{$(A\otimes P, \dashv, \vdash, \Delta_{\dashv,\widetilde{r}},
\Delta_{\vdash,\widetilde{r}})$ \\
{\tiny a quasi-triangular diassociative bialgebra}}}
$$
In particular, for a skew-symmetric solution $r$ of the $\AYBE$ in $(A, \cdot)$, we get
$\widetilde{r}$ is a symmetric solution of the $\DAYBE$ in $(A\otimes P, \dashv, \vdash)$,
$(A\otimes P, \dashv, \vdash, \Delta_{\dashv}, \Delta_{\vdash})=(A\otimes P, \dashv,
\vdash, \Delta_{\dashv,\widetilde{r}}, \Delta_{\vdash,\widetilde{r}})$
as triangular diassociative bialgebras, and the diagram is also commutative.
In \cite{Kup}, Kupershmidt found that the $\CYBE$ in tensor form on Lie algebras can
be converted into an $\mathcal{O}$-operator associated to the coregular representation.
This conclusion has been confirmed on various types of algebras.
Let $(A, \cdot)$ be an associative algebra and $(V, \kl, \kr)$ be a bimodule over
$(A, \cdot)$. Recall that a linear map $T: V\rightarrow A$ is called an {\bf
$\mathcal{O}$-operator of $(A, \cdot)$ associated to $(V, \kl, \kr)$} if for
any $v_{1}, v_{2}\in V$,
$$
T(v_{1})\cdot T(v_{2})=T\big(\kl(T(v_{1}))(v_{2})+\kr(T(v_{2}))(v_{1})\big).
$$

\begin{pro}[\cite{Bai}]\label{pro:o-ass}
Let $(A, \cdot)$ be an associative algebra and $r\in A\otimes A$ be skew-symmetric.
Then $r$ is a solution of the $\AYBE$ in $(A, \cdot)$ if and only if
$r^{\sharp}: A^{\ast}\rightarrow A$ is an $\mathcal{O}$-operator of $(A, \cdot)$
associated to the coregular module $(A^{\ast}, -\fr_{A}^{\ast}, -\fl_{A}^{\ast})$.
\end{pro}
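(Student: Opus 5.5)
The plan is to translate both sides of the equivalence into identities in $A\otimes A\otimes A$ (equivalently, trilinear forms on $A^{\ast}$) and compare them term by term. Write $r=\sum_{i}x_{i}\otimes y_{i}$. Skew-symmetry of $r$ gives $\langle r^{\sharp}(\xi),\eta\rangle=-\langle r^{\sharp}(\eta),\xi\rangle$, so $\tau(r)^{\sharp}=-r^{\sharp}$. Recall that the coregular actions are defined by $\langle -\fr_{A}^{\ast}(a)\xi, b\rangle=\langle\xi, b\cdot a\rangle$ and $\langle -\fl_{A}^{\ast}(a)\xi, b\rangle=\langle\xi, a\cdot b\rangle$. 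The $\mathcal{O}$-operator condition for $T=r^{\sharp}$ therefore reads
$$
r^{\sharp}(\xi_{1})\cdot r^{\sharp}(\xi_{2})=r^{\sharp}\big(-\fr_{A}^{\ast}(r^{\sharp}(\xi_{1}))\xi_{2}-\fl_{A}^{\ast}(r^{\sharp}(\xi_{2}))\xi_{1}\big),
$$
for all $\xi_{1},\xi_{2}\in A^{\ast}$. By nondegeneracy of the pairing, it is equivalent to require that this identity hold after pairing both sides with an arbitrary $\xi_{3}\in A^{\ast}$.

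Next I would compute the three terms of $\mathbf{A}_{r}$ paired with $\xi_{1}\otimes\xi_{2}\otimes\xi_{3}$ in suitable slots. Using $r^{\sharp}(\xi)=\sum_{i}\langle\xi,x_{i}\rangle y_{i}$, one gets
$$
\langle\xi_{1}\otimes\xi_{2}\otimes\xi_{3},\ r_{13}\cdot r_{23}\rangle=\langle\xi_{3},\ r^{\sharp}(\xi_{1})\cdot r^{\sharp}(\xi_{2})\rangle.
$$
For the term $r_{12}\cdot r_{13}=\sum(x_{i}\cdot x_{j})\otimes y_{i}\otimes y_{j}$, pairing with $\xi_{3}\otimes\xi_{1}\otimes\xi_{2}$ gives $\langle\xi_{3}, r^{\sharp}(\xi_{1})'\cdot r^{\sharp}(\xi_{2})'\rangle$-type expressions, where $r^{\sharp}(\xi)'=\sum\langle\xi,y_{i}\rangle x_{i}=-r^{\sharp}(\xi)$ by skew-symmetry. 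I would rather reorganize the computation so that each term becomes $\langle\xi_{3}, r^{\sharp}(\cdot)\rangle$ of a coregular action. Concretely, $\langle r^{\sharp}(\fl_{A}^{\ast}(r^{\sharp}(\xi_{2}))\xi_{1}),\xi_{3}\rangle$ expands to $\sum\langle\xi_{1}, r^{\sharp}(\xi_{2})\cdot x_{i}\rangle\langle\xi_{3},y_{i}\rangle$ up to sign, which matches the pairing of one of $r_{12}\cdot r_{13}$ or $r_{23}\cdot r_{12}$ with a permutation of $\xi_{1}\otimes\xi_{2}\otimes\xi_{3}$. Similarly, the $\fr_{A}^{\ast}$ term matches the remaining one.

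After identifying the terms, the pairing of $\mathbf{A}_{r}$ with a fixed permutation of $\xi_{1}\otimes\xi_{2}\otimes\xi_{3}$ equals $\pm$ the pairing of the difference of the two sides of the $\mathcal{O}$-operator identity with $\xi_{3}$. Since the $\xi_{k}$ are arbitrary and pure tensors span $(A^{\ast})^{\otimes 3}$, we get $\mathbf{A}_{r}=0$ if and only if $r^{\sharp}$ is an $\mathcal{O}$-operator. This proves both directions at once.

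The main obstacle is the bookkeeping of signs and slot positions. Each use of skew-symmetry to move an $x$ to the $y$ side introduces a sign, and the three AYBE terms must land in the same permuted slot pattern simultaneously. I would first fix the slot convention via $r_{13}\cdot r_{23}$, which is the cleanest term, and then adjust the other two by applying skew-symmetry in exactly one tensor factor each, checking the signs carefully.
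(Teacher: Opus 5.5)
Your plan is correct and is the standard direct pairing argument for this result; the paper gives no proof of its own and simply cites \cite{Bai}, where the verification is of this kind. To remove your hedges: no permutation or sign adjustment is needed, since for all $\xi_{1},\xi_{2},\xi_{3}\in A^{\ast}$ one has $\langle\xi_{1}\otimes\xi_{2}\otimes\xi_{3},\mathbf{A}_{r}\rangle=\langle\xi_{3},\ r^{\sharp}(\xi_{1})\cdot r^{\sharp}(\xi_{2})-r^{\sharp}\big(-\fr_{A}^{\ast}(r^{\sharp}(\xi_{1}))\xi_{2}-\fl_{A}^{\ast}(r^{\sharp}(\xi_{2}))\xi_{1}\big)\rangle$, where the $\fr_{A}^{\ast}$-term pairs to $r_{23}\cdot r_{12}$ without using skew-symmetry and the $\fl_{A}^{\ast}$-term pairs to $-r_{12}\cdot r_{13}$ after one application of $r=-\tau(r)$.
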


The $\mathcal{O}$-operator of a diassociative algebra was considered in \cite{HLLZ,Lu}.
Recall that an {\bf $\mathcal{O}$-operator of a diassociative algebra $(D, \dashv, \vdash)$
associated to a bimodule $(V, \kl_{\dashv}, \kr_{\dashv}, \kl_{\vdash}, \kr_{\vdash})$}
is a linear map $T: V\rightarrow D$ such that
\begin{align*}
T(v_{1})\dashv T(v_{2})&=T\big(\kl_{\dashv}(T(v_{1}))(v_{2})
+\kr_{\dashv}(T(v_{2}))(v_{1})\big),\\
T(v_{1})\vdash T(v_{2})&=T\big(\kl_{\vdash}(T(v_{1}))(v_{2})
+\kr_{\vdash}(T(v_{2}))(v_{1})\big),
\end{align*}
for any $v_{1}, v_{2}\in V$.

\begin{pro}[\cite{HLLZ,Lu}]\label{pro:o-dia}
Let $(D, \dashv, \vdash)$ be a diassociative algebra and $r\in D\otimes D$ be symmetric.
Then $r$ is a solution of the $\DAYBE$ in $(D, \dashv, \vdash)$ if and only if
$r^{\sharp}$ is an $\mathcal{O}$-operator of $(D, \dashv, \vdash)$ associated to the
coregular bimodule $(D^{\ast}, \fr_{\vdash}^{\ast}-\fr_{\dashv}^{\ast},
-\fl_{\vdash}^{\ast}, -\fr_{\dashv}^{\ast}, \fl_{\dashv}^{\ast}-\fl_{\vdash}^{\ast})$.
\end{pro}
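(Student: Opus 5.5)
The plan is to translate both defining identities of an $\mathcal{O}$-operator into trilinear identities in $\xi_{1},\xi_{2},\xi_{3}\in D^{\ast}$, and then to recognize these as pairings of $\mathbf{D}_{r}$ against permutations of $\xi_{1}\otimes\xi_{2}\otimes\xi_{3}$. Write $r=\sum_{i}x_{i}\otimes y_{i}$. Since $r$ is symmetric, $r^{\sharp}(\xi)=\sum_{i}\langle\xi,x_{i}\rangle y_{i}=\sum_{i}\langle\xi,y_{i}\rangle x_{i}$. Hence $\langle r^{\sharp}(\xi),\eta\rangle=\langle\xi,r^{\sharp}(\eta)\rangle$ for all $\xi,\eta\in D^{\ast}$. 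This self-adjointness is the only place where the symmetry of $r$ enters, and it lets every $r^{\sharp}$ be moved across the pairing.

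First I treat the $\dashv$-identity. I pair
$$
r^{\sharp}(\xi_{1})\dashv r^{\sharp}(\xi_{2})-r^{\sharp}\big((\fr_{\vdash}^{\ast}-\fr_{\dashv}^{\ast})(r^{\sharp}(\xi_{1}))(\xi_{2})-\fl_{\vdash}^{\ast}(r^{\sharp}(\xi_{2}))(\xi_{1})\big)
$$
with an arbitrary $\xi_{3}$. I move $r^{\sharp}$ onto $\xi_{3}$ by self-adjointness and then use the definition $\langle\psi^{\ast}(d)\xi,v\rangle=-\langle\xi,\psi(d)v\rangle$. The result is
$$
\langle\xi_{3},\,r^{\sharp}\xi_{1}\dashv r^{\sharp}\xi_{2}\rangle
+\langle\xi_{2},\,r^{\sharp}\xi_{3}\vdash r^{\sharp}\xi_{1}-r^{\sharp}\xi_{3}\dashv r^{\sharp}\xi_{1}\rangle
-\langle\xi_{1},\,r^{\sharp}\xi_{2}\vdash r^{\sharp}\xi_{3}\rangle .
$$
I then expand each term in the components of $r$. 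For instance, $\langle\xi_{1},r^{\sharp}\xi_{2}\vdash r^{\sharp}\xi_{3}\rangle=\sum_{i,j}\langle\xi_{2},x_{i}\rangle\langle\xi_{3},x_{j}\rangle\langle\xi_{1},y_{i}\vdash y_{j}\rangle$, which is $\langle\xi_{2}\otimes\xi_{3}\otimes\xi_{1},\,r_{13}\vdash r_{23}\rangle$. Using symmetry of $r$ to swap $x$ and $y$ where needed, each of the four terms becomes a pairing of one of $r_{13}\vdash r_{23}$, $r_{23}\dashv r_{12}$, $r_{12}\vdash r_{13}$, $r_{12}\dashv r_{13}$ with $\xi_{2}\otimes\xi_{3}\otimes\xi_{1}$ (or a fixed permutation of it). The target is to show that the whole expression equals $\pm\langle\mathbf{D}_{r},\,\xi_{\sigma(1)}\otimes\xi_{\sigma(2)}\otimes\xi_{\sigma(3)}\rangle$ for one fixed permutation $\sigma$.

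Next I repeat this computation for the $\vdash$-identity, whose bimodule maps are $-\fr_{\dashv}^{\ast}$ and $\fl_{\dashv}^{\ast}-\fl_{\vdash}^{\ast}$. The expectation is again a signed pairing of $\mathbf{D}_{r}$ with a (possibly different) permutation of the $\xi$'s. Since the $\xi_{k}$ are arbitrary, $\mathbf{D}_{r}=0$ then forces both identities, which gives the implication from the $\DAYBE$ to the $\mathcal{O}$-operator property. Conversely, if $r^{\sharp}$ is an $\mathcal{O}$-operator, the $\dashv$-identity alone gives $\langle\mathbf{D}_{r},\xi_{\sigma(1)}\otimes\xi_{\sigma(2)}\otimes\xi_{\sigma(3)}\rangle=0$ for all $\xi$'s. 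Because $D$ is finite-dimensional, elements of $D^{\otimes3}$ are separated by $(D^{\ast})^{\otimes3}$, so $\mathbf{D}_{r}=0$.

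The main obstacle is the bookkeeping in the matching step. The four terms of $\mathbf{D}_{r}$ carry the products in different tensor slots, and only the symmetry of $r$ allows them to be aligned to a single common ordering of $\xi_{1},\xi_{2},\xi_{3}$. I also need to check that the coregular structure maps in the statement give exactly the signs of $\mathbf{D}_{r}$. For the $\vdash$-identity it may happen that the expression is not literally a permuted $\mathbf{D}_{r}$ but differs from one by terms that vanish by the diassociative axioms, e.g.\ $(d_{1}\dashv d_{2})\vdash d_{3}=(d_{1}\vdash d_{2})\vdash d_{3}$. My first attempt there is to use those axioms to cancel the extra terms.
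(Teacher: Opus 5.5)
Your proof is correct. The paper gives no argument of its own for this statement (it is quoted from \cite{HLLZ,Lu}); your direct pairing computation is the standard proof and uses the same manipulations the paper carries out in proving Proposition~\ref{pro:sym-dia}. The hedge in your last paragraph is unnecessary: with $d_{k}=r^{\sharp}(\xi_{k})$, the $\dashv$-identity paired with $\xi_{3}$ is exactly $-\langle\xi_{2}\otimes\xi_{3}\otimes\xi_{1},\mathbf{D}_{r}\rangle$, and the $\vdash$-identity paired with $\xi_{3}$ is exactly $\langle\xi_{1}\otimes\xi_{2}\otimes\xi_{3},\mathbf{D}_{r}\rangle$. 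So each identity separately is equivalent to the $\DAYBE$, and no diassociative axioms are needed.
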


Let $(A, \cdot)$ be an associative algebra, $r\in A\otimes A$ and $(P, \diamond, \varpi)$ be a
quadratic perm algebra. Then we get a diassociative algebra $(A\otimes P, \dashv, \vdash)$.
By the proof of Theorem \ref{thm:indu-sdibia}, we also have
the following commutative diagram:
$$
\xymatrix@C=3cm@R=0.5cm{
\txt{$r$ \\ {\tiny a skew-symmetric solution} \\ {\tiny of the $\AYBE$ in $(A, \cdot)$}}
\ar[d]_-{{\rm Pro.}~\ref{pro:AYBE-DYBE}}\ar[r]^-{{\rm Pro.}~\ref{pro:o-ass}} &
\txt{$r^{\sharp}$\\ {\tiny an $\mathcal{O}$-operator of $(A, \cdot)$} \\
{\tiny associated to coregular bimodule}}
\ar[d]^-{\mbox{$-\otimes\kappa^{\sharp}$}} \\
\txt{$\widetilde{r}$ \\ {\tiny a symmetric solution} \\ {\tiny of the $\DAYBE$ in
$(A\otimes P, \dashv, \vdash)$}} \ar[r]^-{{\rm Pro.}~\ref{pro:o-dia}}
& \txt{$\widetilde{r}^{\sharp}=r^{\sharp}\otimes\kappa^{\sharp}$ \\
{\tiny an $\mathcal{O}$-operator of $(A\otimes P, \dashv, \vdash)$ } \\
{\tiny associated to coregular bimodule}}}
$$

At the end of this section, we give a simple example.

\begin{ex}\label{ex:ind-tri}
Let $(A, \cdot)$ be the $2$-dimensional associative algebra given in Example
\ref{ex:ind-tribi}, $r=e_{2}\otimes e_{1}-e_{1}\otimes e_{2}\in A\otimes A$ and
$(P, \diamond, \varpi)$ be the $2$-dimensional quadratic perm algebra given in Example
\ref{ex:dual-perm}. Then we get a $4$-dimensional diassociative algebra $(A\otimes P,
\dashv, \vdash)$ and a symmetric solution $\widetilde{r}$ of the $\DAYBE$ in
$(A\otimes P, \dashv, \vdash)$ in Example \ref{ex:ind-tribi}.
The skew-symmetric solution $r$ induces an $\mathcal{O}$-operator $r^{\sharp}: A^{\ast}
\rightarrow A$, which is given by
$$
r^{\sharp}(\xi_{1})=-e_{2},\qquad\qquad  r^{\sharp}(\xi_{2})=e_{1},
$$
where $\xi_{1}, \xi_{2}\in A^{\ast}$ is the dual basis of $e_{1}, e_{2}$. Denote
$\kappa=x_{2}\otimes x_{1}-x_{1}\otimes x_{2}$. Then the linear isomorphism
$\kappa^{\sharp}: P^{\ast}\rightarrow P$ is given by
$$
\kappa^{\sharp}(\eta_{1})=-x_{2},\qquad\qquad  \kappa^{\sharp}(\eta_{2})=x_{1},
$$
where $\eta_{1}, \eta_{2}\in P^{\ast}$ is the dual basis of $x_{1}, x_{2}$.
Moreover, by direct calculation, we get the $\mathcal{O}$-operator
$\widetilde{r}^{\sharp}: (A\otimes P)^{\ast}\rightarrow(A\otimes P)$ induced by
$\widetilde{r}$ is given by
$$
\widetilde{r}^{\sharp}(\xi_{1}\otimes\eta_{1})=e_{2}\otimes x_{2},\qquad
\widetilde{r}^{\sharp}(\xi_{1}\otimes\eta_{2})=-e_{2}\otimes x_{1},\qquad
\widetilde{r}^{\sharp}(\xi_{2}\otimes\eta_{1})=-e_{1}\otimes x_{2},\qquad
\widetilde{r}^{\sharp}(\xi_{2}\otimes\eta_{2})=e_{2}\otimes x_{2}.
$$
It is easy to see that $\widetilde{r}^{\sharp}(\xi_{i}\otimes\eta_{j})=
r^{\sharp}(\xi_{i})\otimes\kappa^{\sharp}(\eta_{j})$ for any $1\leq i, j\leq 2$.
\end{ex}

\section{From antisymmetric infinitesimal bialgebras to Leibniz bialgebras}\label{sec:LeibBi}
In this section, we show that the commutative diagram in Section \ref{sec:Pre}
is correct at the level of bialgebras. Moreover, by considering some special
solutions of the Yang-Baxter equations in associative algebras, Lie algebras,
diassociative algebras and Leibniz algebras, show that the commutative graph is also
correct at the level of triangular bialgebras. At the end of this section,
the close relationship between symplectic Lie algebras, symplectic Leibniz algebras,
symplectic associative algebras and symplectic diassociative algebras is considered.

\subsection{Yang-Baxter equation and triangular bialgebra structrues}\label{subsec:sym-tri}
Recall that  {\bf Lie coalgebra} $(\g, \delta)$ is a vector space $\g$ with a linear map
$\delta: \g\rightarrow\g\otimes\g$, such that $\tau\circ\delta=-\delta$ and $(\id\otimes\delta)
\circ\delta-(\tau\otimes\id)\circ(\id\otimes\delta)\circ\delta=(\delta\otimes\id)\circ\delta$.
Given an associative algebra, we can get a Lie algebra by the commutator.
Considering the dual case, for a coassociative coalgebra $(C, \Delta)$,
we also have a Lie coalgebra structure on $C$ by setting a coproduct
$\delta=\Delta-\tau\circ\Delta: C\rightarrow C\otimes C$. A {\bf Lie bialgebra} is a triple
$(\g, [-,-], \delta)$ such that $(\g, [-,-])$ is a Lie algebra, $(\g, \delta)$ is a Lie
coalgebra and the following compatibility condition holds:
$$
\delta([g_{1}, g_{2}])=(\ad_{\g}(g_{1})\otimes\id+\id\otimes\ad_{\g}(g_{1}))
(\delta(g_{2}))-(\ad_{\g}(g_{2})\otimes\id+\id\otimes\ad_{\g}(g_{2}))(\delta(g_{1})),
$$
where $\ad_{\g}(g_{1})(g_{2})=[g_{1}, g_{2}]$, for all $g_{1}, g_{2}\in\g$.

\begin{pro}[\cite{Bai}]\label{pro:ass-lie}
Let $(A, \cdot, \Delta)$ be an ASI bialgebra. If we define a bracket $[-,-]: A\otimes A
\rightarrow A$ by $[a_{1}, a_{2}]=a_{1}\cdot a_{2}-a_{2}\cdot a_{1}$ and
$\delta=\Delta-\tau\circ\Delta: A\rightarrow A\otimes A$, then $(A, [-,-], \delta)$ is
a Lie bialgebra, which is called {\bf the Lie bialgebra induced by $(A, \cdot, \Delta)$}.
\end{pro}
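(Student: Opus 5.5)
The plan is to verify the three ingredients of a Lie bialgebra separately. For the algebra part, $(A, [-,-])$ is a Lie algebra by the classical commutator construction, which is the special case of Proposition \ref{pro:commtor}. For the coalgebra part, I will dualize. Since $(A, \Delta)$ is coassociative, $(A^{\ast}, \Delta^{\ast})$ is an associative algebra. Its commutator $[\xi_{1}, \xi_{2}]^{\ast}=\xi_{1}\xi_{2}-\xi_{2}\xi_{1}$ is therefore a Lie bracket. A one-line pairing computation shows that this bracket is exactly $\delta^{\ast}$, because
$$
\langle\delta(a), \xi_{1}\otimes\xi_{2}\rangle=\langle\Delta(a), \xi_{1}\otimes\xi_{2}\rangle-\langle\Delta(a), \xi_{2}\otimes\xi_{1}\rangle .
$$
By finite-dimensionality, $(A, \delta)$ is then a Lie coalgebra. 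Alternatively, one checks directly that $\tau\circ\delta=-\delta$ and derives the co-Jacobi identity from $(\Delta\otimes\id)\Delta=(\id\otimes\Delta)\Delta$ by expanding into the six permutations of $\Delta^{(2)}(a)$.

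The substantive step is the compatibility condition. I will first expand $\delta([a_{1}, a_{2}])=\Delta(a_{1}a_{2})-\Delta(a_{2}a_{1})-\tau\Delta(a_{1}a_{2})+\tau\Delta(a_{2}a_{1})$. Applying Eq. \eqref{ASI1} to each of the four terms gives eight terms. Each is of the form $(\fr_{A}(a_{i})\otimes\id)\Delta(a_{j})$ or $(\id\otimes\fl_{A}(a_{i}))\Delta(a_{j})$, possibly followed by $\tau$. Next I will expand the right-hand side using $\ad(a)=\fl_{A}(a)-\fr_{A}(a)$ and $\delta=\Delta-\tau\Delta$. This produces sixteen terms of the same types, namely $(\fl_{A}(a_{i})\otimes\id)$, $(\fr_{A}(a_{i})\otimes\id)$, $(\id\otimes\fl_{A}(a_{i}))$, $(\id\otimes\fr_{A}(a_{i}))$ applied to $\Delta(a_{j})$ or to $\tau\Delta(a_{j})$. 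I will rewrite everything as operators applied to $\Delta(a_{j})$, possibly followed by $\tau$, using $(\phi\otimes\psi)\tau=\tau(\psi\otimes\phi)$.

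After the obvious cancellations, I expect the difference LHS $-$ RHS to collapse to
$$
X(a_{1},a_{2})-\tau X(a_{1},a_{2})-X(a_{2},a_{1})+\tau X(a_{2},a_{1})
$$
(up to sign), where
$$
X(a_{1},a_{2})=\big(\fl_{A}(a_{1})\otimes\id-\id\otimes\fr_{A}(a_{1})\big)(\Delta(a_{2}))-\tau\big(\big(\id\otimes\fr_{A}(a_{2})-\fl_{A}(a_{2})\otimes\id\big)(\Delta(a_{1}))\big).
$$
This quantity $X$ vanishes identically by Eq. \eqref{ASI2}, which finishes the proof.

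The main obstacle is purely bookkeeping: sorting the twenty-four terms by type and tracking signs under $\tau$. I plan to organize the terms into four groups according to which of $\Delta(a_{1}), \Delta(a_{2})$ they involve and whether a $\tau$ is present. This makes the pairing with the $\tau$-images of Eq. \eqref{ASI2} visible. If the residual does not match $X$ exactly, I will also allow the version of Eq. \eqref{ASI2} with $a_{1}, a_{2}$ interchanged and then apply $\tau$, since both are available.
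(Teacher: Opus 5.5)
Your proposal is correct, and it takes a different route from the one the paper relies on. The paper gives no proof of its own and defers to \cite{Bai}, where, as I recall, the statement is obtained structurally via double constructions of Frobenius algebras. There, an ASI bialgebra is equivalent to an associative product on $A\oplus A^{\ast}$ that has $A$ and $A^{\ast}$ as subalgebras and makes the natural symmetric pairing invariant. A symmetric invariant form on an associative algebra stays invariant for the commutator bracket, so $(A\oplus A^{\ast}, A, A^{\ast})$ becomes a Manin triple of Lie algebras. The associated Lie bialgebra has cobracket dual to the commutator of $\Delta^{\ast}$, i.e.\ $\delta=\Delta-\tau\circ\Delta$.

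That argument needs no term-chasing and yields the co-Jacobi identity and the compatibility at once, but it rests on two equivalence theorems. Yours is self-contained and uses only \eqref{ASI1} and \eqref{ASI2} as stated. Your dual treatment of the coalgebra part is fine: the paper's co-Jacobi identity is exactly the Jacobi identity for $\delta^{\ast}$ written as $[\xi_{1},[\xi_{2},\xi_{3}]]=[[\xi_{1},\xi_{2}],\xi_{3}]+[\xi_{2},[\xi_{1},\xi_{3}]]$.

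One correction to your predicted residual. Put $Y(a_{1},a_{2})=(\fl_{A}(a_{1})\otimes\id-\id\otimes\fr_{A}(a_{1}))(\Delta(a_{2}))$, so that $X(a_{1},a_{2})=Y(a_{1},a_{2})+\tau Y(a_{2},a_{1})$. By \eqref{ASI1}, $\Delta([a_{1},a_{2}])=M(a_{1},a_{2})-M(a_{2},a_{1})$ with $M(a_{1},a_{2})=(\id\otimes\fl_{A}(a_{1})-\fr_{A}(a_{1})\otimes\id)(\Delta(a_{2}))$. Since $\ad(a)\otimes\id+\id\otimes\ad(a)$ commutes with $\tau$, the left-hand side minus the right-hand side of the compatibility condition is
\[
(\id-\tau)\big(Y(a_{2},a_{1})-Y(a_{1},a_{2})\big)=-\big(X(a_{1},a_{2})-X(a_{2},a_{1})\big).
\]
Because $\tau X(a_{1},a_{2})=X(a_{2},a_{1})$ holds identically, your expression $X(a_{1},a_{2})-\tau X(a_{1},a_{2})-X(a_{2},a_{1})+\tau X(a_{2},a_{1})$ equals $2\big(X(a_{1},a_{2})-X(a_{2},a_{1})\big)$. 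So the true residual matches yours up to the factor $-\tfrac12$, not merely up to sign. This is harmless, since everything vanishes by \eqref{ASI2}, and your fallback already covers it. It also shows that only the $\tau$-antisymmetric part $(\id-\tau)X(a_{1},a_{2})=0$ of \eqref{ASI2} is actually used.
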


A Lie bialgebra $(\g, [-,-], \delta)$ is called {\bf coboundary}
if there exists an element $r\in\g\otimes\g$ such that $\delta=\delta_{r}$,
where
\begin{align}
\delta_{r}(g)=(\id\otimes\ad_{\g}(g)+\ad_{\g}(g)\otimes\id)(r), \label{lie-cobo}
\end{align}
for any $g\in\g$. Let $(\g, [-,-])$ be a Lie algebra. An element
$r=\sum_{i}x_{i}\otimes y_{i}\in\g\otimes\g$ is said to be {\bf Lie-invariant}
if $(\id\otimes\ad_{\g}(g)+\ad_{\g}(g)\otimes\id)(r)=0$ for all $g\in\g$. The equation
$$
\mathbf{C}_{r}:=[r_{12}, r_{13}]+[r_{13}, r_{23}]+[r_{12}, r_{23}]=0
$$
is called the {\bf classical Yang-Baxter equation} (or $\CYBE$) in $(\g, [-,-])$,
where $[r_{12}, r_{13}]=\sum_{i,j}[x_{i}, x_{j}]\otimes y_{i}\otimes y_{j}$,
$[r_{13}, r_{23}]=\sum_{i,j}x_{i}\otimes x_{j}\otimes[y_{i}, y_{j}]$ and
$[r_{12}, r_{23}]=\sum_{i,j}x_{i}\otimes[y_{i}, x_{j}]\otimes y_{j}$.

\begin{pro}[\cite{RS,LS}]\label{pro:lie-bia}
Let $(\g, [-,-])$ be a Lie algebra, $r\in\g\otimes\g$ and $\delta_{r}:
\g\rightarrow\g\otimes\g$ be the linear map defined by Eq. \eqref{lie-cobo}.
\begin{enumerate}\itemsep=0pt
\item[$(i)$] If $r$ is a skew-symmetric solution of the $\CYBE$ in $(\g, [-,-])$, then
     $(\g, [-,-], \delta_{r})$ is a Lie bialgebra, which is called a {\bf triangular Lie
     bialgebra} associated with $r$.
\item[$(ii)$] If $r$ is a solution of the $\CYBE$ in $(\g, [-,-])$ and $r+\tau(r)$ is
     $\ad_{\g}$-invariant, then $(\g, [-,-], \delta_{r})$ is a Lie bialgebra, which is
     called a {\bf quasi-triangular Lie bialgebra} associated with $r$.
\end{enumerate}
\end{pro}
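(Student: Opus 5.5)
The statement to prove is Proposition \ref{pro:lie-bia}. Part $(i)$ is the special case of part $(ii)$ with $r+\tau(r)=0$, since $0$ is trivially $\ad_{\g}$-invariant. So I would prove $(ii)$ and then read off $(i)$. Two things must be checked: the compatibility condition, and the Lie coalgebra axioms (skew-symmetry and co-Jacobi) for $\delta_{r}$.

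\textbf{Compatibility.} This holds for every $r\in\g\otimes\g$, with no hypothesis. Set $\Phi(g)=\ad_{\g}(g)\otimes\id+\id\otimes\ad_{\g}(g)$. The Jacobi identity says $g\mapsto\Phi(g)$ is a representation of $\g$ on $\g\otimes\g$, so
$$
\Phi([g_{1},g_{2}])=\Phi(g_{1})\Phi(g_{2})-\Phi(g_{2})\Phi(g_{1}).
$$
Since $\delta_{r}(g)=\Phi(g)(r)$, applying this identity to $r$ gives exactly the required formula $\delta_{r}([g_{1},g_{2}])=\Phi(g_{1})\delta_{r}(g_{2})-\Phi(g_{2})\delta_{r}(g_{1})$.

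\textbf{Skew-symmetry.} The map $\tau$ commutes with $\Phi(g)$. Writing $s=r+\tau(r)$, we get
$$
\delta_{r}(g)+\tau(\delta_{r}(g))=\Phi(g)(r+\tau(r))=\Phi(g)(s)=0,
$$
where the last equality is the invariance hypothesis on $s$. Hence $\tau\circ\delta_{r}=-\delta_{r}$.

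\textbf{Co-Jacobi.} This is the main obstacle. I would express the co-Jacobi defect as a cyclic sum,
$$
J(g)=\sum_{\circlearrowleft}(\id\otimes\delta_{r})\delta_{r}(g),
$$
where the sum runs over cyclic permutations of the three tensor factors. The standard computation, as in Drinfeld's and Semenov-Tian-Shansky's treatment, then gives
$$
J(g)=\big(\ad_{\g}(g)\otimes\id\otimes\id+\id\otimes\ad_{\g}(g)\otimes\id+\id\otimes\id\otimes\ad_{\g}(g)\big)(\mathbf{C}_{r})+(\text{terms involving } s).
$$
I would carry this out in three steps.
\begin{enumerate}
\item Expand everything in $r=\sum_{i}x_{i}\otimes y_{i}$. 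Using skew-symmetry of $\delta_{r}$ (just proved), rewrite $J(g)$ as $\ad^{(3)}(g)(\mathbf{C}_{r})$ plus correction terms of the form $[\,\cdot\,,s]$-type expressions, where $\ad^{(3)}(g)$ denotes the diagonal action of $g$ on $\g^{\otimes3}$.
\item Decompose $r=\frac{1}{2}(r-\tau(r))+\frac{1}{2}s$.
\item Show the correction terms vanish. They arise from $\Phi(x)(s)$-type expressions, and these are zero because $s$ is invariant. In fact, invariance of $s$ makes $s_{12}$, $s_{13}$, $s_{23}$ commute appropriately with $r$ inside $\mathbf{C}_{r}$-type brackets, so they cancel.
\end{enumerate}
I expect the sign bookkeeping across the cyclic terms in this step to be the delicate part.

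\textbf{Conclusion.} With the correction terms gone, $J(g)=\ad^{(3)}(g)(\mathbf{C}_{r})$. If $\mathbf{C}_{r}=0$ this vanishes, so co-Jacobi holds and $(\g,[-,-],\delta_{r})$ is a Lie bialgebra. Part $(i)$ then follows by taking $s=0$.
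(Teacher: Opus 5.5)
The paper gives no proof of this proposition; it is quoted from the literature. Your outline is the standard argument from those sources, and it is correct. Your cocycle and skew-symmetry steps are complete. Once $\tau\circ\delta_{r}=-\delta_{r}$ is known, reducing the paper's form of co-Jacobi to the vanishing of your cyclic sum $J(g)$ is legitimate.

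The co-Jacobi step carries all the content, and you only assert it. The mechanism you give in Step 3 is also incomplete. Put $a=\frac{1}{2}(r-\tau(r))$, so that $r=a+\frac{1}{2}s$.
\begin{itemize}
\item The mixed terms between $a$ and $s$ in $\mathbf{C}_{r}$ do cancel, by invariance of $s$ and skew-symmetry of $a$. One gets $[s_{12},a_{13}+a_{23}]=0$, $[s_{23},a_{12}+a_{13}]=0$ and $[s_{13},a_{12}]=[s_{13},a_{23}]$.
\item The pure term $\frac{1}{4}\mathbf{C}_{s}$ does not cancel. It is in general nonzero, e.g.\ for $s$ the Casimir element of a semisimple $\g$.
\item It disappears for a different reason: it is $\g$-invariant. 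Write $g_{i}$ for $g$ placed in the $i$-th factor of $U(\g)^{\otimes 3}$. Invariance of $s$ means $[g_{i}+g_{j},s_{ij}]=0$, and then $[g_{1}+g_{2}+g_{3},[s_{ij},s_{ik}]]=0$ by the derivation property.
\end{itemize}
Hence $\ad^{(3)}(g)(\mathbf{C}_{r})=\ad^{(3)}(g)(\mathbf{C}_{a})$.

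To finish cleanly, use $\Phi(g)(s)=0$ to get $\delta_{r}=\delta_{a}$, and work with the skew-symmetric $a$ throughout.
\begin{itemize}
\item In $U(\g)^{\otimes 3}$ one has $(\delta_{a}\otimes\id)\delta_{a}(g)=[[g_{1}+g_{2}+g_{3},\,a_{13}+a_{23}],\,a_{12}]$.
\item Expand this by the Jacobi identity, noting $[g_{1}+g_{2}+g_{3},a_{12}]=\delta_{a}(g)_{12}$.
\item Sum over cyclic permutations, using $a_{21}=-a_{12}$, $a_{31}=-a_{13}$, $a_{32}=-a_{23}$. This gives $\sum_{\mathrm{cyc}}(\delta_{a}\otimes\id)\delta_{a}(g)=-\ad^{(3)}(g)(\mathbf{C}_{a})$.
\item By skew-symmetry of $\delta_{a}$, $(\id\otimes\delta_{a})\delta_{a}$ is minus a cyclic permutation of $(\delta_{a}\otimes\id)\delta_{a}$.
\end{itemize}
So your $J(g)$ equals $\ad^{(3)}(g)(\mathbf{C}_{a})=\ad^{(3)}(g)(\mathbf{C}_{r})$, which vanishes when $\mathbf{C}_{r}=0$. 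With this filled in, your proof is complete, and $(i)$ follows as you say by taking $s=0$.
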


A quasi-triangular Lie bialgebra $(\g, [-,-], \delta_{r})$ is called a {\bf factorizable
Lie bialgebra} if $\mathcal{I}=r^{\sharp}+\tau(r)^{\sharp}: \g^{\ast}\rightarrow \g$
is an isomorphism of vector spaces. The relationship between the solutions of the $\AYBE$ in
an associative algebra and the solutions of the $\CYBE$ in the induced Lie algebra has
been discussed in \cite{Hou1}.

\begin{pro}[\cite{Hou1}]\label{pro:ass-Lie-YBE}
Let $(A, \cdot)$ be an associative algebra and $(A, [-,-])$ be the induced Lie algebra
from $(A, \cdot)$. Suppose $r=\sum_{i}x_{i}\otimes y_{i}\in A\otimes A$ is a solution
of the $\AYBE$ in $(A, \cdot)$. If $r+\tau(r)$ is ass-invariant, then
$r$ is a solution of the $\CYBE$ in $(A, [-,-])$ and $r+\tau(r)$ is Lie-invariant.

In particular, each skew-symmetric solution of the $\AYBE$ in $(A, \cdot)$ is also a
skew-symmetric solution of the $\CYBE$ in $(A, [-,-])$.
\end{pro}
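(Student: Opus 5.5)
The proof splits into the invariance statement, which is a short formal manipulation, and the $\CYBE$ statement, which reduces to the $\AYBE$ plus a correction term. Throughout I write $s:=r+\tau(r)=\sum_{i}(x_{i}\otimes y_{i}+y_{i}\otimes x_{i})$. Ass-invariance of $s$ means $(\id\otimes\fl_{A}(a)-\fr_{A}(a)\otimes\id)(s)=0$ for all $a\in A$.

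\emph{Invariance.} Since $s$ is symmetric, applying $\tau$ to the ass-invariance identity gives the mirrored identity $(\fl_{A}(a)\otimes\id-\id\otimes\fr_{A}(a))(s)=0$. In the induced Lie algebra $\ad_{A}(a)=\fl_{A}(a)-\fr_{A}(a)$, so
$$(\ad_{A}(a)\otimes\id+\id\otimes\ad_{A}(a))(s)=(\fl_{A}(a)\otimes\id-\id\otimes\fr_{A}(a))(s)+(\id\otimes\fl_{A}(a)-\fr_{A}(a)\otimes\id)(s)=0 .$$
Hence $s$ is Lie-invariant.

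\emph{The $\CYBE$.} I expand each bracket of $\mathbf{C}_{r}$ into products. The terms $r_{12}\cdot r_{13}$, $r_{13}\cdot r_{23}$ and (after relabelling $i\leftrightarrow j$) $-r_{23}\cdot r_{12}$ together give exactly $\mathbf{A}_{r}$. Thus
$$\mathbf{C}_{r}=\mathbf{A}_{r}+B,\qquad B=\sum_{i,j}\big(x_{i}\otimes(y_{i}\cdot x_{j})\otimes y_{j}-(x_{j}\cdot x_{i})\otimes y_{i}\otimes y_{j}-x_{i}\otimes x_{j}\otimes(y_{j}\cdot y_{i})\big).$$
Since $\mathbf{A}_{r}=0$ by hypothesis, it remains to show $B=0$.

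To do this, I substitute $r=s-\tau(r)$ in one tensor factor of each term of $B$, choosing the factor so that the terms containing $s$ can be handled by invariance. Concretely, I apply ass-invariance of $s$ in the legs where the multiplication occurs, with $a$ taken to be an $x_{j}$ or $y_{j}$ coming from the other copy of $r$. Each term involving $s$ then either cancels against another or is converted into a term involving only $\tau(r)$.

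The expected outcome is that $B$ becomes $-\sigma(\mathbf{A}_{r})$ for a suitable permutation $\sigma$ of the three tensor legs (plausibly $\sigma=\tau_{13}$, the one reversing the order of the legs). This would give $B=0$, and hence $\mathbf{C}_{r}=0$.

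The main obstacle is this last bookkeeping step: picking, for each of the three terms of $B$, which copy of $r$ to replace and in which leg to apply invariance, so that the $s$-contributions cancel exactly and the remainder is recognisably a permuted copy of $\mathbf{A}_{r}$. I would first try replacing the copy of $r$ whose component is multiplied on the left (respectively right) and using the identity $\sum_{i}\big(x_{i}\otimes(a\cdot y_{i})+y_{i}\otimes(a\cdot x_{i})\big)=\sum_{i}\big((x_{i}\cdot a)\otimes y_{i}+(y_{i}\cdot a)\otimes x_{i}\big)$, and then check the result leg by leg.

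The skew-symmetric case is immediate from the general one: if $r=-\tau(r)$, then $s=0$ is trivially ass-invariant, so any skew-symmetric solution of the $\AYBE$ is a skew-symmetric solution of the $\CYBE$.
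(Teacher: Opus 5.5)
Everything you actually carry out is correct: the splitting $\mathbf{C}_{r}=\mathbf{A}_{r}+B$ with your $B$, the Lie-invariance of $s=r+\tau(r)$, and the reduction of the skew-symmetric case to $s=0$. The gap is that the only substantive step, $B=0$, is left as an ``expected outcome'' with a guessed permutation. So as written the $\CYBE$ part is not proved. (The paper only quotes this result from \cite{Hou1}, so there is no in-text argument to compare with.)

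Your strategy does close the gap, with less bookkeeping than you expect. Write $s=\sum_{l}u_{l}\otimes v_{l}$ and $B=T_{1}+T_{2}+T_{3}$, in the order of your three terms. Only $T_{1}$ and $T_{2}$ need the substitution $\sum_{i}x_{i}\otimes y_{i}=s-\sum_{i}y_{i}\otimes x_{i}$, applied to the $i$-copy:
\begin{align*}
T_{1}&=\sum_{l,j}u_{l}\otimes(v_{l}\cdot x_{j})\otimes y_{j}
-\sum_{i,j}y_{i}\otimes(x_{i}\cdot x_{j})\otimes y_{j},\\
T_{2}&=-\sum_{l,j}(x_{j}\cdot u_{l})\otimes v_{l}\otimes y_{j}
+\sum_{i,j}(x_{j}\cdot y_{i})\otimes x_{i}\otimes y_{j}.
\end{align*}
The two $s$-terms cancel by the mirrored invariance identity $\sum_{l}(a\cdot u_{l})\otimes v_{l}=\sum_{l}u_{l}\otimes(v_{l}\cdot a)$ with $a=x_{j}$. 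Now relabel $i\leftrightarrow j$ in the last term of $T_{2}$ and in $T_{3}$. This gives
\[
B=-\sum_{i,j}\big(y_{i}\otimes(x_{i}\cdot x_{j})\otimes y_{j}
+x_{j}\otimes x_{i}\otimes(y_{i}\cdot y_{j})
-(x_{i}\cdot y_{j})\otimes x_{j}\otimes y_{i}\big)=-\tau_{12}(\mathbf{A}_{r}),
\]
where $\tau_{12}(a\otimes b\otimes c)=b\otimes a\otimes c$. Hence $\mathbf{C}_{r}=\mathbf{A}_{r}-\tau_{12}(\mathbf{A}_{r})$ whenever $s$ is ass-invariant, and this vanishes when $\mathbf{A}_{r}=0$.

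Your guess $\sigma=\tau_{13}$ is not wrong, but it is the costlier route. With no hypothesis at all one has the identity $B=-\tau_{13}(\mathbf{A}_{\tau(r)})$. To replace $\mathbf{A}_{\tau(r)}$ by $\mathbf{A}_{r}$ you then need the extra lemma $\mathbf{A}_{\tau(r)}=\mathbf{A}_{r}$. That lemma holds under the invariance hypothesis, by expanding $\mathbf{A}$ quadratically in $\tau(r)=s-r$, but it requires substituting into both copies of $r$. Substituting into only one copy per term, as you propose, lands on $\tau_{12}$ instead. The two answers agree because, under the hypothesis, $\mathbf{A}_{r}$ is fixed by the cyclic permutation of the three legs.
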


Thus, for the relationship between quasi-triangular (resp. triangular, factorizable)
ASI bialgebras and quasi-triangular (resp. triangular, factorizable) Lie bialgebras,
we have the following proposition.

\begin{pro}[\cite{Hou1}]\label{pro:qtAss-qtLie}
Let $(A, \cdot)$ be an associative algebra and $r\in A\otimes A$.
Suppose $(A, \cdot, \Delta_{r})$ is an ASI bialgebra and $(A, [-,-], \delta)$ is the
induced Lie bialgebra from $(A, \cdot, \Delta_{r})$, where $\Delta_{r}$ is given by
Eq. \eqref{ass-cobo}. If $r+\tau(r)$ is ass-invariant, then
$(A, [-,-], \delta)=(A, [-,-], \delta_{r})$ as Lie bialgebras, where
$\delta_{r}$ is defined by Eq. \eqref{lie-cobo}. Thus, we have
\begin{enumerate}\itemsep=0pt
\item[$(i)$]   $(A, [-,-], \delta)$ is quasi-triangular if $(A, \cdot, \Delta)$ is
               quasi-triangular;
\item[$(ii)$]  $(A, [-,-], \delta)$ is triangular if $(A, \cdot, \Delta)$ is triangular;
\item[$(iii)$] $(A, [-,-], \delta)$ is factorizable if $(A, \cdot, \Delta)$ is factorizable.
\end{enumerate}
\end{pro}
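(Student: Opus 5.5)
The plan is to compare $\delta=\Delta_{r}-\tau\circ\Delta_{r}$ and $\delta_{r}$ by a direct computation, using only the ass-invariance of $r+\tau(r)$. After that, the three items will follow from Proposition \ref{pro:ass-Lie-YBE} and the definitions.

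Write $r=\sum_{i}x_{i}\otimes y_{i}$. By Eq. \eqref{ass-cobo}, for any $a\in A$ we have
$$
\Delta_{r}(a)=\sum_{i}\big(x_{i}\otimes(a\cdot y_{i})-(x_{i}\cdot a)\otimes y_{i}\big),\qquad
\tau(\Delta_{r}(a))=\sum_{i}\big((a\cdot y_{i})\otimes x_{i}-y_{i}\otimes(x_{i}\cdot a)\big).
$$
Expanding the commutator in Eq. \eqref{lie-cobo} gives
$$
\delta_{r}(a)=\sum_{i}\big(x_{i}\otimes(a\cdot y_{i})-x_{i}\otimes(y_{i}\cdot a)
+(a\cdot x_{i})\otimes y_{i}-(x_{i}\cdot a)\otimes y_{i}\big).
$$
Subtracting, the terms $x_{i}\otimes(a\cdot y_{i})$ and $(x_{i}\cdot a)\otimes y_{i}$ cancel, and
$$
\delta(a)-\delta_{r}(a)=-\sum_{i}\big((a\cdot y_{i})\otimes x_{i}+(a\cdot x_{i})\otimes y_{i}
-y_{i}\otimes(x_{i}\cdot a)-x_{i}\otimes(y_{i}\cdot a)\big).
$$
The ass-invariance of $r+\tau(r)$ means $\Delta_{r+\tau(r)}(a)=0$. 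Explicitly,
$$
\sum_{i}\big(x_{i}\otimes(a\cdot y_{i})+y_{i}\otimes(a\cdot x_{i})-(x_{i}\cdot a)\otimes y_{i}-(y_{i}\cdot a)\otimes x_{i}\big)=0.
$$
Applying $\tau$ to this identity gives exactly the vanishing of the bracketed sum above. Hence $\delta=\delta_{r}$, so $(A,[-,-],\delta)=(A,[-,-],\delta_{r})$ as Lie bialgebras. The bialgebra axioms for the left-hand side already hold by Proposition \ref{pro:ass-lie}.

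For $(i)$: if $(A,\cdot,\Delta_{r})$ is quasi-triangular, then $r$ solves the $\AYBE$ and $r+\tau(r)$ is ass-invariant. Proposition \ref{pro:ass-Lie-YBE} then shows that $r$ solves the $\CYBE$ in $(A,[-,-])$ and that $r+\tau(r)$ is Lie-invariant. So $\delta=\delta_{r}$ is quasi-triangular by Proposition \ref{pro:lie-bia}$(ii)$.

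For $(ii)$: if $r$ is skew-symmetric, then $r+\tau(r)=0$, which is trivially ass-invariant, so the identification $\delta=\delta_{r}$ applies. The second part of Proposition \ref{pro:ass-Lie-YBE} shows that $r$ is a skew-symmetric solution of the $\CYBE$, so $\delta$ is triangular by Proposition \ref{pro:lie-bia}$(i)$.

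For $(iii)$: factorizability adds the condition that $\mathcal{I}=r^{\sharp}+\tau(r)^{\sharp}$ is a linear isomorphism. This map is literally the same for the ASI bialgebra and for the Lie bialgebra, so $(iii)$ follows from $(i)$.

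The main obstacle is keeping track of signs and of which tensor factor $a$ multiplies from which side in the comparison of $\delta$ with $\delta_{r}$. Once one notices that the difference is $\tau$ applied to $\Delta_{r+\tau(r)}(a)$, up to sign, everything else is bookkeeping.
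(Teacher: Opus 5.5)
Your proof is correct: the identity $\delta(a)-\delta_{r}(a)=-\tau\big(\Delta_{r+\tau(r)}(a)\big)$ holds, and items $(i)$--$(iii)$ then follow as you say. The ingredients are Propositions \ref{pro:ass-Lie-YBE} and \ref{pro:lie-bia}, the observation that a skew-symmetric $r$ has $r+\tau(r)=0$, and the fact that both notions of factorizability use the same map $r^{\sharp}+\tau(r)^{\sharp}$. The paper gives no proof of this statement and only quotes it from the authors' earlier work, but your approach (compare the coproducts directly, then transfer the Yang--Baxter condition) is the same one the paper uses for Theorem \ref{thm:indu-sdibia} and Proposition \ref{pro:qtdiAss-qtLeib}.
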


Here we will discuss in detail the connection between triangular diassociative
bialgebras and triangular Leibniz bialgebras.
Let $(L, \ast)$ be a Leibniz algebra, $V$ be a vector space and $\kll, \krr:
L\rightarrow\gl(V)$ be two linear maps. Then $(V, \kll, \krr)$ is called a {\bf
representation of $(L, \ast)$} if for any $x_{1}, x_{2}\in L$,
\begin{align*}
&\qquad\; \kll(x_{1}\ast x_{2})=\kll(x_{1})\circ\kll(x_{2})-\kll(x_{2})\circ\kll(x_{1}),\\
&\krr(x_{1})\circ\krr(x_{2})=\krr(x_{2}\ast x_{1})-\kll(x_{2})\circ\krr(x_{1})
=-\krr(x_{1})\circ\kll(x_{2}).
\end{align*}
Define the left multiplication map $\fll_{L}: L\rightarrow\gl(L)$ and the
right multiplication map $\frr_{L}: L\rightarrow\gl(L)$ by $\fll_{L}(x_{1})(x_{2})
=x_{1}\ast x_{2}$ and $\frr_{L}(x_{1})(x_{2})=x_{2}\ast x_{1}$ respectively
for all $x_{1}, x_{2}\in L$. Then $(L, \fll_{L}, \frr_{L})$ is a representation of
$(L, \ast)$, which is called the {\bf regular representation} of $(L, \ast)$.

We now consider the Leibniz bialgebras. Let $L$ be a vector space and
$\vartheta: L\rightarrow L\otimes L$ be a linear map. Then $(L, \vartheta)$ is called
a {\bf Leibniz coalgebra} if
\begin{align*}
(\id\otimes\vartheta)\circ\vartheta&=(\vartheta\otimes\id)\circ\vartheta
+(\tau\otimes\id)\circ(\id\otimes\vartheta)\circ\vartheta,\\
(\vartheta\otimes\id)\circ\vartheta&=(\id\otimes\vartheta)\circ\vartheta
+(\id\otimes\tau)\circ(\vartheta\otimes\id)\circ\vartheta.
\end{align*}

\begin{defi}\label{def:bialg}
A {\bf Leibniz bialgebra} is a triple $(L, \ast, \vartheta)$ containing
a Leibniz algebra $(L, \ast)$ and a Leibniz coalgebra $(L, \vartheta)$
such that for any $x_{1}, x_{2}\in L$,
\begin{align}
&\qquad\qquad\big(\fr_{L}(x_{1})\otimes\id\big)(\vartheta(x_{2}))
=\tau\Big(\big(\fr_{L}(x_{2})\otimes\id\big)\vartheta(x_{1})\Big),   \label{bialg2}\\
& \vartheta(x_{1}\ast x_{2})+\Big(\big(\id\otimes\fr_{A}(x_{2})-\fr_{A}(x_{2})\otimes\id
+\fl_{A}(x_{2})\otimes\id\big)(\id\otimes\id-\tau)\Big)(\vartheta(x_{1}))\label{bialg1}\\[-2mm]
&\qquad\qquad\qquad\qquad\qquad\qquad\qquad
+\big(\id\otimes\,\fl_{L}(x_{1})+\fl_{L}(x_{1})\otimes\id\big)\vartheta(x_{2})=0.  \nonumber
\end{align}
\end{defi}

Recently, it has been proven in papers \cite{HLLZ} and \cite{Lu} that every diassociative
bialgebra has a Leibniz bialgebra structure.

\begin{pro}[\cite{HLLZ,Lu}]\label{pro:diass-leib}
Let $(D, \dashv, \vdash, \Delta_{\dashv}, \Delta_{\vdash})$ be a diassociative algebra.
If we define a a new binary operation $\ast$ on $D$ by $d_{1}\ast d_{2}=
d_{1}\vdash d_{2}-d_{2}\dashv d_{1}$ and $\delta=\Delta_{\vdash}-\tau\circ\Delta_{\dashv}:
D\rightarrow D\otimes D$, then $(D, \ast, \delta)$ is a Leibniz bialgebra, which is
called {\bf the Leibniz bialgebra induced by $(D, \dashv, \vdash, \Delta_{\dashv},
\Delta_{\vdash})$}.
\end{pro}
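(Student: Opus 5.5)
The plan is to verify the three groups of axioms for $(D,\ast,\delta)$ directly: the Leibniz identity for $\ast$, the two Leibniz coalgebra identities for $\delta$, and the compatibility conditions \eqref{bialg2} and \eqref{bialg1}. The Leibniz identity for $\ast$ is Proposition \ref{pro:commtor}. By the duality remark after Definition \ref{def:codi}, $(D^{\ast},\Delta_{\dashv}^{\ast},\Delta_{\vdash}^{\ast})$ is a diassociative algebra. Dualizing $\delta=\Delta_{\vdash}-\tau\circ\Delta_{\dashv}$ gives $\xi_{1}\ast\xi_{2}=\xi_{1}\vdash\xi_{2}-\xi_{2}\dashv\xi_{1}$ on $D^{\ast}$, where the products are the transposes of $\Delta_{\vdash}$ and $\Delta_{\dashv}$. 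Applying Proposition \ref{pro:commtor} to $D^{\ast}$ shows that this is a Leibniz algebra, and transposing back shows that $(D,\delta)$ is a Leibniz coalgebra. I would check that the two coalgebra identities in the definition are exactly the transposes of the left Leibniz identity, with the second one being its rearrangement.

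For the compatibility conditions, I would expand everything in terms of $\fl_{\dashv},\fr_{\dashv},\fl_{\vdash},\fr_{\vdash}$. Note that $\fll_{L}(d)=\fl_{\vdash}(d)-\fr_{\dashv}(d)$ and $\frr_{L}(d)=\fr_{\vdash}(d)-\fl_{\dashv}(d)$. For \eqref{bialg2}, the left side is $(\frr_{L}(d_{1})\otimes\id)(\Delta_{\vdash}(d_{2})-\tau\Delta_{\dashv}(d_{2}))$, which splits into four terms. I would match them against the $\tau$-twisted four terms on the right using \eqref{bidi1}, \eqref{bidi2}, \eqref{bidi3} and suitable $\tau$-conjugates of these identities. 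Each of those identities equates one mixed term in $(d_{1},d_{2})$ with a term in $(d_{2},d_{1})$, so they are the natural symmetry relations needed here.

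The main obstacle is \eqref{bialg1}. Here $\delta(d_{1}\ast d_{2})=\Delta_{\vdash}(d_{1}\vdash d_{2})-\Delta_{\vdash}(d_{2}\dashv d_{1})-\tau\Delta_{\dashv}(d_{1}\vdash d_{2})+\tau\Delta_{\dashv}(d_{2}\dashv d_{1})$. I would rewrite each of these four terms using \eqref{bidi4}, \eqref{bidi5}, \eqref{bidi6} and \eqref{bidi7}, the latter two conjugated by $\tau$. This expresses everything through operators of the form $(\beta(d_{i})\otimes\id)$ or $(\id\otimes\beta(d_{i}))$ applied to $\Delta_{\vdash}(d_{j})$, $\Delta_{\dashv}(d_{j})$ and their flips. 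The remaining terms of \eqref{bialg1} are expanded in the same basis, and the total is then sorted into a "$d_{2}$ acting on the coproduct of $d_{1}$" part and a "$d_{1}$ acting on the coproduct of $d_{2}$" part. After cancellation, I expect the leftover terms to be precisely the differences appearing in \eqref{bidi8} and \eqref{bidi9}, possibly combined with \eqref{bidi1}–\eqref{bidi3}, so that they vanish. This bookkeeping is the delicate step. Before grinding through it, I would try the alternative of using the characterization of Leibniz bialgebras via Manin triples or matched pairs, namely the semidirect product $D\ltimes D^{\ast}$ with the coregular bimodule from Proposition \ref{pro:bimodule}. If diassociative bialgebras correspond to diassociative algebra structures on $D\oplus D^{\ast}$ with the invariant form, then Proposition \ref{pro:commtor} on $D\oplus D^{\ast}$ yields a quadratic Leibniz algebra structure, and hence a Leibniz bialgebra. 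However, this requires the corresponding Manin triple theorems from \cite{HLLZ,Lu} and \cite{TXS}, so the direct computation remains the fallback.
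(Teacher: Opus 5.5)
The paper does not prove this proposition; it is quoted from the cited works. So I judge your plan on its own. The overall structure is sound, and the part you actually worked out is right. With $\frr_{D}(d)=\fr_{\vdash}(d)-\fl_{\dashv}(d)$, the four terms on each side of \eqref{bialg2} match one-to-one via \eqref{bidi2}, \eqref{bidi3}, \eqref{bidi1} with $d_1,d_2$ interchanged, and \eqref{bidi1} once more after applying $\tau$. The problem is at the two places where you write ``I would check'' and ``I expect'': the targets as printed in the paper are wrong there, so your assertions about them are false as stated.

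\emph{Coalgebra.} The second identity in the definition of a Leibniz coalgebra is not a rearrangement of the first. Paired with $\xi_1\otimes\xi_2\otimes\xi_3$, it reads $(\xi_1\ast\xi_2)\ast\xi_3=\xi_1\ast(\xi_2\ast\xi_3)+(\xi_1\ast\xi_3)\ast\xi_2$ for $\ast=\vartheta^{\ast}$, which is the right Leibniz identity. Commutator Leibniz algebras of diassociative algebras need not satisfy it. In Example \ref{ex:ind-tridiam}, take $u=e_1\otimes x_2$ and $w=e_2\otimes x_1$: then $(u\ast u)\ast w=0$, but $u\ast(u\ast w)+(u\ast w)\ast u=w$. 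Now take $D=(A\otimes P)^{\ast}$ with zero products and with coproducts transposed from $\dashv,\vdash$. Then \eqref{bidi1}--\eqref{bidi9} hold trivially and $\delta^{\ast}$ is this $\ast$, so the identity fails. Only the first identity can be meant, and your transposition argument proves exactly that one.

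\emph{Compatibility.} More seriously, \eqref{bialg1} as displayed (reading $\fr_{A},\fl_{A},\fl_{L}$ as $\frr_{D},\fll_{D}$) does not follow from \eqref{bidi1}--\eqref{bidi9}, so your bookkeeping cannot close against it. Example \ref{ex:ind-diabi} does satisfy \eqref{bidi1}--\eqref{bidi9}. The check is quick because $\Delta_{\vdash}(e_i\otimes x_j)=(e_2\otimes x_1)\otimes(e_i\otimes x_j)$ and $\Delta_{\dashv}(e_i\otimes x_j)=(e_2\otimes x_j)\otimes(e_i\otimes x_1)$. Its induced pair is the one in Example \ref{ex:ind-tridiam}. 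Evaluate \eqref{bialg1} at $(e_1\otimes x_2,\,e_1\otimes x_1)$: the product vanishes, but the other two terms add up to $2(e_2\otimes x_1)\otimes(e_1\otimes x_1)$.

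Besides \eqref{bialg2}, the condition that the Leibniz identity on the double $D\oplus D^{\ast}$ actually imposes is the following (the double is built with the coregular representation of Proposition \ref{pro:o-leib}):
\[
\delta(d_1\ast d_2)=\big(\fll_{D}(d_1)\otimes\id+\id\otimes\fll_{D}(d_1)\big)\delta(d_2)
+\big(\id\otimes\frr_{D}(d_2)-(\fll_{D}+\frr_{D})(d_2)\otimes\id\big)(\id\otimes\id+\tau)\delta(d_1).
\]
Against this target your prediction is exactly right. Put $C=\Delta_{\vdash}-\Delta_{\dashv}$, so that $(\id\otimes\id+\tau)\delta=C+\tau C$. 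Expand $\delta(d_1\ast d_2)$ by \eqref{bidi4}--\eqref{bidi7} as you propose and subtract the right-hand side; ten terms remain. Four of them form the difference of the two sides of \eqref{bidi9}, so they cancel. Two more can be exchanged via \eqref{bidi8}. The last four then cancel in pairs, by \eqref{bidi2} and by $\tau$ applied to \eqref{bidi1}.

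The Manin-triple route you mention is the conceptual form of the same computation. The pairing on $D\oplus D^{\ast}$ that intertwines the regular and coregular bimodules satisfies $\omega(u\ast v,w)=-\omega(v,u\ast w)$ and $\omega(v\ast u,w)=\omega(v,u\ast w+w\ast u)$ for the commutator product. Hence Proposition \ref{pro:commtor}, applied to the double, gives a Leibniz Manin triple. That route is immune to the misprints but relies on both Manin-triple equivalence theorems. The direct computation above is self-contained and short once the correct target is used.
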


Let $(L, \ast)$ be a Leibniz algebra. If there exists an element $r\in L\otimes L$
such that $(L, \ast, \vartheta_{r})$ is a Leibniz bialgebra, where $\vartheta_{r}:
L\rightarrow L\otimes L$ is given by
\begin{align}
\vartheta_{r}(x)=\big((\fll_{L}+\frr_{L})(x)\otimes\id
-\id\otimes\frr_{L}(x)\big)(r),            \label{cobLeb}
\end{align}
for any $x\in L$, then $(L, \ast, \vartheta_{r})$ is called a {\bf coboundary
Leibniz bialgebra}. An element $r\in L\otimes L$ is called {\bf Leib-invariant} if
$\vartheta_{r}(x)=0$ for all $x\in L$. The equation
$$
\mathbf{L}_{r}=r_{12}\ast r_{13}-r_{12}\ast r_{23}-r_{23}\ast r_{12}+r_{23}\ast r_{13}=0
$$
is called the {\bf Leibniz Yang-Baxter equation} (or $\LYBE$) in the Leibniz
algebra $(L, \ast)$, where $r_{12}\ast r_{13}=\sum_{i,j}(x_{i}\ast x_{j})\otimes y_{i}
\otimes y_{j}$, $r_{12}\ast r_{23}=\sum_{i,j}x_{i}\otimes(y_{i}\ast x_{j})\otimes y_{j}$,
$r_{23}\ast r_{12}=\sum_{i,j}x_{j}\otimes(x_{i}\ast y_{j})\otimes y_{i}$ and
$r_{23}\ast r_{13}=\sum_{i,j}x_{j}\otimes x_{i}\otimes(y_{i}\ast y_{j})$.

\begin{pro}[\cite{BLST}]\label{pro:sLib-bia}
Let $(L, \ast)$ be a Leibniz algebra, $r\in L\otimes L$ and $\vartheta_{r}:
L\rightarrow L\otimes L$ be the linear map defined by Eq. \eqref{cobLeb}.
\begin{enumerate}
\item[$(i)$] If $r$ is a solution of the $\LYBE$ in $(L, \ast)$ and $r-\tau(r)$ is
     Leib-invariant, then $(L, \ast, \vartheta_{r})$ is a Leibniz bialgebra,
     which is called a {\bf quasi-triangular Leibniz bialgebra} associated with $r$.
\item[$(ii)$] If $r$ is a symmetric solution of the $\LYBE$ in $(L, \ast)$, then $(L, \ast,
     \vartheta_{r})$ is a Leibniz bialgebra, which is called a {\bf triangular Leibniz
     bialgebra} associated with $r$.
\end{enumerate}
\end{pro}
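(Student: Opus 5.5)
The plan is to prove $(i)$ by direct verification of the axioms of Definition \ref{def:bialg} for $\vartheta_{r}$, and then to obtain $(ii)$ from $(i)$. For $(ii)$: if $r=\tau(r)$, then $r-\tau(r)=0$, and $0$ is trivially Leib-invariant. So everything hinges on $(i)$.

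Write $r=\sum_{i}x_{i}\otimes y_{i}$ and expand
$$
\vartheta_{r}(x)=\sum_{i}\big((x\ast x_{i}+x_{i}\ast x)\otimes y_{i}-x_{i}\otimes(y_{i}\ast x)\big).
$$
The checks go in this order.

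\emph{Step 1: the compatibility conditions \eqref{bialg2} and \eqref{bialg1}.} For \eqref{bialg1}, substitute the formula for $\vartheta_{r}$ and use the Leibniz identity repeatedly. This should show that the left-hand side depends on $r$ only through $r-\tau(r)$. Concretely, I expect it to reduce to an operator built from $\fll_{L}(x_{1}),\frr_{L}(x_{2})$ applied to $\vartheta_{r-\tau(r)}(\cdot)$, possibly with a $\tau$. Condition \eqref{bialg2} is handled the same way: the difference of its two sides should be a term of the form $(\frr_{L}(x_{1})\otimes\frr_{L}(x_{2}))$ or $(\frr_{L}(x_{1})\otimes\id)$ applied to $\vartheta_{r-\tau(r)}(x_{2})$ up to a flip. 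The regular representation identities will be the main tools, in particular $\krr(x_{1})\circ\krr(x_{2})=-\krr(x_{1})\circ\kll(x_{2})$ for $\kll=\fll_{L}$, $\krr=\frr_{L}$; that identity kills the symmetric contributions. Leib-invariance of $r-\tau(r)$ then gives both compatibilities.

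\emph{Step 2: the two Leibniz coalgebra identities.} I would compute $(\id\otimes\vartheta_{r})\vartheta_{r}(x)$, $(\vartheta_{r}\otimes\id)\vartheta_{r}(x)$ and $(\tau\otimes\id)(\id\otimes\vartheta_{r})\vartheta_{r}(x)$ as sums over $i,j$ of triple tensors. The aim is to regroup the combination
$$
(\id\otimes\vartheta_{r})\vartheta_{r}-(\vartheta_{r}\otimes\id)\vartheta_{r}-(\tau\otimes\id)(\id\otimes\vartheta_{r})\vartheta_{r}
$$
evaluated at $x$ into three kinds of terms. The first kind is an operator of the form $\fll_{L}(x)\otimes\id\otimes\id+\frr_{L}(x)\otimes\id\otimes\id-\id\otimes\id\otimes\frr_{L}(x)$ (and permuted variants) applied to $\mathbf{L}_{r}$ or to $\sigma(\mathbf{L}_{r})$ for a permutation $\sigma$ of tensor factors. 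The second kind consists of terms of the form $\sum_{i}(\cdots)\otimes\vartheta_{r-\tau(r)}(\cdots)$, i.e. the invariance defect contracted against one leg of $r$. The third kind consists of terms that cancel by the Leibniz identity. The second identity, involving $(\id\otimes\tau)(\vartheta_{r}\otimes\id)\vartheta_{r}$, is treated the same way. The hypotheses $\mathbf{L}_{r}=0$ and $\vartheta_{r-\tau(r)}=0$ then kill everything.

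The main obstacle is the bookkeeping in Step 2. The Leibniz product is neither symmetric nor antisymmetric, so the terms in $\mathbf{L}_{r}$ (in particular $r_{23}\ast r_{13}$ versus $r_{13}$-type placements) do not line up automatically. It is not obvious a priori which leg-permutations of $\mathbf{L}_{r}$ appear. I would first try to fix this by working with the representation $(L^{\ast},\fll_{L}^{\ast},\ldots)$ dual picture: pair both sides with $\xi_{1}\otimes\xi_{2}\otimes\xi_{3}$, rewrite everything in terms of $r^{\sharp}$, and read off the $\mathbf{L}_{r}$ terms as $r^{\sharp}(\xi)\ast r^{\sharp}(\eta)$-type expressions. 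If that becomes unwieldy, I would fall back on brute-force matching of the roughly twelve $(i,j)$-indexed terms, splitting $r=\frac12(r+\tau(r))+\frac12(r-\tau(r))$ and handling the symmetric part first.
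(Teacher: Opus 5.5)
The paper only quotes this result from \cite{BLST}, so the question is whether your plan proves it. As aimed, it cannot. Definition \ref{def:bialg} is misprinted (note the stray $\fl_A,\fr_A,\fl_L$ in \eqref{bialg1}), and read literally the proposition is false.

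Take $L=\mathrm{span}\{e_1,e_2\}$ with $e_1\ast e_2=e_2=-e_2\ast e_1$ and all other products zero; this is a Lie algebra, hence Leibniz. Let $r=e_2\otimes e_2$. Every term of $\mathbf{L}_r$ contains $e_2\ast e_2=0$, so $r$ is a symmetric solution of the $\LYBE$. Moreover $\vartheta_r(e_1)=e_2\otimes e_2$ and $\vartheta_r(e_2)=0$. At $x_1=x_2=e_1$:
\begin{itemize}
\item the first term of \eqref{bialg1} is $\vartheta_r(0)=0$;
\item the second vanishes because $(\id\otimes\id-\tau)(e_2\otimes e_2)=0$;
\item the third is $\pm 2e_2\otimes e_2$, whether $\fl_L$ is read as $\fll_L$ or $\frr_L$.
\end{itemize}
So your Step~1 claim that the left-hand side of \eqref{bialg1} depends on $r$ only through $r-\tau(r)$ fails here, since $r-\tau(r)=0$. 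No argument can verify \eqref{bialg1} as printed.

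The second displayed Leibniz-coalgebra identity has the same problem. It dualizes to the \emph{right} Leibniz identity on $L^{\ast}$, which is not part of being a Leibniz coalgebra. It fails at $e_1\otimes x_2$ for the $\vartheta$ of Example \ref{ex:ind-tridiam}, which is, up to sign, $\vartheta_{\widetilde r}$ for the symmetric solution $\widetilde r$ there. So treating it ``the same way'' in Step~2 cannot succeed either.

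You need to run the argument against the definition of the cited source. There $(L^{\ast},\vartheta^{\ast})$ is a left Leibniz algebra (only the first coalgebra identity), and $L\oplus L^{\ast}$ is a matched pair for the coregular representations $(L^{\ast},\fll_L^{\ast},-\fll_L^{\ast}-\frr_L^{\ast})$ and its analogue for $L^{\ast}$ acting on $L$. In that setting your Step~1 mechanism is sound.
\begin{itemize}
\item Your computation for \eqref{bialg2} is right: the two sides differ by $(\frr_L(x_1)\otimes\id)\vartheta_{r-\tau(r)}(x_2)$.
\item For example, the $L$-component of the Leibniz identity for $(\xi,x,y)$ in $L\oplus L^{\ast}$ reads
\begin{align*}
\vartheta(x\ast y)=&\;(\id\otimes\frr_L(y))\vartheta(x)-((\fll_L+\frr_L)(y)\otimes\id)(\id\otimes\id+\tau)\vartheta(x)\\
&\;+\big((\fll_L+\frr_L)(x)\otimes\id+\id\otimes\fll_L(x)\big)\vartheta(y).
\end{align*}
For $\vartheta=\vartheta_r$ its defect is exactly $((\fll_L+\frr_L)(y)\otimes\id)\,\tau\big(\vartheta_{r-\tau(r)}(x)\big)$.
\item Your reduction of (ii) to (i) is fine.
\end{itemize}

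Step~2, however, carries the real content and is only announced.
\begin{itemize}
\item For (ii), your ``dual picture'' settles it at once. For symmetric $r$ one has $\vartheta_r^{\ast}(\xi\otimes\eta)=\fll_L^{\ast}(r^{\sharp}(\xi))\eta-(\fll_L^{\ast}+\frr_L^{\ast})(r^{\sharp}(\eta))\xi$. This is the product induced by the $\mathcal{O}$-operator $r^{\sharp}$ of Proposition \ref{pro:o-leib}. It is Leibniz because $\xi\mapsto(r^{\sharp}(\xi),\xi)$ identifies it with a subalgebra of $L\ltimes L^{\ast}$.
\item For (i), the second term involves $\tau(r)^{\sharp}(\eta)$ instead of $r^{\sharp}(\eta)$. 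You still owe the explicit identity expressing the Leibnizator of $\vartheta_r^{\ast}$ through $\mathbf{L}_r$ and $\vartheta_{r-\tau(r)}$. You also owe the remaining matched-pair compatibilities. Without these, the quasi-triangular case is not proved.
\end{itemize}
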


A quasi-triangular Leibniz bialgebra $(L, \ast, \vartheta_{r})$ is called a {\bf factorizable
Leibniz bialgebra} if $\mathcal{I}=r^{\sharp}-\tau(r)^{\sharp}: L^{\ast}\rightarrow L$
is an isomorphism of vector spaces.
Following, we will examine the relationship between the solutions of $\DAYBE$ and the solutions
of $\LYBE$ in the induced Leibniz algebra to provide the connection between these special
diassociative bialgebras and the induced Leibniz bialgebras.

\begin{pro}\label{pro:diass-Leib-YBE}
Let $(D, \dashv, \vdash)$ be a diassociative algebra and $(D, \ast)$ be the induced
Leibniz algebra from $(D, \dashv, \vdash)$. If $r=\sum_{i}x_{i}\otimes y_{i}\in
D\otimes D$ is a symmetric solution of the $\DAYBE$ in $(D, \dashv, \vdash)$, then
$r$ is also a symmetric solution of the $\LYBE$ in $(D, \ast)$.
\end{pro}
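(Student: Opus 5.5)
Since $\ast$ is bilinear and defined by $d_{1}\ast d_{2}=d_{1}\vdash d_{2}-d_{2}\dashv d_{1}$, I plan to expand each of the four terms of $\mathbf{L}_{r}$ into $\vdash$- and $\dashv$-terms. Then I will use the symmetry $r=\tau(r)$, i.e. $\sum_{i}x_{i}\otimes y_{i}=\sum_{i}y_{i}\otimes x_{i}$, to relabel summation indices. The goal is to express $\mathbf{L}_{r}$ as a combination of $\mathbf{D}_{r}$ and images of $\mathbf{D}_{r}$ under permutations of the three tensor factors. Each such term vanishes by hypothesis.

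The expansion gives
\begin{align*}
r_{12}\ast r_{13}&=\sum_{i,j}(x_{i}\vdash x_{j})\otimes y_{i}\otimes y_{j}-\sum_{i,j}(x_{j}\dashv x_{i})\otimes y_{i}\otimes y_{j},\\
r_{12}\ast r_{23}&=\sum_{i,j}x_{i}\otimes(y_{i}\vdash x_{j})\otimes y_{j}-\sum_{i,j}x_{i}\otimes(x_{j}\dashv y_{i})\otimes y_{j},\\
r_{23}\ast r_{12}&=\sum_{i,j}x_{j}\otimes(x_{i}\vdash y_{j})\otimes y_{i}-\sum_{i,j}x_{j}\otimes(y_{j}\dashv x_{i})\otimes y_{i},\\
r_{23}\ast r_{13}&=\sum_{i,j}x_{j}\otimes x_{i}\otimes(y_{i}\vdash y_{j})-\sum_{i,j}x_{j}\otimes x_{i}\otimes(y_{j}\dashv y_{i}).
\end{align*}
By symmetry, I will rewrite each of these eight terms in one of the forms appearing in $\mathbf{D}_{r}$ and its permuted versions. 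The ``product in the first slot'' terms are $r_{12}\vdash r_{13}$ and $r_{12}\dashv r_{13}$ up to swapping factors 2 and 3. The ``product in the last slot'' terms become $r_{13}\vdash r_{23}$ and the like. The ``product in the middle slot'' terms become $r_{23}\dashv r_{12}$ or its flip.

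I expect $\mathbf{L}_{r}=-\mathbf{D}_{r}$ or $\mathbf{L}_{r}=-\tau_{12}\mathbf{D}_{r}$ for a suitable transposition, possibly after symmetrizing. Concretely, I will show that
\[
r_{23}\ast r_{13}=\tau_{12}(r_{13}\vdash r_{23})-\tau_{12}(\ldots),
\]
and match the pieces one by one. If a single permutation of $\mathbf{D}_{r}$ does not suffice, I will allow a sum $\mathbf{D}_{r}+\sigma(\mathbf{D}_{r})$. The middle-slot terms $x_{j}\otimes(x_{i}\vdash y_{j})\otimes y_{i}$ and $x_{i}\otimes(y_{i}\vdash x_{j})\otimes y_{j}$ coincide after using symmetry on both $i$ and $j$. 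So the two $\vdash$ middle terms cancel in $\mathbf{L}_{r}$ because they enter with opposite signs. The middle $\dashv$ terms combine to $2\,r_{23}\dashv r_{12}$-type pieces, and similarly for the first- and last-slot terms.

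The main obstacle is the bookkeeping. I must identify exactly which permutation $\sigma\in S_{3}$ acting on $\mathbf{D}_{r}$ reproduces the coefficients. In particular, $\mathbf{D}_{r}$ contains $r_{12}\vdash r_{13}-r_{12}\dashv r_{13}$ with the same index order, whereas $\mathbf{L}_{r}$ contains $(x_{j}\dashv x_{i})\otimes y_{i}\otimes y_{j}$. This requires swapping $i,j$, which corresponds to the transposition of the last two factors. I would first try to verify $\mathbf{L}_{r}=-(\id\otimes\tau)\mathbf{D}_{r}$ or $\mathbf{L}_{r}=-\mathbf{D}_{r}-(\id\otimes\tau)\mathbf{D}_{r}$ term by term, using symmetry of $r$ freely. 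Either identity gives $\mathbf{L}_{r}=0$ whenever $\mathbf{D}_{r}=0$. Symmetry of $r$ is unchanged, which completes the claim.
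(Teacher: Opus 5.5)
Your strategy of expanding $\ast$, relabelling with $r=\tau(r)$, and writing $\mathbf{L}_{r}$ as a sum of permuted copies of $\mathbf{D}_{r}$ is the paper's. However, the proposal never produces the identity that actually constitutes the proof, and the concrete claims you make on the way to it are false.

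Symmetry of $r$ lets you interchange $x_{i}$ and $y_{i}$ inside one pair. It never changes which outer tensor slot is paired with the left (or right) operand of the product. In $\sum_{i,j}x_{i}\otimes(y_{i}\vdash x_{j})\otimes y_{j}$ the left operand is paired with slot $1$; in $\sum_{i,j}x_{j}\otimes(x_{i}\vdash y_{j})\otimes y_{i}$ it is paired with slot $3$. So the two tensors differ by the flip of factors $1$ and $3$. For $r=a\otimes b+b\otimes a$ their difference is $a\otimes(b\vdash a-a\vdash b)\otimes b+b\otimes(a\vdash b-b\vdash a)\otimes a$. Even if they were equal, both enter $\mathbf{L}_{r}$ with a minus sign (from $-r_{12}\ast r_{23}-r_{23}\ast r_{12}$), so they would add rather than cancel. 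Likewise, the two middle $\dashv$-terms are $r_{23}\dashv r_{12}$ and its $(1\,3)$-flip, not twice one tensor. Hence all eight terms of $\mathbf{L}_{r}$ survive: two have the product in the first slot, four in the middle slot, and two in the last.

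This count already rules out every candidate you list. Each copy of $\mathbf{D}_{r}$ has two first-slot terms ($r_{12}\vdash r_{13}$ and $r_{12}\dashv r_{13}$) and one term in each other slot. To obtain four middle-slot terms from two copies, both permutations must send slot $1$ to slot $2$. These are $\tau\otimes\id$ and the cyclic map $(\tau\otimes\id)(\id\otimes\tau)\colon a\otimes b\otimes c\mapsto c\otimes a\otimes b$. By contrast, $-\mathbf{D}_{r}-(\id\otimes\tau)\mathbf{D}_{r}$ has four first-slot terms, and each of $-\mathbf{D}_{r}$, $-(\tau\otimes\id)\mathbf{D}_{r}$, $-(\id\otimes\tau)\mathbf{D}_{r}$ has only four terms in total.

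The identity the paper uses is
\[
\mathbf{L}_{r}=(\tau\otimes\id)(\mathbf{D}_{r})+(\tau\otimes\id)\big((\id\otimes\tau)(\mathbf{D}_{r})\big),
\]
with both signs positive. The first summand $(\tau\otimes\id)(\mathbf{D}_{r})$ supplies four terms of $\mathbf{L}_{r}$:
the $\vdash$-part of $r_{23}\ast r_{13}$, the $\dashv$-part of $r_{12}\ast r_{13}$, the $\vdash$-part of $-r_{12}\ast r_{23}$, and the $\dashv$-part of $-r_{23}\ast r_{12}$. The cyclic image supplies the remaining four. Each match needs only the symmetry of $r$ in one or both indices plus a relabelling $i\leftrightarrow j$. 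Once you write this identity down and check it term by term, your argument goes through.
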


\begin{proof}
Note that $\mathbf{L}_{r}=r_{12}\ast r_{13}-r_{12}\ast r_{23}-r_{23}\ast r_{12}
+r_{23}\ast r_{13}=r_{12}\vdash r_{13}-r_{13}\dashv r_{12}-r_{12}\vdash r_{23}
+r_{23}\dashv r_{12}-r_{23}\vdash r_{12}+r_{12}\dashv r_{23}+r_{23}\vdash r_{13}
-r_{13}\dashv r_{23}$. If $r$ is symmetric, then we get
$$
\mathbf{L}_{r}=(\tau\otimes\id)(\mathbf{D}_{r})
+(\tau\otimes\id)((\id\otimes\tau)(\mathbf{D}_{r})).
$$
That is, $r$ is a symmetric solution of the $\LYBE$ in $(D, \ast)$ if $r$ is a
symmetric solution of the $\DAYBE$ in $(D, \dashv, \vdash)$.
\end{proof}

Thus, considering the triangular diassociative bialgebras and triangular Leibniz bialgebras,
we have

\begin{pro}\label{pro:qtdiAss-qtLeib}
Let $(D, \dashv, \vdash)$ be a diassociative algebra and $r\in D\otimes D$.
Suppose $(D, \dashv, \vdash, \Delta_{\dashv,r}, \Delta_{\dashv,r})$ is a diassociative
bialgebra and $(D, \ast, \vartheta)$ is the induced Leibniz bialgebra, where
$\Delta_{\dashv,r}, \Delta_{\dashv,r}$ are given by
Eq. \eqref{cobdi}. If $r$ is symmetric, then $(D, \ast, \vartheta)=(D, \ast,
\vartheta_{r})$ as Leibniz bialgebras, where $\vartheta_{r}$ is defined by Eq.
\eqref{cobLeb}. Thus, we obtain that the induced Leibniz bialgebra $(D, \ast, \vartheta)$
is triangular if $(D, \dashv, \vdash, \Delta_{\dashv,r}, \Delta_{\dashv,r})$ is
a triangular diassociative algebra.
\end{pro}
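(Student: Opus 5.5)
The plan is to compute $\vartheta=\Delta_{\vdash,r}-\tau\circ\Delta_{\dashv,r}$ directly from Eq. \eqref{cobdi} and compare it termwise with $\vartheta_{r}$ from Eq. \eqref{cobLeb}, using only $r=\tau(r)$. Write $r=\sum_{i}x_{i}\otimes y_{i}$ and fix $d\in D$.

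First, by Eqs. \eqref{cobdi1} and \eqref{cobdi}, I expand
$$
\Delta_{\vdash,r}(d)=-\big((\fr_{\vdash}-\fr_{\dashv})(d)\otimes\id+\id\otimes\fl_{\dashv}(d)\big)(r).
$$
Next, by Eqs. \eqref{cobdi2} and \eqref{cobdi}, $\Delta_{\dashv,r}(d)=\big(\id\otimes(\fl_{\vdash}-\fl_{\dashv})(d)-\fr_{\vdash}(d)\otimes\id\big)(r)$. Applying $\tau$ moves each operator to the other tensor factor and replaces $r$ by $\tau(r)$, which equals $r$. Hence
$$
\tau(\Delta_{\dashv,r}(d))=\big((\fl_{\vdash}-\fl_{\dashv})(d)\otimes\id-\id\otimes\fr_{\vdash}(d)\big)(r).
$$
Subtracting gives
$$
\vartheta(d)=\big((\fr_{\dashv}-\fr_{\vdash}-\fl_{\vdash}+\fl_{\dashv})(d)\otimes\id\big)(r)+\big(\id\otimes(\fr_{\vdash}-\fl_{\dashv})(d)\big)(r).
$$

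On the Leibniz side, $d_{1}\ast d_{2}=d_{1}\vdash d_{2}-d_{2}\dashv d_{1}$ gives $\fll_{D}(d)=\fl_{\vdash}(d)-\fr_{\dashv}(d)$ and $\frr_{D}(d)=\fr_{\vdash}(d)-\fl_{\dashv}(d)$. Therefore
$$
\vartheta_{r}(d)=\big((\fl_{\vdash}-\fr_{\dashv}+\fr_{\vdash}-\fl_{\dashv})(d)\otimes\id\big)(r)-\big(\id\otimes(\fr_{\vdash}-\fl_{\dashv})(d)\big)(r).
$$
Matching the two expressions, the first-factor operators agree up to an overall sign and the second-factor operators also agree up to a sign. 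So $\vartheta$ and $\vartheta_{r}$ coincide, or differ exactly by $-1$.

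The main obstacle is this sign bookkeeping: the outcome depends on the orientation of $\tau$ in $\delta=\Delta_{\vdash}-\tau\circ\Delta_{\dashv}$ in Proposition \ref{pro:diass-leib}, and on the sign conventions in Eq. \eqref{cobdi} and Eq. \eqref{cobLeb}. My first step would be to recheck these signs against the sources \cite{HLLZ,Lu,BLST}, and if necessary to normalize the induced coproduct as $\tau\circ\Delta_{\dashv}-\Delta_{\vdash}$ (or the sign of $r$). If the discrepancy is a genuine global sign, it is harmless. A global sign does not affect whether $(D,\ast,\vartheta)$ is a Leibniz bialgebra, since the coalgebra identities are quadratic in $\vartheta$ and the compatibility conditions \eqref{bialg2}, \eqref{bialg1} are linear in it. It also amounts to replacing $r$ by $-r$, which is again a symmetric solution of the $\LYBE$ because $\mathbf{L}_{r}$ is quadratic in $r$. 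With this I would conclude $(D,\ast,\vartheta)=(D,\ast,\vartheta_{r})$ under the chosen normalization.

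For the final claim, assume the diassociative bialgebra is triangular, i.e. $r$ is a symmetric solution of the $\DAYBE$. Proposition \ref{pro:diass-Leib-YBE} then shows $r$ is a symmetric solution of the $\LYBE$ in $(D,\ast)$. Proposition \ref{pro:sLib-bia}(ii) makes $(D,\ast,\vartheta_{r})$ a triangular Leibniz bialgebra, and this equals the induced one by the identification above.
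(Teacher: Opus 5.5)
Your argument is the paper's argument: expand $\Delta_{\vdash,r}-\tau\circ\Delta_{\dashv,r}$, use $\tau(r)=r$, and regroup via $\fll_{D}=\fl_{\vdash}-\fr_{\dashv}$ and $\frr_{D}=\fr_{\vdash}-\fl_{\dashv}$. Your sign bookkeeping is correct, so state the outcome outright instead of hedging. With Eq.~\eqref{cobdi}, Proposition~\ref{pro:diass-leib} and Eq.~\eqref{cobLeb} exactly as written, you get $\vartheta=-\vartheta_{r}=\vartheta_{-r}$ for every symmetric $r$. Hence the literal identity $\vartheta=\vartheta_{r}$ in the statement fails whenever $\vartheta_{r}\neq 0$, and there is nothing to recheck in outside sources.

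The paper obtains $\vartheta=\vartheta_{r}$ only through a sign slip. The expanded middle line of its proof is $\mathcal{E}(d)(r)+\tau(\mathcal{F}(d)(r))$, which is the negative of $\Delta_{\vdash,r}(d)-\tau(\Delta_{\dashv,r}(d))$. Its subsequent regrouping of that line into $\big((\fll_{D}+\frr_{D})(d)\otimes\id-\id\otimes\frr_{D}(d)\big)(r)$ is fine. Example~\ref{ex:ind-tridiam} confirms the discrepancy. There $\vartheta(e_{1}\otimes x_{1})=(e_{2}\otimes x_{1})\otimes(e_{1}\otimes x_{1})-(e_{1}\otimes x_{1})\otimes(e_{2}\otimes x_{1})$, whereas Eq.~\eqref{cobLeb} gives $\vartheta_{\widetilde{r}}(e_{1}\otimes x_{1})$ equal to its negative.

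So the identity should read $(D,\ast,\vartheta)=(D,\ast,\vartheta_{-r})$, and your repair of the triangularity claim is exactly what is needed. Since $-r$ is symmetric and $\mathbf{D}_{-r}=\mathbf{D}_{r}$, Propositions~\ref{pro:diass-Leib-YBE} and \ref{pro:sLib-bia}$(ii)$ show that $(D,\ast,\vartheta)$ is the triangular Leibniz bialgebra associated with $-r$.

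Your remark that the Leibniz bialgebra axioms are invariant under $\vartheta\mapsto-\vartheta$ is true but unnecessary. Proposition~\ref{pro:diass-leib} already gives that $(D,\ast,\vartheta)$ is a Leibniz bialgebra.
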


\begin{proof}
Let $r=\sum_{i}x_{i}\otimes y_{i}$. If $r$ is symmetric,
then for any $d\in D$, we have
\begin{align*}
\vartheta(d)&=\Delta_{\vdash,r}(a)-\tau(\Delta_{\dashv,r}(a))\\
&=\sum_{i}\Big((x_{i}\vdash d)\otimes y_{i}-(x_{i}\dashv d)\otimes y_{i}
+x_{i}\otimes(d\dashv y_{i})\\[-5mm]
&\qquad\qquad -y_{i}\otimes(x_{i}\vdash d)+(d\vdash y_{i})\otimes x_{i}
-(d\dashv y_{i})\otimes x_{i}\Big)\\
&=\big((\fll_{D}+\frr_{D})(d)\otimes\id)-\id\otimes\frr_{D}(d)\big)(r)\\
&=\vartheta_{r}(d).
\end{align*}
Thus, $(D, \ast, \vartheta)=(D, \ast, \vartheta_{r})$ as Leibniz bialgebras.
Finally, by Proposition \ref{pro:diass-Leib-YBE}, we get that $(D, \ast, \vartheta)$
is a triangular Leibniz bialgebra if $(D, \dashv, \vdash, \Delta_{\dashv,r},
\Delta_{\dashv,r})$ is triangular.
\end{proof}

Recently, in \cite{HL}, we have considered the construction problem of Leibniz bialgebras
from Lie bialgebras and perm bialgebras. More precisely, We have shown that the tensor
product of a Lie bialgebra and a quadratic perm algebra has a Leibniz bialgebra structure,
and this Leibniz bialgebra structure is coboundary (resp. quasi-triangular, triangular,
factorizable) if the original Lie bialgebra is coboundary (resp. quasi-triangular,
triangular, factorizable).

\begin{thm}[\cite{HL}]\label{thm:Lie+perm=L}
Let $(\g, [-,-], \delta)$ be a Lie bialgebra, $(P, \diamond, \varpi)$ be a quadratic
perm algebra and $(\g\otimes P, \ast)$ be the induced Leibniz algebra. Define
a linear map $\vartheta: \g\otimes P\rightarrow(\g\otimes P)\otimes(\g\otimes P)$ by
$$
\vartheta(g\otimes p)=\delta(g)\bullet\nu_{\varpi}(p)
:=\sum_{(g)}\sum_{(p)}(g_{(1)}\otimes p_{(1)})
\otimes(g_{(2)}\otimes p_{(2)}),
$$
for any $g\in\g$ and $p\in P$, where $\delta(g)=\sum_{(g)}g_{(1)}\otimes g_{(2)}$
and $\nu_{\varpi}(p)=\sum_{(p)}p_{(1)}\otimes p_{(2)}$ in the Sweedler notation.
Then $(\g\otimes P, \ast, \vartheta)$ is a Leibniz bialgebra, which is called the
{\bf Leibniz bialgebra induced from $(\g, [-,-], \delta)$ by $(P, \diamond, \varpi)$}.
\end{thm}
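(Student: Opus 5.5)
We have to show that $(\g\otimes P, \ast, \vartheta)$ is a Leibniz coalgebra and that the compatibility conditions \eqref{bialg2} and \eqref{bialg1} hold. The plan follows the proof of Theorem \ref{thm:permbia-dia}: every identity splits as a ``$\g$-part'' $\bullet$ ``$P$-part''. The $\g$-factors are then handled by the Lie bialgebra axioms, and the $P$-factors by identities for $\nu_{\varpi}$. Each such $P$-identity is checked by pairing with $e\otimes f$ (or $e\otimes f\otimes h$) through the nondegenerate form $\varpi$ and using invariance together with the perm axioms.

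\textbf{Leibniz coalgebra.} By Lemma \ref{lem:perm-dual}, $\nu_{\varpi}$ is a perm coalgebra. For the first Leibniz coalgebra axiom, we use that $\nu_{\varpi}$ is both coassociative and left-commutative, i.e.
$$(\nu_{\varpi}\otimes\id)\nu_{\varpi}=(\id\otimes\nu_{\varpi})\nu_{\varpi}=(\tau\otimes\id)(\id\otimes\nu_{\varpi})\nu_{\varpi}.$$
This lets us factor the common $P$-tensor $T_{p}=(\id\otimes\nu_{\varpi})\nu_{\varpi}(p)$ out of all three terms. The first axiom then reduces to
$$\big((\id\otimes\delta)\delta-(\delta\otimes\id)\delta-(\tau\otimes\id)(\id\otimes\delta)\delta\big)(g)\bullet T_{p}=0,$$
which is exactly the co-Jacobi identity for $\delta$. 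For the second axiom, the term $(\id\otimes\tau)(\vartheta\otimes\id)\vartheta$ carries the $P$-tensor $(\id\otimes\tau)(\nu_{\varpi}\otimes\id)\nu_{\varpi}$, which is not obviously equal to $T_{p}$. I would dualize: $(P^{\ast},\nu_{\varpi}^{\ast})$ is a perm algebra, and $(\g\otimes P)^{\ast}\cong\g^{\ast}\otimes P^{\ast}$ with the product $\vartheta^{\ast}$ is the Leibniz algebra of Proposition \ref{pro:li-lieb} built from the Lie algebra $(\g^{\ast},\delta^{\ast})$. Since $\vartheta$ is a Leibniz coalgebra exactly when $\vartheta^{\ast}$ is a Leibniz product, both coalgebra axioms follow at once from Proposition \ref{pro:li-lieb}, and this dual argument avoids the term-by-term bookkeeping.

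\textbf{Compatibility conditions.} I will reuse the $\nu_{\varpi}$-identities already obtained in the proof of Theorem \ref{thm:permbia-dia}:
$$(\ffr_{P}(p')\otimes\id)\nu_{\varpi}(p)=0,\qquad (\id\otimes\ffr_{P}(p))\tau(\nu_{\varpi}(p'))=0,$$
$$\nu_{\varpi}(p'\diamond p)=(\id\otimes\ffr_{P}(p))\nu_{\varpi}(p')=(\ffl_{P}(p')\otimes\id)\nu_{\varpi}(p).$$
Writing $\fr_{L}(g\otimes p)=\ad$-type$(g)\otimes\ffr_{P}(p)$, the vanishing identities should kill both sides of \eqref{bialg2}. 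For \eqref{bialg1}, I expand each term as (a $\g$-tensor)$\bullet$(a $P$-tensor) and use the identities above, plus the analogous ones for $\ffl_{P}$ and for $\tau\circ\nu_{\varpi}$, to bring every surviving $P$-factor to the common form $\nu_{\varpi}(p_{1}\diamond p_{2})$. The remaining $\g$-factor should then be exactly the Lie bialgebra cocycle condition $\delta([g_{1},g_{2}])=\dots$, which vanishes.

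\textbf{Main obstacle.} The hard step is \eqref{bialg1}. The operator $(\id\otimes\id-\tau)$ mixes $\nu_{\varpi}$ with $\tau\nu_{\varpi}$, and terms such as $(\fl_{L}(x_{2})\otimes\id)(\id\otimes\id-\tau)$ produce $P$-factors like $(\ffl_{P}(p_{2})\otimes\id)\tau\nu_{\varpi}(p_{1})$. For these I must establish new identities by pairing with $e\otimes f$ and using skew-symmetry and invariance of $\varpi$. Some terms will vanish, and the others must combine so that $\delta(g_{1})$ appears with the antisymmetrization $(\id\otimes\id-\tau)\delta(g_{1})=2\delta(g_{1})$. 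I would first compute all the required $\varpi$-pairings in a table, then match coefficients against the cocycle identity.
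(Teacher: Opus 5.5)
The paper only quotes this theorem, so the natural benchmark is the proof of Theorem \ref{thm:permbia-dia}. You follow the same strategy of splitting every identity into a $\g$-part $\bullet$ a $P$-part. Your factorization proof of the first coalgebra axiom is correct, and so is your argument that both sides of \eqref{bialg2} vanish.

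\textbf{Second coalgebra axiom.} Your dualization step here is wrong. Transposing $(\vartheta\otimes\id)\vartheta=(\id\otimes\vartheta)\vartheta+(\id\otimes\tau)(\vartheta\otimes\id)\vartheta$ gives the \emph{right} Leibniz identity $(\xi_1\xi_2)\xi_3=\xi_1(\xi_2\xi_3)+(\xi_1\xi_3)\xi_2$ on $(\g\otimes P)^{\ast}$. Proposition \ref{pro:li-lieb} only yields the left identity. On $\g^{\ast}\otimes P^{\ast}$ the defect of the right identity is $[[\xi_1,\xi_3],\xi_2]\otimes\big((\eta_1\eta_2)\eta_3-(\eta_1\eta_3)\eta_2\big)$, which does not vanish in general. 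So the mismatch $(\id\otimes\tau)T_p\neq T_p$ you noticed is a real obstruction. Already in Example \ref{ex:ind-tridiam}, writing $u_{ij}=e_i\otimes x_j$, one finds
\[
(\vartheta\otimes\id)\vartheta(u_{12})-(\id\otimes\vartheta)\vartheta(u_{12})-(\id\otimes\tau)(\vartheta\otimes\id)\vartheta(u_{12})
=u_{11}\otimes u_{21}\otimes u_{22}-u_{21}\otimes u_{21}\otimes u_{12}+u_{21}\otimes u_{22}\otimes u_{11}-u_{11}\otimes u_{22}\otimes u_{21}\neq0 .
\]
This axiom is therefore false for $\vartheta$, and you should not claim it. A Leibniz coalgebra, being the dual of a left Leibniz algebra (compare the paper's one-identity definition of a Lie coalgebra), is defined by the first identity alone, and you have already proved that one.

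\textbf{Compatibility \eqref{bialg1}.} Your plan here is too vague to close, and as printed it cannot close. The map $\tau$ swaps both tensor legs, so
\[
(\id\otimes\id\mp\tau)\big(\delta(g)\bullet\nu_\varpi(p)\big)=\delta(g)\bullet(\nu_\varpi\pm\tau\nu_\varpi)(p).
\]
No factor $2\delta(g_1)$ ever appears. Instead you get the $P$-factors $\nu_\varpi(p_1\diamond p_2)$, $\tau\nu_\varpi(p_1\diamond p_2)$, $\nu_\varpi(p_2\diamond p_1)$ and $\tau\nu_\varpi(p_2\diamond p_1)$. With the signs as displayed these do not cancel. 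In Example \ref{ex:ind-tridiam} with $x_1=u_{12}$, $x_2=u_{22}$, the left-hand side of \eqref{bialg1} equals $2(u_{21}\otimes u_{22}+u_{22}\otimes u_{21})$. The version the construction satisfies comes from the Leibniz identity for $x\ast(\zeta\ast y)$ in the double $L\oplus L^{\ast}$, using the coregular actions of Proposition \ref{pro:o-leib}; \eqref{bialg2} is the difference of this and another such identity. It reads
\[
\vartheta(x_1\ast x_2)=\big(\fll_{L}(x_1)\otimes\id+\id\otimes\fll_{L}(x_1)\big)\vartheta(x_2)+\big(\id\otimes\frr_{L}(x_2)-(\fll_{L}+\frr_{L})(x_2)\otimes\id\big)(\id\otimes\id+\tau)\vartheta(x_1).
\]
For this version your reduction does work, but it needs two identities beyond those in the proof of Theorem \ref{thm:permbia-dia}:
\[
(\id\otimes\ffl_{P}(p))\nu_\varpi(p')=\nu_\varpi(p\diamond p'),\qquad (\id\otimes\id-\tau)\nu_\varpi(p_1\diamond p_2)=(\id\otimes\id-\tau)\nu_\varpi(p_2\diamond p_1).
\]
The second holds for three reasons: invariance of $\varpi$, the fact that $e\diamond f-f\diamond e$ left-annihilates $P$, and $\varpi(p\diamond q,s)=\varpi(q,p\diamond s)$. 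With these, every term becomes a $\g$-tensor $\bullet\,\nu_\varpi(p_1\diamond p_2)$, and the $\g$-tensor is exactly the cocycle identity for $\delta$.
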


Let $(A, \cdot, \Delta)$ be an ASI bialgebra. By Theorem \ref{thm:permbia-dia}, we get a
diassociative bialgebra $(A\otimes P, \dashv, \vdash, \Delta_{\dashv},\Delta_{\vdash})$,
where $\Delta_{\dashv}(a\otimes p)=\Delta(a)\bullet\tau(\nu_{\varpi}(p))$,
$\Delta_{\vdash}(a\otimes p)=\Delta(a)\bullet\nu_{\varpi}(p)$,
$(a_{1}\otimes p_{1})\dashv(a_{2}\otimes p_{2})=(a_{1}\cdot a_{2})\otimes
(p_{2}\diamond p_{1})$ and $(a_{1}\otimes p_{1})\vdash(a_{2}\otimes p_{2})
=(a_{1}\cdot a_{2})\otimes(p_{1}\diamond p_{2})$ for any $a, a_{1}, a_{2}\in A$,
$p, p_{1}, p_{2}\in P$. By Proposition \ref{pro:diass-leib}, this diassociative bialgebra
induced a Leibniz bialgebra $(A\otimes P, \ast, \vartheta)$, where
\begin{align*}
\vartheta(a\otimes p)&
=(\Delta(a)-\tau(\Delta(a)))\bullet\nu_{\varpi}(p),\\
(a_{1}\otimes p_{1})\ast(a_{2}\otimes p_{2})&=(a_{1}\cdot a_{2}-a_{2}\cdot a_{1})
\otimes(p_{1}\diamond p_{2}),
\end{align*}
On the other hand, by Proposition \ref{pro:ass-lie}, the ASI bialgebra $(A, \cdot, \Delta)$
induces a Lie bialgebra $(A, [-,-], \delta)$, where $\delta=\Delta-\tau\circ\Delta$
and $[a_{1}, a_{2}]= a_{1}\cdot a_{2}-a_{2}\cdot a_{1}$ for any $a_{1}, a_{2}\in A$.
Moreover, by Theorem \ref{thm:Lie+perm=L}, we get a Leibniz bialgebra
$(A\otimes P, \ast', \vartheta')$. It is easy to see that $\ast=\ast'$ and $\vartheta
=\vartheta'$. Thus, we have the following commutative diagram:
$$
\xymatrix@C=2cm@R=0.7cm{
\txt{$(A, \cdot, \Delta)$ \\ {\tiny an ASI bialgebra}}
\ar[d]_{{\rm Pro.}~\ref{pro:ass-lie}} \ar[r]^{{\rm Thm.}~\ref{thm:permbia-dia}}
&\txt{$(A\otimes P, \dashv, \vdash, \Delta_{\dashv},\Delta_{\vdash})$\\
{\tiny a diassociative bialgebra}}\ar[d]^{{\rm Pro.}~\ref{pro:diass-leib}} \\
\txt{$(A, [-,-], \delta)$ \\ {\tiny a Lie bialgebra}}
\ar[r]^{{\rm Thm.}~\ref{thm:Lie+perm=L}}
& \txt{$(A\otimes P, \ast, \vartheta)$ \\ {\tiny a Leibniz bialgebra}}}
$$

For the solutions of the $\CYBE$ in a Lie algebra and the solutions of the $\LYBE$ in
the induced Leibniz algebra, we have:

\begin{pro}[\cite{HL}]\label{pro:CYBE-LYBE}
Let $(\g, [-,-])$ be a Lie algebra, $(P, \diamond, \varpi)$ be a quadratic
perm algebra and $(\g\otimes P, \ast)$ be the induced Leibniz algebra.
Suppose that $r=\sum_{i}x_{i}\otimes y_{i}\in\g\otimes\g$ is a solution of the $\CYBE$
in $(\g, [-,-])$, $r+\tau(r)$ is Lie-invariant, $\{e_{1}, e_{2},\cdots, e_{n}\}$ is
a basis of $P$ and $\{f_{1}, f_{2},\cdots, f_{n}\}$ is the dual basis of $\{e_{1},
e_{2},\cdots, e_{n}\}$ with respect to $\varpi(-,-)$. Then
\begin{align}
\widetilde{r}=\sum_{i, j}(x_{i}\otimes e_{j})\otimes(y_{i}\otimes f_{j})
\in(\g\otimes P)\otimes(\g\otimes P)  \label{rr-max}
\end{align}
is a solution of the $\LYBE$ in $(\g\otimes P, \ast)$, and
$\widetilde{r}-\tau(\widetilde{r})$ is Leib-invariant.

In particular, $\widetilde{r}$ is a symmetric solution of the $\LYBE$ in
$(\g\otimes P, \ast)$ if $r$ is a skew-symmetric solution of the $\CYBE$ in $(\g, [-,-])$.
\end{pro}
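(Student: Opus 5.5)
The plan is to reduce everything to the associative-level statements already established, using the diagram that precedes this proposition. The reduction is as follows.

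First, since $r+\tau(r)$ is Lie-invariant but not assumed ass-invariant, we cannot pass through the diassociative route. Instead we work directly in $(\g\otimes P,\ast)$ and mirror the proof of Proposition \ref{pro:AYBE-DYBE}.

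\emph{Step 1: factor $\mathbf{L}_{\widetilde{r}}$.} Write $\kappa=\sum_j e_j\otimes f_j$. Expanding each of the four terms of $\mathbf{L}_{\widetilde{r}}$ gives a $\bullet$-product of a $\g$-part and a $P$-part:
\begin{itemize}
\item $[r_{12},r_{13}]\bullet\sum_{k,l}(e_k\diamond e_l)\otimes f_k\otimes f_l$;
\item $[r_{12},r_{23}]$ paired with $\sum e_k\otimes(f_k\diamond e_l)\otimes f_l$;
\item the $r_{23}\ast r_{12}$ term, paired with $\sum e_l\otimes(e_k\diamond f_l)\otimes f_k$ and carrying $x_i\ast y_j$ terms;
\item $[r_{13},r_{23}]$, up to relabeling, paired with $\sum e_l\otimes e_k\otimes(f_k\diamond f_l)$.
\end{itemize}
Pairing each $P$-tensor with $e_s\otimes e_u\otimes e_v$ via $\varpi$, and using skew-symmetry, invariance and the perm identities exactly as in the proof of Proposition \ref{pro:AYBE-DYBE}, we express all four $P$-tensors through two basic ones: $\Theta_1=\sum(e_k\diamond e_l)\otimes f_k\otimes f_l$ and $\Theta_2=\sum(e_l\diamond e_k)\otimes f_k\otimes f_l$. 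The expected output is
$$\mathbf{L}_{\widetilde{r}}=\mathbf{C}_r\bullet\Theta_1+X\bullet\Theta_2 .$$
Here $X$ is a combination of the $\g$-tensors that should vanish modulo the Lie-invariance of $r+\tau(r)$. Concretely, $X$ should be a sum of terms like $(\ad(x_i)\otimes\id+\id\otimes\ad(x_i))(r+\tau(r))$ placed in the appropriate legs, and such terms vanish by invariance. Hence $\mathbf{C}_r=0$ gives $\mathbf{L}_{\widetilde{r}}=0$.

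\emph{Step 2: Leib-invariance.} Compute $\vartheta_{\widetilde{r}-\tau(\widetilde{r})}(g\otimes p)$ from Eq. \eqref{cobLeb}. Use the identities from the proof of Proposition \ref{pro:AYBE-DYBE}:
\begin{itemize}
\item $\sum_j(e_j\diamond p)\otimes f_j=-\sum_j(f_j\diamond p)\otimes e_j$;
\item $\sum_j(p\diamond e_j)\otimes f_j=\sum_j\big((e_j\diamond p)\otimes f_j+e_j\otimes(f_j\diamond p)\big)$;
\item $\kappa$ is skew-symmetric.
\end{itemize}
With these, every term collapses to $\big((\ad(g)\otimes\id+\id\otimes\ad(g))(r+\tau(r))\big)\bullet\sum_j f_j\otimes(e_j\diamond p)$, up to sign. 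This vanishes by Lie-invariance.

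\emph{Step 3: the symmetric case.} Since $\kappa$ is skew-symmetric, $\widetilde{r}$ is symmetric whenever $r$ is skew-symmetric. In that case $r+\tau(r)=0$ is trivially invariant, so the claim follows from Step 1.

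The main obstacle is Step 1. One must find the right $P$-level identities so that the four $P$-tensors reduce to only two, and check that the leftover $\g$-coefficient is precisely an invariance expression. I would first compute all four pairings $\varpi(\cdot, e_s\otimes e_u\otimes e_v)$ explicitly, then solve for the linear relations among them.
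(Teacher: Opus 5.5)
Your Steps 2 and 3 are sound, and working directly in $\g\otimes P$ is right, since $\g$ need not come from an associative algebra. Because $\kappa$ is skew-symmetric, $\widetilde{r}-\tau(\widetilde{r})=(r+\tau(r))\bullet\kappa$. The identity $\sum_j(p\diamond e_j)\otimes f_j=\sum_j(e_j\diamond p)\otimes f_j+\sum_j e_j\otimes(f_j\diamond p)$ is exactly what merges the $(\fll+\frr)$-part with the $\frr$-part. This gives $\vartheta_{\widetilde{r}-\tau(\widetilde{r})}(g\otimes p)=\big((\ad(g)\otimes\id+\id\otimes\ad(g))(r+\tau(r))\big)\bullet\sum_j e_j\otimes(f_j\diamond p)$. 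The paper only quotes this proposition, so there is no in-paper proof to compare against.

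The gap is Step 1. You never carry out the factorization; you only announce an ``expected output''. The mechanism you predict is also not what happens: there is no remainder $X$ killed by the invariance of $r+\tau(r)$. Two observations are missing.

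First, the $\g$-side needs no invariance. By antisymmetry of the bracket and relabeling $i\leftrightarrow j$, $\sum_{i,j}x_j\otimes[x_i,y_j]\otimes y_i=-[r_{12},r_{23}]$ and $\sum_{i,j}x_j\otimes x_i\otimes[y_i,y_j]=-[r_{13},r_{23}]$. Hence
\[
\mathbf{L}_{\widetilde{r}}=[r_{12},r_{13}]\bullet A_1-[r_{12},r_{23}]\bullet A_2+[r_{12},r_{23}]\bullet A_3-[r_{13},r_{23}]\bullet A_4,
\]
where $A_1=\sum_{k,l}(e_k\diamond e_l)\otimes f_k\otimes f_l$, $A_2=\sum_{k,l}e_k\otimes(f_k\diamond e_l)\otimes f_l$, $A_3=\sum_{k,l}e_l\otimes(e_k\diamond f_l)\otimes f_k$ and $A_4=\sum_{k,l}e_l\otimes e_k\otimes(f_k\diamond f_l)$.

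Second, on the $P$-side put $T(x,y,z)=\varpi(x\diamond y,z)$. Invariance together with skew-symmetry of $\varpi$ reads $T(x,y,z)=-T(y,z,x)+T(z,y,x)$. Adding this to the same identity with $x$ and $z$ interchanged gives $T(y,z,x)=-T(y,x,z)$. Substituting back gives $T(x,y,z)+T(y,z,x)+T(z,x,y)=0$. Now use $\sum_k\varpi(f_k,x)e_k=x$ and $\sum_k\varpi(e_k,x)f_k=-x$. The pairings of $A_1,A_2,A_3,A_4$ with $a\otimes b\otimes c$ are then $T(b,c,a)$, $T(a,b,c)$, $T(c,b,a)$ and $T(b,a,c)$. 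Hence $A_4=-A_1$ and $A_3=A_1+A_2$, and therefore $\mathbf{L}_{\widetilde{r}}=\mathbf{C}_r\bullet A_1$.

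In your notation $\Theta_1=A_1$ and $\Theta_2=A_3$, so the coefficient $X$ of $\Theta_2$ is $[r_{12},r_{23}]-[r_{12},r_{23}]=0$ identically. The $\LYBE$ part therefore needs only $\mathbf{C}_r=0$; the invariance of $r+\tau(r)$ enters only in Step 2. With this computation inserted, your proof is complete.
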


Thus, we can obtain the relationship between the coboundary (resp. quasi-triangular,
triangular, factorizable) Lie bialgebras and the coboundary (resp. quasi-triangular,
triangular, factorizable) Leibniz bialgebras.

\begin{thm}[\cite{HL}]\label{thm:indu-triLib}
Let $(\g, [-,-], \delta)$ be a Lie bialgebra, $(P, \diamond, \varpi)$ be a quadratic perm
algebra and $(\g\otimes P, \ast, \vartheta)$ be the induced Leibniz bialgebra from
$(\g, [-,-], \delta)$ by $(P, \diamond, \varpi)$. If $\delta=\delta_{r}$ for some
$r\in\g\otimes\g$, then $\vartheta=\vartheta_{\widetilde{r}}$, where $\widetilde{r}$ is
given by Eq. \eqref{rr-max}. Therefore, we obtain
\begin{enumerate}\itemsep=0pt
\item[$(i)$]  $(\g\otimes P, \ast, \vartheta)$ is coboundary if
     $(\g, [-,-], \delta)$ is coboundary;
\item[$(ii)$] $(\g\otimes P, \ast, \vartheta)$ is quasi-triangular if
     $(\g, [-,-], \delta)$ is quasi-triangular;
\item[$(iii)$] $(\g\otimes P, \ast, \vartheta)$ is triangular if
     $(\g, [-,-], \delta)$ is triangular;
\item[$(iv)$] $(\g\otimes P, \ast, \vartheta)$ is factorizable if
     $(\g, [-,-], \delta)$ is factorizable.
\end{enumerate}
\end{thm}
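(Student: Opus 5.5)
The plan is to mirror the proof of Theorem \ref{thm:indu-sdibia}, now in the Lie/Leibniz setting. The first step is to show $\vartheta=\vartheta_{\widetilde{r}}$ whenever $\delta=\delta_{r}$ with $r=\sum_{i}x_{i}\otimes y_{i}$. Items $(i)$--$(iii)$ then follow from Proposition \ref{pro:CYBE-LYBE} and Proposition \ref{pro:sLib-bia}. Item $(iv)$ follows from a tensor-factor computation of $\widetilde{r}^{\sharp}$.

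For the identity $\vartheta=\vartheta_{\widetilde{r}}$, take $g\otimes p\in\g\otimes P$. On one side,
$$\vartheta(g\otimes p)=\Big(\sum_{i}[g,x_{i}]\otimes y_{i}+x_{i}\otimes[g,y_{i}]\Big)\bullet\nu_{\varpi}(p).$$
On the other side, $\fll(g\otimes p)(x\otimes e)=[g,x]\otimes(p\diamond e)$ and $\frr(g\otimes p)(x\otimes e)=[x,g]\otimes(e\diamond p)$. Expanding $\vartheta_{\widetilde{r}}(g\otimes p)=\big((\fll+\frr)(g\otimes p)\otimes\id-\id\otimes\frr(g\otimes p)\big)(\widetilde{r})$ gives
$$\sum_{i,j}\Big(([g,x_{i}]\otimes y_{i})\bullet\big((p\diamond e_{j}-e_{j}\diamond p)\otimes f_{j}\big)+(x_{i}\otimes[g,y_{i}])\bullet\big(e_{j}\otimes(f_{j}\diamond p)\big)\Big).$$
I then need the two tensor identities in $P\otimes P$ already obtained in the proof of Theorem \ref{thm:indu-sdibia}:
$$\nu_{\varpi}(p)=-\sum_{j}e_{j}\otimes(f_{j}\diamond p)=\sum_{j}(e_{j}\diamond p)\otimes f_{j}-\sum_{j}(p\diamond e_{j})\otimes f_{j}.$$
These are checked by pairing with $e_{s}\otimes e_{t}$ via $\varpi$ and using invariance, skew-symmetry and nondegeneracy. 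Substituting them, both $P$-factors become $-\nu_{\varpi}(p)$, so $\vartheta_{\widetilde{r}}(g\otimes p)=-\delta_{r}(g)\bullet\nu_{\varpi}(p)$.

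So the sign may come out as $\vartheta_{\widetilde{r}}=-\vartheta$, and reconciling it is the step I expect to be the main obstacle. I would first carefully recheck the sign conventions in Eqs. \eqref{lie-cobo} and \eqref{cobLeb}, and the $\varpi$-pairing identities (signs from $\varpi(f_{j},e_{k})=\delta_{jk}$ versus $\varpi(e_{k},f_{j})=-\delta_{jk}$). If a global sign genuinely remains, I would absorb it by replacing $\widetilde{r}$ with $\sum_{i,j}(x_{i}\otimes f_{j})\otimes(y_{i}\otimes e_{j})$, which equals $-\widetilde{r}$ by skew-symmetry of $\kappa=\sum_{j}e_{j}\otimes f_{j}$. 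This is harmless for $(ii)$--$(iv)$, since the LYBE is quadratic, invariance is linear, and isomorphy is preserved. This gives $(i)$.

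For $(ii)$ and $(iii)$: if $r$ solves the $\CYBE$ with $r+\tau(r)$ Lie-invariant, or is skew-symmetric, Proposition \ref{pro:CYBE-LYBE} gives that $\widetilde{r}$ solves the $\LYBE$ with $\widetilde{r}-\tau(\widetilde{r})$ Leib-invariant, or is symmetric. Proposition \ref{pro:sLib-bia} then identifies $(\g\otimes P,\ast,\vartheta_{\widetilde{r}})$ as quasi-triangular, resp. triangular.

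For $(iv)$, I would compute exactly as in Theorem \ref{thm:indu-sdibia}: $\widetilde{r}^{\sharp}=r^{\sharp}\otimes\kappa^{\sharp}$, and, since $\kappa$ is skew-symmetric, $\tau(\widetilde{r})^{\sharp}=-\tau(r)^{\sharp}\otimes\kappa^{\sharp}$. Hence $\widetilde{\mathcal{I}}=(r^{\sharp}+\tau(r)^{\sharp})\otimes\kappa^{\sharp}$. This is a tensor product of two linear isomorphisms, so it is an isomorphism, and the Leibniz bialgebra is factorizable.
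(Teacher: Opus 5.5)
Your argument is correct and is the same computation the paper carries out for the diassociative analogue, Theorem \ref{thm:indu-sdibia}. For this statement itself the paper gives no proof and only cites \cite{HL}. As in that theorem, you get $(ii)$--$(iii)$ from Propositions \ref{pro:CYBE-LYBE} and \ref{pro:sLib-bia}, and $(iv)$ from the factorization $\widetilde{\mathcal{I}}=(r^{\sharp}+\tau(r)^{\sharp})\otimes\kappa^{\sharp}$.

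The sign you flag is genuine, not a slip on your part. With Eqs.~\eqref{lie-cobo}, \eqref{cobLeb} and $\varpi(f_{i},e_{j})=\delta_{ij}$, one really has $\vartheta_{\widetilde{r}}=-\vartheta$. This already shows up in Example \ref{ex:ind-tridiam}: $\vartheta_{\widetilde{r}}(e_{1}\otimes x_{1})=(e_{1}\otimes x_{1})\otimes(e_{2}\otimes x_{1})-(e_{2}\otimes x_{1})\otimes(e_{1}\otimes x_{1})=-\vartheta(e_{1}\otimes x_{1})$. So the identity should read $\vartheta=\vartheta_{-\widetilde{r}}$.

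Your replacement of $\widetilde{r}$ by $-\widetilde{r}$ is exactly the right repair for $(i)$--$(iv)$. It preserves the $\LYBE$, symmetry, Leib-invariance of $\widetilde{r}-\tau(\widetilde{r})$, and invertibility of $\widetilde{\mathcal{I}}$.
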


Based on the above results, it is natural for us to ask whether the following commutative
diagram holds true?
{\small
\begin{displaymath}
\xymatrix@R=0.4cm@C=-0.2cm{
&\txt{{\small $(A, \cdot, \Delta_{r})$ }\\ {\tiny a triangular ASI bialgebra}}
\ar@{->}[rr]^{{\rm Thm.}~\ref{thm:indu-sdibia}}
\ar@{->}[ld]^{{\rm Pro.}~\ref{pro:qtAss-qtLie}}
\ar@{<.}[dd]_(0.7){{\rm Pro.}~\ref{pro:quasass-bia}}&
&\txt{{\small $(A\otimes P, \dashv, \vdash, \Delta_{\dashv,\widetilde{r}},
\Delta_{\vdash,\widetilde{r}})$} \\ {\tiny a triangular diassociative bialgebra}}
\ar@{->}[ld]^{{\rm Pro.}~\ref{pro:qtdiAss-qtLeib}}
\ar@{<-}[dd]^{{\rm Pro.}~\ref{pro:quasi-di}}&\\
\txt{{\small $(A, [-,-], \vartheta_{r})$} \\ {\tiny a triangular Lie bialgebra}}
\ar@{<-}[dd]^{{\rm Pro.}~\ref{pro:lie-bia}}
\ar@{->}[rr]^(0.6){{\rm Thm.}~\ref{thm:indu-triLib}}&
&\txt{{\small $(A\otimes P, \ast, \vartheta_{\widetilde{r}})$}\\
{\tiny a triangular Leibniz bialgebra}}
\ar@{<-}[dd]_(0.7){{\rm Pro.}~\ref{pro:sLib-bia}} \\
&\txt{{\small $r$}\\ {\tiny a skew-symmetric solution} \\
{\tiny of the $\AYBE$ in $(A, \cdot)$}}
\ar@{.>}[rr]^(0.6){{\rm Pro.}~\ref{pro:AYBE-DYBE}}&
&\txt{{\small $\widetilde{r}$} \\  {\tiny a symmetric solution} \\
{\tiny of the $\DAYBE$ in $(A\otimes P, \dashv, \vdash)$}}\\
\txt{{\small $r$}\\ {\tiny a skew-symmetric solution} \\
{\tiny of the $\CYBE$ in $(A, [-,-])$}}
\ar@{<.}[ru]_{{\rm Pro.}~\ref{pro:ass-Lie-YBE}}
\ar@{->}[rr]^{{\rm Pro.}~\ref{pro:CYBE-LYBE}}&
&\txt{{\small $\widetilde{r}$}\\ {\tiny a symmetric solution} \\
{\tiny of the $\LYBE$ in $(A\otimes P, \ast)$}}
\ar@{<-}[ru]_{{\rm Pro.}~\ref{pro:diass-Leib-YBE}}}
\end{displaymath}
}\\
Indeed, It can be inferred from Theorem \ref{thm:indu-triLib}, Theorem \ref{thm:indu-sdibia},
Proposition \ref{pro:qtAss-qtLie} and Proposition \ref{pro:qtdiAss-qtLeib} that the front,
back, left, and right sides of the diagram are commutative respectively.
The bottom is naturally commutative. Direct verification shows that the surface above is also
commutative. So the commutative diagram hold.

\begin{rmk}\label{rmk:commdiagram}
In \cite{Hou1}, we have constructed a Lie bialgebra by a dendriform $\mathrm{D}$-bialgebra,
and discussed in detail the relationship between pre-Lie bialgebras, antisymmetric
infinitesimal bialgebras, dendriform $\mathrm{D}$-bialgebras and Lie bialgebras.
Let $(D, \prec, \succ, \theta_{\prec}, \theta_{\succ})$ be a dendriform
$\mathrm{D}$-bialgebra, $(P, \diamond, \varpi)$ and $(P', \diamond', \varpi')$ be two
quadratic perm algebras. Based on the conclusion here and the conclusion in \cite{Hou1},
we have the following commutative diagram:
$$
\xymatrix@C=1.5cm@R=0.7cm{
\txt{$(D, \prec, \succ, \theta_{\prec}, \theta_{\succ})$ \\
{\tiny a dendriform $\mathrm{D}$-bialgebra}}\ar[d]\ar[r]
& \txt{$(D\otimes P, \cdot, \Delta)$ \\ {\tiny an ASI bialgebra}}\ar[d]\ar[r]
& \txt{$((D\otimes P)\otimes P', \dashv, \vdash, \Delta_{\dashv},\Delta_{\vdash})$\\
{\tiny a diassociative bialgebra}}\ar[d] \\
\txt{$(D, \circ, \theta)$ \\ {\tiny a pre-Lie bialgebra}}\ar[r]
& \txt{$(D\otimes P, [-,-], \delta)$ \\ {\tiny a Lie bialgebra}}\ar[r]
& \txt{$((D\otimes P)\otimes P', \ast, \vartheta)$ \\ {\tiny a Leibniz bialgebra}}}
$$
Moreover, if the dendriform $\mathrm{D}$-bialgebra $(D, \prec, \succ, \theta_{\prec},
\theta_{\succ})$ is triangular, then all the bialgebra structures in the commutative
diagram above are triangular.
\end{rmk}

\begin{ex}\label{ex:ind-tridiam}
Let $(A, \cdot)$ be the $2$-dimensional associative algebra given in Example
\ref{ex:ind-tribi}, $r=e_{2}\otimes e_{1}-e_{1}\otimes e_{2}\in A\otimes A$ and
$(P, \diamond, \varpi)$ be the $2$-dimensional quadratic perm algebra given in Example
\ref{ex:dual-perm}. Since $r$ is a skew-symmetric solution of the $\AYBE$ in $(A, \cdot)$,
we have a triangular ASI bialgebra $(A, \cdot, \Delta_{r})$, where $\Delta(e_{1})
=e_{2}\otimes e_{1}$ and $\Delta(e_{2})=e_{2}\otimes e_{2}$.
Moreover, we get a $4$-dimensional diassociative algebra $(A\otimes P,
\dashv, \vdash)$ and a symmetric solution $\widetilde{r}$ of the $\DAYBE$ in
$(A\otimes P, \dashv, \vdash)$ in Example \ref{ex:ind-tribi}. Therefore, we
have a triangular diassociative algebra $(A\otimes P, \dashv, \vdash,
\Delta_{\dashv,\widetilde{r}}, \Delta_{\vdash,\widetilde{r}})$, which is given
in Example \ref{ex:ind-diabi}.

On the other hand, by the associative algebra $(A, \cdot)$, we can obtain a Lie algebra
$(A, [-,-])$, where $[e_{1}, e_{2}]=e_{2}=-[e_{2}, e_{1}]$, and one can check that
$r$ is also a skew-symmetric solution of the $\CYBE$ in $(A, [-,-])$. Thus, $r$ induces a
triangular Lie bialgebra $(A, [-,-], \delta_{r})$, where $\delta_{r}(e_{1})=
e_{2}\otimes e_{1}-e_{1}\otimes e_{2}$ and $\delta_{r}(e_{2})=0$. By this Lie bialgebra
and quadratic perm algebra $(P, \diamond, \varpi)$, we get a Leibniz bialgebra
$(A\otimes P, \ast, \vartheta)$, where
\begin{align*}
& (e_{1}\otimes x_{2})\ast(e_{2}\otimes x_{1})=e_{2}\otimes x_{1}
=-(e_{2}\otimes x_{2})\ast(e_{1}\otimes x_{1}),\\
& (e_{1}\otimes x_{2})\ast(e_{2}\otimes x_{2})=e_{2}\otimes x_{2}
=-(e_{2}\otimes x_{2})\ast(e_{1}\otimes x_{2}),\\
& \vartheta(e_{1}\otimes x_{1})=(e_{2}\otimes x_{1})\otimes(e_{1}\otimes x_{1})
-(e_{1}\otimes x_{1})\otimes(e_{2}\otimes x_{1}),\\
& \vartheta(e_{1}\otimes x_{2})=(e_{2}\otimes x_{1})\otimes(e_{1}\otimes x_{2})
-(e_{1}\otimes x_{1})\otimes(e_{2}\otimes x_{2}),
\end{align*}
and $\vartheta(e_{2}\otimes x_{1})=\vartheta(e_{2}\otimes x_{2})=0$.
One can check that $\widetilde{r}$ is a symmetric solution of the $\LYBE$ in
$(A\otimes P, \ast)$, the Leibniz bialgerba $(A\otimes P, \ast, \vartheta)$
is exactly the triangular Leibniz bialgerba associated with $\widetilde{r}$,
and it is also the Leibniz bialgerba induced by the diassociative algebra
$(A\otimes P, \dashv, \vdash, \Delta_{\dashv,\widetilde{r}}, \Delta_{\vdash,\widetilde{r}})$.
\end{ex}

\subsection{Symmetric solutions of Yang-Baxter equation and $\mathcal{O}$-operators}
\label{subsec:sym-o}
It is known that there is a close relationship between $\mathcal{O}$-operators and the
classical Yang-Baxter equation on Lie algebras, as the former is an operator form of a
solution of the $\CYBE$ (or classical $r$-matrix). Let $(\g, [-,-])$ be a Lie algebra
and $(V, \rho)$ be a representation of $(\g, [-,-])$.
Recall that a linear map $T: V\rightarrow \g$ is called an {\bf $\mathcal{O}$-operator of
$(\g, [-,-])$ associated to $(V, \rho)$} if for any $v_{1}, v_{2}\in V$,
$$
[P(v_{1}), P(v_{2})]=P\big(\rho(P(v_{1}))(v_{2})-\rho(P(v_{2}))(v_{1})\big).
$$

\begin{pro}[\cite{CP,Kup}]\label{pro:o-lie}
Let $(\g, [-,-])$ be a Lie algebra and $r\in\g\otimes\g$ be skew-symmetric.
Then $r$ is a solution of the $\CYBE$ in $(\g, [-,-])$ if and only if $r^{\sharp}$
is an $\mathcal{O}$-operator of $(\g, [-,-])$ associated to the coregular
representation $(\g^{\ast}, \ad_{\g}^{\ast})$.
\end{pro}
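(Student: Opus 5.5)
The plan is to mirror the proof of Proposition \ref{pro:o-ass} and Proposition \ref{pro:o-dia} (Kupershmidt's argument): pair the tensor $\mathbf{C}_{r}$ against an arbitrary element $\xi_{1}\otimes\xi_{2}\otimes\xi_{3}\in\g^{\ast}\otimes\g^{\ast}\otimes\g^{\ast}$. Then show that each of its three terms is a pairing involving $r^{\sharp}$, so that $\mathbf{C}_{r}=0$ becomes equivalent to the $\mathcal{O}$-operator identity for $T=r^{\sharp}$ with $\rho=\ad_{\g}^{\ast}$. Concretely, I will prove that for all $\xi_{1},\xi_{2},\xi_{3}\in\g^{\ast}$ one has
$$
\langle\mathbf{C}_{r},\;\xi_{1}\otimes\xi_{2}\otimes\xi_{3}\rangle
=\pm\big\langle[r^{\sharp}(\xi_{1}),r^{\sharp}(\xi_{2})]-r^{\sharp}\big(\ad_{\g}^{\ast}(r^{\sharp}(\xi_{1}))(\xi_{2})-\ad_{\g}^{\ast}(r^{\sharp}(\xi_{2}))(\xi_{1})\big),\;\xi_{3}\big\rangle.
$$
Since the $\xi_{i}$ are arbitrary, this identity gives both directions of the equivalence at once.

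First I would record the consequences of skew-symmetry. Writing $r=\sum_{i}x_{i}\otimes y_{i}$, we have $r^{\sharp}(\xi)=\sum_{i}\langle\xi,x_{i}\rangle y_{i}=-\sum_{i}\langle\xi,y_{i}\rangle x_{i}$. Moreover $\langle r^{\sharp}(\xi_{1}),\xi_{2}\rangle=-\langle r^{\sharp}(\xi_{2}),\xi_{1}\rangle$, so the transpose satisfies $(r^{\sharp})^{\ast}=-r^{\sharp}$ under the identification $\g^{\ast\ast}=\g$. I would also fix the convention $\langle\ad_{\g}^{\ast}(g)\xi,h\rangle=-\langle\xi,[g,h]\rangle$, as in the paper's definition of $\psi^{\ast}$.

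Next I would compute the three terms one at a time.
\begin{itemize}
\item For $[r_{12},r_{13}]=\sum_{i,j}[x_{i},x_{j}]\otimes y_{i}\otimes y_{j}$, I rewrite $x_{i},x_{j}$ through $y$'s using skew-symmetry. Pairing against $\xi_{1}\otimes\xi_{2}\otimes\xi_{3}$ then gives $\langle\xi_{1},[r^{\sharp}(\xi_{2}),r^{\sharp}(\xi_{3})]\rangle$, up to sign.
\item For $[r_{13},r_{23}]=\sum_{i,j}x_{i}\otimes x_{j}\otimes[y_{i},y_{j}]$, pairing gives $\langle[r^{\sharp}(\xi_{1}),r^{\sharp}(\xi_{2})],\xi_{3}\rangle$.
\item For $[r_{12},r_{23}]=\sum_{i,j}x_{i}\otimes[y_{i},x_{j}]\otimes y_{j}$, pairing gives $\langle\xi_{2},[r^{\sharp}(\xi_{1}),\,-r^{\sharp}(\xi_{3})]\rangle$, again up to sign.
\end{itemize}
The first and third terms I then move onto $\xi_{3}$. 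Using the coadjoint convention together with $(r^{\sharp})^{\ast}=-r^{\sharp}$, they become $\langle r^{\sharp}(\ad_{\g}^{\ast}(r^{\sharp}(\xi_{2}))\xi_{1}),\xi_{3}\rangle$ and $-\langle r^{\sharp}(\ad_{\g}^{\ast}(r^{\sharp}(\xi_{1}))\xi_{2}),\xi_{3}\rangle$ respectively, up to an overall common sign.

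Summing the three contributions yields the displayed identity, and nondegeneracy of the pairing finishes the proof. The main obstacle is purely bookkeeping: keeping the signs consistent across the three terms, the coadjoint convention and the skew-symmetry swaps, so that the terms combine exactly into the $\mathcal{O}$-operator expression rather than with a stray relative sign. I would control this by testing the final identity on the two-dimensional example in Example \ref{ex:ind-tridiam}, where $r=e_{2}\otimes e_{1}-e_{1}\otimes e_{2}$ is known to solve the $\CYBE$. Alternatively, the result follows directly from the classical references \cite{CP,Kup}, and this computation simply reproduces their argument.
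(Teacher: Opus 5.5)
Your proposal is correct and is exactly the standard Kupershmidt pairing argument from \cite{CP,Kup}; the paper itself gives no proof and only cites those references. The signs you hedged all come out as $+$. Pairing gives $\langle\xi_{1},[r^{\sharp}(\xi_{2}),r^{\sharp}(\xi_{3})]\rangle$ for $[r_{12},r_{13}]$, since the two skew-symmetry sign flips cancel, and $-\langle\xi_{2},[r^{\sharp}(\xi_{1}),r^{\sharp}(\xi_{3})]\rangle$ for $[r_{12},r_{23}]$. Hence $\langle\mathbf{C}_{r},\xi_{1}\otimes\xi_{2}\otimes\xi_{3}\rangle=\langle[r^{\sharp}(\xi_{1}),r^{\sharp}(\xi_{2})]-r^{\sharp}(\ad_{\g}^{\ast}(r^{\sharp}(\xi_{1}))\xi_{2}-\ad_{\g}^{\ast}(r^{\sharp}(\xi_{2}))\xi_{1}),\xi_{3}\rangle$ holds exactly, with no leftover sign.
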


Let $(A, \cdot)$ be an associative algebra and $(V, \kl, \kr)$ be a bimodule over it.
Consider the induced Lie algebra $(A, [-,-])$ by the commutator. One can check that
$(V, \kl-\kr)$ is a representation of $(A, [-,-])$.

\begin{pro}\label{pro:o-ass-lie}
Let $(A, \cdot)$ be an associative algebra and $(A, [-,-])$ be the induced Lie algebra.
If $T: V\rightarrow A$ is an $\mathcal{O}$-operator of $(A, \ast)$ associated to
bimodule $(V, \kl, \kr)$, then $T$ is also an $\mathcal{O}$-operator of $(A, [-,-])$
associated to representation $(V, \kl-\kr)$.
\end{pro}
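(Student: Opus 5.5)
This is a direct verification in which the $\mathcal{O}$-operator identity is obtained from the associative one by subtraction. Set $\rho:=\kl-\kr$. We use the fact, noted just before the statement, that $(V,\rho)$ is a representation of $(A,[-,-])$. Then all that remains is the $\mathcal{O}$-operator identity
$$
[T(v_{1}), T(v_{2})]=T\big(\rho(T(v_{1}))(v_{2})-\rho(T(v_{2}))(v_{1})\big),
$$
for any $v_{1}, v_{2}\in V$.

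The plan is to expand the left-hand side with the commutator,
$$
[T(v_{1}), T(v_{2})]=T(v_{1})\cdot T(v_{2})-T(v_{2})\cdot T(v_{1}),
$$
and to apply the associative $\mathcal{O}$-operator identity to each product. The first product gives $T\big(\kl(T(v_{1}))(v_{2})+\kr(T(v_{2}))(v_{1})\big)$. The second, obtained by exchanging $v_{1}$ and $v_{2}$, gives $T\big(\kl(T(v_{2}))(v_{1})+\kr(T(v_{1}))(v_{2})\big)$. Subtracting and using linearity of $T$, we collect the terms acting on $v_{2}$ and those acting on $v_{1}$:
$$
T\big((\kl-\kr)(T(v_{1}))(v_{2})-(\kl-\kr)(T(v_{2}))(v_{1})\big),
$$
which is exactly $T\big(\rho(T(v_{1}))(v_{2})-\rho(T(v_{2}))(v_{1})\big)$.

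The only nonformal ingredient is the representation claim, which the paper states as "one can check". If a justification is wanted, I would verify
$$
\rho([a_{1},a_{2}])=\rho(a_{1})\circ\rho(a_{2})-\rho(a_{2})\circ\rho(a_{1})
$$
by expanding both sides with the three bimodule axioms: $\kl(a_{1}\cdot a_{2})=\kl(a_{1})\circ\kl(a_{2})$, $\kr(a_{1}\cdot a_{2})=\kr(a_{2})\circ\kr(a_{1})$, and the commutation relation $\kl(a_{1})\circ\kr(a_{2})=\kr(a_{2})\circ\kl(a_{1})$. The commutation relation is what makes the cross terms $\kl\circ\kr$ and $\kr\circ\kl$ cancel. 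This bookkeeping step is the only place where care is needed, and there is no real obstacle.
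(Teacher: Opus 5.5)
Your proposal is correct and follows the paper's own proof: expand $[T(v_1),T(v_2)]$ as a commutator, apply the associative $\mathcal{O}$-operator identity to $T(v_1)\cdot T(v_2)$ and $T(v_2)\cdot T(v_1)$, and regroup using linearity of $T$. Your extra sketch that $(V,\kl-\kr)$ is a representation of $(A,[-,-])$ is also correct; it fills in a step the paper only asserts. It uses the commutation axiom in its correct form $\kl(a_1)\circ\kr(a_2)=\kr(a_2)\circ\kl(a_1)$, which is misprinted in the paper's definition of a bimodule.
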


\begin{proof}
Let $T: V\rightarrow A$ be an $\mathcal{O}$-operator of $(A, \ast)$ associated
to $(V, \kl, \kr)$. That is, $T(v_{1})\cdot T(v_{2})=T\big(\kl(T(v_{1}))(v_{2})
+\kr(T(v_{2}))(v_{1})\big)$ for any $v_{1}, v_{2}\in V$. Thus, we have
\begin{align*}
&\; [T(v_{1}),\; T(v_{2})]-T\big((\kl-\kr)(T(v_{1}))(v_{2})
-(\kl-\kr)(T(v_{2}))(v_{1})\big)\\
=&\; T(v_{1})\cdot T(v_{2})-T(v_{2})\cdot T(v_{1})
-T\big(\kl(T(\xi_{1}))(\xi_{2})-\kr(T(v_{1}))(v_{2})
-\kl(T(v_{2}))(v_{1})+\kr(T(v_{2}))(v_{1})\big)\\
=&\; 0.
\end{align*}
This means that $T$ is an $\mathcal{O}$-operator of $(A, [-,-])$ associated to $(V, \kl-\kr)$.
\end{proof}

In particular, if $(V, \kl, \kr)$ is the coregular bimodule $(A^{\ast}, -\fr_{A}^{\ast},
-\fl_{A}^{\ast})$, then $(A^{\ast}, \fl_{A}^{\ast}-\fr_{A}^{\ast})$ is the coregular
representation of $(A, [-,-])$, and $T: A^{\ast}\rightarrow A$ is the $\mathcal{O}$-operator
of $(A, [-,-])$ associated to the coregular representation.
Let $(L, \ast)$ be a Leibniz algebra and $(V, \bar{\kl}, \bar{\kr})$ be a representation of
$(L, \ast)$. Recall that a linear map $T: V\rightarrow L$ is called an {\bf
$\mathcal{O}$-operator of $(L, \ast)$ associated to $(V, \bar{\kl}, \bar{\kr})$} if for
any $v_{1}, v_{2}\in V$,
$$
T(v_{1})\ast T(v_{2})=T\big(\kl(T(v_{1}))(v_{2})+\kr(T(v_{2}))(v_{1})\big).
$$

\begin{pro}[\cite{TS,BLST}]\label{pro:o-leib}
Let $(L, \ast)$ be a Leibniz algebra and $r\in L\otimes L$ be symmetric.
Then $r$ is a solution of the $\LYBE$ in $(L, \ast)$ if and only if
$r^{\sharp}: L^{\ast}\rightarrow L$ is an $\mathcal{O}$-operator of $(L, \ast)$
associated to the coregular representation $(L^{\ast}, \bar{\fl}_{L}^{\ast},
-\bar{\fl}_{L}^{\ast}-\bar{\fr}_{L}^{\ast})$.
\end{pro}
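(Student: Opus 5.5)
The plan is to translate the $\mathcal{O}$-operator identity into a statement about $r$ by pairing both sides with an arbitrary covector, and then to identify the result with a component of $\mathbf{L}_{r}$. I would follow the proof of the analogous statements, Propositions \ref{pro:o-ass} and \ref{pro:o-lie}.

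\textbf{Setup.} Write $r=\sum_{i}x_{i}\otimes y_{i}$. Because $r$ is symmetric, $r^{\sharp}(\xi)=\sum_{i}\langle\xi,x_{i}\rangle y_{i}=\sum_{i}\langle\xi,y_{i}\rangle x_{i}$ for all $\xi\in L^{\ast}$. I will use whichever of these two expressions is convenient at each step. Recall the paper's sign convention $\langle\psi^{\ast}(x)(\xi),v\rangle=-\langle\xi,\psi(x)(v)\rangle$, so that
$$
\langle\fll_{L}^{\ast}(x)\xi,v\rangle=-\langle\xi,x\ast v\rangle,\qquad
\langle\frr_{L}^{\ast}(x)\xi,v\rangle=-\langle\xi,v\ast x\rangle .
$$

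\textbf{Step 1: pair each term with $\xi_{3}$.} For arbitrary $\xi_{1},\xi_{2},\xi_{3}\in L^{\ast}$, I would compute the three scalars
$$
\langle r^{\sharp}(\xi_{1})\ast r^{\sharp}(\xi_{2}),\xi_{3}\rangle,\qquad
\langle r^{\sharp}\big(\fll_{L}^{\ast}(r^{\sharp}(\xi_{1}))\xi_{2}\big),\xi_{3}\rangle,\qquad
\langle r^{\sharp}\big((-\fll_{L}^{\ast}-\frr_{L}^{\ast})(r^{\sharp}(\xi_{2}))\xi_{1}\big),\xi_{3}\rangle .
\]
For the last two, I use $\langle r^{\sharp}(\eta),\xi_{3}\rangle=\langle\eta,r^{\sharp}(\xi_{3})\rangle$, which holds since $r$ is symmetric. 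This moves the outer $r^{\sharp}$ onto $\xi_{3}$, and the transpose definitions then turn each expression into a double sum $\sum_{i,j}$ of three pairings with $\xi_{1},\xi_{2},\xi_{3}$. The expected outcome is:
\begin{itemize}
\item the first term is $\sum_{i,j}\langle\xi_{1},x_{i}\rangle\langle\xi_{2},x_{j}\rangle\langle\xi_{3},y_{i}\ast y_{j}\rangle$;
\item the second is $-\sum_{i,j}\langle\xi_{1},x_{i}\rangle\langle\xi_{3},x_{j}\rangle\langle\xi_{2},y_{i}\ast y_{j}\rangle$;
\item the third is $\sum_{i,j}\langle\xi_{2},x_{j}\rangle\big(\langle\xi_{3},x_{i}\rangle\langle\xi_{1},y_{j}\ast y_{i}\rangle+\langle\xi_{3},x_{i}\rangle\langle\xi_{1},y_{i}\ast y_{j}\rangle\big)$, with indices relabelled as needed.
\end{itemize}

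\textbf{Step 2: match with $\mathbf{L}_{r}$.} Each of these double sums should be recognised as $\langle\sigma(X),\xi_{1}\otimes\xi_{2}\otimes\xi_{3}\rangle$, where $X$ is one of $r_{12}\ast r_{13}$, $r_{12}\ast r_{23}$, $r_{23}\ast r_{12}$, $r_{23}\ast r_{13}$ and $\sigma$ is a fixed permutation of tensor factors. Symmetry of $r$ lets me swap $x_{i}$ and $y_{i}$ wherever needed to put the sums in that form. The goal is to show
$$
\Big\langle r^{\sharp}(\xi_{1})\ast r^{\sharp}(\xi_{2})-r^{\sharp}\big(\fll_{L}^{\ast}(r^{\sharp}(\xi_{1}))\xi_{2}-(\fll_{L}^{\ast}+\frr_{L}^{\ast})(r^{\sharp}(\xi_{2}))\xi_{1}\big),\ \xi_{3}\Big\rangle
=\pm\langle\sigma(\mathbf{L}_{r}),\ \xi_{1}\otimes\xi_{2}\otimes\xi_{3}\rangle
$$
for a single permutation $\sigma$, which I expect to be the transposition of the first and third factors, possibly composed with $\tau\otimes\id$.

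\textbf{Step 3: conclude.} Since $\xi_{1},\xi_{2},\xi_{3}$ are arbitrary and $\sigma$ is invertible, $r^{\sharp}$ is an $\mathcal{O}$-operator if and only if $\mathbf{L}_{r}=0$. This gives both directions at once.

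\textbf{Main obstacle.} The difficulty is the sign and permutation bookkeeping. The coregular representation mixes $\fll_{L}^{\ast}$ and $\frr_{L}^{\ast}$ with minus signs, and $\mathbf{L}_{r}$ has four terms with different factor placements. I expect one term of the $-\fll_{L}^{\ast}-\frr_{L}^{\ast}$ part to need the symmetry of $r$ in a nontrivial way to land on $r_{12}\ast r_{23}$ rather than on some other expression. If the four terms do not line up under a single $\sigma$, my first fallback is to check whether the difference can be rewritten by relabelling $i\leftrightarrow j$ together with symmetry, as in the proof of Proposition \ref{pro:diass-Leib-YBE}. The second fallback is to verify the identity coordinatewise in a basis.
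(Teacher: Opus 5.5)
Your proposal is correct and is the standard direct-pairing argument for this result. The paper gives no proof of its own and only cites \cite{TS,BLST}, but it uses the same pairing technique in Proposition~\ref{pro:sym-dia}. Your three Step~1 formulas are right.

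Carrying out Step~2, the permutation turns out to be simply $\tau\otimes\id$ with sign $+$, not a transposition involving the third factor as you guessed. Using the symmetry of $r$, the three contributions to the difference become pairings with the following tensors:
\begin{itemize}
\item the $r^{\sharp}(\xi_{1})\ast r^{\sharp}(\xi_{2})$ term gives $(\tau\otimes\id)(r_{23}\ast r_{13})$;
\item the $\fll_{L}^{\ast}$ term gives $(\tau\otimes\id)(r_{12}\ast r_{13})$;
\item the $(\fll_{L}^{\ast}+\frr_{L}^{\ast})$ term gives $-(\tau\otimes\id)(r_{12}\ast r_{23}+r_{23}\ast r_{12})$.
\end{itemize}
The total is therefore $\langle(\tau\otimes\id)(\mathbf{L}_{r}),\ \xi_{1}\otimes\xi_{2}\otimes\xi_{3}\rangle$, and your Step~3 then concludes as planned.
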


Let $(\g, [-,-])$ be a Lie algebra and $(P, \diamond, \varpi)$ be a quadratic
perm algebra. Then we get a Leibniz algebra structure on the tensor product $\g\otimes P$.
In \cite{HL}, we have discussed the relationship between $\mathcal{O}$-operators on
Lie algebras and $\mathcal{O}$-operators on the induced Leibniz algebras, and presented the
following commutative diagram:
$$
\xymatrix@C=3cm@R=0.5cm{
\txt{$r$ \\ {\tiny a skew-symmetric solution} \\ {\tiny of the $\CYBE$ in $(\g, [-,-])$}}
\ar[d]_-{{\rm Pro.}~\ref{pro:CYBE-LYBE}}\ar[r]^-{{\rm Pro.}~\ref{pro:o-lie}} &
\txt{$r^{\sharp}$\\ {\tiny an $\mathcal{O}$-operator of $(\g, [-,-])$} \\
{\tiny associated to coregular representation}}
\ar[d]^-{\mbox{$-\otimes\kappa^{\sharp}$}} \\
\txt{$\widetilde{r}$ \\ {\tiny a symmetric solution} \\ {\tiny of the $\LYBE$ in
$(\g\otimes P, \ast)$}} \ar[r]^-{{\rm Pro.}~\ref{pro:o-leib}}
& \txt{$\widetilde{r}^{\sharp}=r^{\sharp}\otimes\kappa^{\sharp}$ \\
{\tiny an $\mathcal{O}$-operator of $(\g\otimes P, \ast)$ } \\
{\tiny associated to coregular representation}}}
$$

Let $(D, \dashv, \vdash)$ be a diassociative algebra and $(V, \kl_{\dashv}, \kr_{\dashv},
\kl_{\vdash}, \kr_{\vdash})$ be a bimodule over it. Then $(V, \kl_{\vdash}-\kr_{\dashv},
\kr_{\vdash}-\kl_{\dashv})$ is a representation of the induced Leibniz algebra $(D, \ast)$.
Moreover, similar to the proof of Proposition \ref{pro:o-ass-lie} we get the following
proposition by direct verification.

\begin{pro}\label{pro:o-diass-leib}
Let $(D, \dashv, \vdash)$ be a diassociative algebra and $(D, \ast)$ be the induced
Leibniz algebra. If $T: V\rightarrow D$ is an $\mathcal{O}$-operator of $(D, \dashv,
\vdash)$ associated to bimodule $(V, \kl_{\dashv}, \kr_{\dashv}, \kl_{\vdash}, \kr_{\vdash})$,
then $T$ is also an $\mathcal{O}$-operator of $(D, \ast)$ associated to
representation $(V, \kl_{\vdash}-\kr_{\dashv}, \kr_{\vdash}-\kl_{\dashv})$.
\end{pro}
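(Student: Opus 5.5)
The plan is a direct substitution mirroring Proposition \ref{pro:o-ass-lie}. Write $\ast$ for the induced Leibniz product $d_{1}\ast d_{2}=d_{1}\vdash d_{2}-d_{2}\dashv d_{1}$ and set $\bar{\kl}=\kl_{\vdash}-\kr_{\dashv}$, $\bar{\kr}=\kr_{\vdash}-\kl_{\dashv}$. I take as given the statement preceding the proposition, that $(V,\bar{\kl},\bar{\kr})$ is a representation of $(D,\ast)$. The only thing to check is the $\mathcal{O}$-operator identity
$$
T(v_{1})\ast T(v_{2})=T\big(\bar{\kl}(T(v_{1}))(v_{2})+\bar{\kr}(T(v_{2}))(v_{1})\big)
$$
for all $v_{1},v_{2}\in V$.

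First expand the left-hand side:
$$
T(v_{1})\ast T(v_{2})=T(v_{1})\vdash T(v_{2})-T(v_{2})\dashv T(v_{1}).
$$
Apply the $\vdash$-identity of the $\mathcal{O}$-operator to the pair $(v_{1},v_{2})$ and the $\dashv$-identity to the swapped pair $(v_{2},v_{1})$. This gives
$$
T\big(\kl_{\vdash}(T(v_{1}))(v_{2})+\kr_{\vdash}(T(v_{2}))(v_{1})\big)
-T\big(\kl_{\dashv}(T(v_{2}))(v_{1})+\kr_{\dashv}(T(v_{1}))(v_{2})\big).
$$

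Next, using linearity of $T$, regroup the four terms according to whether the operator acts on $v_{2}$ with argument $T(v_{1})$, or on $v_{1}$ with argument $T(v_{2})$. The first group is $(\kl_{\vdash}-\kr_{\dashv})(T(v_{1}))(v_{2})=\bar{\kl}(T(v_{1}))(v_{2})$. The second group is $(\kr_{\vdash}-\kl_{\dashv})(T(v_{2}))(v_{1})=\bar{\kr}(T(v_{2}))(v_{1})$. Together they give exactly the right-hand side of the identity above.

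There is no real obstacle in this argument; the only care needed is the order of arguments in the swapped $\dashv$-term, which is precisely what makes $\kr_{\dashv}$ pair with $\kl_{\vdash}$ and $\kl_{\dashv}$ pair with $\kr_{\vdash}$. If one wanted to be self-contained, verifying that $(V,\bar{\kl},\bar{\kr})$ satisfies the representation axioms would be the laborious step. That can be done through Proposition \ref{pro:bimodule}: the semidirect product $D\ltimes V$ is a diassociative algebra, its induced Leibniz algebra (Proposition \ref{pro:commtor}) restricts on $V$ to the actions $\bar{\kl},\bar{\kr}$, and a Leibniz semidirect product characterizes representations.
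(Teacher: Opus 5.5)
Your proposal is correct and is the same direct verification the paper intends (it says the proof mirrors Proposition~\ref{pro:o-ass-lie}): expand $T(v_{1})\ast T(v_{2})=T(v_{1})\vdash T(v_{2})-T(v_{2})\dashv T(v_{1})$, apply the $\vdash$-identity to $(v_{1},v_{2})$ and the $\dashv$-identity to $(v_{2},v_{1})$, then regroup into $(\kl_{\vdash}-\kr_{\dashv})(T(v_{1}))(v_{2})+(\kr_{\vdash}-\kl_{\dashv})(T(v_{2}))(v_{1})$. Your semidirect-product argument that $(V,\kl_{\vdash}-\kr_{\dashv},\kr_{\vdash}-\kl_{\dashv})$ is a representation is also sound, since the induced Leibniz product on $D\ltimes V$ has exactly these actions; the paper only asserts this fact.
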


In particular, if $(V, \kl_{\dashv}, \kr_{\dashv}, \kl_{\vdash}, \kr_{\vdash})$ is the
coregular representation $(D^{\ast}, \fr_{\vdash}^{\ast} -\fr_{\dashv}^{\ast},
-\fl_{\vdash}^{\ast}, -\fr_{\dashv}^{\ast}, \fl_{\dashv}^{\ast}-\fl_{\vdash}^{\ast})$,
then $(D^{\ast}, \fl_{\vdash}^{\ast}-\fr_{\dashv}^{\ast}, \fl_{\dashv}^{\ast}
+\fr_{\dashv}^{\ast}-\fl_{\vdash}^{\ast}-\fr_{\vdash}^{\ast})$ is exactly
the coregular representation of $(D, \ast)$, and $T: D^{\ast}\rightarrow D$ is the
$\mathcal{O}$-operator of $(D, \ast)$ associated to the coregular representation.
Thus, we get the following commutative diagram:
{\small
\begin{displaymath}
\xymatrix@R=0.4cm@C=-1cm{
&\txt{{\small $r$}\\ {\tiny a skew-symmetric solution} \\
{\tiny of the $\AYBE$ in $(A, \cdot)$}}
\ar@{.>}[dd]_(0.7){{\rm Pro.}~\ref{pro:o-ass}}
\ar@{.>}[rr]^{{\rm Pro.}~\ref{pro:AYBE-DYBE}}&
&\txt{{\small $\widetilde{r}$} \\  {\tiny a symmetric solution} \\
{\tiny of the $\DAYBE$ in $(A\otimes P, \dashv, \vdash)$}}
\ar@{->}[dd]^{{\rm Pro.}~\ref{pro:o-dia}}\\
\txt{{\small $r$}\\ {\tiny a skew-symmetric solution} \\
{\tiny of the $\CYBE$ in $(A, [-,-])$}}
\ar@{->}[dd]^{{\rm Pro.}~\ref{pro:o-lie}}
\ar@{<.}[ru]_{{\rm Pro.}~\ref{pro:ass-Lie-YBE}}
\ar@{->}[rr]^(0.6){{\rm Pro.}~\ref{pro:CYBE-LYBE}}&
&\txt{{\small $\widetilde{r}$}\\ {\tiny a symmetric solution} \\
{\tiny of the $\LYBE$ in $(A\otimes P, \ast)$}}
\ar@{<-}[ru]_{{\rm Pro.}~\ref{pro:diass-Leib-YBE}}
\ar@{->}[dd]_(0.65){{\rm Pro.}~\ref{pro:o-leib}}\\
&\txt{{\small $r^{\sharp}$}\\{\tiny an $\mathcal{O}$-operator of $(A, \cdot)$} \\
{\tiny associated to the coregular bimodule}} \ar@{.>}[ld] &
&\txt{{\small $\widetilde{r}^{\sharp}=r^{\sharp}\otimes\kappa^{\sharp}$}\\
{\tiny an $\mathcal{O}$-operator of $(A\otimes P, \dashv, \vdash)$} \\
{\tiny associated to the coregular bimodule}}
\ar@{<.}[ll]_(0.4){\tiny\mbox{$-\otimes\kappa^{\sharp}$}}\\
\txt{{\small $r^{\sharp}$}\\{\tiny an $\mathcal{O}$-operator of $(A, [-,-])$} \\
{\tiny associated to the coregular representation}}&
&\txt{{\small $\widetilde{r}^{\sharp}=r^{\sharp}\otimes\kappa^{\sharp}$}\\
{\tiny an $\mathcal{O}$-operator of $(A\otimes P, \ast)$} \\
{\tiny associated to the coregular representation}}
\ar@{<-}[ru]\ar@{<-}[ll]_{\tiny\mbox{$-\otimes\kappa^{\sharp}$}}&}
\end{displaymath}
}\\[-2mm]
where the commutativity of the front and back sides of the diagram following from the
proof of Theorem 3.10 in \cite{HL} and Theorem \ref{thm:indu-sdibia} respectively.
Therefore, we obtain the overall commutative diagram in Section \ref{sec:intr}.

\subsection{Nondegenerate solution of Yang-Baxter equation and symplectic structure}
\label{subsec:sym-symp}
Here we consider the relationship between symplectic structures on Lie
algebras, associative algebras, diassociative algebras and Liebniz algebras.
Some special solutions of the Yang-Baxter equation are closely related to
$\mathcal{O}$-operators, while some special nondegenerate solutions of the Yang-Baxter
equation are closely related to symplectic structure. Recall that a {\bf Connes cocycle}
on an associative algebra $(A, \cdot)$ is a skew-symmetric bilinear form $\omega(-,-)$
satisfying
$$
\omega(a_{1}\cdot a_{2},\; a_{3})+\omega(a_{2}\cdot a_{3},\; a_{1})
+\omega(a_{3}\cdot a_{1},\; a_{2})=0,
$$
for any $a_{1}, a_{2}, a_{3}\in A$. An associative algebra $(A, \cdot)$ with a
nondegenerate Connes cocycle $\omega(-,-)$ is called a {\bf symplectic associative algebra}.
We denote this symplectic associative algebra by $(A, \cdot, \omega)$.

Let $V$ be a vector space and $r\in V\otimes V$ be nondegenerate. Then we can define a
nondegenerate bilinear form $\omega_{r}(-,-)$ on $V$  by
\begin{align}
\omega_{r}(v_{1}, v_{2}):=\langle(r^{\sharp})^{-1}(v_{1}),\; v_{2}\rangle, \label{symp}
\end{align}
for any $v_{1}, v_{2}\in V$. The bilinear form $\omega_{r}(-,-)$ is called
{\bf the induced bilinear form} by $r$. It is easy to see that $\omega_{r}(-,-)$ is symmetric
(reps. skew-symmetric) if $r$ is symmetric (reps. skew-symmetric).

\begin{pro}[\cite{Agu,Bai}]\label{pro:sym-ass}
Let $(A, \cdot)$ be an associative algebra and $r\in A\otimes A$ be skew-symmetric and
nondegenerate. Then $r$ is a solution of the $\AYBE$ in $(A, \cdot)$ if and only if
$(A, \cdot, \omega_{r})$ is a symplectic associative algebra, where
$\omega_{r}$ is given by Eq. \eqref{symp}.
\end{pro}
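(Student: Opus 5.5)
The plan is to translate the $\AYBE$ for $r$ into an identity for the bilinear form $\omega_{r}$, using nondegeneracy of $r^{\sharp}$. First I fix conventions. Since $r$ is skew-symmetric, $\langle r^{\sharp}(\xi_{1}), \xi_{2}\rangle=-\langle r^{\sharp}(\xi_{2}), \xi_{1}\rangle$. Hence $\omega_{r}$ is skew-symmetric, as already noted after Eq. \eqref{symp}, and it is nondegenerate because $r^{\sharp}$ is invertible. So the only thing to establish is:
\[
r \text{ satisfies } \mathbf{A}_{r}=0 \iff \omega_{r} \text{ is a Connes cocycle}.
\]

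Rather than work with $\mathbf{A}_{r}$ directly, I will use Proposition \ref{pro:o-ass}. Since $r$ is skew-symmetric, $\mathbf{A}_{r}=0$ holds if and only if $r^{\sharp}$ is an $\mathcal{O}$-operator associated to the coregular bimodule $(A^{\ast}, -\fr_{A}^{\ast}, -\fl_{A}^{\ast})$. Unwinding the definitions, with the sign convention $\langle\psi^{\ast}(a)\xi, b\rangle=-\langle\xi, \psi(a)b\rangle$, the $\mathcal{O}$-operator condition reads
\[
r^{\sharp}(\xi_{1})\cdot r^{\sharp}(\xi_{2})=r^{\sharp}\big(-\fr_{A}^{\ast}(r^{\sharp}(\xi_{1}))\xi_{2}-\fl_{A}^{\ast}(r^{\sharp}(\xi_{2}))\xi_{1}\big).
\]

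Next I substitute $a_{1}=r^{\sharp}(\xi_{1})$ and $a_{2}=r^{\sharp}(\xi_{2})$; this covers all pairs because $r^{\sharp}$ is bijective. I then pair both sides with an arbitrary $\xi_{3}=(r^{\sharp})^{-1}(a_{3})$, which again is arbitrary. By definition, $\langle\xi_{3}, x\rangle=\omega_{r}(a_{3}, x)$.

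The left side becomes $\omega_{r}(a_{3}, a_{1}\cdot a_{2})$. For the right side, skew-symmetry of $r^{\sharp}$ gives $\langle\xi_{3}, r^{\sharp}(\eta)\rangle=-\langle\eta, r^{\sharp}(\xi_{3})\rangle=-\langle\eta, a_{3}\rangle$. So the right side equals
\[
\langle\fr_{A}^{\ast}(a_{1})\xi_{2}+\fl_{A}^{\ast}(a_{2})\xi_{1},\; a_{3}\rangle=-\langle\xi_{2}, a_{3}\cdot a_{1}\rangle-\langle\xi_{1}, a_{2}\cdot a_{3}\rangle,
\]
which is $-\omega_{r}(a_{2}, a_{3}\cdot a_{1})-\omega_{r}(a_{1}, a_{2}\cdot a_{3})$.

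Using skew-symmetry of $\omega_{r}$ to move the products into the first slot, the identity becomes
\[
\omega_{r}(a_{1}\cdot a_{2}, a_{3})+\omega_{r}(a_{2}\cdot a_{3}, a_{1})+\omega_{r}(a_{3}\cdot a_{1}, a_{2})=0,
\]
possibly up to an overall sign. This is exactly the Connes cocycle condition. Every step is reversible, since $r^{\sharp}$ is invertible and the pairing is nondegenerate, so this gives both directions of the equivalence.

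The main obstacle is sign bookkeeping: the dual-map sign convention for $\psi^{\ast}$, the skew-symmetry used when transposing $r^{\sharp}$, and the orientation of $\omega_{r}$ must all be tracked so that the three terms come out with equal signs. If Proposition \ref{pro:o-ass} turns out to have a convention mismatch, the fallback is to pair $\mathbf{A}_{r}$ directly with $\xi_{1}\otimes\xi_{2}\otimes\xi_{3}$. Each of its three terms then becomes one of the cyclic Connes terms evaluated at $r^{\sharp}(\xi_{i})$.
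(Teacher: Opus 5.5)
Your proof is correct and follows essentially the same route the paper uses for the diassociative analogue (Proposition~\ref{pro:sym-dia}; the associative statement itself is only cited from \cite{Agu,Bai}): pass to the $\mathcal{O}$-operator form via Proposition~\ref{pro:o-ass}, substitute $a_{i}=r^{\sharp}(\xi_{i})$, and pair with $\xi_{3}=(r^{\sharp})^{-1}(a_{3})$. The hedge ``possibly up to an overall sign'' is unnecessary: your computation gives exactly $\omega_{r}(a_{1}, a_{2}\cdot a_{3})+\omega_{r}(a_{2}, a_{3}\cdot a_{1})+\omega_{r}(a_{3}, a_{1}\cdot a_{2})=0$, which by skew-symmetry is precisely the Connes cocycle identity (your fallback also works, since $\langle\mathbf{A}_{r}, \xi_{1}\otimes\xi_{2}\otimes\xi_{3}\rangle$ equals this same left-hand side).
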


Recall that a {\bf symplectic Lie algebra} (also called a quasi-Frobenius Lie algebra) $(\g,
[-,-], \omega)$ is a Lie algebra $(\g, [-,-])$ with a skew-symmetric nondegenerate
bilinear form $\omega(-,-): \g\times\g\rightarrow\Bbbk$ satisfying
$$
\omega([g_{1}, g_{2}], g_{3})+\omega([g_{3}, g_{1}], g_{2})+\omega([g_{2}, g_{3}], g_{1})=0,
$$
for any $g_{1}, g_{2}, g_{3}\in\g$. Similarly, we have

\begin{pro}[\cite{CP,Kup}]\label{pro:sym-lie}
Let $(\g, [-,-])$ be a Lie algebra and $r\in \g\otimes\g$ be skew-symmetric and
nondegenerate. Then $r$ is a solution of the $\CYBE$ in $(\g, [-,-])$ if and only if
$(\g, [-,-], \omega_{r})$ is a symplectic Lie algebra, where
$\omega_{r}$ is given by Eq. \eqref{symp}.
\end{pro}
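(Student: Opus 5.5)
The plan is to reduce the statement to Proposition~\ref{pro:o-lie}, which turns the $\CYBE$ for a skew-symmetric $r$ into an operator identity for $T:=r^{\sharp}$, and then use nondegeneracy to transport that identity into the cocycle condition for $\omega_{r}$.

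First, I record that $\omega_{r}$ is well defined and skew-symmetric. Since $r$ is nondegenerate, $T=r^{\sharp}:\g^{\ast}\rightarrow\g$ is a linear isomorphism, so $\omega_{r}(g_{1},g_{2})=\langle T^{-1}(g_{1}),\;g_{2}\rangle$ is nondegenerate. Because $r$ is skew-symmetric, $\langle T(\xi_{1}),\xi_{2}\rangle=-\langle T(\xi_{2}),\xi_{1}\rangle$. Substituting $\xi_{i}=T^{-1}(g_{i})$ shows that $\omega_{r}$ is skew-symmetric, as already noted after Eq.~\eqref{symp}. So the only thing to prove is that the $\CYBE$ is equivalent to the 2-cocycle identity.

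By Proposition~\ref{pro:o-lie}, $r$ solves the $\CYBE$ if and only if, for all $\xi_{1},\xi_{2}\in\g^{\ast}$,
$$
[T(\xi_{1}),T(\xi_{2})]=T\big(\ad_{\g}^{\ast}(T(\xi_{1}))(\xi_{2})-\ad_{\g}^{\ast}(T(\xi_{2}))(\xi_{1})\big).
$$
Since $T$ is bijective, I will write $g_{i}=T(\xi_{i})$, apply $T^{-1}$, and pair both sides with an arbitrary $g_{3}\in\g$. Because $g_{3}$ ranges over all of $\g$, this gives an equivalent condition. The left side becomes $\omega_{r}([g_{1},g_{2}],g_{3})$. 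For the right side I use $\langle\ad_{\g}^{\ast}(x)\xi,\;y\rangle=-\langle\xi,[x,y]\rangle$, which turns it into
$$
-\langle\xi_{2},[g_{1},g_{3}]\rangle+\langle\xi_{1},[g_{2},g_{3}]\rangle
=-\omega_{r}(g_{2},[g_{1},g_{3}])+\omega_{r}(g_{1},[g_{2},g_{3}]).
$$

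Next I rewrite the two terms using skew-symmetry of $\omega_{r}$ and of the bracket: $\omega_{r}(g_{2},[g_{1},g_{3}])=\omega_{r}([g_{3},g_{1}],g_{2})$ and $\omega_{r}(g_{1},[g_{2},g_{3}])=-\omega_{r}([g_{2},g_{3}],g_{1})$. The condition then becomes exactly
$$
\omega_{r}([g_{1},g_{2}],g_{3})+\omega_{r}([g_{3},g_{1}],g_{2})+\omega_{r}([g_{2},g_{3}],g_{1})=0
$$
for all $g_{1},g_{2},g_{3}\in\g$, which is the symplectic condition. Every step was an equivalence, so both directions follow at once.

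The only delicate point is bookkeeping. One has to keep the sign convention of $\psi^{\ast}$ straight, since $\ad_{\g}^{\ast}(g)=-\ad_{\g}(g)^{\ast}$ under the paper's definition. One also has to match the order of arguments in $\omega_{r}(v_{1},v_{2})=\langle(r^{\sharp})^{-1}v_{1},v_{2}\rangle$ with the convention in the definition of $r^{\sharp}$. A sign slip in either place would turn the cyclic sum into a non-cyclic expression, so I would double-check both conventions before assembling the final identity.
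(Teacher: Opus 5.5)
Your proof is correct: with the convention $\langle\ad_{\g}^{\ast}(x)(\xi),\; y\rangle=-\langle\xi,\; [x,y]\rangle$, the $\mathcal{O}$-operator identity transports to $\omega_{r}([g_{1},g_{2}],g_{3})=\omega_{r}(g_{1},[g_{2},g_{3}])-\omega_{r}(g_{2},[g_{1},g_{3}])$, which is exactly the cyclic cocycle condition, and every step is reversible because $r^{\sharp}$ is bijective. The paper cites this result to \cite{CP,Kup} without giving a proof, but your route (reduce to the $\mathcal{O}$-operator characterization of Proposition~\ref{pro:o-lie} and pull it back through $(r^{\sharp})^{-1}$) is the same argument the paper uses for the diassociative analogue, Proposition~\ref{pro:sym-dia}.
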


By the definition, direct verification yields:

\begin{pro}\label{pro:sym-ass-lie}
Let $(A, \cdot, \omega)$ be a symplectic associative algebra and $(A, [-,-])$ be the
induced Lie algebra by the commutator. Then $(A, [-,-], \omega)$ is a symplectic Lie algebra,
which is called {\bf the symplectic Lie algebra induced by $(A, \cdot, \omega)$}.
\end{pro}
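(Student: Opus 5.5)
We verify the two defining properties of a symplectic Lie algebra for $(A, [-,-], \omega)$ directly. Skew-symmetry and nondegeneracy of $\omega(-,-)$ are part of the data of the symplectic associative algebra $(A, \cdot, \omega)$ and do not involve the product, so they hold unchanged. Proposition \ref{pro:commtor} (or the classical fact recalled before it) already tells us that $(A, [-,-])$ is a Lie algebra. It therefore remains only to check the Lie $2$-cocycle condition
$$
\omega([a_{1}, a_{2}], a_{3})+\omega([a_{3}, a_{1}], a_{2})+\omega([a_{2}, a_{3}], a_{1})=0
$$
for all $a_{1}, a_{2}, a_{3}\in A$.

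To do this, expand each bracket as $a_{i}\cdot a_{j}-a_{j}\cdot a_{i}$. This turns the left-hand side into six terms, which we split into two groups of three. The first group is
$$
\omega(a_{1}\cdot a_{2}, a_{3})+\omega(a_{3}\cdot a_{1}, a_{2})+\omega(a_{2}\cdot a_{3}, a_{1}),
$$
which is exactly the Connes cocycle expression for the triple $(a_{1}, a_{2}, a_{3})$ and hence vanishes. The second group is
$$
-\big(\omega(a_{2}\cdot a_{1}, a_{3})+\omega(a_{1}\cdot a_{3}, a_{2})+\omega(a_{3}\cdot a_{2}, a_{1})\big).
$$
This is minus the Connes cocycle expression evaluated at the permuted triple $(a_{2}, a_{1}, a_{3})$, whose cyclic terms are $\omega(a_{2}a_{1},a_{3})$, $\omega(a_{1}a_{3},a_{2})$ and $\omega(a_{3}a_{2},a_{1})$. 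It therefore also vanishes, and the cocycle identity follows.

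There is no real obstacle here. The only care needed is in the bookkeeping that matches the second group of terms with the Connes identity applied to the transposed triple. Skew-symmetry of $\omega$ is not even used in this step, since the Connes condition is assumed to hold for all triples.
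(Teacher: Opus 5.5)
Your proof is correct and is exactly the ``direct verification'' the paper invokes without writing it out. Splitting the expanded Lie cocycle into the Connes expression for $(a_1,a_2,a_3)$ minus the Connes expression for $(a_2,a_1,a_3)$ is all that is needed, and skew-symmetry and nondegeneracy carry over unchanged.
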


A {\bf symplectic diassociative algebra} $(D, \dashv, \vdash, \omega)$ is a diassociative
algebra $(D, \dashv, \vdash)$ with a symmetric nondegenerate
bilinear form $\omega(-,-): D\times D\rightarrow\Bbbk$ satisfying
\begin{align*}
&\omega(d_{1},\; d_{2}\vdash d_{3}-d_{2}\dashv d_{3})
+\omega(d_{2},\; d_{3}\dashv d_{1})-\omega(d_{3},\; d_{1}\vdash d_{2})=0,\\
&\omega(d_{1},\; d_{2}\dashv d_{3})+\omega(d_{2},\; d_{3}\dashv d_{1}
-d_{3}\vdash d_{1})-\omega(d_{3},\; d_{1}\dashv d_{2})=0,
\end{align*}
for any $d_{1}, d_{2}, d_{3}\in D$. Now we show that each symmetric
nondegenerate solution of the $\DAYBE$ in $(D, \dashv, \vdash)$ induced a symplectic
structure on $(D, \dashv, \vdash)$.

\begin{pro}\label{pro:sym-dia}
Let $(D, \dashv, \vdash)$ be a diassociative algebra and $r\in D\otimes D$ be symmetric
and nondegenerate. Then $r$ is a solution of the $\DAYBE$ in $(D, \dashv, \vdash)$ if
and only if $(D, \dashv, \vdash, \omega_{r})$ is a symplectic diassociative algebra, where
$\omega_{r}$ is given by Eq. \eqref{symp}.
\end{pro}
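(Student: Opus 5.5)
The plan is to route the statement through Proposition \ref{pro:o-dia}. Since $r$ is symmetric, $r$ solves the $\DAYBE$ if and only if $T:=r^{\sharp}:D^{\ast}\rightarrow D$ is an $\mathcal{O}$-operator of $(D, \dashv, \vdash)$ associated to the coregular bimodule $(D^{\ast}, \fr_{\vdash}^{\ast}-\fr_{\dashv}^{\ast}, -\fl_{\vdash}^{\ast}, -\fr_{\dashv}^{\ast}, \fl_{\dashv}^{\ast}-\fl_{\vdash}^{\ast})$. As $r$ is nondegenerate, $T$ is bijective. I will therefore translate the two $\mathcal{O}$-operator identities into identities for $\omega_{r}$. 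Symmetry of $\omega_r$ follows from symmetry of $r$, since $\langle T(\xi_1),\xi_2\rangle=\langle T(\xi_2),\xi_1\rangle$. Nondegeneracy of $\omega_r$ is immediate from bijectivity of $T$.

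The key steps are as follows. Write $d_{i}=T(\xi_{i})$, so $\xi_{i}=T^{-1}(d_{i})$ and $\omega_{r}(d_{i},x)=\langle\xi_{i},x\rangle$. Consider first the $\dashv$-identity
\[
T(\xi_{1})\dashv T(\xi_{2})=T\big((\fr_{\vdash}^{\ast}-\fr_{\dashv}^{\ast})(d_{1})\xi_{2}-\fl_{\vdash}^{\ast}(d_{2})\xi_{1}\big).
\]
Apply $T^{-1}$ to both sides and pair with an arbitrary $d_{3}=T(\xi_3)$. The left side becomes $\omega_{r}(d_{1}\dashv d_{2},d_{3})$. For the right side, use the convention $\langle\psi^{\ast}(d)\xi,v\rangle=-\langle\xi,\psi(d)v\rangle$. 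This turns $\langle(\fr_{\vdash}^{\ast}-\fr_{\dashv}^{\ast})(d_{1})\xi_{2},d_{3}\rangle$ into $-\omega_{r}(d_{2}, d_{3}\vdash d_{1}-d_{3}\dashv d_{1})$, and $-\langle\fl_{\vdash}^{\ast}(d_{2})\xi_{1},d_{3}\rangle$ into $\omega_{r}(d_{1},d_{2}\vdash d_{3})$. Since $\omega_r$ is symmetric and $d_3$ is arbitrary, the $\dashv$-identity is equivalent to a cyclic-type relation in $d_1,d_2,d_3$. The $\vdash$-identity $T(\xi_{1})\vdash T(\xi_{2})=T\big(-\fr_{\dashv}^{\ast}(d_{1})\xi_{2}+(\fl_{\dashv}^{\ast}-\fl_{\vdash}^{\ast})(d_{2})\xi_{1}\big)$ is handled the same way.

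After relabelling $(d_{1},d_{2},d_{3})$ cyclically and using symmetry of $\omega_{r}$, I expect these two relations to match exactly the two defining identities of a symplectic diassociative algebra. The $\vdash$-identity should give the first identity, and the $\dashv$-identity the second. Every step is an equivalence, because $T$ is bijective and the pairing is nondegenerate. Hence both directions follow simultaneously.

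The main obstacle is bookkeeping. The sign convention for $\psi^{\ast}$, the four different operator combinations in the coregular bimodule, and the choice of which argument plays the role of ``$d_{1}$'' in the definition must all be tracked carefully. If the relations obtained do not literally match, I will try the other cyclic relabellings. I will also check whether the roles of the two identities are swapped, and whether an overall sign ($\omega_r$ versus $-\omega_r$) is absorbed by symmetry. As a backup, I can verify the equivalence directly from $\mathbf{D}_{r}$ by pairing with $\xi_{1}\otimes\xi_{2}\otimes\xi_{3}$, using $r=\sum_{i}x_i\otimes y_i$ with $\langle\xi\otimes\eta,r\rangle=\langle T(\xi),\eta\rangle$.
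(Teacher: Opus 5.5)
Your route is the paper's own: Proposition \ref{pro:o-dia}, then bijectivity of $r^{\sharp}$ to convert the two $\mathcal{O}$-operator identities into identities for $\omega_{r}$. Your evaluation of the individual pairings is also correct. Precisely because it is correct, however, the final matching step fails, and no relabelling can repair it.

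Since the right $\dashv$-action in the coregular bimodule is $-\fl_{\vdash}^{\ast}$, the $\dashv$-identity becomes
\[
\omega_{r}(d_{1}\dashv d_{2},\; d_{3})=\omega_{r}(d_{1},\; d_{2}\vdash d_{3})+\omega_{r}(d_{2},\; d_{3}\dashv d_{1}-d_{3}\vdash d_{1}).
\]
Apply $(d_{1},d_{2},d_{3})\mapsto(d_{3},d_{1},d_{2})$ and use symmetry of $\omega_{r}$: the result is literally the \emph{first} defining identity, which is also what the $\vdash$-identity gives. The second defining identity has $\omega(d_{1},\, d_{2}\dashv d_{3})$ exactly where you correctly obtain $\omega_{r}(d_{1},\, d_{2}\vdash d_{3})$. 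No cyclic relabelling, swap of roles, or sign change converts one into the other.

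Your backup computation shows the same thing. Pairing $\mathbf{D}_{r}$ with $\xi_{1}\otimes\xi_{2}\otimes\xi_{3}$ gives the single identity
\[
\omega_{r}(d_{3}, d_{1}\vdash d_{2})-\omega_{r}(d_{2}, d_{3}\dashv d_{1})-\omega_{r}(d_{1}, d_{2}\vdash d_{3})+\omega_{r}(d_{1}, d_{2}\dashv d_{3})=0,
\]
which is minus the first defining identity alone. So the $\DAYBE$ for symmetric nondegenerate $r$ is equivalent to the first identity only.

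The paper's proof reaches the second identity only through a slip in its ``similarly'' step. There the term $-\fl_{\vdash}^{\ast}(r^{\sharp}(\xi_{2}))(\xi_{1})$ is recorded as producing $\omega_{r}(d_{1},\, d_{2}\dashv d_{3})$. This contradicts the paper's own displayed computation $\omega_{r}(r^{\sharp}(\fl_{\vdash}^{\ast}(r^{\sharp}(\xi_{2}))(\xi_{1})),\, d_{3})=-\omega_{r}(d_{1},\, d_{2}\vdash d_{3})$.

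In fact the forward implication is false with the definition as printed. Here is a counterexample.
\begin{itemize}
\item Let $D=\mathrm{span}_{\Bbbk}\{e,f\}$ with $e\dashv e=f$, and all other products zero, including every $\vdash$-product. All iterated products vanish, so this is a diassociative algebra.
\item Take $r=e\otimes f+f\otimes e$, which is symmetric and nondegenerate.
\item In $\mathbf{D}_{r}$ only $-r_{23}\dashv r_{12}$ and $r_{12}\dashv r_{13}$ survive. Both $r_{23}\dashv r_{12}$ and $r_{12}\dashv r_{13}$ equal $f\otimes f\otimes f$, so $\mathbf{D}_{r}=0$.
\item Here $\omega_{r}(e,f)=\omega_{r}(f,e)=1$ and $\omega_{r}(e,e)=\omega_{r}(f,f)=0$.
\item The second defining identity at $d_{1}=d_{2}=d_{3}=e$ evaluates to $\omega_{r}(e,f)+\omega_{r}(e,f)-\omega_{r}(e,f)=1\neq 0$.
\end{itemize}

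What your argument actually proves is the direction ``$\Leftarrow$'' as stated. It proves the full equivalence only once the second defining identity is corrected to
\[
\omega(d_{1},\, d_{2}\vdash d_{3})+\omega(d_{2},\, d_{3}\dashv d_{1}-d_{3}\vdash d_{1})-\omega(d_{3},\, d_{1}\dashv d_{2})=0.
\]
This corrected identity is a cyclic relabelling of the first one, and hence redundant. You should say this explicitly rather than expect a literal match.
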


\begin{proof}
Since $r\in D\otimes D$ is symmetric, by Proposition \ref{pro:o-dia}, we get that
$r$ is a solution of the $\DAYBE$ in $(D, \dashv, \vdash)$ if and only if
$r^{\sharp}$ is an $\mathcal{O}$-operator of $(D, \dashv, \vdash)$ associated to the
coregular bimodule $(D^{\ast}, \fr_{\vdash}^{\ast}-\fr_{\dashv}^{\ast},
-\fl_{\vdash}^{\ast}, -\fr_{\dashv}^{\ast}, \fl_{\dashv}^{\ast}-\fl_{\vdash}^{\ast})$.
Moreover, since $r^{\sharp}: D^{\ast}\rightarrow D$ is an isomorphism, for any
$d_{1}, d_{2}, d_{3}\in D$, there are $\xi_{1}, \xi_{2}, \xi_{3}\in D^{\ast}$ such that
$r^{\sharp}(\xi_{i})=d_{i}$, $i=1,2,3$. Then we have
\begin{align*}
&\omega_{r}(r^{\sharp}(\xi_{1})\vdash r^{\sharp}(\xi_{2}),\; d_{3})
=\omega_{r}(d_{1}\vdash d_{2},\; d_{3}),\\
&\omega_{r}(r^{\sharp}(\fr_{\dashv}^{\ast}(r^{\sharp}(\xi_{1}))(\xi_{2})),\; d_{3})
=\langle\fr_{\dashv}^{\ast}(r^{\sharp}(\xi_{1}))(\xi_{2}),\; d_{3}\rangle
=-\langle\xi_{2},\; d_{3}\dashv r^{\sharp}(\xi_{1})\rangle
=-\omega_{r}(d_{2},\; d_{3}\dashv d_{1}),\\
&\omega_{r}(r^{\sharp}(\fl_{\vdash}^{\ast}(r^{\sharp}(\xi_{2}))(\xi_{1})),\; d_{3})
=\langle\fl_{\vdash}^{\ast}(r^{\sharp}(\xi_{2}))(\xi_{1}),\; d_{3}\rangle
=-\langle\xi_{1},\; r^{\sharp}(\xi_{2})\vdash d_{3}\rangle
=-\omega_{r}(d_{1},\; d_{2}\vdash d_{3}),\\
&\omega_{r}(r^{\sharp}(\fl_{\dashv}^{\ast}(r^{\sharp}(\xi_{2}))(\xi_{1})),\; d_{3})
=\langle\fl_{\dashv}^{\ast}(r^{\sharp}(\xi_{2}))(\xi_{1}),\; d_{3}\rangle
=-\langle\xi_{1},\; r^{\sharp}(\xi_{2})\dashv d_{3}\rangle
=-\omega_{r}(d_{1},\; d_{2}\dashv d_{3}).
\end{align*}
This means that
$$
r^{\sharp}(\xi_{1})\vdash r^{\sharp}(\xi_{2})
=r^{\sharp}\big(-\fr_{\dashv}^{\ast}(r^{\sharp}(\xi_{1}))(\xi_{2})
-\fl_{\vdash}^{\ast}(r^{\sharp}(\xi_{2}))(\xi_{1})
+\fl_{\dashv}^{\ast}(r^{\sharp}(\xi_{2}))(\xi_{1})\big)
$$
if and only if $\omega_{r}(d_{1}\vdash d_{2},\; d_{3})=\omega_{r}(d_{2},\; d_{3}\dashv d_{1})
+\omega_{r}(d_{1},\; d_{2}\vdash d_{3}-d_{2}\dashv d_{3})$. Similarly, we also have
$$
r^{\sharp}(\xi_{1})\dashv r^{\sharp}(\xi_{2})
=r^{\sharp}\big(\fr_{\vdash}^{\ast}(r^{\sharp}(\xi_{1}))(\xi_{2})
-\fr_{\dashv}^{\ast}(r^{\sharp}(\xi_{1}))(\xi_{2})
-\fl_{\vdash}^{\ast}(r^{\sharp}(\xi_{2}))(\xi_{1})\big)
$$
if and only if $\omega_{r}(d_{1}\dashv d_{2},\; d_{3})=\omega_{r}(d_{1},\; d_{2}\dashv d_{3})
+\omega_{r}(d_{2},\; d_{3}\dashv d_{1}-d_{3}\vdash d_{1})$.
Thus, we obtain that $r$ is a solution of the $\DAYBE$ in $(D, \dashv, \vdash)$ if
and only if $(D, \dashv, \vdash, \omega_{r})$ is a symplectic diassociative algebra.
\end{proof}

A {\bf symplectic Leibniz algebra} $(L, \ast, \omega)$ is a Leibniz algebra
$(L, \ast)$ with a symmetric nondegenerate bilinear form $\omega(-,-): L\times
L\rightarrow\Bbbk$ satisfying
$$
\omega(x_{3},\; x_{1}\ast x_{2})+\omega(x_{2},\; x_{1}\ast x_{3})
-\omega(x_{1},\; x_{2}\ast x_{3})-\omega(x_{1},\; x_{3}\ast x_{2})=0.
$$
for any $x_{1}, x_{2}, x_{3}\in L$. The symplectic structure on a Leibniz algebra
has been studied in \cite{TXS}. Similar to the proof of Proposition \ref{pro:sym-dia},
we can obtain:

\begin{pro}\label{pro:sym-leib}
Let $(L, \ast)$ be a Leibniz algebra and $r\in L\otimes L$ be symmetric
and nondegenerate. Then $r$ is a solution of the $\LYBE$ in $(L, \ast)$ if
and only if $(L, \ast, \omega_{r})$ is a symplectic Leibniz algebra, where
$\omega_{r}$ is given by Eq. \eqref{symp}.
\end{pro}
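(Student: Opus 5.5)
We follow the pattern of the proof of Proposition \ref{pro:sym-dia}, replacing the diassociative structure by the Leibniz one. Since $r$ is symmetric, Proposition \ref{pro:o-leib} says that $r$ is a solution of the $\LYBE$ in $(L, \ast)$ if and only if $r^{\sharp}: L^{\ast}\rightarrow L$ is an $\mathcal{O}$-operator of $(L, \ast)$ associated to the coregular representation $(L^{\ast}, \fll_{L}^{\ast}, -\fll_{L}^{\ast}-\frr_{L}^{\ast})$. Because $r$ is nondegenerate, $r^{\sharp}$ is a linear isomorphism. Hence for any $x_{1}, x_{2}, x_{3}\in L$ we may write $x_{i}=r^{\sharp}(\xi_{i})$ with $\xi_{i}\in L^{\ast}$. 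The $\mathcal{O}$-operator identity
$$
r^{\sharp}(\xi_{1})\ast r^{\sharp}(\xi_{2})=r^{\sharp}\big(\fll_{L}^{\ast}(r^{\sharp}(\xi_{1}))(\xi_{2})-(\fll_{L}^{\ast}+\frr_{L}^{\ast})(r^{\sharp}(\xi_{2}))(\xi_{1})\big)
$$
holds if and only if its pairing under $\omega_{r}(-, x_{3})$ vanishes for all $x_{3}$, since $\omega_{r}$ is nondegenerate.

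Next we translate each term using $\omega_{r}(r^{\sharp}(\eta), x)=\langle\eta, x\rangle$, together with the symmetry of $\omega_{r}$, which follows from the symmetry of $r$:
\begin{align*}
\omega_{r}\big(r^{\sharp}(\fll_{L}^{\ast}(x_{1})(\xi_{2})),\; x_{3}\big)&=-\langle\xi_{2},\; x_{1}\ast x_{3}\rangle=-\omega_{r}(x_{2},\; x_{1}\ast x_{3}),\\
\omega_{r}\big(r^{\sharp}(\fll_{L}^{\ast}(x_{2})(\xi_{1})),\; x_{3}\big)&=-\omega_{r}(x_{1},\; x_{2}\ast x_{3}),\\
\omega_{r}\big(r^{\sharp}(\frr_{L}^{\ast}(x_{2})(\xi_{1})),\; x_{3}\big)&=-\omega_{r}(x_{1},\; x_{3}\ast x_{2}).
\end{align*}
The left-hand side pairs to $\omega_{r}(x_{1}\ast x_{2}, x_{3})=\omega_{r}(x_{3}, x_{1}\ast x_{2})$. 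Substituting these, the $\mathcal{O}$-operator identity becomes
$$
\omega_{r}(x_{3},\; x_{1}\ast x_{2})+\omega_{r}(x_{2},\; x_{1}\ast x_{3})-\omega_{r}(x_{1},\; x_{2}\ast x_{3})-\omega_{r}(x_{1},\; x_{3}\ast x_{2})=0.
$$
This is exactly the defining condition of a symplectic Leibniz algebra. Since $\omega_{r}$ is already symmetric and nondegenerate, the equivalence follows.

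The main obstacle is only sign bookkeeping. The convention $\langle\psi^{\ast}(x)\xi, v\rangle=-\langle\xi, \psi(x)v\rangle$ must be combined with the exact form of the coregular representation in Proposition \ref{pro:o-leib}, and the two minus signs coming from $-\fll_{L}^{\ast}-\frr_{L}^{\ast}$ must turn into the two negative terms of the symplectic identity. We would check this carefully, and if a sign discrepancy appears, re-derive it from the semidirect-product definition of the coregular representation.
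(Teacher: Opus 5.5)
Your proposal is correct and follows the route the paper indicates; the paper proves this result only by saying it is ``similar to the proof of Proposition~\ref{pro:sym-dia}''. That route is to use Proposition~\ref{pro:o-leib} to turn the $\LYBE$ into the $\mathcal{O}$-operator identity for $r^{\sharp}$ with respect to the coregular representation, and then pair with $\omega_{r}(-,x_{3})$, using that $r^{\sharp}$ is bijective and $\omega_{r}$ is symmetric and nondegenerate. Your signs already check out: the three transposed terms give $-\omega_{r}(x_{2}, x_{1}\ast x_{3})+\omega_{r}(x_{1}, x_{2}\ast x_{3})+\omega_{r}(x_{1}, x_{3}\ast x_{2})$, which rearranges to exactly the defining identity of a symplectic Leibniz algebra, so no re-derivation is needed.
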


Moreover, by direct verification, we have

\begin{pro}\label{pro:sym-diass-leib}
Let $(D, \dashv, \vdash, \omega)$ be a symplectic diassociative algebra and
$(D, \ast)$ be the induced Leibniz algebra. Then $(D, \ast, \omega)$ is
a symplectic Leibniz algebra, which is called {\bf the symplectic Leibniz algebra
induced by $(D, \dashv, \vdash, \omega)$}.
\end{pro}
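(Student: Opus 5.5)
Let $x_{1},x_{2},x_{3}\in D$ and write $\ast$ for $x_{1}\ast x_{2}=x_{1}\vdash x_{2}-x_{2}\dashv x_{1}$. We must show
$$
S:=\omega(x_{3},\; x_{1}\ast x_{2})+\omega(x_{2},\; x_{1}\ast x_{3})
-\omega(x_{1},\; x_{2}\ast x_{3})-\omega(x_{1},\; x_{3}\ast x_{2})=0.
$$
Symmetry and nondegeneracy of $\omega$ are inherited directly, so only this identity needs checking. The plan is to expand $S$ into eight terms of the forms $\omega(d,\; d'\vdash d'')$ and $\omega(d,\; d'\dashv d'')$, and then show that $S$ is a signed sum of instances of the two defining identities of a symplectic diassociative algebra. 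For convenience, denote these identities by
$$
\Phi(d_{1},d_{2},d_{3}):=\omega(d_{1},\; d_{2}\vdash d_{3}-d_{2}\dashv d_{3})
+\omega(d_{2},\; d_{3}\dashv d_{1})-\omega(d_{3},\; d_{1}\vdash d_{2})=0,
$$
$$
\Psi(d_{1},d_{2},d_{3}):=\omega(d_{1},\; d_{2}\dashv d_{3})+\omega(d_{2},\; d_{3}\dashv d_{1}
-d_{3}\vdash d_{1})-\omega(d_{3},\; d_{1}\dashv d_{2})=0.
$$

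The expansion of $S$ is
\begin{align*}
S=&\;\omega(x_{3}, x_{1}\vdash x_{2})-\omega(x_{3}, x_{2}\dashv x_{1})
+\omega(x_{2}, x_{1}\vdash x_{3})-\omega(x_{2}, x_{3}\dashv x_{1})\\
&-\omega(x_{1}, x_{2}\vdash x_{3})+\omega(x_{1}, x_{3}\dashv x_{2})
-\omega(x_{1}, x_{3}\vdash x_{2})+\omega(x_{1}, x_{2}\dashv x_{3}).
\end{align*}
Next, we compare this with the candidates $\Phi(x_{1},x_{2},x_{3})$ and $\Phi(x_{1},x_{3},x_{2})$, which contain $-\omega(x_{3},x_{1}\vdash x_{2})$ and $-\omega(x_{2},x_{1}\vdash x_{3})$ respectively. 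Their sum is
\begin{align*}
&\omega(x_{1}, x_{2}\vdash x_{3})-\omega(x_{1}, x_{2}\dashv x_{3})+\omega(x_{2}, x_{3}\dashv x_{1})-\omega(x_{3}, x_{1}\vdash x_{2})\\
&+\omega(x_{1}, x_{3}\vdash x_{2})-\omega(x_{1}, x_{3}\dashv x_{2})+\omega(x_{3}, x_{2}\dashv x_{1})-\omega(x_{2}, x_{1}\vdash x_{3}).
\end{align*}
Comparing term by term, this is exactly $-S$. Hence
$$
S=-\Phi(x_{1},x_{2},x_{3})-\Phi(x_{1},x_{3},x_{2})=0,
$$
so only the first identity $\Phi$ is needed, and $(D,\ast,\omega)$ is a symplectic Leibniz algebra. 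By Proposition~\ref{pro:commtor}, $(D,\ast)$ is a Leibniz algebra, which completes the argument. The only real obstacle is the bookkeeping of signs and argument orders in this term-by-term comparison; the choice of which instances of $\Phi$ to use is dictated by matching the $\vdash$-terms of the form $\omega(x_{3},x_{1}\vdash x_{2})$.
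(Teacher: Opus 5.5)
Your proof is correct and is exactly the direct verification the paper invokes without writing out. The expansion of $S$ and the identity $S=-\Phi(x_{1},x_{2},x_{3})-\Phi(x_{1},x_{3},x_{2})$ check out term by term, and your observation that only the first defining identity is needed is a small sharpening of what the paper states.
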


Let $(A, \cdot)$ be an associative algebra and $(P, \diamond, \varpi)$ be a quadratic
perm algebra. Consider the symplectic structure on the induced diassociative algebra
$(A\otimes P, \dashv, \vdash)$, we have

\begin{pro}\label{pro:sym-ass-diass}
Let $(A, \cdot, \omega)$ be a symplectic associative algebra and $(P, \diamond, \varpi)$
be a quadratic perm algebra. If we define a bilinear form $\widetilde{\omega}(-,-)$
on the induced diassociative algebra $(A\otimes P, \dashv, \vdash)$ by
$$
\widetilde{\omega}(a_{1}\otimes p_{1},\; a_{2}\otimes p_{2})
=\omega(a_{1}, a_{2})\varpi(p_{1}, p_{2}),
$$
for any $a_{1}, a_{2}\in A$ and $p_{1}, p_{2}\in P$. Then $(A\otimes P, \dashv, \vdash,
\widetilde{\omega})$ is a symplectic diassociative algebra.
\end{pro}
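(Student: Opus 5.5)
The plan is to avoid checking the two defining identities of a symplectic diassociative algebra by hand. Instead I will pass through nondegenerate solutions of the Yang--Baxter equations, using Proposition \ref{pro:sym-ass}, Proposition \ref{pro:AYBE-DYBE}, Remark \ref{rmk:nond} and Proposition \ref{pro:sym-dia}.

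\textbf{Step 1 (from $\omega$ to $r$).} Since $\omega$ is nondegenerate, the map $A\rightarrow A^{\ast}$, $a\mapsto\omega(a,-)$, is an isomorphism. Let $r^{\sharp}:A^{\ast}\rightarrow A$ be its inverse and let $r\in A\otimes A$ be the corresponding tensor. Then $r$ is nondegenerate and $\omega_{r}=\omega$ by Eq. \eqref{symp}. Skew-symmetry of $\omega$ gives skew-symmetry of $r$. By Proposition \ref{pro:sym-ass}, $r$ is a skew-symmetric nondegenerate solution of the $\AYBE$ in $(A,\cdot)$.

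\textbf{Step 2 (from $r$ to $\widetilde{r}$).} Form $\widetilde{r}$ as in Eq. \eqref{r-max}. By Proposition \ref{pro:AYBE-DYBE}, $\widetilde{r}$ is a symmetric solution of the $\DAYBE$ in $(A\otimes P,\dashv,\vdash)$. By Remark \ref{rmk:nond} (equivalently, $\widetilde{r}^{\sharp}=r^{\sharp}\otimes\kappa^{\sharp}$ from the proof of Theorem \ref{thm:indu-sdibia}), $\widetilde{r}$ is nondegenerate. Proposition \ref{pro:sym-dia} then shows that $(A\otimes P,\dashv,\vdash,\omega_{\widetilde{r}})$ is a symplectic diassociative algebra.

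\textbf{Step 3 (identify $\omega_{\widetilde{r}}$ with $\widetilde{\omega}$).} This is the main point to verify. From $\widetilde{r}^{\sharp}=r^{\sharp}\otimes\kappa^{\sharp}$ we get $(\widetilde{r}^{\sharp})^{-1}=(r^{\sharp})^{-1}\otimes(\kappa^{\sharp})^{-1}$. Hence
$$\omega_{\widetilde{r}}(a_{1}\otimes p_{1},\;a_{2}\otimes p_{2})=\omega_{r}(a_{1},a_{2})\,\omega_{\kappa}(p_{1},p_{2}),$$
where $\omega_{\kappa}(p_{1},p_{2})=\langle(\kappa^{\sharp})^{-1}(p_{1}),\,p_{2}\rangle$.

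Next compute $\omega_{\kappa}$. We have $\kappa^{\sharp}(\xi)=\sum_{j}\langle\xi,e_{j}\rangle f_{j}$. Since $\varpi(f_{i},e_{j})=\delta_{ij}$, every $p\in P$ expands as $p=\sum_{j}\varpi(p,e_{j})f_{j}$. Applying this with $\xi=\varpi(p,-)$ gives $\kappa^{\sharp}(\varpi(p,-))=p$. Therefore $(\kappa^{\sharp})^{-1}(p)=\varpi(p,-)$, and so $\omega_{\kappa}=\varpi$.

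\textbf{Expected difficulty.} The hard part is keeping the conventions for $\langle\kappa,\xi_{1}\otimes\xi_{2}\rangle$ and the order of factors in $\omega_{r}$ consistent. At worst this yields $\omega_{\kappa}=-\varpi$, and the same sign issue could affect $\omega_{r}$. This is harmless, because both defining identities of a symplectic diassociative algebra are linear in the form. Any overall sign therefore does not matter, and $\widetilde{\omega}=\pm\omega_{\widetilde{r}}$ is symplectic.

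\textbf{Fallback.} As a consistency check, or if the above bookkeeping fails, one can verify the identities directly. Symmetry and nondegeneracy of $\widetilde{\omega}$ are immediate, since $\widetilde{\omega}$ is the product of two skew forms and a tensor product of nondegenerate forms. The two identities then reduce, after factoring the $A$-part, to the Connes cocycle condition for $\omega$ combined with the invariance of $\varpi$ and the perm identities.
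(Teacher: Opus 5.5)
You take a different route from the paper, which checks the two defining identities directly. Your Steps 1 and 3 are correct, and so is your use of Proposition \ref{pro:AYBE-DYBE} and Remark \ref{rmk:nond}. In particular $(\kappa^{\sharp})^{-1}(p)=\varpi(p,-)$, so $\omega_{\widetilde{r}}=\widetilde{\omega}$ with no sign ambiguity.

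The failing link is Proposition \ref{pro:sym-dia}. It is false for the definition of symplectic diassociative algebra as printed, and so is the statement you are proving.

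\textbf{Why Proposition \ref{pro:sym-dia} fails as printed.} Take $r$ symmetric and nondegenerate, and put $d_{i}=r^{\sharp}(\xi_{i})$. Pairing $\mathbf{D}_{r}$ with $\xi_{1}\otimes\xi_{2}\otimes\xi_{3}$ gives exactly minus the left side of the \emph{first} identity. The $\dashv$-half of the $\mathcal{O}$-operator condition produces, through its $-\fl_{\vdash}^{\ast}$ term, $\omega(d_{1},\,d_{2}\vdash d_{3})$, not $\omega(d_{1},\,d_{2}\dashv d_{3})$. The identity it actually yields is
\[
\omega(d_{1},\,d_{2}\vdash d_{3})+\omega(d_{2},\,d_{3}\dashv d_{1}-d_{3}\vdash d_{1})-\omega(d_{3},\,d_{1}\dashv d_{2})=0 .
\]
This is the sum of the first identity evaluated at $(d_{1},d_{2},d_{3})$ and at $(d_{3},d_{1},d_{2})$. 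So, given the first identity and nondegeneracy, the printed second identity is equivalent to $d_{2}\dashv d_{3}=d_{2}\vdash d_{3}$ for all $d_{2},d_{3}$. That fails on $A\otimes P$ whenever $A\cdot A\neq 0$ and $\diamond$ is not commutative.

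\textbf{A concrete counterexample.} Take $A$ from Example \ref{ex:ind-tribi} with $\omega(e_{1},e_{2})=1$, and $P$ from Example \ref{ex:dual-perm}. Set $d_{1}=e_{1}\otimes x_{2}$, $d_{2}=e_{1}\otimes x_{1}$, $d_{3}=e_{2}\otimes x_{2}$. The only nonzero product occurring is $d_{2}\dashv d_{3}=e_{2}\otimes x_{1}$. The printed second identity therefore evaluates to $\omega(e_{1},e_{2})\varpi(x_{2},x_{1})=-1\neq 0$.

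\textbf{Your fallback and the paper's proof.} Your fallback breaks at the same place. In the printed second identity the $P$-factors are $\varpi(p_{1},\,p_{3}\diamond p_{2})$, $\varpi(p_{2},\,p_{1}\diamond p_{3}-p_{3}\diamond p_{1})$ and $-\varpi(p_{3},\,p_{2}\diamond p_{1})$. The last two equal $\varpi(p_{2}\diamond p_{1},\,p_{3})$, but the first does not, so the Connes sum cannot be factored out. The paper's direct proof has the same defect. It correctly factors the first identity as the Connes cocycle sum times $\varpi(p_{1},\,p_{2}\diamond p_{3}-p_{3}\diamond p_{2})$, and then only asserts the second ``similarly''.

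\textbf{With the corrected second identity.} If the second identity is replaced by the displayed one, the statement is true and both routes go through.
\begin{itemize}
\item In your route, Proposition \ref{pro:sym-dia} becomes correct. For symmetric nondegenerate $r$, the $\DAYBE$ is equivalent to the first identity alone, and that implies the corrected second one.
\item In the paper's direct route, invariance and skew-symmetry give $\varpi(p\diamond q,\,s)=\varpi(q,\,p\diamond s)$. Hence $\varpi(p_{1},\,p_{2}\diamond p_{3})=\varpi(p_{2}\diamond p_{1},\,p_{3})$, all three $P$-factors coincide, and the Connes sum factors out.
\end{itemize}
Your route is more conceptual but relies essentially on nondegeneracy of $\omega$. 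The direct computation is elementary and works for $\widetilde{\omega}$ built from any skew-symmetric Connes cocycle.
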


\begin{proof}
First, it is easy to see that $\widetilde{\omega}(-,-)$ is symmetric and nondegenerate
since $\omega(-,-)$ and $\varpi(-,-)$ are skew-symmetric and nondegenerate.
Since $(A, \cdot, \omega)$ be a symplectic associative algebra and $\varpi(-,-)$ is
invariant, for any $a_{1}, a_{2}, a_{3}\in A$, we have $\omega(a_{1},\; a_{2}\cdot a_{3})
+\omega(a_{2},\; a_{3}\cdot a_{1})+\omega(a_{3},\; a_{1}\cdot a_{2})=0$, and for any
$p_{1}, p_{2}, p_{3}\in P$, $\varpi(p_{2},\; p_{1}\diamond p_{3})=-\varpi(p_{3},\; p_{1}
\diamond p_{2})=\varpi(p_{1},\;p_{2}\diamond p_{3}-p_{3}\diamond p_{2})$. Thus,
\begin{align*}
&\; \widetilde{\omega}(a_{1}\otimes p_{1},\; (a_{2}\otimes p_{2})\vdash(a_{3}\otimes p_{3}))
-\widetilde{\omega}(a_{1}\otimes p_{1},\; (a_{2}\otimes p_{2})
\dashv(a_{3}\otimes p_{3}))\\[-1mm]
&\qquad +\widetilde{\omega}(a_{2}\otimes p_{2},\; (a_{3}\otimes p_{3})\dashv
(a_{1}\otimes p_{1}))
-\widetilde{\omega}(a_{3}\otimes p_{3},\; (a_{1}\otimes p_{1})\vdash
(a_{2}\otimes p_{2}))\\
=&\; \omega(a_{1},\; a_{2}\cdot a_{3})\varpi(p_{1},\; p_{2}\diamond p_{3})
-\omega(a_{1},\; a_{2}\cdot a_{3})\varpi(p_{1},\; p_{3}\diamond p_{2})\\[-1mm]
&\qquad +\omega(a_{2},\; a_{3}\cdot a_{1})\varpi(p_{2},\; p_{1}\diamond p_{3})
-\omega(a_{3},\; a_{1}\cdot a_{2})\varpi(p_{3},\; p_{1}\diamond p_{2})\\
=&\;\Big(\omega(a_{1},\; a_{2}\cdot a_{3})+\omega(a_{2},\; a_{3}\cdot a_{1})
+\omega(a_{3},\; a_{1}\cdot a_{2})\Big)\varpi(p_{1},\; p_{2}\diamond p_{3})\\[-1mm]
&\qquad -\Big(\omega(a_{1},\; a_{2}\cdot a_{3})+\omega(a_{2},\; a_{3}\cdot a_{1})
+\omega(a_{3},\; a_{1}\cdot a_{2})\Big)\varpi(p_{1},\; p_{3}\diamond p_{2})\\
=&\; 0.
\end{align*}
Similarly, we also have
\begin{align*}
&\; \widetilde{\omega}(a_{1}\otimes p_{1},\; (a_{2}\otimes p_{2})\dashv(a_{3}\otimes p_{3}))
+\widetilde{\omega}(a_{2}\otimes p_{2},\; (a_{3}\otimes p_{3})\dashv
(a_{1}\otimes p_{1}))\\[-1mm]
&\qquad -\widetilde{\omega}(a_{2}\otimes p_{2},\; (a_{3}\otimes p_{3})\vdash
(a_{1}\otimes p_{1}))
-\widetilde{\omega}(a_{3}\otimes p_{3},\; (a_{1}\otimes p_{1})\dashv
(a_{2}\otimes p_{2}))=0.
\end{align*}
Thus, $(A\otimes P, \dashv, \vdash, \widetilde{\omega})$ is a symplectic diassociative algebra.
\end{proof}

Let $(A, \cdot)$ be an associative algebra, $(P, \diamond, \varpi)$ be a quadratic
perm algebra and $r=\sum_{i}x_{i}\otimes y_{i}\in A\otimes A$ be a skew-symmetric and
nondegenerate solution of the $\AYBE$ in $(A, \cdot)$. In Proposition \ref{pro:AYBE-DYBE},
we have construct a symmetric nondegenerate solution
$$
\widetilde{r}=\sum_{i, j}(x_{i}\otimes e_{j})\otimes(y_{i}\otimes f_{j})
\in(A\otimes P)\otimes(A\otimes P)
$$
of the $\DAYBE$ in the induced diassociative algebra $(A\otimes P, \dashv, \vdash)$,
where $\{e_{1}, e_{2},\cdots, e_{n}\}$ is a basis of $P$ and $\{f_{1},
f_{2},\cdots, f_{n}\}$ is the dual basis of $\{e_{1}, e_{2},\cdots, e_{n}\}$ with
respect to $\varpi(-,-)$. For the skew-symmetric element $\kappa=\sum_{j}e_{j}\otimes
f_{j}\in P\otimes P$, we have a linear isomorphism $\kappa^{\sharp}: P^{\ast}\rightarrow P$
by $\langle\kappa^{\sharp}(\xi_{1}),\; \xi_{2}\rangle=\langle\kappa,\;
\xi_{1}\otimes\xi_{2}\rangle$, for any $\xi_{1}, \xi_{2}\in P^{\ast}$.
It is easy to see that $\kappa^{\sharp}(e_{i}^{\ast})=f_{i}$ if we denote by
$\{e_{1}^{\ast}, e_{2}^{\ast},\cdots, e_{n}^{\ast}\}$ the dual basis of
$\{e_{1}, e_{2},\cdots, e_{n}\}$ with respect to $\langle-,-\rangle$.
Moreover, using $\kappa$, we can obtain a skew-symmetric bilinear form
$\omega_{\kappa}: P\otimes P\rightarrow\Bbbk$ by $\omega_{\kappa}(p_{1}, p_{2})=
\langle(\kappa^{\sharp})^{-1}(p_{1}),\; p_{2}\rangle$. Note that $\omega_{\kappa}(f_{i},
e_{j})=\delta_{ij}=\varpi(f_{i}, e_{j})$ for any $1\leq i, j\leq n$. We get that
the bilinear form $\omega_{\kappa}(-,-)$ is exactly the original skew-symmetric
nondegenerate bilinear form $\varpi(-,-)$.

\begin{pro}\label{pro:ass-diass-comm}
Let $(A, \cdot)$ be an associative algebra, $(P, \diamond, \varpi)$ be a quadratic perm
algebra and $r=\sum_{i}x_{i}\otimes y_{i}\in A\otimes A$ be a skew-symmetric nondegenerate
solution of the $\AYBE$ in $(A, \cdot)$. Suppose $\widetilde{r}=\sum_{i, j}(x_{i}\otimes
e_{j})\otimes(y_{i}\otimes f_{j})\in(A\otimes P)\otimes(A\otimes P)$ is the symmetric
nondegenerate solution of the $\DAYBE$ in the induced diassociative algebra
$(A\otimes P, \dashv, \vdash)$ and $\omega_{\widetilde{r}}(-,-)$ is the bilinear form
on $(A\otimes P, \dashv, \vdash)$ by Eq. \eqref{symp}. Then we have
$$
\omega_{\widetilde{r}}(a_{1}\otimes p_{1},\; a_{2}\otimes p_{2})
=\omega_{r}(a_{1}, a_{2})\varpi(p_{1}, p_{2}),
$$
for any $a_{1}, a_{2}\in A$ and $p_{1}, p_{2}\in P$. Therefore, we obtain the following
commutative diagram:
$$
\xymatrix@C=3cm@R=0.5cm{
\txt{{\small $r$}\\ {\tiny a nondegenerate skew-symmetric} \\
{\tiny solution of the $\AYBE$ in $(A, \cdot)$}}
\ar[d]_(0.6){{\rm Pro.}~\ref{pro:sym-ass}}\ar[r]^{{\rm Pro.}~\ref{pro:AYBE-DYBE}} &
\txt{{\small $\widetilde{r}$} \\  {\tiny a nondegenerate symmetric solution} \\
{\tiny of the $\DAYBE$ in $(A\otimes P, \dashv, \vdash)$}}
\ar[d]^(0.6){{\rm Pro.}~\ref{pro:sym-dia}} \\
\txt{{\small $(A, \cdot, \omega_{r})$}\\{\tiny a symplectic associative algebra}}
\ar[r]^{{\rm Pro.}~\ref{pro:sym-ass-diass}}
& \txt{{\small $(A\otimes P, \dashv, \vdash, \omega_{\widetilde{r}})$}\\
{\tiny a symplectic diassociative algebra}}}
$$
\end{pro}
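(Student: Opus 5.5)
The plan is to use the factorization $\widetilde{r}^{\sharp}=r^{\sharp}\otimes\kappa^{\sharp}$ from the proof of Theorem \ref{thm:indu-sdibia}. Here $(A\otimes P)^{\ast}$ is identified with $A^{\ast}\otimes P^{\ast}$, which is legitimate since everything is finite-dimensional. From this factorization and the nondegeneracy of $r$ and $\kappa$ we get
\[
(\widetilde{r}^{\sharp})^{-1}=(r^{\sharp})^{-1}\otimes(\kappa^{\sharp})^{-1}.
\]
This inverse exists by Remark \ref{rmk:nond}. The formula for $\omega_{\widetilde{r}}$ should then follow by evaluating Eq. \eqref{symp} on pure tensors.

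Concretely, for $a_{1},a_{2}\in A$ and $p_{1},p_{2}\in P$, write $(r^{\sharp})^{-1}(a_{1})=\eta$ and $(\kappa^{\sharp})^{-1}(p_{1})=\xi$. Then
\[
\omega_{\widetilde{r}}(a_{1}\otimes p_{1},\; a_{2}\otimes p_{2})
=\langle\eta\otimes\xi,\; a_{2}\otimes p_{2}\rangle
=\langle\eta, a_{2}\rangle\langle\xi, p_{2}\rangle
=\omega_{r}(a_{1},a_{2})\,\omega_{\kappa}(p_{1},p_{2}).
\]
The discussion preceding the proposition shows that $\omega_{\kappa}=\varpi$: both forms are determined by the values $\omega_{\kappa}(f_{i},e_{j})=\delta_{ij}=\varpi(f_{i},e_{j})$ on bases. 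Hence $\omega_{\widetilde{r}}(a_{1}\otimes p_{1},a_{2}\otimes p_{2})=\omega_{r}(a_{1},a_{2})\varpi(p_{1},p_{2})$. Bilinearity extends this from pure tensors to all of $A\otimes P$, which proves the displayed identity.

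For the commutative diagram, I would chase the two paths. Going down then right, Proposition \ref{pro:sym-ass} makes $(A,\cdot,\omega_{r})$ symplectic, and Proposition \ref{pro:sym-ass-diass} then produces $\widetilde{\omega}$ with $\widetilde{\omega}(a_{1}\otimes p_{1},a_{2}\otimes p_{2})=\omega_{r}(a_{1},a_{2})\varpi(p_{1},p_{2})$. Going right then down, Proposition \ref{pro:AYBE-DYBE} together with Remark \ref{rmk:nond} makes $\widetilde{r}$ a symmetric nondegenerate solution of the $\DAYBE$, and Proposition \ref{pro:sym-dia} produces $\omega_{\widetilde{r}}$. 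The identity just proved says exactly that $\widetilde{\omega}=\omega_{\widetilde{r}}$, so the diagram commutes.

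The only delicate point I expect is bookkeeping of conventions. One must check that the identification $(A\otimes P)^{\ast}\cong A^{\ast}\otimes P^{\ast}$ is compatible with how $\widetilde{r}^{\sharp}$ was computed, that is, that the first tensor slot of $\widetilde{r}$ is paired with the argument. One must also track the sign and order in $\kappa^{\sharp}$, for instance that $\kappa^{\sharp}(e_{i}^{\ast})=f_{i}$, so that $\omega_{\kappa}$ is $\varpi$ rather than $-\varpi$. I would verify this on the basis elements $f_{i},e_{j}$ directly.
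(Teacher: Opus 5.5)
Your proposal is correct and follows essentially the same route as the paper. You factor $\widetilde{r}^{\sharp}=r^{\sharp}\otimes\kappa^{\sharp}$, invert and evaluate Eq. \eqref{symp} on pure tensors to get $\omega_{\widetilde{r}}=\omega_{r}\cdot\omega_{\kappa}$, and then use $\omega_{\kappa}=\varpi$ (your check via $\kappa^{\sharp}(e_{i}^{\ast})=f_{i}$ confirms the sign), so the two paths around the diagram give the same form.
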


\begin{proof}
Since $\widetilde{r}$ is a symmetric nondegenerate solution of the $\DAYBE$ in
$(A\otimes P, \dashv, \vdash)$, by Eq. \eqref{symp}, we get a bilinear form
$\omega_{\widetilde{r}}(-,-)$ on $A\otimes P$. By direct calculation, we have
$$
\omega_{\widetilde{r}}(a_{1}\otimes p_{1},\; a_{2}\otimes p_{2})
=\langle\eta_{1}\otimes\xi_{1},\; a_{2}\otimes p_{2}\rangle
=\langle\eta_{1}, a_{2}\rangle\langle\xi_{1}, p_{2}\rangle
=\omega_{r}(a_{1}, a_{2})\omega_{\kappa}(p_{1}, p_{2}),
$$
where $\eta_{1}\in A^{\ast}$ satisfying $r^{\sharp}(\eta_{1})=a_{1}$
and $\xi_{1}\in P^{\ast}$ satisfying $\kappa^{\sharp}(\xi_{1})=p_{1}$.
Note that $\omega_{\kappa}(p_{1}, p_{2})=\varpi(p_{1}, p_{2})$.
We obtain that $\omega_{\widetilde{r}}(a_{1}\otimes p_{1},\; a_{2}\otimes p_{2})
=\omega_{r}(a_{1}, a_{2})\varpi(p_{1}, p_{2})$.
That is, the bilinear form on $A\otimes P$ constructed in Proposition \ref{pro:sym-ass-diass}
by $\omega_{r}(-,-)$ is exactly the bilinear form on $A\otimes P$ induced by
$\widetilde{r}$. Hence, we get the commutative diagram.
\end{proof}

Similar to the proof of Proposition \ref{pro:sym-ass-diass}, we have

\begin{pro}\label{pro:sym-lie-leib}
Let $(\g, [-,-], \omega)$ be a symplectic Lie algebra and $(P, \diamond, \varpi)$
be a quadratic perm algebra. If we define a bilinear form $\widetilde{\omega}(-,-)$
on the induced Leibniz algebra $(\g\otimes P, \ast)$ by
$$
\widetilde{\omega}(g_{1}\otimes p_{1},\; g_{2}\otimes p_{2})
=\omega(g_{1}, g_{2})\varpi(p_{1}, p_{2}),
$$
for any $g_{1}, g_{2}\in\g$ and $p_{1}, p_{2}\in P$. Then $(\g\otimes P, \ast,
\widetilde{\omega})$ is a symplectic Leibniz algebra.
\end{pro}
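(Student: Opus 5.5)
The plan is to mirror the proof of Proposition \ref{pro:sym-ass-diass}, replacing the Connes cocycle identity by the Lie 2-cocycle identity. First we check that $\widetilde{\omega}$ is symmetric and nondegenerate. Since both $\omega$ and $\varpi$ are skew-symmetric, their product is symmetric:
$$
\widetilde{\omega}(g_{2}\otimes p_{2},\; g_{1}\otimes p_{1})=(-\omega(g_{1},g_{2}))(-\varpi(p_{1},p_{2})).
$$
Nondegeneracy follows because the tensor product of two nondegenerate forms is nondegenerate; concretely, dual bases of $\omega$ and $\varpi$ give a dual basis for $\widetilde{\omega}$.

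Next we verify the symplectic Leibniz identity on pure tensors $x_{k}=g_{k}\otimes p_{k}$. By Eq. \eqref{prod3}, $x_{i}\ast x_{j}=[g_{i},g_{j}]\otimes(p_{i}\diamond p_{j})$, so the left-hand side expands to
$$
\omega(g_{3},[g_{1},g_{2}])\varpi(p_{3},p_{1}\diamond p_{2})+\omega(g_{2},[g_{1},g_{3}])\varpi(p_{2},p_{1}\diamond p_{3})-\omega(g_{1},[g_{2},g_{3}])\varpi(p_{1},p_{2}\diamond p_{3})-\omega(g_{1},[g_{3},g_{2}])\varpi(p_{1},p_{3}\diamond p_{2}).
$$
Using antisymmetry of the bracket, the last term becomes $+\omega(g_{1},[g_{2},g_{3}])\varpi(p_{1},p_{3}\diamond p_{2})$.

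We then apply the identities on $P$ already used in the proof of Proposition \ref{pro:sym-ass-diass}, namely
$$
\varpi(p_{2},p_{1}\diamond p_{3})=-\varpi(p_{3},p_{1}\diamond p_{2})=\varpi(p_{1},p_{2}\diamond p_{3}-p_{3}\diamond p_{2}).
$$
These follow from the invariance and skew-symmetry of $\varpi$. Set $Q=\varpi(p_{1},p_{2}\diamond p_{3}-p_{3}\diamond p_{2})$. Then the first $P$-factor equals $-Q$, the second equals $Q$, and the last two terms combine to $-\omega(g_{1},[g_{2},g_{3}])Q$. The whole expression therefore becomes
$$
\big(-\omega(g_{3},[g_{1},g_{2}])+\omega(g_{2},[g_{1},g_{3}])-\omega(g_{1},[g_{2},g_{3}])\big)Q.
$$
Using skew-symmetry of $\omega$ to rewrite each term as $\omega([\cdot,\cdot],\cdot)$, the bracket is, up to sign, the cyclic sum $\omega([g_{1},g_{2}],g_{3})+\omega([g_{3},g_{1}],g_{2})+\omega([g_{2},g_{3}],g_{1})$. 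This vanishes because $(\g,[-,-],\omega)$ is symplectic. Bilinearity then extends the identity from pure tensors to all of $\g\otimes P$.

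The main obstacle is bookkeeping the signs so that all four $P$-factors reduce to the single quantity $Q$ and the $\g$-part collapses exactly to the cyclic 2-cocycle sum. The step I would double-check is the identity $\varpi(p_{2},p_{1}\diamond p_{3})=-\varpi(p_{3},p_{1}\diamond p_{2})$. It follows from invariance, $\varpi(p_{2}\diamond p_{1},p_{3})=\varpi(p_{2},p_{1}\diamond p_{3}-p_{3}\diamond p_{1})$, combined with the perm relation and skew-symmetry. If it is not directly available, I would first derive it from the invariance condition before carrying out the main computation.
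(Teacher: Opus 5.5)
Your proposal is correct and is the argument the paper intends; the paper only says the proof is similar to that of Proposition \ref{pro:sym-ass-diass}. As there, you get symmetry and nondegeneracy of $\widetilde{\omega}$ from those of $\omega$ and $\varpi$. You then reduce all four $P$-factors to $Q=\varpi(p_{1},p_{2}\diamond p_{3}-p_{3}\diamond p_{2})$, so the $\g$-part becomes exactly (with sign $+1$) the cyclic $2$-cocycle sum.

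For the step you flagged, $\varpi(p_{2},p_{1}\diamond p_{3})=-\varpi(p_{3},p_{1}\diamond p_{2})$ follows from invariance applied to $(p_{1},p_{2},p_{3})$ and to $(p_{1},p_{3},p_{2})$, together with skew-symmetry of $\varpi$; the perm axiom is not needed.
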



Thus, starting from a symplectic associative algebra, we have obtained two approaches to
constructing symplectic Leibniz algebras, and the symplectic Leibniz algebras obtained
from these two approaches are consistent.

\begin{thm}\label{thm:sym-ass-leib}
Let $(A, \cdot, \omega)$ be a symplectic associative algebra and $(P, \diamond, \varpi)$
be a quadratic perm algebra. Suppose $(A\otimes P, \dashv, \vdash)$ is the diassociative
algebra induced from $(A, \cdot)$ and $(P, \diamond)$, $(A, [-,-])$ is the Lie algebra
induced from $(A, \cdot)$ and $(A\otimes P, \ast)$ is the Leibniz algebra induced form
$(A, [-,-])$ and $(P, \diamond)$. Then we have the following commutative diagram about
symplectic associative algebra, symplectic diassociative algebra, symplectic Lie
algebra and symplectic Leibniz algebra:
$$
\xymatrix@C=2cm@R=0.7cm{
\txt{$(A, \cdot, \omega)$ \\
{\tiny a symplectic associative algebra}}
\ar[d]_{{\rm Pro.}~\ref{pro:sym-ass-lie}} \ar[r]^{{\rm Pro.}~\ref{pro:sym-ass-diass}}
&\txt{$(A\otimes P, \dashv, \vdash, \widetilde{\omega})$\\
{\tiny a symplectic diassociative algebra}}\ar[d]^{{\rm Pro.}~\ref{pro:sym-diass-leib}} \\
\txt{$(A, [-,-], \omega)$ \\ {\tiny a symplectic Lie algebra}}
\ar[r]^{{\rm Pro.}~\ref{pro:sym-lie-leib}}
& \txt{$(A\otimes P, \ast, \widetilde{\omega})$ \\ {\tiny a symplectic Leibniz algebra}}}
$$
where $\widetilde{\omega}(a_{1}\otimes p_{1},\; a_{2}\otimes p_{2})
=\omega(a_{1}, a_{2})\varpi(p_{1}, p_{2})$ for any $a_{1}, a_{2}\in A$
and $p_{1}, p_{2}\in P$.
\end{thm}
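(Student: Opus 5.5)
The plan is to check the commutativity of the square directly on the underlying data. Each arrow is already known to land in the right kind of object: Propositions \ref{pro:sym-ass-lie}, \ref{pro:sym-ass-diass}, \ref{pro:sym-diass-leib} and \ref{pro:sym-lie-leib}. So we only need to show that the two routes from $(A, \cdot, \omega)$ produce the same Leibniz algebra with the same bilinear form.

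First, the underlying algebras. Going right then down, we take the diassociative algebra $(A\otimes P, \dashv, \vdash)$ of Proposition \ref{pro:ass-diass} and form $x_{1}\ast x_{2}=x_{1}\vdash x_{2}-x_{2}\dashv x_{1}$ as in Proposition \ref{pro:commtor}. On pure tensors this gives
$$
(a_{1}\otimes p_{1})\ast(a_{2}\otimes p_{2})=(a_{1}\cdot a_{2})\otimes(p_{1}\diamond p_{2})-(a_{2}\cdot a_{1})\otimes(p_{1}\diamond p_{2}),
$$
which equals $[a_{1}, a_{2}]\otimes(p_{1}\diamond p_{2})$. Going down then right, Proposition \ref{pro:li-lieb} gives exactly this product. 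This is the commutative square of Section \ref{sec:Pre}, so the two Leibniz algebras coincide. One caution: the operations must be read with the convention of Theorem \ref{thm:permbia-dia} (the one used in the Leibniz bialgebra computation), that is, $\dashv$ uses $p_{2}\diamond p_{1}$ and $\vdash$ uses $p_{1}\diamond p_{2}$. Under that convention $\vdash$ contributes $p_{1}\diamond p_{2}$, and $x_{2}\dashv x_{1}$ contributes $p_{1}\diamond p_{2}$ as well.

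Second, the bilinear forms. Both routes use the same formula $\widetilde{\omega}(a_{1}\otimes p_{1}, a_{2}\otimes p_{2})=\omega(a_{1},a_{2})\varpi(p_{1},p_{2})$. Proposition \ref{pro:sym-diass-leib} leaves the form unchanged when passing from the diassociative algebra to its Leibniz algebra. Proposition \ref{pro:sym-ass-lie} keeps $\omega$ on $(A,[-,-])$, and Proposition \ref{pro:sym-lie-leib} then tensors it with $\varpi$. Hence the two resulting forms on $A\otimes P$ agree identically.

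The only point needing care is that each arrow really yields a symplectic structure. Propositions \ref{pro:sym-diass-leib} and \ref{pro:sym-lie-leib} are stated as direct verifications, so I would at least sketch them. For Proposition \ref{pro:sym-diass-leib}, I would expand the Leibniz symplectic identity for $\ast=\vdash-\dashv^{\rm op}$ and show it is a combination of the two diassociative symplectic identities, using the symmetry of $\omega$ with permuted arguments. For Proposition \ref{pro:sym-lie-leib}, I would mimic the proof of Proposition \ref{pro:sym-ass-diass}. Using invariance and skew-symmetry of $\varpi$, all terms reduce to $\varpi(p_{1}, p_{2}\diamond p_{3})$ and $\varpi(p_{1}, p_{3}\diamond p_{2})$ multiplied by cyclic sums of $\omega([\cdot,\cdot],\cdot)$, and those cyclic sums vanish. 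I expect this bookkeeping to be the main obstacle; the commutativity itself is formal.
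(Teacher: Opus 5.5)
Your proposal is correct and matches the paper, which gives no separate proof of this theorem: it is exactly Propositions \ref{pro:sym-ass-lie}, \ref{pro:sym-ass-diass}, \ref{pro:sym-diass-leib} and \ref{pro:sym-lie-leib} combined with the Section~\ref{sec:Pre} identification $(a_{1}\otimes p_{1})\ast(a_{2}\otimes p_{2})=[a_{1},a_{2}]\otimes(p_{1}\diamond p_{2})$ and the observation that both routes carry the same form $\omega(a_{1},a_{2})\varpi(p_{1},p_{2})$. One caution in case you also re-verify the top arrow: Proposition \ref{pro:sym-ass-diass} holds only if the first term of the second defining identity of a symplectic diassociative algebra is read as $\omega(d_{1},\; d_{2}\vdash d_{3})$, which is what the $\mathcal{O}$-operator computation in Proposition \ref{pro:sym-dia} actually produces; with the printed $\omega(d_{1},\; d_{2}\dashv d_{3})$ the tensor form leaves the residue $\omega(a_{1},\; a_{2}\cdot a_{3})\varpi(p_{1}\diamond p_{3},\; p_{2})$, which is already nonzero in Example \ref{ex:ind-symdiam}, whereas your Leibniz check is unaffected because the Leibniz expression equals minus the sum of the first diassociative identity evaluated at $(x_{1},x_{2},x_{3})$ and at $(x_{1},x_{3},x_{2})$.
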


Moreover, similar to the proof of Proposition \ref{pro:ass-diass-comm}, we also have

\begin{pro}\label{pro:lie-leib-comm}
Let $(\g, [-,-])$ be a Lie algebra, $(P, \diamond, \varpi)$ be a quadratic perm algebra
and $r=\sum_{i}x_{i}\otimes y_{i}\in\g\otimes\g$ be a skew-symmetric nondegenerate solution
of the $\CYBE$ in $(\g, [-,-])$. Suppose $\widetilde{r}=\sum_{i, j}(x_{i}\otimes e_{j})
\otimes(y_{i}\otimes f_{j})\in(\g\otimes P)\otimes(\g\otimes P)$ is the symmetric
nondegenerate solution of the $\LYBE$ in the induced Leibniz algebra $(\g\otimes P, \ast)$
and $\omega_{\widetilde{r}}(-,-)$ is the bilinear form on $(\g\otimes P, \ast)$ by
Eq. \eqref{symp}. Then we have
$$
\omega_{\widetilde{r}}(g_{1}\otimes p_{1},\; g_{2}\otimes p_{2})
=\omega_{r}(g_{1}, g_{2})\varpi(p_{1}, p_{2}),
$$
for any $g_{1}, g_{2}\in\g$ and $p_{1}, p_{2}\in P$.
\end{pro}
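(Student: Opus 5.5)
The plan is to imitate the proof of Proposition \ref{pro:ass-diass-comm} verbatim, replacing the associative data by Lie data. The first ingredient is the factorization $\widetilde{r}^{\sharp}=r^{\sharp}\otimes\kappa^{\sharp}$. This was computed in the proof of Theorem \ref{thm:indu-sdibia} using only the vector space structure of $A\otimes P$ and the form of $\widetilde{r}$, so the same pairing computation applies to $\g\otimes P$. I would state it for $\g$ in place of $A$:
$$
\langle\widetilde{r}^{\sharp}(\eta_{1}\otimes\xi_{1}),\; \eta_{2}\otimes\xi_{2}\rangle
=\langle r^{\sharp}(\eta_{1}),\;\eta_{2}\rangle\langle\kappa^{\sharp}(\xi_{1}),\;\xi_{2}\rangle,
$$
for $\eta_{1},\eta_{2}\in\g^{\ast}$ and $\xi_{1},\xi_{2}\in P^{\ast}$. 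Since $r$ is nondegenerate and $\kappa^{\sharp}$ is an isomorphism, $\widetilde{r}^{\sharp}$ is invertible with $(\widetilde{r}^{\sharp})^{-1}=(r^{\sharp})^{-1}\otimes(\kappa^{\sharp})^{-1}$. This is exactly the nondegeneracy observation of Remark \ref{rmk:nond}.

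Next, given $g_{1}\otimes p_{1}$, choose $\eta_{1}\in\g^{\ast}$ with $r^{\sharp}(\eta_{1})=g_{1}$ and $\xi_{1}\in P^{\ast}$ with $\kappa^{\sharp}(\xi_{1})=p_{1}$. Then $(\widetilde{r}^{\sharp})^{-1}(g_{1}\otimes p_{1})=\eta_{1}\otimes\xi_{1}$, and Eq. \eqref{symp} gives
$$
\omega_{\widetilde{r}}(g_{1}\otimes p_{1},\; g_{2}\otimes p_{2})
=\langle\eta_{1},\; g_{2}\rangle\langle\xi_{1},\; p_{2}\rangle
=\omega_{r}(g_{1}, g_{2})\,\omega_{\kappa}(p_{1}, p_{2}).
$$
Finally, I invoke the identification $\omega_{\kappa}=\varpi$ established just before Proposition \ref{pro:ass-diass-comm}. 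That identification came from $\omega_{\kappa}(f_{i},e_{j})=\delta_{ij}=\varpi(f_{i},e_{j})$ together with bilinearity on the basis $\{f_{i}\}\times\{e_{j}\}$. This yields the claimed formula.

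The only step needing care is the sign and ordering conventions in $\widetilde{r}^{\sharp}$ and $\kappa^{\sharp}$. In particular, one must check that $\kappa^{\sharp}(e_{i}^{\ast})=f_{i}$ is consistent with the definition $\langle\kappa^{\sharp}(\xi_{1}),\xi_{2}\rangle=\langle\kappa,\xi_{1}\otimes\xi_{2}\rangle$, so that no extra sign appears when passing from $\omega_{\kappa}$ to $\varpi$. I would verify this by evaluating on $e_{i}^{\ast}\otimes e_{k}^{\ast}$ and expanding $\kappa=\sum_{j}e_{j}\otimes f_{j}$. Everything else is formal, and no Lie-algebraic property of $\g$ is used beyond the hypotheses guaranteeing that $\widetilde{r}$ is defined.
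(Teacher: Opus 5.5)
Your proposal is correct and follows the paper's own argument, which reduces this statement to the proof of Proposition \ref{pro:ass-diass-comm}. That argument factors $\widetilde{r}^{\sharp}=r^{\sharp}\otimes\kappa^{\sharp}$, obtains $\omega_{\widetilde{r}}(g_{1}\otimes p_{1}, g_{2}\otimes p_{2})=\omega_{r}(g_{1},g_{2})\,\omega_{\kappa}(p_{1},p_{2})$, and then uses $\omega_{\kappa}=\varpi$. Your sign check also works out with no extra sign, since $\langle\kappa^{\sharp}(e_{i}^{\ast}),\, e_{k}^{\ast}\rangle=\langle e_{k}^{\ast},\, f_{i}\rangle$ gives $\kappa^{\sharp}(e_{i}^{\ast})=f_{i}$.
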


Therefore, by Propositions \ref{pro:sym-ass}-\ref{pro:lie-leib-comm}, we obtain the
following commutative diagram of the nondegenerate solutions of the Yang-Baxter equation
and the symplectic structure on the corresponding algebras:
{\small
\begin{displaymath}
\xymatrix@R=0.4cm@C=-1cm{
&\txt{{\small $r$}\\ {\tiny a nondegenerate skew-symmetric} \\
{\tiny solution of the $\AYBE$ in $(A, \cdot)$}}
\ar@{->}[dd]_(0.75){{\rm Pro.}~\ref{pro:sym-ass}}
\ar@{->}[rr]^{{\rm Pro.}~\ref{pro:AYBE-DYBE}}&
&\txt{{\small $\widetilde{r}$} \\  {\tiny a nondegenerate symmetric solution} \\
{\tiny of the $\DAYBE$ in $(A\otimes P, \dashv, \vdash)$}}
\ar@{->}[dd]^{{\rm Pro.}~\ref{pro:sym-dia}}\\
\txt{{\small $r$}\\ {\tiny a nondegenerate skew-symmetric} \\
{\tiny solution of the $\CYBE$ in $(A, [-,-])$}}
\ar@{->}[dd]^{{\rm Pro.}~\ref{pro:sym-lie}}
\ar@{<.}[ru]_{{\rm Pro.}~\ref{pro:ass-Lie-YBE}}
\ar@{->}[rr]^(0.6){{\rm Pro.}~\ref{pro:CYBE-LYBE}}&
&\txt{{\small $\widetilde{r}$}\\ {\tiny a nondegenerate symmetric solution} \\
{\tiny of the $\LYBE$ in $(A\otimes P, \ast)$}}
\ar@{<-}[ru]_{{\rm Pro.}~\ref{pro:diass-Leib-YBE}}
\ar@{->}[dd]_(0.7){{\rm Pro.}~\ref{pro:sym-leib}}\\
&\txt{{\small $(A, \cdot, \omega_{r})$}\\{\tiny a symplectic associative algebra}}
\ar@{.>}[ld]^{{\rm Pro.}~\ref{pro:sym-ass-lie}} &
&\txt{{\small $(A\otimes P, \dashv, \vdash, \omega_{\widetilde{r}})$}\\
{\tiny a symplectic diassociative algebra}}
\ar@{<.}[ll]_(0.65){{\rm Pro.}~\ref{pro:ass-diass-comm}}\\
\txt{{\small $(A, [-,-], \omega_{r})$}\\ {\tiny a symplectic Lie algebra}}&
&\txt{{\small $(A\otimes P, \ast, \omega_{\widetilde{r}})$}\\
{\tiny a symplectic Leibniz algebra}}
\ar@{<-}[ru]_{{\rm Pro.}~\ref{pro:sym-diass-leib}}
\ar@{<-}[ll]_{{\rm Pro.}~\ref{pro:lie-leib-comm}}&}
\end{displaymath}
}

\begin{ex}\label{ex:ind-symdiam}
Let $(A, \cdot)$ be the $2$-dimensional associative algebra given in Example
\ref{ex:ind-tribi}, $(P, \diamond, \varpi)$ be the $2$-dimensional quadratic perm
algebra given in Example \ref{ex:dual-perm}, $(A\otimes P, \dashv, \vdash)$ be the
$4$-dimensional diassociative algebra given in Example \ref{ex:ind-tribi},
$(A, [-,-])$ and $(A\otimes P, \ast)$ be the Lie algebra and Leibniz algebra given in
Example \ref{ex:ind-tridiam} respectively. Denote by $r=e_{2}\otimes e_{1}-e_{1}\otimes
e_{2}\in A\otimes A$. Then $r$ is a nondegenerate skew-symmetric solution of the
$\AYBE$ in $(A, \cdot)$ and the element $\widetilde{r}$ given in Example \ref{ex:ind-tribi}
is a nondegenerate symmetric solution of the $\DAYBE$ in $(A\otimes P, \dashv, \vdash)$.
Thus, $r$ induces a skew-symmetric bilinear form $\omega_{r}: A\times A\rightarrow\Bbbk$:
$$
\omega_{r}(e_{1}, e_{2})=1;
$$
and $\widetilde{r}$ induces a symmetric bilinear form $\omega_{\widetilde{r}}:
(A\otimes P)\times(A\otimes P)\rightarrow\Bbbk$:
$$
\omega_{\widetilde{r}}(e_{1}\otimes x_{1},\; e_{2}\otimes x_{2})=1,\qquad\qquad
\omega_{\widetilde{r}}(e_{1}\otimes x_{2},\; e_{2}\otimes x_{1})=-1.
$$
Then it is easy to see that $\omega_{\widetilde{r}}(e_{i}\otimes x_{l}, e_{j}\otimes x_{k})
=\omega_{r}(e_{i}, e_{j})\varpi(x_{l}, x_{k})$ for any $1\leq i, j, l, k\leq2$.
One can check that $(A, \cdot, \omega_{r})$, $(A, [-,-], \omega_{r})$, $(A\otimes P,
\dashv, \vdash)$ and $(A\otimes P, \dashv, \vdash)$ are respectively symplectic
associative algebra, symplectic Lie algebra, symplectic diassociative algebra and
symplectic Leibniz algebra, and the commutative diagram above holds.
\end{ex}

\bigskip
\noindent
{\bf Acknowledgements. } This work was financially supported by National
Natural Science Foundation of China (No.11771122).

\smallskip
\noindent
{\bf Declaration of interests.} The authors have no conflicts of interest to disclose.

\smallskip
\noindent
{\bf Data availability.} Data sharing is not applicable to this article as no new data were
created or analyzed in this study.

 \end{document}